\newcommand{\ifarticle}[2]{
    \csname@ifclassloaded\endcsname{beamer}{#2}{#1}
}
\newcommand{\ifbook}[2]{
    \csname@ifclassloaded\endcsname{amsbook}{#1}{#2}
}
        \newcommand{\cref}{\zcref}
        \newcommand{\Cref}{\zcref[cap=true]}
        \setlist{topsep=2pt,itemsep=2pt,partopsep=2pt,parsep=2pt} % Uniform spacing.
\def\@makefntext{\indent\@makefnmark}\makeatletter
        \xpretocmd{\@adminfootnotes}{\let\@makefntext\BHFN@OldMakefntext}{}{}
        \xpatchcmd{\@maketitle}{\let\@makefnmark\relax}{\let\@makefnmark\no@makefnmark}{}{}
        \def\no@makefnmark{}
        \renewcommand\@makefntext[1]{%
          \ifx\@makefnmark\no@makefnmark
            \BHFN@OldMakefntext{#1}%
          \else
            \renewcommand\@makefnmark{%
            \mbox{%
                \textsuperscript{%
                \normalfont
                \hyperref[\BackrefFootnoteTag]{\@thefnmark}%
                }%
            }\,%
            }%
            \BHFN@OldMakefntext{#1}%
          \fi
        }
        \LetLtxMacro{\BHFN@Old@footnotemark}{\@footnotemark}
        \renewcommand*{\@footnotemark}{%
            \refstepcounter{BackrefHyperFootnoteCounter}%
            \xdef\BackrefFootnoteTag{bhfn:\theBackrefHyperFootnoteCounter}%
            \label{\BackrefFootnoteTag}%
            \BHFN@Old@footnotemark
        }
        \def\paragraph{\@startsection{paragraph}{4}%
          \z@\z@{-\fontdimen2\font}%
          {\normalfont\bfseries}}
        \theoremstyle{plain}
        \theoremstyle{definition}
        \newenvironment{sketch}{\proof}{\endproof}
            \setlist[enumerate,1]{ref={\csname theaxiom\endcsname.(\arabic*)}}
            \setlist[enumerate,2]{ref={\theaxiom.(\arabic*).(\alph*)}}
            \setlist[enumerate,1]{ref={\csname theconjecture\endcsname.(\arabic*)}}
            \setlist[enumerate,2]{ref={\theconjecture.(\arabic*).(\alph*)}}
            \setlist[enumerate,1]{ref={\csname theconstruction\endcsname.(\arabic*)}}
            \setlist[enumerate,2]{ref={\theconstruction.(\arabic*).(\alph*)}}
            \setlist[enumerate,1]{ref={\csname thecorollary\endcsname.(\arabic*)}}
            \setlist[enumerate,2]{ref={\thecorollary.(\arabic*).(\alph*)}}
            \setlist[enumerate,1]{ref={\csname thedefinition\endcsname.(\arabic*)}}
            \setlist[enumerate,2]{ref={\thedefinition.(\arabic*).(\alph*)}}
            \setlist[enumerate,1]{ref={\csname theexample\endcsname.(\arabic*)}}
            \setlist[enumerate,2]{ref={\theexample.(\arabic*).(\alph*)}}
            \setlist[enumerate,1]{ref={\csname thelemma\endcsname.(\arabic*)}}
            \setlist[enumerate,2]{ref={\thelemma.(\arabic*).(\alph*)}}
            \setlist[enumerate,1]{ref={\csname thenotation\endcsname.(\arabic*)}}
            \setlist[enumerate,2]{ref={\thenotation.(\arabic*).(\alph*)}}
            \setlist[enumerate,1]{ref={\csname thenote\endcsname.(\arabic*)}}
            \setlist[enumerate,2]{ref={\thenote.(\arabic*).(\alph*)}}
            \setlist[enumerate,1]{ref={\csname theproposition\endcsname.(\arabic*)}}
            \setlist[enumerate,2]{ref={\theproposition.(\arabic*).(\alph*)}}
            \setlist[enumerate,1]{ref={\csname theremark\endcsname.(\arabic*)}}
            \setlist[enumerate,2]{ref={\theremark.(\arabic*).(\alph*)}}
            \setlist[enumerate,1]{ref={\csname thetheorem\endcsname.(\arabic*)}}
            \setlist[enumerate,2]{ref={\thetheorem.(\arabic*).(\alph*)}}
        \newcommand{\qedshift}{\vspace*{-\baselineskip}}
    \NewDocumentCommand{\mathcommand}{mO{0}m}
     {
      \exp_args:Nc \NewCommandCopy {khue_kept_\cs_to_str:N #1} { #1 }
      \exp_args:Nc \newcommand {khue_new_\cs_to_str:N #1}[#2]{#3}
      \DeclareDocumentCommand {#1} {}
       {
        \mode_if_math:TF
         {
          \use:c {khue_new_\cs_to_str:N #1}
         }
         {
          \use:c {khue_kept_\cs_to_str:N #1}
         }
       }
     }
    \newenvironment{iffseq}{%
        \global\let\externaldblbackslash\\
        \[\begin{array}{cl}
        \ifundef{\internaldblbackslash}{%
            \global\let\internaldblbackslash\\%
            \gdef\\{\internaldblbackslash\cmidrule{1-1}\morecmidrules\cmidrule{1-1}}%
        }{}
    }{%
        \end{array}\]
        \global\undef\internaldblbackslash
        \global\let\\\externaldblbackslash
    }
    \newsavebox\tikzcdbox
    \newenvironment{tikzcdscale}{%
      \begin{lrbox}{\tikzcdbox}%
      \begin{tikzcd}%
    }{%
      \end{tikzcd}%
      \end{lrbox}%
      \resizebox{\textwidth}{!}{\usebox\tikzcdbox}%
    }
    \mathcommand{\h}{\textup{-}}
    \newcommand{\tx}{\mathrm}
    \mathcommand{\b}{\mathbf}
    \newcommand{\cl}{\mathcal}
    \mathcommand{\bb}{\mathbb}
    \DeclareMathAlphabet{\bbn}{U}{bbold}{m}{n}
    \newcommand{\dc}[1]{\TextOrMath{double category\xspace#1}{\b{\bb#1}}}
    \mathcommand{\sf}{\mathsf}
    \mathcommand{\u}{\underline}
    \mathcommand{\o}{\overline}
    \newcommand{\TODO}[1][TODO]{\textcolor{orange}{\textup{#1}}\xspace}
    \newcommand{\flip}[1]{\text{\rotatebox[origin=c]{-180}{$#1$}}}
    \newcommand{\datetoday}{\date{\cleanlookdateon\today}}
    \newcommand{\defeq}{\mathrel{:=}}
    \mathcommand{\d}{\mathbin{;}}
    \mathcommand{\c}{\circ}
    \newcommand{\ph}[1][]{{({-}_{#1})}}
    \newcommand{\iso}{\cong}
    \renewcommand{\equiv}{\simeq}
    \newcommand{\from}{\leftarrow}
    \newcommand{\xto}{\xrightarrow}
    \newcommand{\xfrom}{\xleftarrow}
    \newcommand{\tto}{\Rightarrow}
    \newcommand{\tfrom}{\Leftarrow}
    \newcommand{\ffto}{\hookrightarrow}
    \newcommand{\xffto}{\xhookrightarrow}
    \newcommand{\monoto}{\rightarrowtail}
    \newcommand{\squigto}{\rightsquigarrow}
    \def\slashedarrowfill@#1#2#3#4#5{%
    $\m@th\thickmuskip0mu\medmuskip\thickmuskip\thinmuskip\thickmuskip
    \relax#5#1\mkern-7mu%
    \cleaders\hbox{$#5\mkern-2mu#2\mkern-2mu$}\hfill
    \mathclap{#3}\mathclap{#2}%
    \cleaders\hbox{$#5\mkern-2mu#2\mkern-2mu$}\hfill
    \mkern-7mu#4$%
    }
    \def\rightslashedarrowfill@{%
    \slashedarrowfill@\relbar\relbar\mapstochar\rightarrow}
    \newcommand\xslashedrightarrow[2][]{%
    \ext@arrow 0055{\rightslashedarrowfill@}{#1}{#2}}
    \def\leftslashedarrowfill@{%
    \slashedarrowfill@\leftarrow\relbar\mapsfromchar\relbar}
    \newcommand\xslashedleftarrow[2][]{%
    \ext@arrow 0055{\leftslashedarrowfill@}{#1}{#2}}
    \def\rightdoubleslashedarrowfill@{%
    \slashedarrowfill@\relbar\relbar{\mapstochar\mkern-2mu\mapstochar}\rightarrow}
    \newcommand\xdoubleslashedrightarrow[2][]{%
    \ext@arrow 0055{\rightdoubleslashedarrowfill@}{#1}{#2}}
    \newcommand{\xlto}{\xslashedrightarrow}
    \newcommand{\lto}{\xlto{}}
    \newcommand{\xlfrom}{\xslashedleftarrow}
    \newcommand{\inv}{^{-1}}
    \newcommand{\op}{{}^\tx{op}}
    \newcommand{\co}{{}^\tx{co}}
    \newcommand{\coop}{{}^\tx{co\,op}}
    \newcommand{\tp}[1]{\langle#1\rangle}
    \newcommand{\unit}{{\tp{}}}
    \newcommand{\adj}{\dashv}
    \newcommand{\ob}[1]{|#1|}
    \DeclareFontFamily{U}{min}{}
    \DeclareFontShape{U}{min}{m}{n}{<-> udmj30}{}
    \mathcommand{\comma}{\downarrow}
    \newcommand{\copi}{\flip\pi}
    \newsavebox{\whitecircstar}\sbox{\whitecircstar}{\kern.075em\tikz{\node[draw, circle,line width=.36pt, inner sep=0]{$*$};}\kern.075em}
    \newsavebox{\blackcircstar}\sbox{\blackcircstar}{\kern.075em\tikz{\node[fill, circle, line width=.36pt, inner sep=0, text=white]{$*$};}\kern.075em}
    \newcommand{\skt}{\olessthan}
    \newcommand{\pow}{\pitchfork}
    \newcommand{\copow}{\cdot}
    \def\widebreve{\mathpalette\wide@breve}
    \def\wide@breve#1#2{\sbox\z@{$#1#2$}%
         \mathop{\vbox{\m@th\ialign{##\crcr
    \kern0.08em\brevefill#1{0.8\wd\z@}\crcr\noalign{\nointerlineskip}%
                        $\hss#1#2\hss$\crcr}}}\limits}
    \def\brevefill#1#2{$\m@th\sbox\tw@{$#1($}%
      \hss\resizebox{#2}{\wd\tw@}{\rotatebox[origin=c]{90}{\upshape(}}\hss$}
    \NewDocumentCommand{\jrule}{om}{%
        \IfNoValueTF{#1}
            {\textsc{#2}}
            {$#1$-\textsc{#2}}%
    }
    \newcommand{\N}{{\bb N}}
    \newcommand{\Set}{{\b{Set}}}
    \newcommand{\Cat}{\b{Cat}}
    \newcommand{\ff}{fully faithful}
    \newcommand{\ffness}{full faithfulness}
    \newcommand{\xth}{\textsuperscript{th}}
    \newcommand{\eg}{e.g.\@\xspace}
    \newcommand{\ie}{i.e.\@\xspace}
    \newcommand{\viz}{viz.\@\xspace}
    \newcommand{\cf}{cf.\@\xspace}
    \newcommand{\aka}{a.k.a.\@\xspace}
    \NewDocumentCommand{\etc}{t.}{etc.\@\xspace}
    \NewDocumentCommand{\ibid}{t.}{ibid.\@\xspace}
    \NewDocumentCommand{\loccit}{t.}{loc.\ cit.\@\xspace}
    \newcommand{\sdc}{strict double category}
    \newcommand{\pdc}{pseudo double category}
    \newcommand{\pdcs}{pseudo double categories}
    \newcommand{\dcs}{double categories\xspace}
    \newcommand{\vd}{virtual double}
    \newcommand{\vdc}{\vd{} category}
    \newcommand{\vdcs}{\vd{} categories}
    \newcommand{\ve}{virtual equipment}
    \newcommand{\fct}{formal category theory}
\patchcmd{\beamer@sectionintoc}{\vfill}{\vskip\itemsep}{}{}
  \colorlet{colour-bg}{black!85} % Slide background.
  \definecolor{colour-primary}{HTML}{cc80ff} % Titles, headings, hyperlinks.
  \colorlet{colour-text}{black!10} % Body text.
  \colorlet{colour-subtle}{black!40} % Page numbers.
  \colorlet{colour-block-bg}{black!80} % Block background.
  \definecolor{colour-warning-bg}{HTML}{ffea80} % Placeholder slide backgrounds.
  \definecolor{colour-warning-primary}{HTML}{e08152} % Placeholder slide headings.
  \apptocmd{\frame}{}{\justifying}{}
  \newtheorem{proposition}[theorem]{\translate{Proposition}}
  \renewenvironment<>{block}[1]{%
      \begin{actionenv}#2%
        \par%
        \usebeamertemplate{block begin}}
      {\par%
        \usebeamertemplate{block end}%
      \end{actionenv}}
  \renewenvironment<>{exampleblock}[1]{%
      \begin{actionenv}#2%
          \par%
          \only<presentation>{%\usebeamerfont{block}%
            \setbeamercolor{local structure}{parent=example text}}%
          \usebeamertemplate{block example begin}}
        {\par%
          \usebeamertemplate{block example end}%
        \end{actionenv}}
\newcommand{\C}{{\cl C}}
\newcommand{\D}{{\b D}}
\newcommand{\E}{{\b E}}
\newcommand{\G}{{\b G}}
\newcommand{\M}{{\b M}}
\newcommand{\Grph}{{\b{Grph}}}
\newcommand{\VDbl}{{\b{VDblCat}}}
\newcommand{\VDbln}{\b{NVDblCat}}
\newcommand{\VEquip}{\b{VEquip}}
\newcommand{\Dbl}{{\b{DblCat}}}
\newcommand{\Multicat}{{\b{Multicat}}}
\newcommand{\DFib}{{\b{DFib}}}
\newcommand{\LoCo}{{\b{LoCo}}}
\newcommand{\A}{{\dc A}}
\newcommand{\B}{{\dc B}}
\newcommand{\F}{{\dc F}}
\newcommand{\W}{{\dc W}}
\newcommand{\X}{{\dc X}}
\newcommand{\Y}{{\dc Y}}
\newcommand{\Span}{{\dc{Span}}}
\newcommand{\Dist}{{\dc{Dist}}}
\newcommand{\Mod}{{\dc{Mod}}}
\newcommand{\Mat}{{\dc{Mat}}}
\newcommand{\U}[1]{#1'}
\newcommand{\Lax}{{\dc{Lax}}}
\newcommand{\LaxN}{{\dc{Lax_N}}}
\newcommand{\LvAmnIso}{{\dc{LvwAmIsof}}}
\newcommand{\LvDFib}{{\dc{LvwDFib}}}
\newcommand{\DFibDbl}{{\dc{DFib}}}
\newcommand{\El}{{\dc{El}}}
\newcommand{\Ch}{{\dc{Ch}}}
\newcommand{\Dis}[1]{{\b 0 \rightrightarrows #1}}
\newcommand{\dfibsl}{/_{\tx{dfib}\,}}
\mathcommand{\P}{{\dc P}}
\newcommand{\up}{^{{-}}}
\newcommand{\down}{_{{-}}}
\newcommand{\crr}{\ob}
\newcommand{\I}{{\sf I}}
\newcommand{\cod}{{\tx{cod}}}
\newcommand{\dom}{{\tx{dom}}}
\renewcommand{\exp}{_{\tx{exp}}}
\newcommand{\pb}{{\it{pb}}}
\newcommand{\cart}{{\tx{cart}}}
\newcommand{\opcart}{{\tx{opcart}}}
\newcommand{\opt}{{}^\tx{op\,t}}
\newcommand{\ex}[1]{\epsilon_{#1}}
\DeclareFontFamily{U}{mathx}{}
\DeclareFontShape{U}{mathx}{m}{n}{<-> mathx10}{}
\DeclareSymbolFont{mathx}{U}{mathx}{m}{n}
\DeclareMathAccent{\widecheck}{0}{mathx}{"71}
\title[Exponentiable virtual double categories]{Exponentiable virtual double categories \\ and presheaves for double categories}
\author{Nathanael Arkor}
\address{Department of Software Science, Tallinn University of Technology, Estonia}
\thanks{During the writing of this paper, the author was supported by a departmental postdoctoral grant from the Department of Software Science at Tallinn University of Technology, and by the Estonian Research Council grant PSG1242.}
\subjclass{18D15,18D20,18D60,18M65,18N10}
\begin{document}

\begin{abstract}
    Given a pair of \pdcs{} $\A$ and $\B$, the lax functors from $\A$ to $\B$, along with their transformations, modules, and multimodulations, assemble into a \vdc{} $\Lax(\A, \B)$. We exhibit a universal property of this construction by observing that it arises naturally from the consideration of exponentiability for \vdcs. In particular, we show that every \pdc{} is exponentiable as a \vdc{}, whereby the \vdc{} $\Lax(\A, \B)$ of lax functors arises as the \vdc{} $\Mod(\B^\A)$ of monads and modules in the exponential $\B^\A$. We explore some consequences of this characterisation, demonstrating that it leads to simple proofs of statements that heretofore required unwieldy computations. For instance, we deduce that the 2-category of \pdcs{} and lax functors is enriched in the 2-category of normal \vdcs, and demonstrate that several aspects of the Yoneda theory of pseudo double categories -- such as the correspondence between presheaves and discrete fibrations -- are substantially simplified by this perspective.
\end{abstract}

\dedicatory{Dedicated to Bob Par\'e, a profound influence on my understanding \\ of category theory, in celebration of his 80\xth{} birthday.}

\maketitle

\setcounter{tocdepth}{1}
\tableofcontents

\section{Introduction}

In the last twenty years, it has become increasingly clear that weak \dcs{} play a fundamental role in category theory~\cite{grandis1999limits,grandis2004adjoint,shulman2008framed}. However, due to its relative youth, several basic aspects of the theory of \dcs{} have remained underdeveloped. One such aspect is the theory of \emph{representability} for \dcs, extending the theory of representable presheaves for categories. The notion of \emph{presheaf} for \dcs{} was first studied in detail by \textcite{pare2011yoneda}, who showed that, for each \dc{} $\A$, there exists a \ff{} functor $Y_\A \colon \A \to \Lax(\A\opt, \Span)$, sending each object in $\A$ to a contravariant lax functor from $\A$ into the \dc{} of sets and spans; the appropriateness of this definition is justified by a corresponding Yoneda lemma. The generalisation from categories to \dcs{} is by no means routine: for a category $\b A$, the presheaves on $\b A$ assemble into a category $\Set^{\b A\op}$; on the other hand, $\Lax(\A\opt, \Span)$ is, in general, not a \pdc{} but merely a \emph{\vdc} -- a structure which generalises a \dc{} in the same way that a multicategory generalises a monoidal category. Thus, surprisingly, the theory of \dcs{} appears to lead to the study of \vdcs.

While the development of \textcite{pare2011yoneda} is insightful, the nature of the \vdc{} of presheaves is left rather mysterious. For a category $\b A$, the Yoneda embedding ${y_{\b A} \colon \b A \to \Set^{\b A\op}}$ corresponds, via the cartesian closed structure of the 2-category $\Cat$, to the hom-functor $\b A({-}, {-}) \colon \b A\op \times \b A \to \Set$. This gives a particularly elementary perspective on the Yoneda embedding, every category being equipped with a hom-functor essentially by definition. We might therefore hope that something similar is true for the double categorical Yoneda embedding $Y_\A \colon \A \to \Lax(\A\opt, \Span)$: namely, that it corresponds, via some notion of `transposition', to the lax hom-functor $\A({-}, {-}) \colon \A\opt \times \A \to \Span$. The present paper stemmed from the desire to understand to what extent this is true.

\subsection*{Exponentiability of \vdcs}

Given that, for $\Cat$, transposition is induced by cartesian closure, a first instinct might be to guess that the 2-category $\VDbl$ of \vdcs{} is cartesian closed and that, for each pair $\A$ and $\B$ of \dcs, the \vdc{} $\Lax(\A, \B)$ of lax functors forms the exponential $\B^\A$. This is false on both counts, but the thought nevertheless turns out to be fruitful. To understand what is going on, we take inspiration from the theory of multicategories. Just as monoidal categories may be seen as \dcs{} with a single object and single tight morphism, so too may multicategories be seen as \vdcs{} with a single object and single tight morphism, and in many respects the theory of \vdcs{} may thereby be seen to extend the theory of multicategories. The 2-category of multicategories is \emph{not} cartesian closed, but it does admit some exponentials: \textcite{pisani2014sequential} established that a multicategory $\M$ is exponentiable (\ie{} that $\ph \times M$ admits a right adjoint $\ph^\M$) if and only if it is \emph{promonoidal} in the sense of \textcite{day1970closed}. In particular, every monoidal category is promonoidal, hence exponentiable as a multicategory. Given monoidal categories $\M$ and $\b N$, one might thus expect that their exponential $\b N^\M$ in the category of multicategories to comprise the lax monoidal functors from $\M$ to $\b N$. This is not true; however, we can obtain the lax monoidal functors from $\M$ to $\b N$ as the \emph{monoids} in $\b N^\M$.

Guided by the one-object setting, we carry out a similar analysis for \vdcs. As for multicategories, while the 2-category $\VDbl$ is not cartesian closed, every \dc{} is exponentiable as a \vdc{} and so, given \dcs{} $\A$ and $\B$, we may form an exponential \vdc{} $\B^\A$. The analogue of a monoid in a multicategory is a monad in a \vdc\footnotemark{}, and for every \vdc{} $\X$, there is a \vdc{} $\Mod(\X)$ of monads and modules in $\X$.
\footnotetext{In this paper, a \emph{monad} in a \vdc{} will always refer to a \emph{loose monad}, rather than a \emph{tight monad} (which can only be defined in a \vdc{} with enough identity loose morphisms).}%
Our main theorem establishes the expected analogue\footnotemark{} of the multicategorical story.
\footnotetext{In fact, the virtual double categorical story is better behaved than the multicategorical one, as the monoids in a multicategory merely form a category, rather than a multicategory, whereas the monads in a \vdc{} form another \vdc.}

\begin{theorem*}[\cref{Lax-is-Mod}]
    \citeauthor{pare2011yoneda}'s \vdc{} $\Lax(\A, \B)$ is isomorphic to $\Mod(\B^\A)$.
\end{theorem*}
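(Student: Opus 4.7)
My plan is to prove $\Mod(\B^\A) \iso \Lax(\A, \B)$ by a direct comparison of data, exploiting the exponential adjunction of $\A$ in $\VDbl$ to unpack each layer of a monad-and-bimodule structure in $\B^\A$ and matching it term-by-term against Par\'{e}'s definition. I would proceed one layer at a time: objects (lax functors vs.\ monads), tight morphisms (tight transformations vs.\ monad morphisms), loose morphisms (modules vs.\ bimodules), and multi-cells (multimodulations vs.\ bimodule morphisms).

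The central case is that of objects. An object of $\Mod(\B^\A)$ is a loose monad in $\B^\A$: an object $F$ together with a loose endomorphism $t \colon F \lto F$, a multiplication cell $t, t \tto t$, and a unit cell $\tto t$. By the universal property of $\B^\A$ as an exponential, the object $F$ corresponds via currying to an underlying map $\A \to \B$ of the appropriate ``pre-lax'' flavour supplied by the explicit description of $\B^\A$, while the loose endomorphism $t$ supplies the action on loose morphisms of $\A$. The multiplication cell, translated through the exponential, gives exactly the laxators $Fm_1 \cdot Fm_2 \cdots Fm_n \to F(m_1 \cdots m_n)$ as a family of cells in $\B$ natural in the tight morphisms of $\A$; the associativity and unit axioms of the monad then match the coherence equations for a lax functor. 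A similar unpacking via the exponential adjunction --- applied now to probe \vdcs{} representing tight transformations, bimodules, and bimodule morphisms --- identifies the tight morphisms, loose morphisms, and multi-cells of $\Mod(\B^\A)$ with Par\'{e}'s corresponding data. In each case the bimodule action cells, curried through the exponential, become the left and right actions that a Par\'{e} module requires, with their compatibility with the monad multiplications translating into compatibility with the laxators.

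The main obstacle I anticipate lies at the level of loose morphisms and multi-cells: Par\'{e}'s modules between lax functors carry intricate left and right action data that must interact coherently with the laxators of the flanking lax functors, and matching this against bimodules over monads in $\B^\A$ requires a careful analysis of multi-cells in $\B^\A$ under the exponential adjunction. Once a workable explicit description of the multi-cells of $\B^\A$ is in hand --- which should already be available from the proof of exponentiability established earlier in the paper --- the comparison should reduce to a systematic check that no structure has been missed on either side, after which functoriality of the assignment yields the isomorphism of \vdcs{}.
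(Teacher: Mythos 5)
Your proposal is correct and matches the paper's proof in essence: both reduce to the explicit description of $\B^\A$ from the exponentiability theorem and compare the monad/module/cell data term-by-term against Par\'e's definitions, with the loose morphisms and multi-cells carrying the real work. The only difference is that the paper dispatches the objects and tight morphisms in one line via the adjunction chain $\u{\Lax(\A, \X)} \iso \VDbl(\A, \X) \iso \VDbl(\bbn 1, \X^\A) \iso \u{\Mod(\X^\A)'}$ rather than unpacking the monad data by hand as you suggest, reserving the hands-on comparison for modules and multimodulations exactly as you propose.
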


In answer to our original question, \citeauthor{pare2011yoneda}'s Yoneda embedding may be decomposed into the following composite,
% https://q.uiver.app/#q=WzAsMyxbMCwwLCJcXEEiXSxbMSwwLCJcXE1vZChcXEEpIl0sWzIsMCwiXFxNb2QoXFxTcGFuXntcXEFcXG9wdH0pIl0sWzEsMiwiIiwwLHsic3R5bGUiOnsidGFpbCI6eyJuYW1lIjoiaG9vayIsInNpZGUiOiJ0b3AifX19XSxbMCwxLCIiLDAseyJzdHlsZSI6eyJ0YWlsIjp7Im5hbWUiOiJob29rIiwic2lkZSI6InRvcCJ9fX1dXQ==
\[\begin{tikzcd}
	\A & {\Mod(\A)} & {\Mod(\Span^{\A\opt})}
	\arrow[hook, from=1-1, to=1-2]
	\arrow[hook, from=1-2, to=1-3]
\end{tikzcd}\]
in which the first functor sends each object in $\A$ to the identity monad on that object, and in which the second functor is the image under $\Mod$ of the transposition $\A \ffto \Span^{\A\opt}$ of the lax hom-functor $\A({-}, {-}) \colon \A\opt \times \A \to \Span$. Thus, the Yoneda embedding is obtained, as desired, by transposition of the hom-functor, `up to $\Mod$'.

\subsection*{Enrichment and universality}

While our motivation for such a characterisation of $\Lax(\A, \B)$ was entirely conceptual, it turns out to have a number of practical implications. For instance, it follows easily (\cref{Dbl-is-enriched}) that the 2-category of \dcs{} and lax functors is enriched in the 2-category of \emph{normal} \vdcs{} (which are \vdcs{} that admit identity loose morphisms). Furthermore, the \vdc{} $\Lax(\A, \B)$ acquires a universal property from the combined universal properties of the $\Mod$ construction~\cite{cruttwell2010unified} and the exponential $\B^\A$, equipping the 2-category of \vdcs{} with a partially-closed skew-monoidal structure (\cref{partial-closed-structure}). As a consequence, it becomes unnecessary to manipulate $\Lax(\A, \B)$ directly; the benefits of this will be immediately apparent to readers familiar with previous work on \vdcs{} of lax functors~\cite{pare2011yoneda,pare2013composition,lambert2021discrete}.

\subsection*{Representability for \dcs}

Having answered our motivating question, we explore the extent to which this perspective simplifies and clarifies the theory of representability for \dcs{} developed by \textcite{pare2011yoneda}. One of the most important aspects of category theory is the relationship between families and bundles. For categories, this relationship takes the form of an equivalence between the category of presheaves on a fixed category $\b A$ and the category of discrete fibrations over $\b A$. The situation for \dcs{} is similar. \textcite{pare2011yoneda} observed that every lax functor $F \colon \A\opt \to \Span$ admits the construction of a \dc{} $\El(F)$ of elements, equipped with a projection functor $\pi_F \colon \El(F) \to \A$, which is a discrete fibration of \dcs. Subsequently, \textcite{lambert2021discrete}, established that, for $\A$ a \emph{strict} \dc, this assignment extends to an equivalence between the \vdc{} $\Lax(\A\opt, \Span)$ and a \vdc{} $\DFibDbl(\A)$ of discrete fibrations over $\A$. Unexpectedly, the assumption that $\A$ be strict is not simply a matter of convenience, but rather is crucial in \citeauthor{lambert2021discrete}'s proof strategy; we shall elaborate on this matter in \cref{elements-construction}. Later, \textcite{cruttwell2022double} extended the correspondence to permit $\A$ to be an arbitrary \pdc, but merely established an equivalence of categories rather than of \vdcs.

To demonstrate the utility of our characterisation of $\Lax(\A, \B)$, we give in \cref{elements-construction} a new proof of the correspondence between presheaves on \dcs{} and discrete fibrations of \dcs. Our strategy has several advantages over previous approaches. For instance, we establish an equivalence of \vdcs{}, rather than merely of categories, with respect to an arbitrary \pdc{} $\A$. Our correspondence thus strictly generalises the previous correspondences. Moreover, we go a step further and establish a two-sided version of the correspondence, relating lax functors $\B\opt \times \A \to \Span$ and two-sided discrete fibrations from $\A$ to $\B$. A feature of our proof strategy, in contrast to previous approaches~\cite{lambert2021discrete,cruttwell2022double}, is that it avoids substantial manual calculation by deriving the correspondence directly from that for categories.

\subsection*{Connections to bicategory theory}

Though this paper is primarily motivated by the theory of \dcs, our investigation nevertheless illuminates several constructions arising in bicategory theory. For one, the universal property we exhibit for $\Lax(\A, \B)$ is of interest even in the setting in which $\A$ and $\B$ are merely bicategories, since, in this case, it recovers the two-dimensional structure formed by lax functors between bicategories and their modules and multimodulations, studied by \citeauthor{cockett2003modules}~\cite{cockett2003morphisms,cockett2003modules}, which had not been known to have a universal property. For another, we show in \cref{local-cocompletion} that the local cocompletion of a bicategory~$\cl A$, as studied for instance by \textcite{kelly2002categories}, arises naturally from the \pdc{} $\Lax(\cl A\co, \Span)$, exhibiting a new universal property of the local cocompletion (\cref{local-cocompletion-via-exponentiation}). As an application, we show that contravariant lax functors into $\Span$ are examples of categories enriched in bicategories (\cref{lax-functors-are-graded-categories}).

\subsection{Outline of paper}

We begin in \cref{background-on-dcs} by recalling the basic definitions of \dc{} theory (both pseudo and virtual) that we will use throughout the paper.
In \cref{exponentiability}, we study exponentiability for \vdcs{} and, in particular, establish that pseudo double categories are exponentiable \vdcs{} (\cref{representable-vdcs-are-exponentiable}). In \cref{the-vdc-of-loose monads}, we recall the definition of the \vdc{} $\Mod(\X)$ of monads and modules in a \vdc{} $\X$, recall its universal property as the cofree normal \vdc{} on $\X$ (\cref{Mod-is-right-adjoint}), and study some of its properties, including its lax-idempotence (\cref{lax-idempotence}) and its interaction with the slice construction (\cref{slicing-over-a-monad}). In \cref{powers-and-copowers}, we recall the construction of the free normal \vdc{} on $\X$~\cite{fujii2025familial}, and establish some properties useful for the following sections. In \cref{lax-functors-as-monads}, we establish our main result, demonstrating that \citeauthor{pare2011yoneda}'s \vdc{} $\Lax(\A, \B)$ of lax functors between \pdcs{} arises as $\Mod(\B^\A)$ (\cref{Lax-is-Mod}). The remainder of the paper is dedicated to exploring consequences of this result. For instance, we show that the 2-category of \pdcs{} and lax functors is enriched in the 2-category of (normal) \vdcs{} (\cref{Dbl-is-enriched}), and exhibit a universal property of $\Lax(\A, \B)$ (\cref{partial-closed-structure}). In \cref{Yoneda-theory}, we study representability for presheaves of \dcs, and show that our perspective simplifies aspects of the development of \textcite{pare2011yoneda}. In \cref{exponentiability-revisited}, we utilise the Yoneda embedding for \dcs{} to give a complete characterisation of the exponentiable \vdcs. In \cref{local-cocompletion}, we explore the relationship between the Yoneda embedding for \dcs{} and local cocompletion for bicategories. Finally, in \cref{elements-construction}, we use the perspective to give a new and substantially simpler proof that contravariant lax functors from a \pdc{} $\A$ into $\Span$ are equivalent to discrete fibrations over $\A$, generalising the main theorem of \textcite{lambert2021discrete} from strict double categories to \pdcs, and then use the same techniques to establish the two-sided correspondence.

\subsection{Tight representability and loose representability}

Above, and in this paper as a whole, we focus on just one aspect of representability for \dcs. However, we find it helpful to describe briefly how this paper fits into the larger picture, by clarifying the relationship between two approaches for representability for \dcs{} that may be found in the literature.

A presheaf on a category is most helpfully considered as a special case of a more general concept: a \emph{distributor} between categories, which provides a notion of \emph{heteromorphism} between objects of different categories~\cite{benabou1973distributeurs}. Since there is a single notion of morphism between objects of a category, there is a single notion of distributor between categories. However, \dcs{} have two kinds of morphism -- the \emph{tight morphisms} (which compose strictly), and the \emph{loose morphisms} (which compose weakly) -- and correspondingly there are two notions of distributor between \dcs{}: \emph{tight distributors} and \emph{loose distributors}, respectively providing notions of \emph{tight heteromorphisms} and \emph{loose heteromorphisms} between objects of different \dcs\footnotemark{}. By specialising to (tight or loose) distributors with terminal domain or codomain, we obtain notions of \emph{tight (co)presheaf} and \emph{loose (co)presheaf}.
\footnotetext{Tight and loose distributors may be defined efficiently as (necessarily strict) functors of \dcs{} into the free-standing tight morphism and into the free-standing loose morphism respectively. The reader is invited to verify that these two definitions recover the notions of (tight) presheaf of \textcite{pare2011yoneda} and (loose) presheaf of \textcite{fiore2012double} respectively.}

This reveals there are two notions of representability for \dcs: \emph{tight representability}, which is concerned with the representability of tight presheaves (and, more generally, tight distributors); and \emph{loose representability}, which is concerned with the representability of loose presheaves (and, more generally, loose distributors). \citeauthor{pare2011yoneda}'s~\cite{pare2011yoneda} presheaves for \dcs{} -- namely, lax functors $\A\opt \to \Span$ -- are the \emph{tight} presheaves. Consequently, we focus in this paper purely on tight representability.

Loose representability has been considered in the literature by \textcite[\S3]{fiore2012double}, albeit only in the context of strict \dcs. The precise connection between the two approaches deserves further clarification.

\subsection{Acknowledgements}

This paper has been improved thanks to several people. Bryce Clarke, Bojana Femi\'c, and the two anonymous reviewers provided detailed and perspicacious comments on the paper. Kevin Carlson and Ea Thompson identified an oversight in a provisional definition of pro-double category. Steve Lack, Graham Manuell, and Dylan McDermott offered helpful suggestions. The author is also grateful to James Deikun and Keisuke Hoshino for general discussions regarding \vdcs.

\section{Pseudo double categories and virtual double categories}
\label{background-on-dcs}

We begin by recalling the basic definitions of pseudo and virtual \dcs{}, both for the benefit of the reader unfamiliar with these notions, and to establish our terminology and notation. The reader familiar with these concepts may prefer to skip directly to \cref{exponentiability}.

\subsection{Virtual \dcs}
\label{background-on-vdcs}

The central concept in this paper is that of a \emph{\vdc}, which is to a multicategory what a \pdc{} is to a monoidal category. Like a \dc{}, a \vdc{} is a structure with two kinds of morphism, which we call \emph{tight morphisms} and \emph{loose morphisms}. The objects and tight morphisms in a \vdc{} assemble into a category, so that tight morphisms compose strictly associatively and unitally. However, it is not generally possible to compose the loose morphisms in a \vdc. Instead, a \vdc{} has \emph{cells} that are framed at the top by a path of loose morphisms, representing a `virtual composite'. If you are encountering the definition for the first time, this may appear a little intimidating. However, in a \vdc, unlike in a \pdc, there is no coherence data, and it often turns out to be simpler to work with \vdcs{} than with \pdcs{} for this reason.

\begin{definition}[{\cites[61]{burroni1971tcategories}[Definition~2.1]{cruttwell2010unified}}]
    A \emph{\vdc{}} $\X$ comprises the following data.
    \begin{enumerate}
        \item A category $\u\X$ of \emph{objects} and \emph{tight morphisms}.
        We denote a tight morphism $f$ from an object $A$ to an object $B$ by an arrow $f \colon A \to B$; denote the composition of tight morphisms $f \colon A \to B$ and $g \colon B \to C$ both by $(f \d g) \colon A \to C$ and by $g f \colon A \to C$; and denote the identity of an object $A$ by $1_A \colon A \to A$, or simply by $=$ in pasting diagrams.
        \item For each pair of objects $A$ and $B$, a class of \emph{loose morphisms} from $A$ to $B$. We denote a loose morphism $p$ from $A$ to $B$ by an arrow with a vertical stroke $p \colon A \lto B$.
        \item For each chain of loose morphisms $p_1, \ldots, p_n$ ($n \geq 0$) and compatible tight morphisms $f, f'$ and loose morphism $q$ (together forming a \emph{frame}), a class of \emph{($n$-ary) cells} with the given frame.
        % https://q.uiver.app/#q=WzAsNyxbNCwxLCJCJyJdLFs0LDAsIkFfbiJdLFswLDEsIkIiXSxbMywwLCJBX3tuIC0gMX0iXSxbMSwwLCJBXzEiXSxbMCwwLCJBXzAiXSxbMiwwLCJcXGNkb3RzIl0sWzAsMiwicSIsMCx7InN0eWxlIjp7ImJvZHkiOnsibmFtZSI6ImJhcnJlZCJ9fX1dLFsxLDMsInBfbiIsMix7InN0eWxlIjp7ImJvZHkiOnsibmFtZSI6ImJhcnJlZCJ9fX1dLFs0LDUsInBfMSIsMix7InN0eWxlIjp7ImJvZHkiOnsibmFtZSI6ImJhcnJlZCJ9fX1dLFszLDYsInBfe24gLSAxfSIsMix7InN0eWxlIjp7ImJvZHkiOnsibmFtZSI6ImJhcnJlZCJ9fX1dLFs2LDQsInBfMiIsMix7InN0eWxlIjp7ImJvZHkiOnsibmFtZSI6ImJhcnJlZCJ9fX1dLFsxLDAsImYnIl0sWzUsMiwiZiIsMl0sWzEyLDEzLCJcXHBoaSIsMSx7InNob3J0ZW4iOnsic291cmNlIjoyMCwidGFyZ2V0IjoyMH0sInN0eWxlIjp7ImJvZHkiOnsibmFtZSI6Im5vbmUifSwiaGVhZCI6eyJuYW1lIjoibm9uZSJ9fX1dXQ==
        \[\begin{tikzcd}
            {A_0} & {A_1} & \cdots & {A_{n - 1}} & {A_n} \\
            B &&&& {B'}
            \arrow[""{name=0, anchor=center, inner sep=0}, "f"', from=1-1, to=2-1]
            \arrow["{p_1}"'{inner sep=.8ex}, "\shortmid"{marking}, from=1-2, to=1-1]
            \arrow["{p_2}"'{inner sep=.8ex}, "\shortmid"{marking}, from=1-3, to=1-2]
            \arrow["{p_{n - 1}}"'{inner sep=.8ex}, "\shortmid"{marking}, from=1-4, to=1-3]
            \arrow["{p_n}"'{inner sep=.8ex}, "\shortmid"{marking}, from=1-5, to=1-4]
            \arrow[""{name=1, anchor=center, inner sep=0}, "{f'}", from=1-5, to=2-5]
            \arrow["q"{inner sep=.8ex}, "\shortmid"{marking}, from=2-5, to=2-1]
            \arrow["\phi"{description}, draw=none, from=1, to=0]
        \end{tikzcd}\]
        Observe that our pasting diagrams are written from right-to-left (matching nondiagrammatic composition order of loose morphisms). Such a cell is \emph{globular} if $f$ and $f'$ are identities; we denote such a globular cell by $\phi \colon p_1, \ldots, p_n \tto q$.

        In pasting diagrammatic notation, we denote a nullary cell by a square of the following form\footnote{Some authors, \eg{} \cite{cruttwell2010unified}, prefer to render nullary cells as triangles. However, we find this to be both less uniform (for instance, such authors do not render binary cells as trapeziums) and less visually pleasing than rendering all cells as rectangles.}.
        % https://q.uiver.app/#q=WzAsNCxbMSwwLCJBIl0sWzAsMCwiQSJdLFsxLDEsIkInIl0sWzAsMSwiQiJdLFsyLDMsInAiLDAseyJzdHlsZSI6eyJib2R5Ijp7Im5hbWUiOiJiYXJyZWQifX19XSxbMCwyLCJmJyJdLFsxLDMsImYiLDJdLFswLDEsIiIsMix7ImxldmVsIjoyLCJzdHlsZSI6eyJoZWFkIjp7Im5hbWUiOiJub25lIn19fV0sWzUsNiwiXFxwaGkiLDEseyJzaG9ydGVuIjp7InNvdXJjZSI6MjAsInRhcmdldCI6MjB9LCJzdHlsZSI6eyJib2R5Ijp7Im5hbWUiOiJub25lIn0sImhlYWQiOnsibmFtZSI6Im5vbmUifX19XV0=
        \[\begin{tikzcd}
            A & A \\
            B & {B'}
            \arrow[""{name=0, anchor=center, inner sep=0}, "f"', from=1-1, to=2-1]
            \arrow[equals, from=1-2, to=1-1]
            \arrow[""{name=1, anchor=center, inner sep=0}, "{f'}", from=1-2, to=2-2]
            \arrow["p"{inner sep=.8ex}, "\shortmid"{marking}, from=2-2, to=2-1]
            \arrow["\phi"{description}, draw=none, from=1, to=0]
        \end{tikzcd}\]
        \item For every configuration of cells of the following shape,
        % https://q.uiver.app/#q=WzAsMTQsWzYsMSwiXFxjZG90Il0sWzYsMCwiXFxjZG90Il0sWzQsMSwiXFxjZG90Il0sWzQsMCwiXFxjZG90Il0sWzUsMCwiXFxjZG90cyJdLFszLDAsIlxcY2RvdHMiXSxbMywxLCJcXGNkb3RzIl0sWzIsMCwiXFxjZG90Il0sWzEsMCwiXFxjZG90cyJdLFswLDAsIlxcY2RvdCJdLFsyLDEsIlxcY2RvdCJdLFswLDEsIlxcY2RvdCJdLFs2LDIsIlxcY2RvdCJdLFswLDIsIlxcY2RvdCJdLFswLDIsIiIsMCx7InN0eWxlIjp7ImJvZHkiOnsibmFtZSI6ImJhcnJlZCJ9fX1dLFsxLDAsImYnIl0sWzMsMl0sWzEsNCwicF97bV9ufV5uIiwyLHsic3R5bGUiOnsiYm9keSI6eyJuYW1lIjoiYmFycmVkIn19fV0sWzQsMywicF97MX1ebiIsMix7InN0eWxlIjp7ImJvZHkiOnsibmFtZSI6ImJhcnJlZCJ9fX1dLFs3LDgsInBfe21fMX1eMSIsMix7InN0eWxlIjp7ImJvZHkiOnsibmFtZSI6ImJhcnJlZCJ9fX1dLFs4LDksInBfMV4xIiwyLHsic3R5bGUiOnsiYm9keSI6eyJuYW1lIjoiYmFycmVkIn19fV0sWzEwLDExLCIiLDAseyJzdHlsZSI6eyJib2R5Ijp7Im5hbWUiOiJiYXJyZWQifX19XSxbNywxMF0sWzksMTEsImYiLDJdLFswLDEyLCJnJyJdLFsxMiwxMywicSIsMCx7InN0eWxlIjp7ImJvZHkiOnsibmFtZSI6ImJhcnJlZCJ9fX1dLFsxMSwxMywiZyIsMl0sWzE1LDE2LCJcXHBoaV9uIiwxLHsic2hvcnRlbiI6eyJzb3VyY2UiOjIwLCJ0YXJnZXQiOjIwfSwic3R5bGUiOnsiYm9keSI6eyJuYW1lIjoibm9uZSJ9LCJoZWFkIjp7Im5hbWUiOiJub25lIn19fV0sWzIyLDIzLCJcXHBoaV8xIiwxLHsic2hvcnRlbiI6eyJzb3VyY2UiOjIwLCJ0YXJnZXQiOjIwfSwic3R5bGUiOnsiYm9keSI6eyJuYW1lIjoibm9uZSJ9LCJoZWFkIjp7Im5hbWUiOiJub25lIn19fV0sWzI0LDI2LCJcXHBzaSIsMSx7InNob3J0ZW4iOnsic291cmNlIjoyMCwidGFyZ2V0IjoyMH0sInN0eWxlIjp7ImJvZHkiOnsibmFtZSI6Im5vbmUifSwiaGVhZCI6eyJuYW1lIjoibm9uZSJ9fX1dXQ==
        \[\begin{tikzcd}
            \cdot & \cdots & \cdot & \cdots & \cdot & \cdots & \cdot \\
            \cdot && \cdot & \cdots & \cdot && \cdot \\
            \cdot &&&&&& \cdot
            \arrow["\shortmid"{marking}, from=2-7, to=2-5]
            \arrow[""{name=0, anchor=center, inner sep=0}, "{f'}", from=1-7, to=2-7]
            \arrow[""{name=1, anchor=center, inner sep=0}, from=1-5, to=2-5]
            \arrow["{p_{m_n}^n}"', "\shortmid"{marking}, from=1-7, to=1-6]
            \arrow["{p_{1}^n}"', "\shortmid"{marking}, from=1-6, to=1-5]
            \arrow["{p_{m_1}^1}"', "\shortmid"{marking}, from=1-3, to=1-2]
            \arrow["{p_1^1}"', "\shortmid"{marking}, from=1-2, to=1-1]
            \arrow["\shortmid"{marking}, from=2-3, to=2-1]
            \arrow[""{name=2, anchor=center, inner sep=0}, from=1-3, to=2-3]
            \arrow[""{name=3, anchor=center, inner sep=0}, "f"', from=1-1, to=2-1]
            \arrow[""{name=4, anchor=center, inner sep=0}, "{g'}", from=2-7, to=3-7]
            \arrow["q", "\shortmid"{marking}, from=3-7, to=3-1]
            \arrow[""{name=5, anchor=center, inner sep=0}, "g"', from=2-1, to=3-1]
            \arrow["{\phi_n}"{description}, draw=none, from=0, to=1]
            \arrow["{\phi_1}"{description}, draw=none, from=2, to=3]
            \arrow["\psi"{description}, draw=none, from=4, to=5]
        \end{tikzcd}\]
        a cell,
        % https://q.uiver.app/#q=WzAsNSxbMiwwLCJcXGNkb3QiXSxbMSwwLCJcXGNkb3RzIl0sWzAsMCwiXFxjZG90Il0sWzIsMSwiXFxjZG90Il0sWzAsMSwiXFxjZG90Il0sWzAsMSwicF97bV9ufV5uIiwyLHsic3R5bGUiOnsiYm9keSI6eyJuYW1lIjoiYmFycmVkIn19fV0sWzMsNCwicSIsMCx7InN0eWxlIjp7ImJvZHkiOnsibmFtZSI6ImJhcnJlZCJ9fX1dLFswLDMsImYnIFxcZCBnJyJdLFsyLDQsImYgXFxkIGciLDJdLFsxLDIsInBfMV4xIiwyLHsic3R5bGUiOnsiYm9keSI6eyJuYW1lIjoiYmFycmVkIn19fV0sWzcsOCwiKFxccGhpXzEsIFxcbGRvdHMsIFxccGhpX24pIFxcZCBcXHBzaSIsMSx7InNob3J0ZW4iOnsic291cmNlIjoyMCwidGFyZ2V0IjoyMH0sInN0eWxlIjp7ImJvZHkiOnsibmFtZSI6Im5vbmUifSwiaGVhZCI6eyJuYW1lIjoibm9uZSJ9fX1dXQ==
        \[\begin{tikzcd}
            \cdot & \cdots & \cdot \\
            \cdot && \cdot
            \arrow["{p_{m_n}^n}"', "\shortmid"{marking}, from=1-3, to=1-2]
            \arrow["q", "\shortmid"{marking}, from=2-3, to=2-1]
            \arrow[""{name=0, anchor=center, inner sep=0}, "{f' \d g'}", from=1-3, to=2-3]
            \arrow[""{name=1, anchor=center, inner sep=0}, "{f \d g}"', from=1-1, to=2-1]
            \arrow["{p_1^1}"', "\shortmid"{marking}, from=1-2, to=1-1]
            \arrow["{(\phi_1, \ldots, \phi_n) \d \psi}"{description}, draw=none, from=0, to=1]
        \end{tikzcd}\]
        the \emph{composite}.
        \item For each loose morphism $p \colon A' \lto A$, a cell $1_p \colon p \tto p$, the \emph{identity} of $p$, denoted simply as $=$ in pasting diagrams.
        % https://q.uiver.app/#q=WzAsNCxbMSwwLCJBJyJdLFswLDAsIkEiXSxbMSwxLCJBJyJdLFswLDEsIkEiXSxbMCwxLCJwIiwyLHsic3R5bGUiOnsiYm9keSI6eyJuYW1lIjoiYmFycmVkIn19fV0sWzIsMywicCIsMCx7InN0eWxlIjp7ImJvZHkiOnsibmFtZSI6ImJhcnJlZCJ9fX1dLFswLDIsIiIsMCx7ImxldmVsIjoyLCJzdHlsZSI6eyJoZWFkIjp7Im5hbWUiOiJub25lIn19fV0sWzEsMywiIiwyLHsibGV2ZWwiOjIsInN0eWxlIjp7ImhlYWQiOnsibmFtZSI6Im5vbmUifX19XSxbNiw3LCI9IiwxLHsic2hvcnRlbiI6eyJzb3VyY2UiOjIwLCJ0YXJnZXQiOjIwfSwic3R5bGUiOnsiYm9keSI6eyJuYW1lIjoibm9uZSJ9LCJoZWFkIjp7Im5hbWUiOiJub25lIn19fV1d
        \[\begin{tikzcd}
            A & {A'} \\
            A & {A'}
            \arrow[""{name=0, anchor=center, inner sep=0}, equals, from=1-1, to=2-1]
            \arrow["p"'{inner sep=.8ex}, "\shortmid"{marking}, from=1-2, to=1-1]
            \arrow[""{name=1, anchor=center, inner sep=0}, equals, from=1-2, to=2-2]
            \arrow["p"{inner sep=.8ex}, "\shortmid"{marking}, from=2-2, to=2-1]
            \arrow["{=}"{description}, draw=none, from=1, to=0]
        \end{tikzcd}\]
    \end{enumerate}
    We shall sometimes denote by
    % https://q.uiver.app/#q=WzAsNixbMiwwLCJcXGNkb3QiXSxbMSwwLCJcXGNkb3RzIl0sWzAsMCwiXFxjZG90Il0sWzIsMSwiXFxjZG90Il0sWzEsMSwiXFxjZG90cyJdLFswLDEsIlxcY2RvdCJdLFswLDEsIiIsMix7InN0eWxlIjp7ImJvZHkiOnsibmFtZSI6ImJhcnJlZCJ9fX1dLFsxLDIsIiIsMix7InN0eWxlIjp7ImJvZHkiOnsibmFtZSI6ImJhcnJlZCJ9fX1dLFszLDQsIiIsMix7InN0eWxlIjp7ImJvZHkiOnsibmFtZSI6ImJhcnJlZCJ9fX1dLFs0LDUsIiIsMix7InN0eWxlIjp7ImJvZHkiOnsibmFtZSI6ImJhcnJlZCJ9fX1dLFswLDMsIiIsMSx7ImxldmVsIjoyLCJzdHlsZSI6eyJoZWFkIjp7Im5hbWUiOiJub25lIn19fV0sWzIsNSwiIiwxLHsibGV2ZWwiOjIsInN0eWxlIjp7ImhlYWQiOnsibmFtZSI6Im5vbmUifX19XSxbMTAsMTEsIj0iLDEseyJzaG9ydGVuIjp7InNvdXJjZSI6MjAsInRhcmdldCI6MjB9LCJzdHlsZSI6eyJib2R5Ijp7Im5hbWUiOiJub25lIn0sImhlYWQiOnsibmFtZSI6Im5vbmUifX19XV0=
    \[\begin{tikzcd}
        \cdot & \cdots & \cdot \\
        \cdot & \cdots & \cdot
        \arrow["\shortmid"{marking}, from=1-3, to=1-2]
        \arrow["\shortmid"{marking}, from=1-2, to=1-1]
        \arrow["\shortmid"{marking}, from=2-3, to=2-2]
        \arrow["\shortmid"{marking}, from=2-2, to=2-1]
        \arrow[""{name=0, anchor=center, inner sep=0}, Rightarrow, no head, from=1-3, to=2-3]
        \arrow[""{name=1, anchor=center, inner sep=0}, Rightarrow, no head, from=1-1, to=2-1]
        \arrow["{=}"{description}, draw=none, from=0, to=1]
    \end{tikzcd}\]
    the juxtaposition of identity cells
    % https://q.uiver.app/#q=WzAsOCxbMywwLCJcXGNkb3QiXSxbMiwwLCJcXGNkb3QiXSxbMCwwLCJcXGNkb3QiXSxbMywxLCJcXGNkb3QiXSxbMSwxLCJcXGNkb3QiXSxbMCwxLCJcXGNkb3QiXSxbMSwwLCJcXGNkb3QiXSxbMiwxLCJcXGNkb3QiXSxbMCwxLCIiLDIseyJzdHlsZSI6eyJib2R5Ijp7Im5hbWUiOiJiYXJyZWQifX19XSxbNCw1LCIiLDIseyJzdHlsZSI6eyJib2R5Ijp7Im5hbWUiOiJiYXJyZWQifX19XSxbMCwzLCIiLDEseyJsZXZlbCI6Miwic3R5bGUiOnsiaGVhZCI6eyJuYW1lIjoibm9uZSJ9fX1dLFsyLDUsIiIsMSx7ImxldmVsIjoyLCJzdHlsZSI6eyJoZWFkIjp7Im5hbWUiOiJub25lIn19fV0sWzYsMiwiIiwyLHsic3R5bGUiOnsiYm9keSI6eyJuYW1lIjoiYmFycmVkIn19fV0sWzMsNywiIiwyLHsic3R5bGUiOnsiYm9keSI6eyJuYW1lIjoiYmFycmVkIn19fV0sWzEsNiwiXFxjZG90cyIsMSx7InN0eWxlIjp7ImJvZHkiOnsibmFtZSI6Im5vbmUifSwiaGVhZCI6eyJuYW1lIjoibm9uZSJ9fX1dLFs3LDQsIlxcY2RvdHMiLDEseyJzdHlsZSI6eyJib2R5Ijp7Im5hbWUiOiJub25lIn0sImhlYWQiOnsibmFtZSI6Im5vbmUifX19XSxbMSw3LCIiLDEseyJsZXZlbCI6Miwic3R5bGUiOnsiaGVhZCI6eyJuYW1lIjoibm9uZSJ9fX1dLFs2LDQsIiIsMSx7ImxldmVsIjoyLCJzdHlsZSI6eyJoZWFkIjp7Im5hbWUiOiJub25lIn19fV0sWzEwLDE2LCI9IiwxLHsic2hvcnRlbiI6eyJzb3VyY2UiOjIwLCJ0YXJnZXQiOjIwfSwic3R5bGUiOnsiYm9keSI6eyJuYW1lIjoibm9uZSJ9LCJoZWFkIjp7Im5hbWUiOiJub25lIn19fV0sWzE3LDExLCI9IiwxLHsic2hvcnRlbiI6eyJzb3VyY2UiOjIwLCJ0YXJnZXQiOjIwfSwic3R5bGUiOnsiYm9keSI6eyJuYW1lIjoibm9uZSJ9LCJoZWFkIjp7Im5hbWUiOiJub25lIn19fV1d
    \[\begin{tikzcd}
        \cdot & \cdot & \cdot & \cdot \\
        \cdot & \cdot & \cdot & \cdot
        \arrow["\shortmid"{marking}, from=1-4, to=1-3]
        \arrow["\shortmid"{marking}, from=2-2, to=2-1]
        \arrow[""{name=0, anchor=center, inner sep=0}, Rightarrow, no head, from=1-4, to=2-4]
        \arrow[""{name=1, anchor=center, inner sep=0}, Rightarrow, no head, from=1-1, to=2-1]
        \arrow["\shortmid"{marking}, from=1-2, to=1-1]
        \arrow["\shortmid"{marking}, from=2-4, to=2-3]
        \arrow["\cdots"{description}, draw=none, from=1-3, to=1-2]
        \arrow["\cdots"{description}, draw=none, from=2-3, to=2-2]
        \arrow[""{name=2, anchor=center, inner sep=0}, Rightarrow, no head, from=1-3, to=2-3]
        \arrow[""{name=3, anchor=center, inner sep=0}, Rightarrow, no head, from=1-2, to=2-2]
        \arrow["{=}"{description}, draw=none, from=0, to=2]
        \arrow["{=}"{description}, draw=none, from=3, to=1]
    \end{tikzcd}\]
    Composition of cells is required to be associative and unital in the evident manner~\cite[Definition~2.1]{cruttwell2010unified}: associativity is implicit in pasting diagrams. A \emph{virtual bicategory} is a \vdc{} in which every tight morphism is an identity.
\end{definition}

\begin{example}
    Every category $\b A$ forms a \vdc{} $\o{\b A}$ with no loose morphisms or cells.
\end{example}

\begin{example}
    \label{multicategory}
    Every multicategory $\M$ forms a \vdc{} $\Sigma\M$ with a single object and single tight morphism, the loose morphisms of $\Sigma\M$ being the objects of $\M$, and the cells of $\Sigma\M$ being the multimorphisms of $\M$.
\end{example}

\begin{example}
    \label{terminal-vdc}
    The terminal \vdc{} is denoted $\bbn 1$, and has a single object, single tight morphism, single loose morphism, and a single $n$-ary cell for each $n \in \N$.
\end{example}

\begin{example}
    \label{Span}
    The \vdc{} $\Span$ is defined as follows.
    \begin{enumerate}
        \item The underlying category is $\Set$, the category of sets and functions.
        \item A loose morphism from $A$ to $B$ is a span $A \from X \to B$.
        \item A cell with the following frame
        % https://q.uiver.app/#q=WzAsMTAsWzAsMCwiQV8wIl0sWzEsMCwiWF8xIl0sWzYsMCwiQV9uIl0sWzAsMSwiQiJdLFszLDEsIlkiXSxbNiwxLCJCJyJdLFsyLDAsIkFfMSJdLFszLDAsIlhfMiJdLFs0LDAsIlxcY2RvdHMiXSxbNSwwLCJYX24iXSxbMSwwLCJwXzEiLDJdLFswLDMsImYiLDJdLFsyLDUsImYnIl0sWzQsMywiciJdLFs0LDUsInMiLDJdLFsxLDYsInFfMSJdLFs3LDYsInBfMiIsMl0sWzcsOCwicV8yIl0sWzksMiwicV9uIl0sWzksOCwicF9uIiwyXV0=
        \[\begin{tikzcd}
            {A_0} & {X_1} & {A_1} & {X_2} & \cdots & {X_n} & {A_n} \\
            B &&& Y &&& {B'}
            \arrow["f"', from=1-1, to=2-1]
            \arrow["{p_1}"', from=1-2, to=1-1]
            \arrow["{q_1}", from=1-2, to=1-3]
            \arrow["{p_2}"', from=1-4, to=1-3]
            \arrow["{q_2}", from=1-4, to=1-5]
            \arrow["{p_n}"', from=1-6, to=1-5]
            \arrow["{q_n}", from=1-6, to=1-7]
            \arrow["{f'}", from=1-7, to=2-7]
            \arrow["r", from=2-4, to=2-1]
            \arrow["s"', from=2-4, to=2-7]
        \end{tikzcd}\]
        is a span morphism from the wide pullback, as follows.
        % https://q.uiver.app/#q=WzAsOCxbMCwwLCJBXzAiXSxbMiwwLCJYXzEgXFx0aW1lc197QV8xfSBYXzIgXFx0aW1lc197QV8yfSBcXGNkb3RzIFxcdGltZXNfe0Ffbn0gWF9uIl0sWzQsMCwiQV9uIl0sWzAsMSwiQiJdLFsyLDEsIlkiXSxbNCwxLCJCJyJdLFsxLDAsIlhfMSJdLFszLDAsIlhfbiJdLFswLDMsImYiLDJdLFsyLDUsImYnIl0sWzQsMywiciJdLFs0LDUsInMiLDJdLFsxLDQsImciLDFdLFs2LDAsInBfMSIsMl0sWzEsNiwiXFxwaV8xIiwyXSxbNywyLCJxX24iXSxbMSw3LCJcXHBpX24iXV0=
        \[\begin{tikzcd}
            {A_0} & {X_1} & {X_1 \times_{A_1} X_2 \times_{A_2} \cdots \times_{A_n} X_n} & {X_n} & {A_n} \\
            B && Y && {B'}
            \arrow["f"', from=1-1, to=2-1]
            \arrow["{p_1}"', from=1-2, to=1-1]
            \arrow["{\pi_1}"', from=1-3, to=1-2]
            \arrow["{\pi_n}", from=1-3, to=1-4]
            \arrow["g"{description}, from=1-3, to=2-3]
            \arrow["{q_n}", from=1-4, to=1-5]
            \arrow["{f'}", from=1-5, to=2-5]
            \arrow["r", from=2-3, to=2-1]
            \arrow["s"', from=2-3, to=2-5]
        \end{tikzcd}\]
    \end{enumerate}
    The identity on a span $A \from X \to B$ is given by $1_X$, and the composition of cells is induced by the universal property of the pullback in the evident manner.
\end{example}

\begin{example}[{\cite[Example~2.7]{cruttwell2010unified}}]
    \label{Span(E)}
    More generally, for any category $\E$ with pullbacks, there is a \vdc{} $\Span(\E)$, whose underlying category is $\E$, in which a loose morphism is a span in $\E$, and whose cells and their composition is defined analogously to \cref{Span}. In particular, $\Span \defeq \Span(\Set)$.
\end{example}

\begin{example}
    \label{Dist}
    The \vdc{} $\Dist$ is defined as follows.
    \begin{enumerate}
        \item The underlying category is $\Cat$, the category of \emph{small} categories and functors.
        \item A loose morphism from $\b A$ to $\b B$ is a distributor from $\b A$ to $\b B$, \ie{} a functor $\b B\op \times \b A \to \Set$.
        \item A cell with the following frame
        % https://q.uiver.app/#q=WzAsNSxbMCwwLCJcXGIgQV8wIl0sWzIsMCwiXFxiIEFfbiJdLFsyLDEsIlxcYiBCJyJdLFswLDEsIlxcYiBCIl0sWzEsMCwiXFxjZG90cyJdLFsyLDMsInEiLDAseyJzdHlsZSI6eyJib2R5Ijp7Im5hbWUiOiJiYXJyZWQifX19XSxbMSwyLCJmJyJdLFswLDMsImYiLDJdLFsxLDQsInBfbiIsMix7InN0eWxlIjp7ImJvZHkiOnsibmFtZSI6ImJhcnJlZCJ9fX1dLFs0LDAsInBfMSIsMix7InN0eWxlIjp7ImJvZHkiOnsibmFtZSI6ImJhcnJlZCJ9fX1dLFs2LDcsIlxccGhpIiwxLHsic2hvcnRlbiI6eyJzb3VyY2UiOjIwLCJ0YXJnZXQiOjIwfSwic3R5bGUiOnsiYm9keSI6eyJuYW1lIjoibm9uZSJ9LCJoZWFkIjp7Im5hbWUiOiJub25lIn19fV1d
        \[\begin{tikzcd}
            {\b A_0} & \cdots & {\b A_n} \\
            {\b B} && {\b B'}
            \arrow[""{name=0, anchor=center, inner sep=0}, "f"', from=1-1, to=2-1]
            \arrow["{p_1}"'{inner sep=.8ex}, "\shortmid"{marking}, from=1-2, to=1-1]
            \arrow["{p_n}"'{inner sep=.8ex}, "\shortmid"{marking}, from=1-3, to=1-2]
            \arrow[""{name=1, anchor=center, inner sep=0}, "{f'}", from=1-3, to=2-3]
            \arrow["q"{inner sep=.8ex}, "\shortmid"{marking}, from=2-3, to=2-1]
            \arrow["\phi"{description}, draw=none, from=1, to=0]
        \end{tikzcd}\]
        comprises a family of functions
        \[\phi_{a_0, \ldots, a_n} \colon p_1(a_0, a_1) \times \cdots \times p_n(a_{n - 1}, a_n) \to q(f a_0, f' a_n)\]
		for each family of objects $a_0 \in \b A_0, \dots, a_n \in \b A_n$, satisfying naturality conditions~\cite[Definition~8.1]{arkor2024formal}.
    \end{enumerate}
    The identity on a distributor is given by the family of identity functions, and the composition of cells is induced by functoriality of the cartesian product.
\end{example}

\begin{example}[{\cite[Example~2.10]{cruttwell2010unified}}]
    \label{Dist(E)}
    More generally, for any category $\E$ with pullbacks, there is a \vdc{} $\Dist(\E)$, whose underlying category is $\Cat(\E)$ (the category of categories and functors internal to $\E$), in which a loose morphism is an internal distributor, and whose cells are internal natural transformations~\cite[\S2.4]{johnstone1977topos}. Since we will only refer to this example in passing, we will not spell it out, but will rather define $\Dist(\E)$ by means of a general construction on $\Span(\E)$ in \cref{Mod-Span}. In particular, $\Dist \defeq \Dist(\Set)$.
\end{example}

\begin{definition}[\cite{burroni1971tcategories}]
    \label{functor}
    A \emph{functor} of \vdcs{} is a homomorphism, having an assignment on objects, tight morphisms, loose morphisms, and cells that (strictly) preserves identities and composites of tight morphisms and cells. Every functor of \vdcs{} $F \colon \X \to \X'$ has an underlying functor $\u F \colon \u\X \to \u{\X'}$ of categories.
\end{definition}

At first, \cref{functor} may appear too strict, as we are asking for all structure to be preserved strictly. However, as we shall see in \cref{Dbl_l-in-VDbl}, such functors correspond to \emph{lax functors} between (non-virtual) \dcs. This illustrates the general principle we mentioned in the beginning of the section: that \vdcs{} provide a coherence-free approach to \dc{} theory.

\begin{definition}[{\cite[489]{dawson2006paths}}]
	\label{transformation}
	Let $F, G \colon \dc X \to \dc Y$ be functors of \vdcs. A \emph{transformation} $\Phi$ from $F$ to $G$ comprises the following data.
	\begin{enumerate}
		\item A tight morphism $\Phi_X \colon FX \to GX$ in $\Y$ for each object of $\X$.
		\item A cell $\Phi_p$ in $\Y$ for each loose morphism $p \colon X' \lto X$ of $\X$.
		% https://q.uiver.app/#q=WzAsNCxbMSwwLCJGWCciXSxbMCwwLCJGWCJdLFswLDEsIkdYIl0sWzEsMSwiR1gnIl0sWzAsMSwiRnAiLDIseyJzdHlsZSI6eyJib2R5Ijp7Im5hbWUiOiJiYXJyZWQifX19XSxbMCwzLCJcXFBoaV97WCd9Il0sWzEsMiwiXFxQaGlfWCIsMl0sWzMsMiwiR3AiLDAseyJzdHlsZSI6eyJib2R5Ijp7Im5hbWUiOiJiYXJyZWQifX19XSxbNiw1LCJcXFBoaV9wIiwxLHsic2hvcnRlbiI6eyJzb3VyY2UiOjIwLCJ0YXJnZXQiOjIwfSwic3R5bGUiOnsiYm9keSI6eyJuYW1lIjoibm9uZSJ9LCJoZWFkIjp7Im5hbWUiOiJub25lIn19fV1d
        \[\begin{tikzcd}
            FX & {FX'} \\
            GX & {GX'}
            \arrow[""{name=0, anchor=center, inner sep=0}, "{\Phi_X}"', from=1-1, to=2-1]
            \arrow["Fp"'{inner sep=.8ex}, "\shortmid"{marking}, from=1-2, to=1-1]
            \arrow[""{name=1, anchor=center, inner sep=0}, "{\Phi_{X'}}", from=1-2, to=2-2]
            \arrow["Gp"{inner sep=.8ex}, "\shortmid"{marking}, from=2-2, to=2-1]
            \arrow["{\Phi_p}"{description}, draw=none, from=0, to=1]
        \end{tikzcd}\]
	\end{enumerate}
    The family of tight morphisms is required to form a natural transformation $\u\Phi \colon \u F \tto \u G$, and the family of cells is required to be natural in the sense that, for each cell $\chi$ in $\X$ as follows,
    % https://q.uiver.app/#q=WzAsNSxbMCwwLCJYXzAiXSxbMSwwLCJcXGNkb3RzIl0sWzIsMCwiWF9uIl0sWzAsMSwiWCJdLFsyLDEsIlgnIl0sWzEsMCwicF8xIiwyLHsic3R5bGUiOnsiYm9keSI6eyJuYW1lIjoiYmFycmVkIn19fV0sWzIsMSwicF9uIiwyLHsic3R5bGUiOnsiYm9keSI6eyJuYW1lIjoiYmFycmVkIn19fV0sWzAsMywieCIsMl0sWzIsNCwieCciXSxbNCwzLCJwIiwwLHsic3R5bGUiOnsiYm9keSI6eyJuYW1lIjoiYmFycmVkIn19fV0sWzcsOCwiXFxjaGkiLDEseyJzaG9ydGVuIjp7InNvdXJjZSI6MjAsInRhcmdldCI6MjB9LCJzdHlsZSI6eyJib2R5Ijp7Im5hbWUiOiJub25lIn0sImhlYWQiOnsibmFtZSI6Im5vbmUifX19XV0=
    \[\begin{tikzcd}
        {X_0} & \cdots & {X_n} \\
        X && {X'}
        \arrow[""{name=0, anchor=center, inner sep=0}, "x"', from=1-1, to=2-1]
        \arrow["{p_1}"'{inner sep=.8ex}, "\shortmid"{marking}, from=1-2, to=1-1]
        \arrow["{p_n}"'{inner sep=.8ex}, "\shortmid"{marking}, from=1-3, to=1-2]
        \arrow[""{name=1, anchor=center, inner sep=0}, "{x'}", from=1-3, to=2-3]
        \arrow["p"{inner sep=.8ex}, "\shortmid"{marking}, from=2-3, to=2-1]
        \arrow["\chi"{description}, draw=none, from=0, to=1]
    \end{tikzcd}\]
    the following equation holds in $\Y$.
    \[
    % https://q.uiver.app/#q=WzAsNyxbMCwwLCJGWF8wIl0sWzEsMCwiXFxjZG90cyJdLFsyLDAsIkZYX24iXSxbMCwxLCJGWCJdLFsyLDEsIkZYJyJdLFswLDIsIkdYIl0sWzIsMiwiR1gnIl0sWzEsMCwiRnBfMSIsMix7InN0eWxlIjp7ImJvZHkiOnsibmFtZSI6ImJhcnJlZCJ9fX1dLFsyLDEsIkZwX24iLDIseyJzdHlsZSI6eyJib2R5Ijp7Im5hbWUiOiJiYXJyZWQifX19XSxbMCwzLCJGeCIsMl0sWzIsNCwiRngnIl0sWzQsMywiRnAiLDFdLFszLDUsIlxcUGhpX1giLDJdLFs0LDYsIlxcUGhpX3tYJ30iXSxbNiw1LCJHcCIsMCx7InN0eWxlIjp7ImJvZHkiOnsibmFtZSI6ImJhcnJlZCJ9fX1dLFs5LDEwLCJGXFxjaGkiLDEseyJzaG9ydGVuIjp7InNvdXJjZSI6MjAsInRhcmdldCI6MjB9LCJzdHlsZSI6eyJib2R5Ijp7Im5hbWUiOiJub25lIn0sImhlYWQiOnsibmFtZSI6Im5vbmUifX19XSxbMTIsMTMsIlxcUGhpX3AiLDEseyJzaG9ydGVuIjp7InNvdXJjZSI6MjAsInRhcmdldCI6MjB9LCJzdHlsZSI6eyJib2R5Ijp7Im5hbWUiOiJub25lIn0sImhlYWQiOnsibmFtZSI6Im5vbmUifX19XV0=
    \begin{tikzcd}[column sep=large]
        {FX_0} & \cdots & {FX_n} \\
        FX && {FX'} \\
        GX && {GX'}
        \arrow[""{name=0, anchor=center, inner sep=0}, "Fx"', from=1-1, to=2-1]
        \arrow["{Fp_1}"'{inner sep=.8ex}, "\shortmid"{marking}, from=1-2, to=1-1]
        \arrow["{Fp_n}"'{inner sep=.8ex}, "\shortmid"{marking}, from=1-3, to=1-2]
        \arrow[""{name=1, anchor=center, inner sep=0}, "{Fx'}", from=1-3, to=2-3]
        \arrow[""{name=2, anchor=center, inner sep=0}, "{\Phi_X}"', from=2-1, to=3-1]
        \arrow["Fp"{description}, from=2-3, to=2-1]
        \arrow[""{name=3, anchor=center, inner sep=0}, "{\Phi_{X'}}", from=2-3, to=3-3]
        \arrow["Gp"{inner sep=.8ex}, "\shortmid"{marking}, from=3-3, to=3-1]
        \arrow["{F\chi}"{description}, draw=none, from=0, to=1]
        \arrow["{\Phi_p}"{description}, draw=none, from=2, to=3]
    \end{tikzcd}
    \quad = \quad
    % https://q.uiver.app/#q=WzAsMTAsWzAsMCwiRlhfMCJdLFsyLDAsIkZYX3tuIC0gMX0iXSxbMywwLCJGWF9uIl0sWzAsMSwiR1hfMCJdLFszLDEsIkdYX24iXSxbMCwyLCJHWCJdLFszLDIsIkdYJyJdLFsyLDEsIkdYX3tuIC0gMX0iXSxbMSwwLCJGWF8xIl0sWzEsMSwiR1hfMSJdLFsyLDEsIkZwX24iLDIseyJzdHlsZSI6eyJib2R5Ijp7Im5hbWUiOiJiYXJyZWQifX19XSxbMCwzLCJcXFBoaV97WF8wfSIsMl0sWzIsNCwiXFxQaGlfe1hfbn0iXSxbMyw1LCJHeCIsMl0sWzQsNiwiR3gnIl0sWzYsNSwiR3AiLDAseyJzdHlsZSI6eyJib2R5Ijp7Im5hbWUiOiJiYXJyZWQifX19XSxbNCw3LCJHcF9uIiwxXSxbOCwwLCJGcF8xIiwyLHsic3R5bGUiOnsiYm9keSI6eyJuYW1lIjoiYmFycmVkIn19fV0sWzksMywiR3BfMSIsMV0sWzgsOSwiXFxQaGlfe1hfMX0iLDFdLFsxLDgsIlxcY2RvdHMiLDEseyJzdHlsZSI6eyJib2R5Ijp7Im5hbWUiOiJub25lIn0sImhlYWQiOnsibmFtZSI6Im5vbmUifX19XSxbNyw5LCJcXGNkb3RzIiwxLHsic3R5bGUiOnsiYm9keSI6eyJuYW1lIjoibm9uZSJ9LCJoZWFkIjp7Im5hbWUiOiJub25lIn19fV0sWzEsNywiXFxQaGlfe1hfe24gLSAxfX0iLDFdLFsxMywxNCwiR1xcY2hpIiwxLHsic2hvcnRlbiI6eyJzb3VyY2UiOjIwLCJ0YXJnZXQiOjIwfSwic3R5bGUiOnsiYm9keSI6eyJuYW1lIjoibm9uZSJ9LCJoZWFkIjp7Im5hbWUiOiJub25lIn19fV0sWzExLDE5LCJcXFBoaV97cF8xfSIsMSx7InNob3J0ZW4iOnsic291cmNlIjoyMCwidGFyZ2V0IjoyMH0sInN0eWxlIjp7ImJvZHkiOnsibmFtZSI6Im5vbmUifSwiaGVhZCI6eyJuYW1lIjoibm9uZSJ9fX1dLFsyMCwyMSwiXFxjZG90cyIsMSx7ImxldmVsIjoxLCJzdHlsZSI6eyJib2R5Ijp7Im5hbWUiOiJub25lIn0sImhlYWQiOnsibmFtZSI6Im5vbmUifX19XSxbMjIsMTIsIlxcUGhpX3twX259IiwxLHsic2hvcnRlbiI6eyJzb3VyY2UiOjIwLCJ0YXJnZXQiOjIwfSwic3R5bGUiOnsiYm9keSI6eyJuYW1lIjoibm9uZSJ9LCJoZWFkIjp7Im5hbWUiOiJub25lIn19fV1d
    \begin{tikzcd}[column sep=large]
        {FX_0} & {FX_1} & {FX_{n - 1}} & {FX_n} \\
        {GX_0} & {GX_1} & {GX_{n - 1}} & {GX_n} \\
        GX &&& {GX'}
        \arrow[""{name=0, anchor=center, inner sep=0}, "{\Phi_{X_0}}"', from=1-1, to=2-1]
        \arrow["{Fp_1}"'{inner sep=.8ex}, "\shortmid"{marking}, from=1-2, to=1-1]
        \arrow[""{name=1, anchor=center, inner sep=0}, "{\Phi_{X_1}}"{description}, from=1-2, to=2-2]
        \arrow[""{name=2, anchor=center, inner sep=0}, "\cdots"{description}, draw=none, from=1-3, to=1-2]
        \arrow[""{name=3, anchor=center, inner sep=0}, "{\Phi_{X_{n - 1}}}"{description}, from=1-3, to=2-3]
        \arrow["{Fp_n}"'{inner sep=.8ex}, "\shortmid"{marking}, from=1-4, to=1-3]
        \arrow[""{name=4, anchor=center, inner sep=0}, "{\Phi_{X_n}}", from=1-4, to=2-4]
        \arrow[""{name=5, anchor=center, inner sep=0}, "Gx"', from=2-1, to=3-1]
        \arrow["{Gp_1}"{description}, from=2-2, to=2-1]
        \arrow[""{name=6, anchor=center, inner sep=0}, "\cdots"{description}, draw=none, from=2-3, to=2-2]
        \arrow["{Gp_n}"{description}, from=2-4, to=2-3]
        \arrow[""{name=7, anchor=center, inner sep=0}, "{Gx'}", from=2-4, to=3-4]
        \arrow["Gp"{inner sep=.8ex}, "\shortmid"{marking}, from=3-4, to=3-1]
        \arrow["{\Phi_{p_1}}"{description}, draw=none, from=0, to=1]
        \arrow["\cdots"{description}, draw=none, from=2, to=6]
        \arrow["{\Phi_{p_n}}"{description}, draw=none, from=3, to=4]
        \arrow["{G\chi}"{description}, draw=none, from=5, to=7]
    \end{tikzcd}
    \]
\end{definition}

\begin{proposition}[{\cite{dawson2006paths}\footnote{Note that \textcite{dawson2006paths} call \vdcs{} \emph{lax \dcs}, though, as discussed in \cite[\S4]{pare2011yoneda}, this name is better reserved for a notion of \dc{} in which loose morphisms may be composed, but only associatively and unitally up to noninvertible cells (called a \emph{slack \dc} in \cite{dawson2006paths}).}}]
    Virtual double categories, functors, and transformations form a 2-category $\VDbl$.
\end{proposition}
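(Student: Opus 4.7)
The plan is to construct the required identities and compositions, then reduce each 2-category axiom to a property of the cell calculus in the target \vdc.

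First, for every functor $F$, I would define the identity transformation $1_F \colon F \tto F$ by taking its tight component at $X$ to be $1_{FX}$ and its cell component at $p$ to be the identity cell $1_{Fp}$; its naturality holds because identity cells act as two-sided units for cell composition. For transformations $\Phi \colon F \tto G$ and $\Psi \colon G \tto H$, I would define their vertical composite component-wise: the tight component $(\Phi \d \Psi)_X$ is $(\Phi_X \d \Psi_X)$ in $\u\Y$, and the cell component $(\Phi \d \Psi)_p$ is the binary vertical composite of $\Phi_p$ atop $\Psi_p$. Associativity and unitality of vertical composition follow immediately from the corresponding properties of cell composition in $\Y$, while naturality of the composite follows by pasting the naturality squares of $\Phi$ and $\Psi$ vertically.

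For horizontal composition, I would proceed via whiskering. Given $H \colon \Y \to \Z$, the left whiskering $H\Phi$ of $\Phi \colon F \tto G$ has components $H(\Phi_X)$ and $H(\Phi_p)$; because $H$ strictly preserves cell identities and composites, the naturality equation for $H\Phi$ at a cell $\chi$ in $\X$ is precisely the image under $H$ of the naturality equation for $\Phi$ at $\chi$. Right whiskering of $\Phi$ by $K \colon \W \to \X$ is defined dually, with components $\Phi_{KW}$ and $\Phi_{Kq}$. The horizontal composite of $\Phi \colon F \tto G$ with $\Psi \colon F' \tto G'$ (where $F', G' \colon \Y \to \Z$) can then be taken to be either $(\Psi F) \d (G' \Phi)$ or $(F' \Phi) \d (\Psi G)$; the two choices agree by applying the naturality of $\Psi$ to the cells $\Phi_p$, and this agreement is in particular the middle-four interchange law.

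It remains to verify associativity and unitality of horizontal composition, along with functoriality of each composition in its second argument. On tight components these reduce to the corresponding laws in $\u\Y$, while on cell components they reduce to associativity and unitality of cell composition in $\Y$, combined with strict preservation of identities and composites by functors of \vdcs. The main --- and essentially only --- obstacle throughout the proof is verifying the naturality condition of \cref{transformation} for each newly defined transformation; in every case this is a routine diagram chase that pastes naturality squares together and uses strict preservation of identities and composites by functors, reflecting the coherence-free character of the virtual setting.
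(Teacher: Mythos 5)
Your proof is correct. Note, however, that the paper does not prove this statement at all: it is imported wholesale by citation to \textcite{dawson2006paths}, so there is no internal argument to compare against. Your direct verification is the standard one and supplies exactly what the citation leaves implicit. You correctly isolate the only genuinely non-routine point, namely that the two candidate horizontal composites $(F'\Phi) \d (\Psi G)$ and $(\Psi F) \d (G'\Phi)$ coincide: on tight components this is naturality of $\u\Psi$ at the morphisms $\Phi_X$, and on loose components it is precisely the naturality condition of \cref{transformation} for $\Psi$ instantiated at the \emph{unary} cells $\Phi_p$, which is what makes the interchange law hold. Everything else (identities, vertical composites, whiskering) does reduce, as you say, to unitality and associativity of cell pasting in the codomain together with strict preservation of identities and composites by functors of \vdcs{}. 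One small remark: the cited source ultimately also realises $\VDbl$ abstractly as a 2-category of coalgebras for a 2-comonad on strict double categories (this is how the present paper uses it in \cref{Dbl_l-in-VDbl}), which yields the 2-category structure without elementwise verification; your hands-on construction is more elementary and has the advantage of making the unit, composites, and whiskerings explicit, at the cost of the routine diagram chases you acknowledge.
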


Consequently, we automatically obtain appropriate notions of adjunction and equivalence of \vdcs. In the following sections, we will make use of certain two-dimensional limits and colimits of \vdcs.

\begin{proposition}[{\cite[Remark~2.14]{dawson2006paths}}]
    \label{VDbl-is-bicomplete}
    $\VDbl$ is a complete and cocomplete 2-category.
\end{proposition}

It will be useful in particular to have an explicit description of copowers by the interval category $\b2 \defeq \{ 0 \to 1 \}$.

\begin{lemma}
    \label{copower-by-2}
    Let $\X$ be a \vdc{}. The copower $\b2 \copow \X$ is the \vdc{} described as follows.
    \begin{enumerate}
        \item An object is a pair $(i, X)$ where $i \in \{ 0, 1 \}$ and $X \in \X$.
        \item A tight morphism $(i, X)$ from $(j, Y)$, where $i \leq j$, is a tight morphism $X \to Y$ in $\X$; there are no tight morphisms between pairs with $i > j$.
        \item A loose morphism from $(i, X')$ to $(i, X)$ is a loose morphism $X' \lto X$ in $\X$; there are no loose morphisms between pairs with different indices.
        \item A cell is simply a cell with the same frame in $\X$ (forgetting the indices).
    \end{enumerate}
    The coprojections are given by the functors $\copi_i \colon \X \to \b2 \copow \X$, which are constantly $i$ on the first component and the identity on the second component, together with the canonical natural transformation $\copi_{<} \colon \copi_0 \tto \copi_1$ between them.
\end{lemma}

\begin{proof}
    The transformation $\copi_{<}$ induces a functor $\VDbl(\b2 \copow \X, \Y) \to \Cat(\b2, \VDbl(\X, \Y))$, which we must show to be invertible. Given a transformation $\Psi \colon F \tto G \colon \X \to \Y$, we define a functor $[\Psi] \colon \b2 \copow \X \to \Y$ by
    \begin{align*}
        [\Psi](0, X) & \defeq F(X) &
        [\Psi](1, X) & \defeq G(X) &
        [\Psi]({<}, X) & \defeq \Phi_X
    \end{align*}
    and analogously on the rest of the data. It is easy to verify that this defines a function $\Cat(\b2, \VDbl(\X, \Y))_0 \to \VDbl(\b2 \copow \X, \Y)_0$ inverse to the one induced by $\copi_{<}$, thus verifying the one-dimensional aspect of the universal property. Since we know from \cref{VDbl-is-bicomplete} that $\VDbl$ admits copowers, this suffices to identify the copower.
\end{proof}

\subsection{Pseudo \dcs}

While \vdcs{} do not admit composites of loose morphisms in general, we may characterise the existence of particular composites by a universal property, in the same way that we may characterise the existence of certain tensor products in a multicategory~\cite{hermida2000representable}.

\begin{definition}[{\cites[Definition~2.7]{dawson2006paths}[Definition~5.1]{cruttwell2010unified}}]
	\label{opcartesian}
    A cell
    % https://q.uiver.app/#q=WzAsNSxbMiwwLCJcXGNkb3QiXSxbMSwwLCJcXGNkb3RzIl0sWzAsMCwiXFxjZG90Il0sWzIsMSwiXFxjZG90Il0sWzAsMSwiXFxjZG90Il0sWzAsMSwicV9tIiwyLHsic3R5bGUiOnsiYm9keSI6eyJuYW1lIjoiYmFycmVkIn19fV0sWzEsMiwicV8xIiwyLHsic3R5bGUiOnsiYm9keSI6eyJuYW1lIjoiYmFycmVkIn19fV0sWzMsNCwicSIsMCx7InN0eWxlIjp7ImJvZHkiOnsibmFtZSI6ImJhcnJlZCJ9fX1dLFswLDMsIiIsMCx7ImxldmVsIjoyLCJzdHlsZSI6eyJoZWFkIjp7Im5hbWUiOiJub25lIn19fV0sWzIsNCwiIiwyLHsibGV2ZWwiOjIsInN0eWxlIjp7ImhlYWQiOnsibmFtZSI6Im5vbmUifX19XSxbOCw5LCJcXG9wY2FydCIsMSx7InNob3J0ZW4iOnsic291cmNlIjoyMCwidGFyZ2V0IjoyMH0sInN0eWxlIjp7ImJvZHkiOnsibmFtZSI6Im5vbmUifSwiaGVhZCI6eyJuYW1lIjoibm9uZSJ9fX1dXQ==
	\[\begin{tikzcd}
		\cdot & \cdots & \cdot \\
		\cdot && \cdot
		\arrow["{q_m}"', "\shortmid"{marking}, from=1-3, to=1-2]
		\arrow["{q_1}"', "\shortmid"{marking}, from=1-2, to=1-1]
		\arrow["q", "\shortmid"{marking}, from=2-3, to=2-1]
		\arrow[""{name=0, anchor=center, inner sep=0}, Rightarrow, no head, from=1-3, to=2-3]
		\arrow[""{name=1, anchor=center, inner sep=0}, Rightarrow, no head, from=1-1, to=2-1]
		\arrow["\opcart"{description}, draw=none, from=0, to=1]
	\end{tikzcd}\]
    in a \vdc{} is \emph{opcartesian} if any cell
    % https://q.uiver.app/#q=WzAsOSxbNiwwLCJcXGNkb3QiXSxbNSwwLCJcXGNkb3RzIl0sWzQsMCwiXFxjZG90Il0sWzMsMCwiXFxjZG90cyJdLFsyLDAsIlxcY2RvdCJdLFsxLDAsIlxcY2RvdHMiXSxbMCwwLCJcXGNkb3QiXSxbNiwxLCJcXGNkb3QiXSxbMCwxLCJcXGNkb3QiXSxbMCwxLCJyX24iLDIseyJzdHlsZSI6eyJib2R5Ijp7Im5hbWUiOiJiYXJyZWQifX19XSxbMSwyLCJyXzEiLDIseyJzdHlsZSI6eyJib2R5Ijp7Im5hbWUiOiJiYXJyZWQifX19XSxbMiwzLCJxX20iLDIseyJzdHlsZSI6eyJib2R5Ijp7Im5hbWUiOiJiYXJyZWQifX19XSxbMyw0LCJxXzEiLDIseyJzdHlsZSI6eyJib2R5Ijp7Im5hbWUiOiJiYXJyZWQifX19XSxbNCw1LCJwX2wiLDIseyJzdHlsZSI6eyJib2R5Ijp7Im5hbWUiOiJiYXJyZWQifX19XSxbNSw2LCJwXzEiLDIseyJzdHlsZSI6eyJib2R5Ijp7Im5hbWUiOiJiYXJyZWQifX19XSxbMCw3LCJnIl0sWzYsOCwiZiIsMl0sWzcsOCwicyIsMCx7InN0eWxlIjp7ImJvZHkiOnsibmFtZSI6ImJhcnJlZCJ9fX1dLFsxNSwxNiwiXFxwaGkiLDEseyJzaG9ydGVuIjp7InNvdXJjZSI6MjAsInRhcmdldCI6MjB9LCJzdHlsZSI6eyJib2R5Ijp7Im5hbWUiOiJub25lIn0sImhlYWQiOnsibmFtZSI6Im5vbmUifX19XV0=
	\[\begin{tikzcd}
		\cdot & \cdots & \cdot & \cdots & \cdot & \cdots & \cdot \\
		\cdot &&&&&& \cdot
		\arrow["{r_n}"', "\shortmid"{marking}, from=1-7, to=1-6]
		\arrow["{r_1}"', "\shortmid"{marking}, from=1-6, to=1-5]
		\arrow["{q_m}"', "\shortmid"{marking}, from=1-5, to=1-4]
		\arrow["{q_1}"', "\shortmid"{marking}, from=1-4, to=1-3]
		\arrow["{p_l}"', "\shortmid"{marking}, from=1-3, to=1-2]
		\arrow["{p_1}"', "\shortmid"{marking}, from=1-2, to=1-1]
		\arrow[""{name=0, anchor=center, inner sep=0}, "g", from=1-7, to=2-7]
		\arrow[""{name=1, anchor=center, inner sep=0}, "f"', from=1-1, to=2-1]
		\arrow["s", "\shortmid"{marking}, from=2-7, to=2-1]
		\arrow["\phi"{description}, draw=none, from=0, to=1]
	\end{tikzcd}\]
    factors uniquely therethrough:
	\[\begin{tikzcd}
		\cdot & \cdots & \cdot & \cdots & \cdot & \cdots & \cdot \\
		\cdot & \cdots & \cdot && \cdot & \cdots & \cdot \\
		\cdot &&&&&& \cdot
		\arrow["{r_n}"', "\shortmid"{marking}, from=1-7, to=1-6]
		\arrow["{r_1}"', "\shortmid"{marking}, from=1-6, to=1-5]
		\arrow["{q_m}"', "\shortmid"{marking}, from=1-5, to=1-4]
		\arrow["{q_1}"', "\shortmid"{marking}, from=1-4, to=1-3]
		\arrow["{p_l}"', "\shortmid"{marking}, from=1-3, to=1-2]
		\arrow["{p_1}"', "\shortmid"{marking}, from=1-2, to=1-1]
		\arrow["s", "\shortmid"{marking}, from=3-7, to=3-1]
		\arrow["q"{description}, from=2-5, to=2-3]
		\arrow[""{name=0, anchor=center, inner sep=0}, Rightarrow, no head, from=1-5, to=2-5]
		\arrow[""{name=1, anchor=center, inner sep=0}, "g", from=2-7, to=3-7]
		\arrow[""{name=2, anchor=center, inner sep=0}, "f"', from=2-1, to=3-1]
		\arrow[""{name=3, anchor=center, inner sep=0}, Rightarrow, no head, from=1-7, to=2-7]
		\arrow[""{name=4, anchor=center, inner sep=0}, Rightarrow, no head, from=1-1, to=2-1]
		\arrow["{r_n}"{description}, from=2-7, to=2-6]
		\arrow["{r_1}"{description}, from=2-6, to=2-5]
		\arrow["{p_l}"{description}, from=2-3, to=2-2]
		\arrow["{p_1}"{description}, from=2-2, to=2-1]
		\arrow[""{name=5, anchor=center, inner sep=0}, Rightarrow, no head, from=1-3, to=2-3]
		\arrow["{=}"{description}, draw=none, from=3, to=0]
		\arrow["{=}"{description}, draw=none, from=5, to=4]
		\arrow["\opcart"{description}, draw=none, from=0, to=5]
		\arrow["\check\phi"{description}, draw=none, from=1, to=2]
	\end{tikzcd}\]
    In this case, we call $q$ the \emph{(loose) composite} $q_1 \odot \cdots \odot q_m$ of $q_1, \ldots, q_m$.
	When $m = 0$, we call $q \colon A \lto A$ the \emph{loose identity} and denote it by $A(1, 1)$, or simply by $=\!\!\!|\!\!\!=$ in pasting diagrams.

	We denote a nullary cell with loose identity codomain by $\phi \colon f \tto g$.
	% https://q.uiver.app/#q=WzAsNCxbMSwwLCJcXGNkb3QiXSxbMSwxLCJcXGNkb3QiXSxbMCwxLCJcXGNkb3QiXSxbMCwwLCJcXGNkb3QiXSxbMSwyLCIiLDIseyJsZXZlbCI6Miwic3R5bGUiOnsiYm9keSI6eyJuYW1lIjoiYmFycmVkIn0sImhlYWQiOnsibmFtZSI6Im5vbmUifX19XSxbMCwxLCJnIl0sWzMsMiwiZiIsMl0sWzAsMywiIiwyLHsibGV2ZWwiOjIsInN0eWxlIjp7ImhlYWQiOnsibmFtZSI6Im5vbmUifX19XSxbNSw2LCJcXHBoaSIsMSx7InNob3J0ZW4iOnsic291cmNlIjoyMCwidGFyZ2V0IjoyMH0sInN0eWxlIjp7ImJvZHkiOnsibmFtZSI6Im5vbmUifSwiaGVhZCI6eyJuYW1lIjoibm9uZSJ9fX1dXQ==
	\[\begin{tikzcd}
		\cdot & \cdot \\
		\cdot & \cdot
		\arrow["\shortmid"{marking}, Rightarrow, no head, from=2-2, to=2-1]
		\arrow[""{name=0, anchor=center, inner sep=0}, "g", from=1-2, to=2-2]
		\arrow[""{name=1, anchor=center, inner sep=0}, "f"', from=1-1, to=2-1]
		\arrow[Rightarrow, no head, from=1-2, to=1-1]
		\arrow["\phi"{description}, draw=none, from=0, to=1]
	\end{tikzcd}\]
    A \vdc{} is \emph{representable} when it is equipped with a choice of loose composite for every chain $q_1, \ldots, q_m$ ($m \ge 0$).\footnote{Typically, representability merely asks for the \emph{existence} of loose composites (which are essentially unique), rather than for a \emph{choice}. We ask for a choice of loose composites to align more closely with the usual definition of \pdc{} (see \cref{Dbl_l-in-VDbl}).}
\end{definition}

Loose composites are unique up to unique globular isomorphism, and consequently loose composition is associative and unital up to coherent isomorphism, \eg{} we have $(p \odot q) \odot r \iso p \odot q \odot r \iso p \odot (q \odot r)$ when each of these composites exists. Loose composition is also functorial: given a chain of unary cells $\phi_1, \ldots, \phi_n$ (below left), assuming $p_1, \ldots, p_n$ and $q_1, \ldots, q_n$ admit loose composites, their universal properties induce a unary cell $\phi_1 \odot \cdots \odot \phi_n$ (below right). (The reader new to \vdcs{} is encouraged to try proving these properties directly themselves; abstractly, they follow from \cref{Dbl_l-in-VDbl}.)
\[
% https://q.uiver.app/#q=WzAsOCxbMywwLCJcXGNkb3QiXSxbMiwwLCJcXGNkb3QiXSxbMCwwLCJcXGNkb3QiXSxbMywxLCJcXGNkb3QiXSxbMSwxLCJcXGNkb3QiXSxbMCwxLCJcXGNkb3QiXSxbMSwwLCJcXGNkb3QiXSxbMiwxLCJcXGNkb3QiXSxbMCwxLCJwX24iLDIseyJzdHlsZSI6eyJib2R5Ijp7Im5hbWUiOiJiYXJyZWQifX19XSxbNCw1LCJxXzEiLDAseyJzdHlsZSI6eyJib2R5Ijp7Im5hbWUiOiJiYXJyZWQifX19XSxbMCwzLCJmX24iXSxbMiw1LCJmXzAiLDJdLFs2LDIsInBfMSIsMix7InN0eWxlIjp7ImJvZHkiOnsibmFtZSI6ImJhcnJlZCJ9fX1dLFszLDcsInFfbiIsMCx7InN0eWxlIjp7ImJvZHkiOnsibmFtZSI6ImJhcnJlZCJ9fX1dLFsxLDYsIlxcY2RvdHMiLDEseyJzdHlsZSI6eyJib2R5Ijp7Im5hbWUiOiJub25lIn0sImhlYWQiOnsibmFtZSI6Im5vbmUifX19XSxbNyw0LCJcXGNkb3RzIiwxLHsic3R5bGUiOnsiYm9keSI6eyJuYW1lIjoibm9uZSJ9LCJoZWFkIjp7Im5hbWUiOiJub25lIn19fV0sWzEsNywiZl97biAtIDF9IiwxXSxbNiw0LCJmXzEiLDFdLFsxMCwxNiwiXFxwaGlfbiIsMSx7InNob3J0ZW4iOnsic291cmNlIjoyMCwidGFyZ2V0IjoyMH0sInN0eWxlIjp7ImJvZHkiOnsibmFtZSI6Im5vbmUifSwiaGVhZCI6eyJuYW1lIjoibm9uZSJ9fX1dLFsxNywxMSwiXFxwaGlfMSIsMSx7InNob3J0ZW4iOnsic291cmNlIjoyMCwidGFyZ2V0IjoyMH0sInN0eWxlIjp7ImJvZHkiOnsibmFtZSI6Im5vbmUifSwiaGVhZCI6eyJuYW1lIjoibm9uZSJ9fX1dXQ==
\begin{tikzcd}[column sep=large]
	\cdot & \cdot & \cdot & \cdot \\
	\cdot & \cdot & \cdot & \cdot
	\arrow[""{name=0, anchor=center, inner sep=0}, "{f_0}"', from=1-1, to=2-1]
	\arrow["{p_1}"'{inner sep=.8ex}, "\shortmid"{marking}, from=1-2, to=1-1]
	\arrow[""{name=1, anchor=center, inner sep=0}, "{f_1}"{description}, from=1-2, to=2-2]
	\arrow["\cdots"{description}, draw=none, from=1-3, to=1-2]
	\arrow[""{name=2, anchor=center, inner sep=0}, "{f_{n - 1}}"{description}, from=1-3, to=2-3]
	\arrow["{p_n}"'{inner sep=.8ex}, "\shortmid"{marking}, from=1-4, to=1-3]
	\arrow[""{name=3, anchor=center, inner sep=0}, "{f_n}", from=1-4, to=2-4]
	\arrow["{q_1}"{inner sep=.8ex}, "\shortmid"{marking}, from=2-2, to=2-1]
	\arrow["\cdots"{description}, draw=none, from=2-3, to=2-2]
	\arrow["{q_n}"{inner sep=.8ex}, "\shortmid"{marking}, from=2-4, to=2-3]
	\arrow["{\phi_1}"{description}, draw=none, from=1, to=0]
	\arrow["{\phi_n}"{description}, draw=none, from=3, to=2]
\end{tikzcd}
\quad \mapsto \quad
% https://q.uiver.app/#q=WzAsNCxbMCwwLCJcXGNkb3QiXSxbMSwxLCJcXGNkb3QiXSxbMCwxLCJcXGNkb3QiXSxbMSwwLCJcXGNkb3QiXSxbMSwyLCJwXzEgXFxvZG90IFxcY2RvdHMgXFxvZG90IHFfbiIsMCx7InN0eWxlIjp7ImJvZHkiOnsibmFtZSI6ImJhcnJlZCJ9fX1dLFswLDIsImZfMCIsMl0sWzMsMCwicF8xIFxcb2RvdCBcXGNkb3RzIFxcb2RvdCBwX24iLDIseyJzdHlsZSI6eyJib2R5Ijp7Im5hbWUiOiJiYXJyZWQifX19XSxbMywxLCJmX24iXSxbNyw1LCJcXHBoaV8xIFxcb2RvdCBcXGNkb3RzIFxcb2RvdCBcXHBoaV9uIiwxLHsic2hvcnRlbiI6eyJzb3VyY2UiOjIwLCJ0YXJnZXQiOjIwfSwic3R5bGUiOnsiYm9keSI6eyJuYW1lIjoibm9uZSJ9LCJoZWFkIjp7Im5hbWUiOiJub25lIn19fV1d
\begin{tikzcd}[column sep=7em]
	\cdot & \cdot \\
	\cdot & \cdot
	\arrow[""{name=0, anchor=center, inner sep=0}, "{f_0}"', from=1-1, to=2-1]
	\arrow["{p_1 \odot \cdots \odot p_n}"'{inner sep=.8ex}, "\shortmid"{marking}, from=1-2, to=1-1]
	\arrow[""{name=1, anchor=center, inner sep=0}, "{f_n}", from=1-2, to=2-2]
	\arrow["{p_1 \odot \cdots \odot q_n}"{inner sep=.8ex}, "\shortmid"{marking}, from=2-2, to=2-1]
	\arrow["{\phi_1 \odot \cdots \odot \phi_n}"{description}, draw=none, from=1, to=0]
\end{tikzcd}
\]

\begin{definition}[{\cites[Definition~2.7]{dawson2006paths}[Remark~5.8]{cruttwell2010unified}}]
    If the universal property of \cref{opcartesian} holds only for $l = 0$ and $n = 0$, we say that the cell is \emph{weakly opcartesian}, and that $q$ is a \emph{weak loose composite} or \emph{weak loose identity}. A \vdc{} is \emph{weakly representable} when it is equipped with a choice of weak loose composites for every chain of loose morphisms.
\end{definition}

Weak loose composites are also unique up to unique globular isomorphism, but weak loose composition is neither associative nor unital (see \cref{colax-dcs}).

\begin{remark}[Terminological warning]
    The term \emph{representability} is used in two senses in this paper. When speaking of \pdcs{}, it is in the sense of \emph{representable presheaves}; whereas, when speaking of \vdcs, it is in the sense of \cref{opcartesian}, \ie{} the existence of loose composites.
\end{remark}

If a \vdc{} admits loose identities, it has an \emph{underlying 2-category}, whose 2-cells are given by nullary cells with loose identity codomain~\cite[Proposition~6.1]{cruttwell2010unified}.

Pseudo \dcs{} and their morphisms are typically defined explicitly in terms of pseudocategories in the 2-category $\Cat$~\cites[\S7.1]{grandis1999limits}[Example~5.13]{arkor2024enhanced}. For our purposes, it will be more useful to view \pdcs{} as certain kinds of \vdcs, which is justified by the following result of \textcite{dawson2006paths}.

\begin{theorem}
    \label{Dbl_l-in-VDbl}
    Denote by $\Dbl_l$ the 2-category of \pdcs{}, lax functors, and transformations~\cite[\S7.1 -- 7.3]{grandis1999limits}. There is a \ff{} 2-functor $\Dbl_l \ffto \VDbl$ whose image comprises the representable \vdcs{}.
\end{theorem}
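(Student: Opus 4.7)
The plan is to construct the 2-functor $\iota \colon \Dbl_l \to \VDbl$ directly, verify it is locally an isomorphism of hom-categories (that is, \ff{} in the 2-categorical sense), and identify its essential image with the full sub-2-category of $\VDbl$ on representable \vdcs.

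On objects, given a \pdc{} $\A$, I define $\iota\A$ to have the same objects, tight morphisms, and loose morphisms as $\A$. For each path $p_1, \ldots, p_n$ of composable loose morphisms, I declare the $n$-ary cells with this source frame to be the unary cells of $\A$ whose top is the iterated pseudo-composite $p_1 \odot \cdots \odot p_n$, with a fixed choice of bracketing and with $n = 0$ treated as the loose identity. Composition of cells is inherited from $\A$, mediated by associators and unitors; the coherence theorem for \pdcs{} ensures that this is strictly associative and unital. Each chosen composite is exhibited as a loose composite in $\iota\A$ via an identity cell, so $\iota\A$ is representable.

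On 1-cells, given a lax functor $F \colon \A \to \B$, I define $\iota F$ to agree with $F$ on objects, tight morphisms, and loose morphisms, and to send an $n$-ary cell $\phi \colon p_1 \odot \cdots \odot p_n \tto q$ to the composite of $F\phi$ with the iterated laxator $Fp_1 \odot \cdots \odot Fp_n \tto F(p_1 \odot \cdots \odot p_n)$. The lax-functor axioms translate into strict functoriality of $\iota F$ with respect to cell composition. Transformations are transported without altering the underlying data; naturality on general $n$-ary cells follows from naturality on unary cells using the opcartesian property of the cells exhibiting composites.

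For local \ffness, I exhibit an inverse on hom-categories. Given a virtual functor $G \colon \iota\A \to \iota\B$, the underlying lax functor is recovered from $G$ on objects, tight morphisms, loose morphisms, and unary cells directly; the laxators arise by applying $G$ to the opcartesian cells $(p_1, \ldots, p_n) \tto p_1 \odot \cdots \odot p_n$ of $\iota\A$ and factoring through the corresponding opcartesian cells of $\iota\B$. The coherence axioms for the resulting lax functor follow from the functoriality of $G$ combined with the universal property of opcartesian cells. The analogous recovery for transformations is immediate. For the essential image, every $\iota\A$ is representable by construction; conversely, any representable \vdc{} $\X$ admits a choice of loose composite for each path and loose identity for each object, and these choices equip $\X$ with pseudo-composition whose associators and unitors are uniquely determined by the universal property of opcartesian cells, so that $\X$ arises (up to isomorphism) as $\iota\A$ for a \pdc{} $\A$. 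The main delicate point of the proof will be verifying that the laxator-cell correspondence gives a bijection: this is routine but requires careful tracking of coherence data, driven by the coherence theorem for \pdcs{} on one side and the universal property of opcartesian cells on the other.
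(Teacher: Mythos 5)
Your proposal is correct in outline, but it takes a genuinely different route from the paper. The paper's proof is a three-line appeal to the results of \textcite{dawson2006paths}: the 2-category $\Dbl_l$ is the Kleisli 2-category of a lax-idempotent 2-comonad, $\VDbl$ is the 2-category of strict coalgebras for that comonad, and the canonical comparison from a Kleisli 2-category into coalgebras is automatically \ff{}, with image identified there as the representable \vdcs. You instead build the embedding by hand: $n$-ary cells are unary cells out of a chosen iterated composite, coherence for \pdcs{} gives strict associativity of cell composition, laxators are recovered as images of the opcartesian cells, and representability is shown to be exactly the condition for a \vdc{} to arise this way. Your approach buys self-containedness and makes explicit the dictionary (laxator $\leftrightarrow$ image of opcartesian cell) that the paper later uses implicitly, e.g.\ in the proof of \cref{representable-vdcs-are-exponentiable}; the cost is the coherence bookkeeping you defer at the end, which the comonadic argument absorbs into general Kleisli/coalgebra theory. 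One small point worth tightening: exhibiting the identity cell on $p_1 \odot \cdots \odot p_n$ as opcartesian in the strong sense of \cref{opcartesian} (with arbitrary flanking strings $p_1, \ldots, p_l$ and $r_1, \ldots, r_n$) is itself a coherence argument, not just a definition, and it is this strong universal property that your recovery of the lax-functor axioms from functoriality of $G$ relies on; and if you want the \emph{image} (rather than essential image) to be exactly the representable \vdcs, you should note that the isomorphism $\iota\A \iso \X$ induced by the chosen opcartesian cells is \ioo{} and identity on loose morphisms, so can be rectified to an equality.
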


\begin{proof}
    The 2-category $\Dbl_l$ is the Kleisli 2-category for a lax-idempotent 2-comonad on $\Dbl_s$~\cite[Proposition~1.19]{dawson2006paths}, whereas the 2-category $\VDbl$ is the 2-category of strict coalgebras~\cite[Theorem~2.13]{dawson2006paths}. The image of the \ff{} inclusion of the Kleisli 2-category into the 2-category of coalgebras comprises the representable \vdcs{}~\cite[Proposition~2.8]{dawson2006paths}.
\end{proof}

\begin{remark}
    \label{colax-dcs}
    Similarly, \emph{colax \dcs} may be identified with the \vdcs{} that are weakly representable~\cites[\S2.9]{dawson2006paths}[Example~9.7]{cruttwell2010unified}. A colax \dc{} is a two-dimensional structure like a \pdc{}, but for which composition is neither associative nor unital. Instead, there is an $n$-ary composition operation for each $n \in \N$, and globular cells that introduce parenthesisation: for instance, given composable loose morphisms $p, q, r$ in a colax \dc, there are globular cells
    \[(p \odot q) \odot r \tfrom p \odot q \odot r \tto p \odot (q \odot r)\]
    but no cell between $(p \odot q) \odot r$ and $p \odot (q \odot r)$ in general.

    Examples of colax \dcs{} typically arise in the presence of colimit structure that is not preserved by composition. For instance, for every monoidal category $\b V$, there is a \vdc{} $\b V\h\dc{Mat}$ of sets and $\b V$-enriched matrices~\cite[Example~2.6]{cruttwell2010unified}. When $\b V$ is cocomplete and the tensor product preserves colimits in each variable, then $\b V\h\dc{Mat}$ is representable. However, if $\b V$ is cocomplete, but these colimits are not preserved by the tensor product, $\b V\h\dc{Mat}$ is merely weakly representable~\cite[Remark~5.8]{cruttwell2010unified}. We shall see another natural example of a colax \dc{} in \cref{power-of-Set}.
\end{remark}

In this paper, we \underline{define} a \emph{\pdc} to be a representable \vdc{} and a \emph{lax functor} to be a functor of \vdcs{} between representable \vdcs.

\begin{example}
    The \vdc{} $\Sigma\M$ arising from a multicategory $\M$ is representable if and only if $\M$ is representable in the sense of \textcite{hermida2000representable}, \ie{} it is induced by a monoidal category.
\end{example}

\begin{example}[{\cite[\S3.2]{grandis1999limits}}]
    $\Span$ is a \pdc. The loose identity associated to a set $A$ is the identity span $A =\!= A =\!= A$, and the loose composite of two spans is given by the pullback of the inner cospan. More generally, for any category $\E$ with pullbacks, $\Span(\E)$ is a \pdc.
\end{example}

\begin{example}[{\cite[\S3.1]{grandis1999limits}}]
    $\Dist$ is a \pdc. The loose identity associated to a small category $\b A$ is the hom distributor $\b A({-}, {-}) \colon \b A\op \times \b A \to \Set$, and the loose composite of two distributors $p \colon \b B \lto \b A$ and $q \colon \b C \lto \b B$ is given by the following coend.
    \[p \odot q \defeq \int^{b \in \b B} p({-}, b) \odot q(b, {-})\]
    More generally, for any category $\E$ with pullbacks and reflexive coequalisers that are stable under pullback, $\Dist(\E)$ is a \pdc{} (\eg{} see \cite[Proposition~2.43]{johnstone1977topos}).
\end{example}

\begin{example}
    For any \pdc{} $\A$, we may define a \pdc{} $\A\opt$, the \emph{tight opposite} of $\A$, whose underlying category is $(\u\A)\op$, whose loose morphisms are the same as those of $\A$, and in which a cell with the following frame in $\A\opt$
    % https://q.uiver.app/#q=WzAsNSxbMiwxLCJCJyJdLFsyLDAsIkFfbiJdLFswLDEsIkIiXSxbMSwwLCJcXGNkb3RzIl0sWzAsMCwiQV8wIl0sWzAsMiwicSIsMCx7InN0eWxlIjp7ImJvZHkiOnsibmFtZSI6ImJhcnJlZCJ9fX1dLFszLDQsInBfMSIsMix7InN0eWxlIjp7ImJvZHkiOnsibmFtZSI6ImJhcnJlZCJ9fX1dLFsxLDAsImYnIl0sWzQsMiwiZiIsMl0sWzEsMywicF9uIiwyLHsic3R5bGUiOnsiYm9keSI6eyJuYW1lIjoiYmFycmVkIn19fV1d
    \[\begin{tikzcd}
        {A_0} & \cdots & {A_n} \\
        B && {B'}
        \arrow["f"', from=1-1, to=2-1]
        \arrow["{p_1}"'{inner sep=.8ex}, "\shortmid"{marking}, from=1-2, to=1-1]
        \arrow["{p_n}"'{inner sep=.8ex}, "\shortmid"{marking}, from=1-3, to=1-2]
        \arrow["{f'}", from=1-3, to=2-3]
        \arrow["q"{inner sep=.8ex}, "\shortmid"{marking}, from=2-3, to=2-1]
    \end{tikzcd}\]
    is given by a cell with the following frame in $\A$.
    % https://q.uiver.app/#q=WzAsNCxbMiwwLCJCJyJdLFsyLDEsIkFfbiJdLFswLDAsIkIiXSxbMCwxLCJBXzAiXSxbMCwyLCJxIiwyLHsic3R5bGUiOnsiYm9keSI6eyJuYW1lIjoiYmFycmVkIn19fV0sWzAsMSwiZiciXSxbMiwzLCJmIiwyXSxbMSwzLCJwXzEgXFxvZG90IFxcY2RvdHMgXFxvZG90IHBfbiIsMCx7InN0eWxlIjp7ImJvZHkiOnsibmFtZSI6ImJhcnJlZCJ9fX1dXQ==
    \[\begin{tikzcd}[column sep=large]
        B && {B'} \\
        {A_0} && {A_n}
        \arrow["f"', from=1-1, to=2-1]
        \arrow["q"'{inner sep=.8ex}, "\shortmid"{marking}, from=1-3, to=1-1]
        \arrow["{f'}", from=1-3, to=2-3]
        \arrow["{p_1 \odot \cdots \odot p_n}"{inner sep=.8ex}, "\shortmid"{marking}, from=2-3, to=2-1]
    \end{tikzcd}\qedshift\]
\end{example}

\subsection{Virtual equipments}
\label{virtual-equipments}

One of the motivating applications of \vdcs{} is to \fct, which is the axiomatic study of \emph{category theories} (such as enriched, internal, and fibred category theory). In practice, to effectively carry out \fct, it is necessary to assume some additional structure on a \vdc, allowing loose morphisms to be restricted along tight morphisms.

\begin{definition}[{\cite[Definition~7.1]{cruttwell2010unified}}]
	\label{cartesian-cell}
    A 2-cell
    % https://q.uiver.app/#q=WzAsNCxbMSwwLCJcXGNkb3QiXSxbMCwwLCJcXGNkb3QiXSxbMSwxLCJcXGNkb3QiXSxbMCwxLCJcXGNkb3QiXSxbMCwxLCJwIiwyLHsic3R5bGUiOnsiYm9keSI6eyJuYW1lIjoiYmFycmVkIn19fV0sWzIsMywicSIsMCx7InN0eWxlIjp7ImJvZHkiOnsibmFtZSI6ImJhcnJlZCJ9fX1dLFswLDIsImciXSxbMSwzLCJmIiwyXSxbNiw3LCJcXGNhcnQiLDEseyJzaG9ydGVuIjp7InNvdXJjZSI6MjAsInRhcmdldCI6MjB9LCJzdHlsZSI6eyJib2R5Ijp7Im5hbWUiOiJub25lIn0sImhlYWQiOnsibmFtZSI6Im5vbmUifX19XV0=
	\[\begin{tikzcd}
		\cdot & \cdot \\
		\cdot & \cdot
		\arrow["p"', "\shortmid"{marking}, from=1-2, to=1-1]
		\arrow["q", "\shortmid"{marking}, from=2-2, to=2-1]
		\arrow[""{name=0, anchor=center, inner sep=0}, "g", from=1-2, to=2-2]
		\arrow[""{name=1, anchor=center, inner sep=0}, "f"', from=1-1, to=2-1]
		\arrow["\cart"{description}, draw=none, from=0, to=1]
	\end{tikzcd}\]
    in a \vdc{} is \emph{cartesian} if any 2-cell
    % https://q.uiver.app/#q=WzAsNSxbMiwwLCJcXGNkb3QiXSxbMCwwLCJcXGNkb3QiXSxbMiwxLCJcXGNkb3QiXSxbMCwxLCJcXGNkb3QiXSxbMSwwLCJcXGNkb3RzIl0sWzIsMywicSIsMCx7InN0eWxlIjp7ImJvZHkiOnsibmFtZSI6ImJhcnJlZCJ9fX1dLFswLDIsImcnIFxcZCBnIl0sWzEsMywiZicgXFxkIGYiLDJdLFswLDQsInJfbiIsMix7InN0eWxlIjp7ImJvZHkiOnsibmFtZSI6ImJhcnJlZCJ9fX1dLFs0LDEsInJfMSIsMix7InN0eWxlIjp7ImJvZHkiOnsibmFtZSI6ImJhcnJlZCJ9fX1dLFs2LDcsIlxccGhpIiwxLHsic2hvcnRlbiI6eyJzb3VyY2UiOjIwLCJ0YXJnZXQiOjIwfSwic3R5bGUiOnsiYm9keSI6eyJuYW1lIjoibm9uZSJ9LCJoZWFkIjp7Im5hbWUiOiJub25lIn19fV1d
	\[\begin{tikzcd}
		\cdot & \cdots & \cdot \\
		\cdot && \cdot
		\arrow["q", "\shortmid"{marking}, from=2-3, to=2-1]
		\arrow[""{name=0, anchor=center, inner sep=0}, "{g' \d g}", from=1-3, to=2-3]
		\arrow[""{name=1, anchor=center, inner sep=0}, "{f' \d f}"', from=1-1, to=2-1]
		\arrow["{r_n}"', "\shortmid"{marking}, from=1-3, to=1-2]
		\arrow["{r_1}"', "\shortmid"{marking}, from=1-2, to=1-1]
		\arrow["\phi"{description}, draw=none, from=0, to=1]
	\end{tikzcd}\]
    factors uniquely therethrough:
    % https://q.uiver.app/#q=WzAsNyxbMiwwLCJcXGNkb3QiXSxbMCwwLCJcXGNkb3QiXSxbMiwyLCJcXGNkb3QiXSxbMCwyLCJcXGNkb3QiXSxbMSwwLCJcXGNkb3RzIl0sWzIsMSwiXFxjZG90Il0sWzAsMSwiXFxjZG90Il0sWzIsMywicSIsMCx7InN0eWxlIjp7ImJvZHkiOnsibmFtZSI6ImJhcnJlZCJ9fX1dLFswLDQsInJfbiIsMix7InN0eWxlIjp7ImJvZHkiOnsibmFtZSI6ImJhcnJlZCJ9fX1dLFs1LDIsImciXSxbNiwzLCJmIiwyXSxbNSw2LCJwIiwxXSxbMCw1LCJnJyJdLFsxLDYsImYnIiwyXSxbNCwxLCJyXzEiLDIseyJzdHlsZSI6eyJib2R5Ijp7Im5hbWUiOiJiYXJyZWQifX19XSxbOSwxMCwiXFxjYXJ0IiwxLHsic2hvcnRlbiI6eyJzb3VyY2UiOjIwLCJ0YXJnZXQiOjIwfSwic3R5bGUiOnsiYm9keSI6eyJuYW1lIjoibm9uZSJ9LCJoZWFkIjp7Im5hbWUiOiJub25lIn19fV0sWzEyLDEzLCJcXGhhdFxccGhpIiwxLHsic2hvcnRlbiI6eyJzb3VyY2UiOjIwLCJ0YXJnZXQiOjIwfSwic3R5bGUiOnsiYm9keSI6eyJuYW1lIjoibm9uZSJ9LCJoZWFkIjp7Im5hbWUiOiJub25lIn19fV1d
	\[\begin{tikzcd}
		\cdot & \cdots & \cdot \\
		\cdot && \cdot \\
		\cdot && \cdot
		\arrow["q", "\shortmid"{marking}, from=3-3, to=3-1]
		\arrow["{r_n}"', "\shortmid"{marking}, from=1-3, to=1-2]
		\arrow[""{name=0, anchor=center, inner sep=0}, "g", from=2-3, to=3-3]
		\arrow[""{name=1, anchor=center, inner sep=0}, "f"', from=2-1, to=3-1]
		\arrow["p"{description}, from=2-3, to=2-1]
		\arrow[""{name=2, anchor=center, inner sep=0}, "{g'}", from=1-3, to=2-3]
		\arrow[""{name=3, anchor=center, inner sep=0}, "{f'}"', from=1-1, to=2-1]
		\arrow["{r_1}"', "\shortmid"{marking}, from=1-2, to=1-1]
		\arrow["\cart"{description}, draw=none, from=0, to=1]
		\arrow["\hat\phi"{description}, draw=none, from=2, to=3]
	\end{tikzcd}\]
    In this case, we call $p$ the \emph{restriction} $q(f, g)$.
    If $q$ is a loose identity $A(1, 1)$, we denote $p = A(1, 1)(f, g)$ simply by $A(f, g)$.
\end{definition}

Restrictions are unique up to unique globular isomorphism and are consequently pseudo functorial: given a loose-cell $p \colon D \lto C$ and tight-cells $f, f' \colon A \to C$ and $g, g' \colon B \to D$, each pair of 2-cells $\phi \colon f' \tto f$ and $\gamma \colon g \tto g'$ induces a 2-cell $p(\phi, \gamma) \colon p(f, g) \tto p(f', g')$, assuming both restrictions exist.
\[
\Bigg(\;
% https://q.uiver.app/#q=WzAsNCxbMCwwLCJBIl0sWzAsMSwiQyJdLFsxLDAsIkEiXSxbMSwxLCJDIl0sWzAsMSwiZiciLDJdLFsyLDMsImYiXSxbMCwyLCIiLDAseyJsZXZlbCI6Miwic3R5bGUiOnsiaGVhZCI6eyJuYW1lIjoibm9uZSJ9fX1dLFsxLDMsIiIsMix7ImxldmVsIjoyLCJzdHlsZSI6eyJib2R5Ijp7Im5hbWUiOiJiYXJyZWQifSwiaGVhZCI6eyJuYW1lIjoibm9uZSJ9fX1dLFs0LDUsIlxccGhpIiwxLHsic2hvcnRlbiI6eyJzb3VyY2UiOjIwLCJ0YXJnZXQiOjIwfSwic3R5bGUiOnsiYm9keSI6eyJuYW1lIjoibm9uZSJ9LCJoZWFkIjp7Im5hbWUiOiJub25lIn19fV1d
\begin{tikzcd}
	A & A \\
	C & C
	\arrow[equals, from=1-1, to=1-2]
	\arrow[""{name=0, anchor=center, inner sep=0}, "{f'}"', from=1-1, to=2-1]
	\arrow[""{name=1, anchor=center, inner sep=0}, "f", from=1-2, to=2-2]
	\arrow["\shortmid"{marking}, equals, from=2-1, to=2-2]
	\arrow["\phi"{description}, draw=none, from=0, to=1]
\end{tikzcd}
\quad,\quad
% https://q.uiver.app/#q=WzAsNCxbMCwwLCJCIl0sWzAsMSwiRCJdLFsxLDAsIkIiXSxbMSwxLCJEIl0sWzAsMSwiZyIsMl0sWzIsMywiZyciXSxbMCwyLCIiLDAseyJsZXZlbCI6Miwic3R5bGUiOnsiaGVhZCI6eyJuYW1lIjoibm9uZSJ9fX1dLFsxLDMsIiIsMix7ImxldmVsIjoyLCJzdHlsZSI6eyJib2R5Ijp7Im5hbWUiOiJiYXJyZWQifSwiaGVhZCI6eyJuYW1lIjoibm9uZSJ9fX1dLFs0LDUsIlxcZ2FtbWEiLDEseyJzaG9ydGVuIjp7InNvdXJjZSI6MjAsInRhcmdldCI6MjB9LCJzdHlsZSI6eyJib2R5Ijp7Im5hbWUiOiJub25lIn0sImhlYWQiOnsibmFtZSI6Im5vbmUifX19XV0=
\begin{tikzcd}
	B & B \\
	D & D
	\arrow[equals, from=1-1, to=1-2]
	\arrow[""{name=0, anchor=center, inner sep=0}, "g"', from=1-1, to=2-1]
	\arrow[""{name=1, anchor=center, inner sep=0}, "{g'}", from=1-2, to=2-2]
	\arrow["\shortmid"{marking}, equals, from=2-1, to=2-2]
	\arrow["\gamma"{description}, draw=none, from=0, to=1]
\end{tikzcd}
\;\Bigg)
\qquad\mapsto\qquad
% https://q.uiver.app/#q=WzAsNCxbMCwwLCJBIl0sWzAsMSwiQSJdLFsxLDAsIkIiXSxbMSwxLCJCIl0sWzAsMSwiIiwyLHsibGV2ZWwiOjIsInN0eWxlIjp7ImhlYWQiOnsibmFtZSI6Im5vbmUifX19XSxbMiwzLCIiLDAseyJsZXZlbCI6Miwic3R5bGUiOnsiaGVhZCI6eyJuYW1lIjoibm9uZSJ9fX1dLFsyLDAsInAoZiwgZykiLDIseyJzdHlsZSI6eyJib2R5Ijp7Im5hbWUiOiJiYXJyZWQifX19XSxbMywxLCJwKGYnLCBnJykiLDAseyJzdHlsZSI6eyJib2R5Ijp7Im5hbWUiOiJiYXJyZWQifX19XSxbNCw1LCJwKFxccGhpLCBcXGdhbW1hKSIsMSx7InNob3J0ZW4iOnsic291cmNlIjoyMCwidGFyZ2V0IjoyMH0sInN0eWxlIjp7ImJvZHkiOnsibmFtZSI6Im5vbmUifSwiaGVhZCI6eyJuYW1lIjoibm9uZSJ9fX1dXQ==
\begin{tikzcd}[column sep=large]
	A & B \\
	A & B
	\arrow[""{name=0, anchor=center, inner sep=0}, equals, from=1-1, to=2-1]
	\arrow["{p(f, g)}"'{inner sep=.8ex}, "\shortmid"{marking}, from=1-2, to=1-1]
	\arrow[""{name=1, anchor=center, inner sep=0}, equals, from=1-2, to=2-2]
	\arrow["{p(f', g')}"{inner sep=.8ex}, "\shortmid"{marking}, from=2-2, to=2-1]
	\arrow["{p(\phi, \gamma)}"{description}, draw=none, from=0, to=1]
\end{tikzcd}
\]

\emph{Virtual equipments} were introduced by \textcite{cruttwell2010unified} to axiomatise the structure fundamental to the study of \fct{} and have proven an effective formalism for that purpose~\cite{cruttwell2010unified,koudenburg2024formal,arkor2024formal,arkor2024relative,arkor2025nerve}.

\begin{definition}[{\cite[Definition~7.6]{cruttwell2010unified}}]
    \label{virtual-equipment}
    A \emph{\ve} is a \vdc{} that admits restrictions and loose identities.
\end{definition}

\begin{example}[{\cite[Examples~7.3]{cruttwell2010unified}}]
    $\Span$ is a \ve. Given a diagram of sets,
    % https://q.uiver.app/#q=WzAsNSxbMSwxLCJZIl0sWzAsMSwiQyJdLFsyLDEsIkQiXSxbMCwwLCJBIl0sWzIsMCwiQiJdLFswLDEsImMiXSxbMywxLCJmIiwyXSxbNCwyLCJnIl0sWzAsMiwiZCIsMl1d
    \[\begin{tikzcd}
        A && B \\
        C & Y & D
        \arrow["f"', from=1-1, to=2-1]
        \arrow["g", from=1-3, to=2-3]
        \arrow["c", from=2-2, to=2-1]
        \arrow["d"', from=2-2, to=2-3]
    \end{tikzcd}\]
    its limit exhibits the restriction. More generally, for any category $\E$ with pullbacks, $\Span(\E)$ is a \ve, restrictions being formed in the same way.
\end{example}

\begin{example}[{\cite[Examples~7.3]{cruttwell2010unified}}]
    $\Dist$ is a \ve. Given a diagram of categories and distributors,
    % https://q.uiver.app/#q=WzAsNCxbMCwxLCJcXGIgQyJdLFsxLDEsIlxcYiBEIl0sWzAsMCwiXFxiIEEiXSxbMSwwLCJcXGIgQiJdLFsxLDAsInAiLDAseyJzdHlsZSI6eyJib2R5Ijp7Im5hbWUiOiJiYXJyZWQifX19XSxbMiwwLCJmIiwyXSxbMywxLCJnIl1d
    \[\begin{tikzcd}
        {\b A} & {\b B} \\
        {\b C} & {\b D}
        \arrow["f"', from=1-1, to=2-1]
        \arrow["g", from=1-2, to=2-2]
        \arrow["p"{inner sep=.8ex}, "\shortmid"{marking}, from=2-2, to=2-1]
    \end{tikzcd}\]
    the distributor $p(f, g) \defeq (a, b) \mapsto p(fa, gb)$ exhibits the restriction. More generally, for any category $\E$ with pullbacks, $\Dist(\E)$ is a \ve, restrictions being formed in the same way.
\end{example}

\subsection{Fully faithful functors}

We shall be concerned throughout the paper with full faithfulness of functors of \vdcs.

\begin{definition}
    \label{ff}
    A functor of \vdcs{} $F \colon \X \to \Y$ is \emph{\ff} if its underlying functor $\u F \colon \u\X \to \u\Y$ of categories is \ff{} and if, for each frame in $\X$ as on the left below,
    \[
    % https://q.uiver.app/#q=WzAsNSxbMCwwLCJYXzAiXSxbMSwwLCJcXGNkb3RzIl0sWzIsMCwiWF9uIl0sWzAsMSwiWCJdLFsyLDEsIlgnIl0sWzEsMCwicF8xIiwyLHsic3R5bGUiOnsiYm9keSI6eyJuYW1lIjoiYmFycmVkIn19fV0sWzIsMSwicF9uIiwyLHsic3R5bGUiOnsiYm9keSI6eyJuYW1lIjoiYmFycmVkIn19fV0sWzAsMywieCIsMl0sWzIsNCwieCciXSxbNCwzLCJwIiwwLHsic3R5bGUiOnsiYm9keSI6eyJuYW1lIjoiYmFycmVkIn19fV1d
    \begin{tikzcd}
        {X_0} & \cdots & {X_n} \\
        X && {X'}
        \arrow["x"', from=1-1, to=2-1]
        \arrow["{p_1}"'{inner sep=.8ex}, "\shortmid"{marking}, from=1-2, to=1-1]
        \arrow["{p_n}"'{inner sep=.8ex}, "\shortmid"{marking}, from=1-3, to=1-2]
        \arrow["{x'}", from=1-3, to=2-3]
        \arrow["p"{inner sep=.8ex}, "\shortmid"{marking}, from=2-3, to=2-1]
    \end{tikzcd}
    \hspace{5em}
    % https://q.uiver.app/#q=WzAsNSxbMCwwLCJGWF8wIl0sWzEsMCwiXFxjZG90cyJdLFsyLDAsIkZYX24iXSxbMCwxLCJGWCJdLFsyLDEsIkZYJyJdLFsxLDAsIkZwXzEiLDIseyJzdHlsZSI6eyJib2R5Ijp7Im5hbWUiOiJiYXJyZWQifX19XSxbMiwxLCJGcF9uIiwyLHsic3R5bGUiOnsiYm9keSI6eyJuYW1lIjoiYmFycmVkIn19fV0sWzAsMywiRngiLDJdLFsyLDQsIkZ4JyJdLFs0LDMsIkZwIiwwLHsic3R5bGUiOnsiYm9keSI6eyJuYW1lIjoiYmFycmVkIn19fV1d
    \begin{tikzcd}
        {FX_0} & \cdots & {FX_n} \\
        FX && {FX'}
        \arrow["Fx"', from=1-1, to=2-1]
        \arrow["{Fp_1}"'{inner sep=.8ex}, "\shortmid"{marking}, from=1-2, to=1-1]
        \arrow["{Fp_n}"'{inner sep=.8ex}, "\shortmid"{marking}, from=1-3, to=1-2]
        \arrow["{Fx'}", from=1-3, to=2-3]
        \arrow["Fp"{inner sep=.8ex}, "\shortmid"{marking}, from=2-3, to=2-1]
    \end{tikzcd}
    \]
    the function sending each cell $\chi$ in $\X$ with the frame on the left above to a cell $F\chi$ in $\Y$ exhibits a bijection between cells in $\X$ with the frame on the left above, and cells in $\Y$ with the frame on the right above.
\end{definition}

Abstractly, a functor of \vdcs{} is \ff{} in the sense of \cref{ff} if and only if it is \ff{} as a tight morphism in the \ve{} of \vdcs{}~\cite[\S8]{cruttwell2010unified}. In particular, every \ff{} functor of \vdcs{} $F \colon \X \to \Y$ is representably \ff{} as a 1-cell in the 2-category $\VDbl$, \ie{} for each \vdc{} $\W$, the postcomposition functor $\VDbl(\W, F) \colon \VDbl(\W, \X) \to \VDbl(\W, \Y)$ is \ff. In general, the converse does not hold: conceptually, representable \ffness{} does not capture \ffness{} with respect to non-unary cells, because the data of a transformation (\cref{transformation}) only involves assignments of unary cells.

\begin{example}
    Denote by $\X$ the \vdc{} with a unique object, unique tight morphism, unique loose morphism, unique nullary cell, and unique unary cell, and no $n$-ary cells for $n > 1$. Denote by $\Y$ the \vdc{} with a unique object, unique tight morphism, unique loose morphism, and unique unary cell, two nullary cells $\alpha$ and $\beta$, and no $n$-ary cells for $n > 1$. There is a unique functor of \vdcs{} $A \colon \X \to \Y$ that sends the unique nullary cell of $\X$ to $\alpha$. This is not \ff, since the assignment on nullary cells is not bijective. However, it is representably \ff. To see this, observe that, since $\X$ is subterminal, given a \vdc{} $\W$, there is at most one functor $F \colon \W \to \X$ (a functor exists if and only if $\W$ has no $n$-ary cells for $n > 1$). Since $\X$ and $\Y$ both have no non-identity unary cells, there is exactly one transformation $F \tto F$ and exactly one transformation $AF \tto AF$. Thus $A$ is trivially representably \ff.
\end{example}

Despite the distinction between the notions of \ffness{} and of representable \ffness, there is a specific instance of interest in which they coincide. Recall from \cref{background-on-vdcs} that an adjunction of \vdcs{} is, by definition, an adjunction in the 2-category $\VDbl$ or, equivalently, an adjunction in the virtual equipment of \vdcs~\cite[Corollary~5.6]{arkor2024formal}. The following establishes that right adjoints between \vdcs{} are \ff{} if and only if they are representably \ff.

\begin{lemma}
    \label{coreflection}
    Let $\ell \adj r \colon B \to A$ be an adjunction in a \ve{}. The following are equivalent.
    \begin{enumerate}
        \item The counit of the adjunction is invertible.
        \item The right adjoint $r$ is \ff{}, in the sense that the canonical cell ${} \tto A(r, r)$ induced by restriction is opcartesian~\cite[Definition~3.27]{arkor2024formal}.
        \item The right adjoint $r$ is representably \ff{} in the underlying 2-category.
    \end{enumerate}
\end{lemma}

\begin{proof}
    (1 $\iff$ 2) follows from \cite[Lemma~2.23]{arkor2024formal} applied to the loose adjunction $B(1, \ell) \adj A(1, r)$ induced by $\ell \adj r$.
    (1 $\iff$ 3) follows by representability from the corresponding statement in $\Cat$, since the representably \ff{} 1-cells in $\Cat$ are precisely the \ff{} functors.
\end{proof}

\section{Exponentiable \vdcs}
\label{exponentiability}

Our investigation begins with the study of exponentiability in the 2-category $\VDbl$ of \vdcs{}. As we shall see, $\VDbl$ is not cartesian closed (\cref{VDbl-is-not-CC}), but there are nevertheless several interesting classes of exponentiable objects. To analyse exponentials of \vdcs, it will be useful to first spend a little time studying the relationship between \vdcs{} and graphs internal to categories.

\begin{definition}
    Denote by $\G \defeq \{ G_1 \rightrightarrows G_0 \}$ the free-standing parallel pair.
    \begin{enumerate}
        \item $\Grph \defeq [\G, \Set]$ is the category of graphs.
        \item $\Grph(\Cat) \defeq [\G, \Cat]$ is the 2-category of graphs internal to categories.
        \qedhere
    \end{enumerate}
\end{definition}

There are two perspectives on graphs internal to categories, which we make explicit to avoid potential confusion about our notational conventions.

\begin{proposition}
    The 2-category $\Grph(\Cat)$ is isomorphic to the 2-category $\Cat(\Grph)$ of categories internal to $\Grph$.
\end{proposition}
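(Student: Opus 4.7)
The strategy is to construct a direct isomorphism by transposing the two axes of structure that both sides carry. Unfolded, a graph internal to $\Cat$ consists of a pair of categories $C_0, C_1$ and a parallel pair of functors $C_1 \to C_0$, while a category internal to $\Grph$ consists of a pair of graphs $\mathrm{Ob}, \mathrm{Ar}$ with source, target, identity, and composition graph morphisms, subject to the usual axioms. In either case the total data amounts to a $2 \times 2$ grid of sets carrying a ``category structure'' (identities and composition) along one axis and a ``graph structure'' (source and target) along the other, with the two sets of operations commuting; the two descriptions correspond simply to a choice of which axis to present first.

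Concretely, from a category $(\mathrm{Ob}, \mathrm{Ar}, s, t, i, m)$ in $\Grph$ I would extract a ``vertex category'' $C_0$ on the vertices of $\mathrm{Ob}$ and of $\mathrm{Ar}$, and an ``edge category'' $C_1$ on their edges, with category structure induced by the vertex and edge components of $s, t, i, m$; the graph structure maps of $\mathrm{Ob}$ and of $\mathrm{Ar}$ then assemble into two parallel functors $C_1 \to C_0$, whose functoriality is exactly the requirement that $s, t, i, m$ be graph morphisms. Conversely, from a functor $\G \to \Cat$ the same sets repackage into two graphs whose category structure maps become graph morphisms, and the internal category axioms are exactly the functoriality axioms of the original data. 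These passages are evidently mutually inverse on objects.

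For 1-cells, a natural transformation between functors $\G \to \Cat$ is a pair of functors commuting with source and target; split into its object and morphism parts, this coincides with the pair of graph morphisms underlying an internal functor in $\Grph$, and the two lists of axioms translate into one another. For 2-cells, a 2-cell in $[\G, \Cat]$ is a pair of natural transformations compatible with source and target, whose components together determine a graph morphism $\mathrm{Ob} \to \mathrm{Ar}'$ satisfying the usual internal-naturality condition; this is precisely an internal natural transformation in $\Cat(\Grph)$. Horizontal and vertical compositions agree because both are defined componentwise.

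There is no substantive obstacle: the statement is an instance of the general principle that the finite-limit theories of graphs and of categories commute, so that the order of internalisation is immaterial. The only real work is in verifying that the matching of data extends coherently through each layer of 2-categorical structure, which proceeds by direct rearrangement as above.
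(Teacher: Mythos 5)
Your proof is correct and amounts to the same argument as the paper's: the paper simply observes that limits in the functor category $\Grph = [\G, \Set]$ are computed componentwise (citing the symmetry of internalisation for two-dimensional limit sketches), which is exactly the abstract principle you name in your final paragraph; your explicit transposition of the $2\times 2$ grid of data, together with the componentwise matching of 1-cells and 2-cells, is just that principle unwound by hand. No gap.
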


\begin{proof}
    Follows from the fact that limits in functor categories are computed componentwise. Abstractly, this is an instance of the symmetry of internalisation for two-dimensional limit sketches~\cite[Theorem~7.5]{arkor2024enhanced}.
\end{proof}

Explicitly, the objects of $\Grph(\Cat)$ comprise graphs of categories (below left), where $\A_0$ is a category of \emph{objects} and \emph{tight morphisms}, and $\A_1$ is a category of \emph{loose morphisms} and \emph{unary cells}. On the other hand, objects of $\Cat(\Grph)$ comprise internal categories (below right), where $(\A^\top)_0$ is a graph of objects and loose morphisms, and $(\A^\top)_1$ is a graph of tight morphisms and unary cells. The isomorphism between the two 2-categories may therefore be seen as a kind of transposition operation, analogous to that for strict \dcs{}~\cite[\S1.2]{grandis1999limits}.
\[
% https://q.uiver.app/#q=WzAsMyxbMSwwLCJcXEFfMSJdLFswLDEsIlxcQV8wIl0sWzIsMSwiXFxBXzAiXSxbMCwxLCJzX1xcQSIsMl0sWzAsMiwidF9cXEEiXV0=
\begin{tikzcd}
	& {\A_1} \\
	{\A_0} && {\A_0}
	\arrow["{s_\A}"', from=1-2, to=2-1]
	\arrow["{t_\A}", from=1-2, to=2-3]
\end{tikzcd}
\hspace{6em}
% https://q.uiver.app/#q=WzAsMyxbMSwwLCIoXFxBXlxcdG9wKV8xIl0sWzAsMSwiKFxcQV5cXHRvcClfMCJdLFsyLDEsIihcXEFeXFx0b3ApXzAiXSxbMCwxLCJzX3soXFxBXlxcdG9wKX0iLDJdLFswLDIsInRfeyhcXEFeXFx0b3ApfSJdXQ==
\begin{tikzcd}
	& {(\A^\top)_1} \\
	{(\A^\top)_0} && {(\A^\top)_0}
	\arrow["{s_{(\A^\top)}}"', from=1-2, to=2-1]
	\arrow["{t_{(\A^\top)}}", from=1-2, to=2-3]
\end{tikzcd}
\]
In this paper, we will focus on the perspective of $\Grph(\Cat)$; our notation is chosen to reflect this, as well as to match the typical convention for \pdcs{} viewed as pseudocategories.

Every \vdc{} $\X$ has an underlying graph $\X\down$ internal to categories, given by forgetting the non-unary cells. Conversely, we may view a graph $\A$ internal to categories as a \vdc{} $\A\up$ with no non-unary cells. The precise relationship between categories, graphs internal to categories, and \vdcs{} is summarised by the following pair of adjunctions.\footnote{Abstractly, these adjunctions may be seen to arise from change of shape for generalised multicategories (see \cite[\S6.7]{leinster2004higher} and also \cref{generalised-multicategories}).}

\begin{proposition}
    \label{Cat-graphs-into-VDCs}
    There are coreflective 2-adjunctions as follows.
    \[
    % https://q.uiver.app/#q=WzAsMyxbMiwwLCJcXFZEYmwiXSxbMSwwLCJcXEdycGgoXFxDYXQpIl0sWzAsMCwiXFxDYXQiXSxbMSwwLCJcXHBoXFx1cCIsMCx7Im9mZnNldCI6LTIsInN0eWxlIjp7InRhaWwiOnsibmFtZSI6Imhvb2siLCJzaWRlIjoidG9wIn19fV0sWzAsMSwiXFxwaFxcZG93biIsMCx7Im9mZnNldCI6LTJ9XSxbMiwxLCJcXERpc1xccGgiLDAseyJvZmZzZXQiOi0yLCJzdHlsZSI6eyJ0YWlsIjp7Im5hbWUiOiJob29rIiwic2lkZSI6InRvcCJ9fX1dLFsxLDIsIlxccGhfMCIsMCx7Im9mZnNldCI6LTJ9XSxbMyw0LCIiLDAseyJsZXZlbCI6MSwic3R5bGUiOnsibmFtZSI6ImFkanVuY3Rpb24ifX1dLFs1LDYsIiIsMCx7ImxldmVsIjoxLCJzdHlsZSI6eyJuYW1lIjoiYWRqdW5jdGlvbiJ9fV1d
    \begin{tikzcd}[column sep=large]
        \Cat & {\Grph(\Cat)} & \VDbl
        \arrow[""{name=0, anchor=center, inner sep=0}, "{\Dis\ph}", shift left=2, hook, from=1-1, to=1-2]
        \arrow[""{name=1, anchor=center, inner sep=0}, "{\ph_0}", shift left=2, from=1-2, to=1-1]
        \arrow[""{name=2, anchor=center, inner sep=0}, "{\ph\up}", shift left=2, hook, from=1-2, to=1-3]
        \arrow[""{name=3, anchor=center, inner sep=0}, "{\ph\down}", shift left=2, from=1-3, to=1-2]
        \arrow["\dashv"{anchor=center, rotate=-90}, draw=none, from=0, to=1]
        \arrow["\dashv"{anchor=center, rotate=-90}, draw=none, from=2, to=3]
    \end{tikzcd}
    \qquad = \qquad
    % https://q.uiver.app/#q=WzAsMixbMSwwLCJcXFZEYmwiXSxbMCwwLCJcXENhdCJdLFsxLDAsIlxcb1xccGgiLDAseyJvZmZzZXQiOi0yLCJzdHlsZSI6eyJ0YWlsIjp7Im5hbWUiOiJob29rIiwic2lkZSI6InRvcCJ9fX1dLFswLDEsIlxcdVxccGgiLDAseyJvZmZzZXQiOi0yfV0sWzIsMywiIiwwLHsibGV2ZWwiOjEsInN0eWxlIjp7Im5hbWUiOiJhZGp1bmN0aW9uIn19XV0=
    \begin{tikzcd}
        \Cat & \VDbl
        \arrow[""{name=0, anchor=center, inner sep=0}, "{\o\ph}", shift left=2, hook, from=1-1, to=1-2]
        \arrow[""{name=1, anchor=center, inner sep=0}, "{\u\ph}", shift left=2, from=1-2, to=1-1]
        \arrow["\dashv"{anchor=center, rotate=-90}, draw=none, from=0, to=1]
    \end{tikzcd}
    \]
\end{proposition}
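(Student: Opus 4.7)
The plan is to establish the two coreflective adjunctions separately and then observe that their composite yields the adjunction $\o\ph \dashv \u\ph$ on the right.

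For $\Dis\ph \dashv \ph_0$: since $\Grph(\Cat) = [\G, \Cat]$ is a 2-functor category into a cocomplete 2-category, evaluation at $G_0 \in \G$ admits a left 2-adjoint given by left Kan extension along the inclusion $\{G_0\} \hookrightarrow \G$. Computing this pointwise shows it sends a category $A$ to the internal graph $\emptyset \rightrightarrows A$, where $\emptyset$ is the initial category; this coincides with $\Dis(A)$. Concretely, a morphism $\Dis(A) \to \B$ in $\Grph(\Cat)$ consists of a functor $A \to \B_0$ and a functor $\emptyset \to \B_1$ compatible with source and target, and the latter is uniquely determined by initiality. Since $(\Dis A)_0 = A$, the unit is an identity, so $\Dis\ph$ is 2-fully faithful and the 2-adjunction is coreflective.

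For $\ph\up \dashv \ph\down$: on a graph $\A = (\A_1 \rightrightarrows \A_0)$ internal to categories, $\A\up$ is defined to be the \vdc{} whose objects, tight morphisms, loose morphisms, and unary cells come from $\A$, and which has no $n$-ary cells for $n \neq 1$; composition and identities of unary cells arise from the category structure of $\A_1$, and this trivially satisfies the axioms of a \vdc. The functor $\ph\down$ simply discards non-unary cells. A functor $F \colon \A\up \to \X$ in $\VDbl$ is determined by its action on objects, tight morphisms, loose morphisms, and unary cells, subject to functoriality in $\A_0$ and $\A_1$ and to compatibility with source and target; this is precisely the data of a morphism $\A \to \X\down$ in $\Grph(\Cat)$. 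Similarly, a transformation $F \tto G$ between such functors consists of tight components and cell components indexed by loose morphisms of $\A$; because $\A\up$ has no non-unary cells, the naturality axiom of \cref{transformation} reduces to the instances where $\chi$ is a unary cell of $\A_1$, which is exactly the commutativity of a square in $\A_1$ expressing that the components assemble into a 2-cell of $\Grph(\Cat)$. Since $(\A\up)\down = \A$ on the nose, the unit is an identity, so $\ph\up$ is 2-fully faithful.

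The composite adjunction $\o\ph \dashv \u\ph$ then sends a category $A$ to the \vdc{} $(\Dis A)\up$ with underlying category $A$ and no non-identity loose morphisms or cells, whose right adjoint extracts the underlying category of tight morphisms. The main obstacle is the verification for $\ph\up \dashv \ph\down$ that the naturality condition for transformations in $\VDbl$ collapses appropriately on functors out of $\A\up$; however, since the cells $\chi$ appearing in the naturality axiom of \cref{transformation} are all unary when the source \vdc{} has no non-unary cells, this collapse is immediate from inspection of the naturality diagram.
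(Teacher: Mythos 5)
Your proof is correct and takes essentially the same route as the paper's, which simply notes that the left-hand adjunction follows from initiality of $\b 0$, that the right-hand one is elementary, and that coreflectivity holds because the right adjoints retract the left adjoints; your left-Kan-extension computation (giving $\b 0$ at $G_1$ since $\G(G_0,G_1)=\emptyset$) and the collapse of the transformation-naturality condition to unary cells are exactly the details being elided. One minor slip: the composite left adjoint sends $\b A$ to $(\Dis{\b A})\up \iso \o{\b A}$, which has \emph{no} loose morphisms (and hence no cells) at all, not merely no non-identity ones.
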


\begin{proof}
    The existence of the leftmost 2-adjunction follows from initiality of the empty category $\b0$. The rightmost 2-adjunction
    $\VDbl(\A\up, \X) \iso \Grph(\Cat)(\A, \X\down)$ is elementary. In both cases, the right adjoints are retractions of the left adjoints.
\end{proof}

\begin{remark}
    Neither $\Dis\ph \colon \Cat \to \Grph(\Cat)$ nor $\o\ph \colon \Cat \to \VDbl$ admits a left adjoint, since neither preserves the terminal object. We will show in \cref{chaotic-vdc} that $\u\ph \colon \VDbl \to \Cat$ admits a right adjoint.
\end{remark}

It follows from \cref{Cat-graphs-into-VDCs} that the forgetful functor $\u{\ph} \colon \VDbl \to \Cat$ is given by $\u{\ph} \iso \Cat(\b1, \u{\ph}) \iso \VDbl(\o{\b1}, {-})$, where $\b1$ is the terminal category\footnote{This is also observed in \cite[Remark~6.1]{fujii2025familial}.}. In what follows, we may refer to the underlying category of a \vdc{} $\X$ as either $\u\X$ or $\X_0$ (thereby leaving $\ph\down$ implicit), depending on the intuition we wish to convey.

\begin{definition}
    A \vdc{} is \emph{unary} if each of its cells is unary, \ie{} it is in the image of $\ph\up \colon \Grph(\Cat) \ffto \VDbl$.
\end{definition}

While exponentiability of general \vdcs{} is subtle, \emph{unary} \vdcs{} are much better behaved.

\begin{proposition}
    \label{Grph(Cat)-is-CC}
    The 2-category $\Grph(\Cat)$ is cartesian closed. Explicitly, given unary \vdcs{} $\A$ and $\X$, the exponential $\X^\A$ in $\Grph(\Cat)$ is given by the following unary \vdc{}.
    \begin{itemize}
        \item The underlying category is the functor category ${\X_0}^{\A_0}$.
        \item \label{graph-morphism} A loose morphism from $F$ to $G$ is a graph morphism.
        % https://q.uiver.app/#q=WzAsNixbMSwwLCJcXEFfMSJdLFsxLDEsIlxcWF8xIl0sWzAsMCwiXFxBXzAiXSxbMCwxLCJcXFhfMCJdLFsyLDEsIlxcWF8wIl0sWzIsMCwiXFxBXzAiXSxbMCwyLCJzIiwyXSxbMCw1LCJ0Il0sWzIsMywiRiIsMl0sWzEsMywicyJdLFswLDEsIlxcWGkiLDFdLFsxLDQsInQiLDJdLFs1LDQsIkciXV0=
        \[\begin{tikzcd}
            {\A_0} & {\A_1} & {\A_0} \\
            {\X_0} & {\X_1} & {\X_0}
            \arrow["F"', from=1-1, to=2-1]
            \arrow["s"', from=1-2, to=1-1]
            \arrow["t", from=1-2, to=1-3]
            \arrow["\Xi"{description}, from=1-2, to=2-2]
            \arrow["G", from=1-3, to=2-3]
            \arrow["s", from=2-2, to=2-1]
            \arrow["t"', from=2-2, to=2-3]
        \end{tikzcd}\]
        Explicitly, this comprises the following data.
        \begin{enumerate}
            \item For each loose morphism $p \colon X \lto Y$ in $\A$, a loose morphism $\Xi p \colon FX \lto GY$.
            \item For each cell in $\A$,
            % https://q.uiver.app/#q=WzAsNCxbMSwwLCJYIl0sWzAsMCwiWSJdLFsxLDEsIlgnIl0sWzAsMSwiWSciXSxbMCwxLCJwIiwyLHsic3R5bGUiOnsiYm9keSI6eyJuYW1lIjoiYmFycmVkIn19fV0sWzEsMywieSIsMl0sWzAsMiwieCJdLFsyLDMsInAnIiwwLHsic3R5bGUiOnsiYm9keSI6eyJuYW1lIjoiYmFycmVkIn19fV0sWzYsNSwiXFxhbHBoYSIsMSx7InNob3J0ZW4iOnsic291cmNlIjoyMCwidGFyZ2V0IjoyMH0sInN0eWxlIjp7ImJvZHkiOnsibmFtZSI6Im5vbmUifSwiaGVhZCI6eyJuYW1lIjoibm9uZSJ9fX1dXQ==
            \[\begin{tikzcd}
                Y & X \\
                {Y'} & {X'}
                \arrow[""{name=0, anchor=center, inner sep=0}, "y"', from=1-1, to=2-1]
                \arrow["p"'{inner sep=.8ex}, "\shortmid"{marking}, from=1-2, to=1-1]
                \arrow[""{name=1, anchor=center, inner sep=0}, "x", from=1-2, to=2-2]
                \arrow["{p'}"{inner sep=.8ex}, "\shortmid"{marking}, from=2-2, to=2-1]
                \arrow["\alpha"{description}, draw=none, from=1, to=0]
            \end{tikzcd}\]
            a cell in $\X$.
            % https://q.uiver.app/#q=WzAsNCxbMSwwLCJGWCJdLFswLDAsIkdZIl0sWzEsMSwiRlgnIl0sWzAsMSwiR1knIl0sWzAsMSwiXFxYaSBwIiwyLHsic3R5bGUiOnsiYm9keSI6eyJuYW1lIjoiYmFycmVkIn19fV0sWzEsMywiR3kiLDJdLFswLDIsIkZ4Il0sWzIsMywiXFxYaSBwJyIsMCx7InN0eWxlIjp7ImJvZHkiOnsibmFtZSI6ImJhcnJlZCJ9fX1dLFs2LDUsIlxcWGkgXFxhbHBoYSIsMSx7InNob3J0ZW4iOnsic291cmNlIjoyMCwidGFyZ2V0IjoyMH0sInN0eWxlIjp7ImJvZHkiOnsibmFtZSI6Im5vbmUifSwiaGVhZCI6eyJuYW1lIjoibm9uZSJ9fX1dXQ==
            \[\begin{tikzcd}
                GY & FX \\
                {GY'} & {FX'}
                \arrow[""{name=0, anchor=center, inner sep=0}, "Gy"', from=1-1, to=2-1]
                \arrow["{\Xi p}"'{inner sep=.8ex}, "\shortmid"{marking}, from=1-2, to=1-1]
                \arrow[""{name=1, anchor=center, inner sep=0}, "Fx", from=1-2, to=2-2]
                \arrow["{\Xi p'}"{inner sep=.8ex}, "\shortmid"{marking}, from=2-2, to=2-1]
                \arrow["{\Xi \alpha}"{description}, draw=none, from=1, to=0]
            \end{tikzcd}\]
        \end{enumerate}
        These are required to satisfy the following laws.
        \begin{enumerate}[resume]
            \item $\Xi(1_p) = 1_{\Xi(p)}$.
            \item $\Xi(\alpha \d \alpha') = \Xi(\alpha) \d \Xi(\alpha')$.
        \end{enumerate}
        \item A cell with the following frame
        % https://q.uiver.app/#q=WzAsNCxbMSwwLCJGIl0sWzAsMCwiRyJdLFsxLDEsIkYnIl0sWzAsMSwiRyciXSxbMCwxLCJcXFhpIiwyLHsic3R5bGUiOnsiYm9keSI6eyJuYW1lIjoiYmFycmVkIn19fV0sWzAsMiwiXFxQaGkiXSxbMSwzLCJcXEdhbW1hIiwyXSxbMiwzLCJcXFhpJyIsMCx7InN0eWxlIjp7ImJvZHkiOnsibmFtZSI6ImJhcnJlZCJ9fX1dLFs2LDUsIlxceGkiLDEseyJzaG9ydGVuIjp7InNvdXJjZSI6MjAsInRhcmdldCI6MjB9LCJzdHlsZSI6eyJib2R5Ijp7Im5hbWUiOiJub25lIn0sImhlYWQiOnsibmFtZSI6Im5vbmUifX19XV0=
        \[\begin{tikzcd}
            G & F \\
            {G'} & {F'}
            \arrow[""{name=0, anchor=center, inner sep=0}, "\Gamma"', from=1-1, to=2-1]
            \arrow["\Xi"'{inner sep=.8ex}, "\shortmid"{marking}, from=1-2, to=1-1]
            \arrow[""{name=1, anchor=center, inner sep=0}, "\Phi", from=1-2, to=2-2]
            \arrow["{\Xi'}"{inner sep=.8ex}, "\shortmid"{marking}, from=2-2, to=2-1]
            \arrow["\xi"{description}, draw=none, from=0, to=1]
        \end{tikzcd}\]
        is a natural transformation $\xi \colon \Xi \tto \Xi'$ that commutes with $\Phi$ and $\Gamma$.
    \end{itemize}
\end{proposition}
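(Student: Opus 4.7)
The statement is essentially a calculation of exponentials in the 2-functor 2-category $\Grph(\Cat) = [\G, \Cat]$. Since $\G$ is a small 2-category (in fact locally discrete) and $\Cat$ is a complete cartesian closed 2-category, a standard enriched-category-theoretic argument yields that $[\G, \Cat]$ is itself cartesian closed. The remaining work is then to verify that the $\X^\A$ described explicitly in the statement realises the abstract exponential.

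I would establish this by directly exhibiting a 2-natural isomorphism
\[
\Grph(\Cat)(\B \times \A, \X) \iso \Grph(\Cat)(\B, \X^\A)
\]
for an arbitrary $\B \in \Grph(\Cat)$. Unpacked, a morphism $\B \times \A \to \X$ in $\Grph(\Cat)$ consists of a pair of functors $F_i : \B_i \times \A_i \to \X_i$ (for $i \in \{0, 1\}$) commuting with the evident source and target maps. Cartesian closure of $\Cat$ yields transposes $\tilde F_0 : \B_0 \to [\A_0, \X_0] = (\X^\A)_0$ and $\tilde F_1 : \B_1 \to [\A_1, \X_1]$. The crux is to observe that compatibility of $F_1$ with source and target translates, via the transpose, to the statement that for every $b \in \B_1$ the functor $\tilde F_1(b) : \A_1 \to \X_1$ satisfies $s_\X \c \tilde F_1(b) = \tilde F_0(s b) \c s_\A$ and $t_\X \c \tilde F_1(b) = \tilde F_0(t b) \c t_\A$. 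This is precisely the data of a loose morphism of $\X^\A$ from $\tilde F_0(s b)$ to $\tilde F_0(t b)$ in the sense of item (2), so $\tilde F_1$ corestricts to a functor $\B_1 \to (\X^\A)_1$, yielding the transposed morphism $\B \to \X^\A$.

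The inverse construction---taking a morphism $\B \to \X^\A$ to its componentwise uncurrying---is dual, and mutual inverseness follows immediately from that of currying in $\Cat$. To upgrade to a 2-natural isomorphism, I would perform the analogous unpacking for 2-cells of $\Grph(\Cat)$: a 2-cell is a pair of natural transformations on the component categories commuting with source and target, and the description in item (3) is precisely what is needed to match these with 2-cells in $\Grph(\Cat)(\B, \X^\A)$. I expect the main obstacle to be simply bookkeeping---carefully tracking how each piece of data on one side of the adjunction corresponds, under componentwise transposition, to the prescribed data of $\X^\A$ on the other---since there is no substantive coherence issue and the proof ultimately reduces to unwinding definitions.
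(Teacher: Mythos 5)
Your argument is correct, and the first half (cartesian closure of $[\G,\Cat]$ from general enriched/presheaf 2-category theory) is exactly the paper's. Where you diverge is in how the explicit description of $\X^\A$ is justified: the paper obtains it by evaluating the exponential at the representables $\G(G_0,-)$ and $\G(G_1,-)$ via the Yoneda lemma, so that $(\X^\A)_0 \iso [\G,\Cat](\yo G_0 \times \A, \X) \iso [\A_0,\X_0]$ and $(\X^\A)_1 \iso [\G,\Cat](\yo G_1 \times \A, \X)$, the latter unwinding to the category of graph morphisms since $\yo G_1 \times \A$ has value $\A_0 + \A_0$ at $G_0$ and $\A_1$ at $G_1$. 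You instead verify the universal property $\Grph(\Cat)(\B\times\A,\X)\iso\Grph(\Cat)(\B,\X^\A)$ directly by componentwise currying in $\Cat$. Both are sound; the Yoneda route is shorter and explains where the formula comes from, while your route is self-contained and makes the bookkeeping explicit. One small point of care in your version: $(\X^\A)_1$ is not $[\A_1,\X_1]$ but the category of triples $(F,\Xi,G)$ fibred over $[\A_0,\X_0]\times[\A_0,\X_0]$, so the transpose $\tilde F_1$ does not literally corestrict; rather one must assemble $b\mapsto(\tilde F_0(sb),\tilde F_1(b),\tilde F_0(tb))$ and check functoriality in $b$ -- you note the right compatibility, so this is a matter of phrasing rather than a gap.
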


\begin{proof}
    $\Grph(\Cat)$ is a presheaf 2-category, hence is cartesian closed. The explicit description of the exponential is given by evaluating the exponentials in the presheaf 2-category using the Yoneda lemma.
\end{proof}

Our strategy for studying exponentiability of general \vdcs{} stems from the observation that exponentials of \vdcs{} are closely related to exponentials of unary \vdcs.

\begin{definition}
    A 2-adjunction $L \adj R$ between cartesian monoidal 2-categories satisfies the \emph{reciprocity condition} if the canonical 1-cell $\tp{\varepsilon_X \c L\pi_1, L\pi_2} \colon L(RX \times Y) \to X \times LY$ is invertible for all objects $X$ and $Y$.
\end{definition}

Given a 2-adjunction $L \adj R$, it is a straightforward exercise to show that $R$ preserves those exponentials that exist if the 2-adjunction satisfies the reciprocity condition, and that the converse holds assuming the domain of $R$ is cartesian closed (\cf~\cite[6]{lawvere1970equality}).

\begin{lemma}
    \label{reciprocity}
    The 2-adjunctions of \Cref{Cat-graphs-into-VDCs} satisfy the reciprocity condition. Consequently, the 2-functors $\ph\down \colon \VDbl \to \Grph(\Cat)$ and $\ph_0 \colon \Grph(\Cat) \to \Cat$, and their composite $\u\ph \colon \VDbl \to \Cat$, preserve exponentials.
\end{lemma}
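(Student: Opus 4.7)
The plan is to exploit that both 2-adjunctions in \Cref{Cat-graphs-into-VDCs} are coreflective, so the left adjoints $\Dis\ph$ and $\ph\up$ are fully faithful and their counits are invertible. First I would record the following general fact: for any coreflective 2-adjunction $L \adj R$ between cartesian 2-categories in which $L$ preserves binary products, the reciprocity condition holds. Indeed, letting $\phi_{X, Y} \colon L(RX \times Y) \xto{\iso} LRX \times LY$ denote the canonical comparison iso, the reciprocity 1-cell $\tp{\varepsilon_X \c L\pi_1, L\pi_2}$ coincides with $(\varepsilon_X \times 1_{LY}) \c \phi_{X, Y}$, which is invertible because $\varepsilon_X$ is.

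It then suffices to verify that $\Dis\ph$ and $\ph\up$ each preserve binary products. Products in $\Grph(\Cat)$ are computed componentwise, and products in $\VDbl$ are too, by \Cref{products}. For $\Dis\ph$, the graph of categories $\Dis C \times \Dis D$ has category of objects $C \times D$ and category of morphisms $\b 0 \times \b 0 = \b 0$, which identifies it with $\Dis(C \times D)$. For $\ph\up$, both $(\A \times \B)\up$ and $\A\up \times \B\up$ have empty classes of $n$-ary cells for each $n \neq 1$ (for the latter, because this is true of each factor and products in $\VDbl$ are computed componentwise), while their unary cells are in both cases pairs of unary cells of $\A$ and of $\B$.

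The ``Consequently'' clause is then immediate: as noted just before the statement of the lemma, reciprocity is equivalent to preservation of exponentials by the right adjoint, so $\ph\down$ and $\ph_0$ each preserve exponentials, and hence so does their composite $\u\ph$. The only step requiring any real care is matching up the componentwise descriptions of products in $\Grph(\Cat)$ and $\VDbl$ with the concrete action of the left adjoints, but this is bookkeeping rather than a conceptual obstacle.
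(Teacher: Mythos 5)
There is a genuine gap at the very first step, and it propagates through the whole argument. You assert that, the adjunctions being coreflective, the left adjoints are fully faithful \emph{and their counits are invertible}; your general lemma then reduces reciprocity to the invertibility of $\varepsilon_X$. But for an adjunction $L \adj R$ with $L$ fully faithful it is the \emph{unit} $1 \tto RL$ that is invertible, while the counit $LR \tto 1$ is invertible only when $R$ is also fully faithful. Here the counits are the comparison functors $(\X\down)\up \to \X$ and $(\Dis{\A_0}) \to \A$, which forget, respectively, all non-unary cells of $\X$ and the entire morphism category $\A_1$; neither is invertible in general. Your proposed general fact is in fact false: the discrete-category adjunction $L \adj \mathrm{ob} \colon \Cat \to \Set$ is coreflective and $L$ preserves binary products, yet $L(\mathrm{ob}(\b C) \times S)$ is a discrete category while $\b C \times LS$ is not, so reciprocity fails there -- consistently with the fact that $\mathrm{ob}$ does not preserve exponentials.

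That said, your factorisation $\tp{\varepsilon_X \c L\pi_1, L\pi_2} = (\varepsilon_X \times 1_{LY}) \c \phi_{X, Y}$ is correct, and your verification that $\Dis\ph$ and $\ph\up$ preserve binary products is fine, so the argument can be repaired: what must actually be shown is that $\varepsilon_X \times 1_{LY}$ is invertible even though $\varepsilon_X$ is not. This holds for a reason specific to these two adjunctions, namely that taking the product with an object in the image of $L$ annihilates precisely the data the counit forgets. Since $n$-ary cells in a product of \vdcs{} are pairs of $n$-ary cells and $\A\up$ has only unary cells, $\X \times \A\up$ has no non-unary cells, so $\varepsilon_\X \times 1_{\A\up} \colon (\X\down)\up \times \A\up \to \X \times \A\up$ is invertible; similarly $\A \times (\Dis{\b B})$ has empty morphism category, so the other comparison is invertible. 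This componentwise computation is exactly the paper's proof, so once your appeal to invertibility of the counit is replaced by it, your route collapses onto the paper's.
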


\begin{proof}
    The reciprocity condition for the first adjunction states that the canonical functor $(\X\down \times \A)\up \to \X \times \A\up$ is invertible for each \vdc{} $\X$ and unary \vdc{} $\A$. Since the $n$-ary cells in a product are given by pairs of $n$-ary cells in each component, and $\A\up$ has only unary cells, this is clear. The reciprocity condition for the second adjunction states that the canonical functor $(\Dis{(\X_0 \times \b A)}) \to \X \times (\Dis{\b A})$ is invertible for each unary \vdc{} $\X$ and category $\b A$. This is clear for essentially the same reason, since $\Dis{\b A}$ has no cells whatsoever.
\end{proof}

In particular, observe that, upon taking $\b A = \b1$ to be the terminal category in the reciprocity condition for the 2-adjunction $\o\ph \adj \u\ph \colon \VDbl \to \Cat$, we have that $\X \times \o{\b1} \iso \o{(\u\X)}$ for every \vdc{} $\X$, \ie{} taking the product of any \vdc{} $\X$ with the \vdc{} $\o{\b1}$ having a single tight morphism and no loose morphisms forgets the loose morphisms in $\X$.

We are now ready to prove the main theorem of this section. As we have mentioned (and will substantiate in \cref{VDbl-is-not-CC}), $\VDbl$ is not cartesian closed. However, there is an important class of exponentiable objects: namely, the \pdcs.

\begin{theorem}
    \label{representable-vdcs-are-exponentiable}
    Representable \vdcs{} are exponentiable, \ie{} for every representable \vdc{} $\A$, the 2-functor $\ph \times \A \colon \VDbl \to \VDbl$ admits a right adjoint $\ph^\A \colon \VDbl \to \VDbl$.
\end{theorem}

\begin{proof}
    Let $\A$ be a \pdc{} and let $\X$ be a \vdc. We shall define a \vdc{} $\X^\A$, guided by \cref{reciprocity}, which tells us that the unary \vdc{} $(\X^\A)\down$ underlying $\X^\A$ must be $(\X\down)^{(\A\down)}$. The definition of the multiary cells may be determined by considering functors into $\X^\A$ from the free-standing $n$-ary cell for each $n \in \N$. Thus, it is only necessary to exercise creativity in coming up with the appropriate definition of composition; the remaining data is determined.
    \begin{enumerate}
        \item The underlying category is given by ${\u\X}^{\u\A}$.
        \item A loose morphism from $F$ to $F'$ is given by a graph morphism as follows (see \cref{Grph(Cat)-is-CC}).
        % https://q.uiver.app/#q=WzAsNixbMSwwLCJcXEFfMSJdLFsxLDEsIlxcWF8xIl0sWzAsMCwiXFxBXzAiXSxbMCwxLCJcXFhfMCJdLFsyLDEsIlxcWF8wIl0sWzIsMCwiXFxBXzAiXSxbMCwyLCJzIiwyXSxbMCw1LCJ0Il0sWzIsMywiRiIsMl0sWzEsMywicyJdLFswLDEsIlxcWGkiLDFdLFsxLDQsInQiLDJdLFs1LDQsIkYnIl1d
        \[\begin{tikzcd}
            {\A_0} & {\A_1} & {\A_0} \\
            {\X_0} & {\X_1} & {\X_0}
            \arrow["F"', from=1-1, to=2-1]
            \arrow["s"', from=1-2, to=1-1]
            \arrow["t", from=1-2, to=1-3]
            \arrow["\Xi"{description}, from=1-2, to=2-2]
            \arrow["{F'}", from=1-3, to=2-3]
            \arrow["s", from=2-2, to=2-1]
            \arrow["t"', from=2-2, to=2-3]
        \end{tikzcd}\]
        \item \label{cell-in-exponential} A cell with the following frame
        % https://q.uiver.app/#q=WzAsNSxbMiwwLCJGX24iXSxbMiwxLCJGJyJdLFsxLDAsIlxcY2RvdHMiXSxbMCwxLCJGIl0sWzAsMCwiRl8wIl0sWzAsMSwiXFxQaGknIl0sWzAsMiwiXFxYaV9uIiwyLHsic3R5bGUiOnsiYm9keSI6eyJuYW1lIjoiYmFycmVkIn19fV0sWzEsMywiXFxYaSIsMCx7InN0eWxlIjp7ImJvZHkiOnsibmFtZSI6ImJhcnJlZCJ9fX1dLFsyLDQsIlxcWGlfMSIsMix7InN0eWxlIjp7ImJvZHkiOnsibmFtZSI6ImJhcnJlZCJ9fX1dLFs0LDMsIlxcUGhpIiwyXSxbNSw5LCJcXHhpIiwxLHsic2hvcnRlbiI6eyJzb3VyY2UiOjIwLCJ0YXJnZXQiOjIwfSwic3R5bGUiOnsiYm9keSI6eyJuYW1lIjoibm9uZSJ9LCJoZWFkIjp7Im5hbWUiOiJub25lIn19fV1d
        \[\begin{tikzcd}
            {F_0} & \cdots & {F_n} \\
            F && {F'}
            \arrow[""{name=0, anchor=center, inner sep=0}, "\Phi"', from=1-1, to=2-1]
            \arrow["{\Xi_1}"'{inner sep=.8ex}, "\shortmid"{marking}, from=1-2, to=1-1]
            \arrow["{\Xi_n}"'{inner sep=.8ex}, "\shortmid"{marking}, from=1-3, to=1-2]
            \arrow[""{name=1, anchor=center, inner sep=0}, "{\Phi'}", from=1-3, to=2-3]
            \arrow["\Xi"{inner sep=.8ex}, "\shortmid"{marking}, from=2-3, to=2-1]
            \arrow["\xi"{description}, draw=none, from=1, to=0]
        \end{tikzcd}\]
        comprises, for each chain of loose morphisms $p_1, \ldots, p_n$ in $\A$, a cell in $\X$,
        % https://q.uiver.app/#q=WzAsNSxbMiwwLCJGX25BX24iXSxbMiwxLCJGJ0FfbiJdLFsxLDAsIlxcY2RvdHMiXSxbMCwxLCJGQV8wIl0sWzAsMCwiRl8wQV8wIl0sWzAsMSwiXFxQaGknX3tBX259Il0sWzAsMiwiXFxYaV9uKHBfbikiLDIseyJzdHlsZSI6eyJib2R5Ijp7Im5hbWUiOiJiYXJyZWQifX19XSxbMSwzLCJcXFhpKHBfMSBcXG9kb3QgXFxjZG90cyBcXG9kb3QgcF9uKSIsMCx7InN0eWxlIjp7ImJvZHkiOnsibmFtZSI6ImJhcnJlZCJ9fX1dLFsyLDQsIlxcWGlfMShwXzEpIiwyLHsic3R5bGUiOnsiYm9keSI6eyJuYW1lIjoiYmFycmVkIn19fV0sWzQsMywiXFxQaGl7QV8wfSIsMl0sWzUsOSwiXFx4aShwXzEsIFxcbGRvdHMsIHBfbikiLDEseyJzaG9ydGVuIjp7InNvdXJjZSI6MjAsInRhcmdldCI6MjB9LCJzdHlsZSI6eyJib2R5Ijp7Im5hbWUiOiJub25lIn0sImhlYWQiOnsibmFtZSI6Im5vbmUifX19XV0=
        \[\begin{tikzcd}[column sep=large]
            {F_0A_0} & \cdots & {F_nA_n} \\
            {FA_0} && {F'A_n}
            \arrow[""{name=0, anchor=center, inner sep=0}, "{\Phi{A_0}}"', from=1-1, to=2-1]
            \arrow["{\Xi_1(p_1)}"'{inner sep=.8ex}, "\shortmid"{marking}, from=1-2, to=1-1]
            \arrow["{\Xi_n(p_n)}"'{inner sep=.8ex}, "\shortmid"{marking}, from=1-3, to=1-2]
            \arrow[""{name=1, anchor=center, inner sep=0}, "{\Phi'_{A_n}}", from=1-3, to=2-3]
            \arrow["{\Xi(p_1 \odot \cdots \odot p_n)}"{inner sep=.8ex}, "\shortmid"{marking}, from=2-3, to=2-1]
            \arrow["{\xi(p_1, \ldots, p_n)}"{description}, draw=none, from=1, to=0]
        \end{tikzcd}\]
        which is natural in the sense that, for each family of cells $\varpi_i$ in $\A$ ($0 \leq i < n$),
        % https://q.uiver.app/#q=WzAsNCxbMCwxLCJBX2kiXSxbMCwwLCJBX2knIl0sWzEsMSwiQV97aSArIDF9Il0sWzEsMCwiQV97aSArIDF9JyJdLFsyLDAsInBfaSJdLFsxLDAsImFfaSIsMl0sWzMsMiwiYV97aSArIDF9Il0sWzMsMSwicF9pJyIsMix7InN0eWxlIjp7ImJvZHkiOnsibmFtZSI6ImJhcnJlZCJ9fX1dLFs1LDYsIlxcdmFycGlfe2kgKyAxfSIsMSx7InNob3J0ZW4iOnsic291cmNlIjoyMCwidGFyZ2V0IjoyMH0sInN0eWxlIjp7ImJvZHkiOnsibmFtZSI6Im5vbmUifSwiaGVhZCI6eyJuYW1lIjoibm9uZSJ9fX1dXQ==
        \[\begin{tikzcd}
            {A_i'} & {A_{i + 1}'} \\
            {A_i} & {A_{i + 1}}
            \arrow[""{name=0, anchor=center, inner sep=0}, "{a_i}"', from=1-1, to=2-1]
            \arrow["{p_i'}"'{inner sep=.8ex}, "\shortmid"{marking}, from=1-2, to=1-1]
            \arrow[""{name=1, anchor=center, inner sep=0}, "{a_{i + 1}}", from=1-2, to=2-2]
            \arrow["{p_i}", from=2-2, to=2-1]
            \arrow["{\varpi_{i + 1}}"{description}, draw=none, from=0, to=1]
        \end{tikzcd}\]
        the following two cells in $\X$ are equal.
        \[
        \begin{tikzcd}
            {F_0A_0'} && {F_1A_1'} & \cdots & {F_{n - 1}A_{n - 1}'} && {F_nA_n'} \\
            {F_0A_0} && {F_1A_1} & \cdots & {F_{n - 1}A_{n - 1}} && {F_nA_n} \\
            {FA_0} &&&&&& {F'A_n}
            \arrow[""{name=0, anchor=center, inner sep=0}, "{F_0a_0}"', from=1-1, to=2-1]
            \arrow["{\Xi_1(p_1')}"'{inner sep=.8ex}, "\shortmid"{marking}, from=1-3, to=1-1]
            \arrow[""{name=1, anchor=center, inner sep=0}, "{F_1a_1}"{description}, from=1-3, to=2-3]
            \arrow["\shortmid"{marking}, from=1-4, to=1-3]
            \arrow["\cdots"{description}, draw=none, from=1-4, to=2-4]
            \arrow["\shortmid"{marking}, from=1-5, to=1-4]
            \arrow[""{name=2, anchor=center, inner sep=0}, "{F_{n - 1}a_{n - 1}}"{description}, from=1-5, to=2-5]
            \arrow["{\Xi_n(p_n')}"'{inner sep=.8ex}, "\shortmid"{marking}, from=1-7, to=1-5]
            \arrow[""{name=3, anchor=center, inner sep=0}, "{F_na_n}", from=1-7, to=2-7]
            \arrow[""{name=4, anchor=center, inner sep=0}, "{\Phi_{A_0}}"', from=2-1, to=3-1]
            \arrow["{\Xi_1(p_1)}"{description}, from=2-3, to=2-1]
            \arrow["\shortmid"{marking}, from=2-4, to=2-3]
            \arrow["\shortmid"{marking}, from=2-5, to=2-4]
            \arrow["{\Xi_n(p_n)}"{description}, from=2-7, to=2-5]
            \arrow[""{name=5, anchor=center, inner sep=0}, "{\Phi'_{A_n}}", from=2-7, to=3-7]
            \arrow["{\Xi(p_1 \odot \cdots \odot p_n)}"{inner sep=.8ex}, "\shortmid"{marking}, from=3-7, to=3-1]
            \arrow["{\Xi_1(\varpi_1)}"{description}, draw=none, from=0, to=1]
            \arrow["{\Xi_n(\varpi_n)}"{description}, draw=none, from=2, to=3]
            \arrow["{\xi(p_1, \ldots, p_n)}"{description}, draw=none, from=5, to=4]
        \end{tikzcd}
        \]
        \[
        % https://q.uiver.app/#q=WzAsOSxbNiwxLCJGJ0FfbiciXSxbNiwyLCJGJ0FfbiJdLFswLDIsIkZBXzAiXSxbMCwxLCJGQV8wJyJdLFswLDAsIkZfMEFfMCciXSxbMiwwLCJGXzFBXzEnIl0sWzYsMCwiRl9uQV9uJyJdLFs0LDAsIkZfe24gLSAxfUFfe24gLSAxfSciXSxbMywwLCJcXGNkb3RzIl0sWzAsMSwiRmFfbiJdLFszLDIsIkZhXzAiLDJdLFs0LDMsIlxcUGhpX3tBXzAnfSIsMl0sWzYsMCwiXFxQaGknX3tBX24nfSJdLFs1LDQsIlxcWGlfMShwXzEnKSIsMix7InN0eWxlIjp7ImJvZHkiOnsibmFtZSI6ImJhcnJlZCJ9fX1dLFs3LDgsIiIsMSx7InN0eWxlIjp7ImJvZHkiOnsibmFtZSI6ImJhcnJlZCJ9fX1dLFs4LDUsIiIsMSx7InN0eWxlIjp7ImJvZHkiOnsibmFtZSI6ImJhcnJlZCJ9fX1dLFs2LDcsIlxcWGlfbihwX24nKSIsMix7InN0eWxlIjp7ImJvZHkiOnsibmFtZSI6ImJhcnJlZCJ9fX1dLFswLDMsIlxcWGkocF8xJyBcXG9kb3QgXFxjZG90cyBcXG9kb3QgcF9uJykiLDFdLFsxLDIsIlxcWGkocF8xIFxcb2RvdCBcXGNkb3RzIFxcb2RvdCBwX24pIiwwLHsic3R5bGUiOnsiYm9keSI6eyJuYW1lIjoiYmFycmVkIn19fV0sWzEyLDExLCJcXHhpKHBfMScsIFxcbGRvdHMsIHBfbicpIiwxLHsic2hvcnRlbiI6eyJzb3VyY2UiOjIwLCJ0YXJnZXQiOjIwfSwic3R5bGUiOnsiYm9keSI6eyJuYW1lIjoibm9uZSJ9LCJoZWFkIjp7Im5hbWUiOiJub25lIn19fV0sWzEwLDksIlxcWGkoXFx2YXJwaV8xIFxcb2RvdCBcXGNkb3RzIFxcb2RvdCBcXHZhcnBpX24pIiwxLHsic2hvcnRlbiI6eyJzb3VyY2UiOjIwLCJ0YXJnZXQiOjIwfSwic3R5bGUiOnsiYm9keSI6eyJuYW1lIjoibm9uZSJ9LCJoZWFkIjp7Im5hbWUiOiJub25lIn19fV1d
        \begin{tikzcd}
            {F_0A_0'} && {F_1A_1'} & \cdots & {F_{n - 1}A_{n - 1}'} && {F_nA_n'} \\
            {FA_0'} &&&&&& {F'A_n'} \\
            {FA_0} &&&&&& {F'A_n}
            \arrow[""{name=0, anchor=center, inner sep=0}, "{\Phi_{A_0'}}"', from=1-1, to=2-1]
            \arrow["{\Xi_1(p_1')}"'{inner sep=.8ex}, "\shortmid"{marking}, from=1-3, to=1-1]
            \arrow["\shortmid"{marking}, from=1-4, to=1-3]
            \arrow["\shortmid"{marking}, from=1-5, to=1-4]
            \arrow["{\Xi_n(p_n')}"'{inner sep=.8ex}, "\shortmid"{marking}, from=1-7, to=1-5]
            \arrow[""{name=1, anchor=center, inner sep=0}, "{\Phi'_{A_n'}}", from=1-7, to=2-7]
            \arrow[""{name=2, anchor=center, inner sep=0}, "{Fa_0}"', from=2-1, to=3-1]
            \arrow["{\Xi(p_1' \odot \cdots \odot p_n')}"{description}, from=2-7, to=2-1]
            \arrow[""{name=3, anchor=center, inner sep=0}, "{Fa_n}", from=2-7, to=3-7]
            \arrow["{\Xi(p_1 \odot \cdots \odot p_n)}"{inner sep=.8ex}, "\shortmid"{marking}, from=3-7, to=3-1]
            \arrow["{\xi(p_1', \ldots, p_n')}"{description}, draw=none, from=1, to=0]
            \arrow["{\Xi(\varpi_1 \odot \cdots \odot \varpi_n)}"{description}, draw=none, from=2, to=3]
        \end{tikzcd}
        \]
        \item The identity cell on $\Xi \colon F' \lto F$ is given by sending each loose morphism $p$ in $\A$ to the identity cell $1_{\Xi(p)}$ in $\X$.
        \item The composite of cells
        \[\begin{tikzcd}
            {F_0^1} & \cdots & {F_{n_1}^1} & \cdots & {F_0^m} & \cdots & {F_{n_m}^m} \\
            {F_0} && {F_1} & \cdots & {F_{m - 1}} && {F_m} \\
            F &&&&&& {F'}
            \arrow[""{name=0, anchor=center, inner sep=0}, "{\Phi_0}"', from=1-1, to=2-1]
            \arrow["{\Xi_1^1}"'{inner sep=.8ex}, "\shortmid"{marking}, from=1-2, to=1-1]
            \arrow["{\Xi_{n_1}^1}"'{inner sep=.8ex}, "\shortmid"{marking}, from=1-3, to=1-2]
            \arrow[""{name=1, anchor=center, inner sep=0}, "{\Phi_1}"{description}, from=1-3, to=2-3]
            \arrow["\shortmid"{marking}, from=1-4, to=1-3]
            \arrow["\shortmid"{marking}, from=1-5, to=1-4]
            \arrow[""{name=2, anchor=center, inner sep=0}, "{\Phi_{m - 1}}"{description}, from=1-5, to=2-5]
            \arrow["{\Xi_1^m}"'{inner sep=.8ex}, "\shortmid"{marking}, from=1-6, to=1-5]
            \arrow["{\Xi_{n_m}^m}"'{inner sep=.8ex}, "\shortmid"{marking}, from=1-7, to=1-6]
            \arrow[""{name=3, anchor=center, inner sep=0}, "{\Phi_m}", from=1-7, to=2-7]
            \arrow[""{name=4, anchor=center, inner sep=0}, "\Phi"', from=2-1, to=3-1]
            \arrow["{\Xi_1}"{description}, from=2-3, to=2-1]
            \arrow["\shortmid"{marking}, from=2-4, to=2-3]
            \arrow["\shortmid"{marking}, from=2-5, to=2-4]
            \arrow["{\Xi_m}"{description}, from=2-7, to=2-5]
            \arrow[""{name=5, anchor=center, inner sep=0}, "{\Phi'}", from=2-7, to=3-7]
            \arrow["\Xi"{inner sep=.8ex}, "\shortmid"{marking}, from=3-7, to=3-1]
            \arrow["{\xi^1}"{description}, draw=none, from=1, to=0]
            \arrow["\cdots"{description}, draw=none, from=1, to=2]
            \arrow["{\xi^m}"{description}, draw=none, from=2, to=3]
            \arrow["\xi"{description}, draw=none, from=4, to=5]
        \end{tikzcd}\]
        is given by the cell sending each chain $p_1^1, \ldots, p_{m_n}^m$ of loose morphisms in $\A$ to the following cell in $\X$.
        \[\begin{tikzcdscale}[column sep=large]
            {F_0^1A_0^1} & \cdots & {F_{n_1}^1 A_{n_1}^1} & \cdots & {F_0^m A_0^m} & \cdots & {F_{n_m}^m A_{n_m}^m} \\
            {F_0 A_0^1} && {F_1 A_{n_1}^1} & \cdots & {F_{m - 1} A_0^m} && {F_m A_{n_m}^m} \\
            {F A_0^1} &&&&&& {F' A_{n_m}^m} \\
            {F A_0^1} &&&&&& {F' A_{n_m}^m}
            \arrow[""{name=0, anchor=center, inner sep=0}, "{(\Phi_0)_{A_0^1}}"', from=1-1, to=2-1]
            \arrow["{\Xi_1^1(p_1^1)}"'{inner sep=.8ex}, "\shortmid"{marking}, from=1-2, to=1-1]
            \arrow["{\Xi_{n_1}^1(p_{n_1}^1)}"'{inner sep=.8ex}, "\shortmid"{marking}, from=1-3, to=1-2]
            \arrow[""{name=1, anchor=center, inner sep=0}, "{(\Phi_1)_{A_{n_1}^1}}"{description}, from=1-3, to=2-3]
            \arrow["\shortmid"{marking}, from=1-4, to=1-3]
            \arrow["\shortmid"{marking}, from=1-5, to=1-4]
            \arrow[""{name=2, anchor=center, inner sep=0}, "{(\Phi_{m - 1})_{A_0^m}}"{description}, from=1-5, to=2-5]
            \arrow["{\Xi_1^m (p_1^m)}"'{inner sep=.8ex}, "\shortmid"{marking}, from=1-6, to=1-5]
            \arrow["{\Xi_{n_m}^m (p_{n_m}^m)}"'{inner sep=.8ex}, "\shortmid"{marking}, from=1-7, to=1-6]
            \arrow[""{name=3, anchor=center, inner sep=0}, "{(\Phi_m)_{A_{n_m}^m}}", from=1-7, to=2-7]
            \arrow[""{name=4, anchor=center, inner sep=0}, "{\Phi_{A_0^1}}"', from=2-1, to=3-1]
            \arrow["{\Xi_1(p_1^1 \odot \cdots \odot p_{n_1}^1)}"{description}, from=2-3, to=2-1]
            \arrow["\shortmid"{marking}, from=2-4, to=2-3]
            \arrow["\shortmid"{marking}, from=2-5, to=2-4]
            \arrow["{\Xi_m(p_1^m \odot \cdots \odot p_{n_m}^m)}"{description}, from=2-7, to=2-5]
            \arrow[""{name=5, anchor=center, inner sep=0}, "{\Phi'_{A_{n_m}^m}}", from=2-7, to=3-7]
            \arrow[""{name=6, anchor=center, inner sep=0}, equals, from=3-1, to=4-1]
            \arrow["{\Xi((p_1^1 \odot \cdots \odot p_{n_1}^1) \odot \cdots \odot (p_1^m \odot \cdots \odot p_{n_m}^m))}"{description}, from=3-7, to=3-1]
            \arrow[""{name=7, anchor=center, inner sep=0}, equals, from=3-7, to=4-7]
            \arrow["{\Xi(p_1^1 \odot \cdots \odot p_{n_1}^1 \odot \cdots \odot p_1^m \odot \cdots \odot p_{n_m}^m)}", from=4-7, to=4-1]
            \arrow["{\xi^1(p_1^1, \ldots, p_{n_1}^1)}"{description}, draw=none, from=1, to=0]
            \arrow["\cdots"{description}, draw=none, from=1, to=2]
            \arrow["{\xi^m(p_1^m, \ldots, p_{n_m}^m)}"{description}, draw=none, from=2, to=3]
            \arrow["{\xi(p_1^1 \odot \cdots \odot p_{n_1}^1, \ldots, p_1^m \odot \cdots \odot p_{n_m}^m)}"{description}, draw=none, from=4, to=5]
            \arrow["{\Xi(\iso)}"{description}, draw=none, from=6, to=7]
        \end{tikzcdscale}\]
    \end{enumerate}
    That composition is unital and associative follows from unitality and associativity of composition in $\A$ and $\X$, together with naturality of the cells.

    We must show that, given a \vdc{} $\W$, there is an isomorphism of categories $\VDbl(\W \times \A, \X) \iso \VDbl(\W, \X^\A)$, 2-natural in $\W$ and $\X$. First, note that this isomorphism certainly holds on the underlying categories of objects and tight morphisms, \ie $\Cat(\u\W \times \u\A, \u\X) \iso \Cat(\u\W, \u\X^{\u\A})$, so it remains to check the assignments on the loose morphisms and cells.

    \begin{enumerate}
        \item \label{left-functor} A functor $F \colon \W \times \A \to \X$ comprises an assignment sending pairs of loose morphisms, $p \colon W' \lto W$ in $\W$ and $q \colon A' \lto A$ in $\A$, to a loose morphism $F(p, q) \colon F(W', A') \lto F(W, A)$ in $\X$; and an assignment sending pairs of a cell in $\W$ and a cell in $\A$,
        \[
        % https://q.uiver.app/#q=WzAsNSxbMCwwLCJXXzEiXSxbMiwwLCJXX24iXSxbMCwxLCJXIl0sWzIsMSwiVyciXSxbMSwwLCJcXGNkb3RzIl0sWzEsNCwicF9uIiwyLHsic3R5bGUiOnsiYm9keSI6eyJuYW1lIjoiYmFycmVkIn19fV0sWzQsMCwicF8xIiwyLHsic3R5bGUiOnsiYm9keSI6eyJuYW1lIjoiYmFycmVkIn19fV0sWzMsMiwicCIsMCx7InN0eWxlIjp7ImJvZHkiOnsibmFtZSI6ImJhcnJlZCJ9fX1dLFswLDIsInciLDJdLFsxLDMsIncnIl0sWzgsOSwiXFxvbWVnYSIsMSx7InNob3J0ZW4iOnsic291cmNlIjoyMCwidGFyZ2V0IjoyMH0sInN0eWxlIjp7ImJvZHkiOnsibmFtZSI6Im5vbmUifSwiaGVhZCI6eyJuYW1lIjoibm9uZSJ9fX1dXQ==
        \begin{tikzcd}
            {W_1} & \cdots & {W_n} \\
            W && {W'}
            \arrow[""{name=0, anchor=center, inner sep=0}, "w"', from=1-1, to=2-1]
            \arrow["{p_1}"', "\shortmid"{marking}, from=1-2, to=1-1]
            \arrow["{p_n}"', "\shortmid"{marking}, from=1-3, to=1-2]
            \arrow[""{name=1, anchor=center, inner sep=0}, "{w'}", from=1-3, to=2-3]
            \arrow["p", "\shortmid"{marking}, from=2-3, to=2-1]
            \arrow["\omega"{description}, draw=none, from=0, to=1]
        \end{tikzcd}
        \hspace{4em}
        % https://q.uiver.app/#q=WzAsNSxbMCwwLCJBXzEiXSxbMiwwLCJBX24iXSxbMCwxLCJBIl0sWzIsMSwiQSciXSxbMSwwLCJcXGNkb3RzIl0sWzEsNCwicV9uIiwyLHsic3R5bGUiOnsiYm9keSI6eyJuYW1lIjoiYmFycmVkIn19fV0sWzQsMCwicV8xIiwyLHsic3R5bGUiOnsiYm9keSI6eyJuYW1lIjoiYmFycmVkIn19fV0sWzMsMiwicSIsMCx7InN0eWxlIjp7ImJvZHkiOnsibmFtZSI6ImJhcnJlZCJ9fX1dLFsxLDMsImEnIl0sWzAsMiwiYSIsMl0sWzksOCwiXFxhbHBoYSIsMSx7InNob3J0ZW4iOnsic291cmNlIjoyMCwidGFyZ2V0IjoyMH0sInN0eWxlIjp7ImJvZHkiOnsibmFtZSI6Im5vbmUifSwiaGVhZCI6eyJuYW1lIjoibm9uZSJ9fX1dXQ==
        \begin{tikzcd}
            {A_1} & \cdots & {A_n} \\
            A && {A'}
            \arrow[""{name=0, anchor=center, inner sep=0}, "a"', from=1-1, to=2-1]
            \arrow["{q_1}"', "\shortmid"{marking}, from=1-2, to=1-1]
            \arrow["{q_n}"', "\shortmid"{marking}, from=1-3, to=1-2]
            \arrow[""{name=1, anchor=center, inner sep=0}, "{a'}", from=1-3, to=2-3]
            \arrow["q", "\shortmid"{marking}, from=2-3, to=2-1]
            \arrow["\alpha"{description}, draw=none, from=0, to=1]
        \end{tikzcd}
        \]
        to a cell in $\X$ as follows.
        % https://q.uiver.app/#q=WzAsNSxbMCwwLCJGKFdfMSwgQV8xKSJdLFsyLDAsIkYoV19uLCBBX24pIl0sWzAsMSwiRihXLCBBKSJdLFsyLDEsIkYoVycsIEEnKSJdLFsxLDAsIlxcY2RvdHMiXSxbMSw0LCJGKHBfbiwgcV9uKSIsMix7InN0eWxlIjp7ImJvZHkiOnsibmFtZSI6ImJhcnJlZCJ9fX1dLFs0LDAsIkYocF8xLCBxXzEpIiwyLHsic3R5bGUiOnsiYm9keSI6eyJuYW1lIjoiYmFycmVkIn19fV0sWzMsMiwiRihwLCBxKSIsMCx7InN0eWxlIjp7ImJvZHkiOnsibmFtZSI6ImJhcnJlZCJ9fX1dLFsxLDMsIkYodycsIGEnKSJdLFswLDIsIkYodywgYSkiLDJdLFs5LDgsIkYoXFxvbWVnYSwgXFxhbHBoYSkiLDEseyJzaG9ydGVuIjp7InNvdXJjZSI6MjAsInRhcmdldCI6MjB9LCJzdHlsZSI6eyJib2R5Ijp7Im5hbWUiOiJub25lIn0sImhlYWQiOnsibmFtZSI6Im5vbmUifX19XV0=
        \[\begin{tikzcd}[column sep=large]
            {F(W_1, A_1)} & \cdots & {F(W_n, A_n)} \\
            {F(W, A)} && {F(W', A')}
            \arrow[""{name=0, anchor=center, inner sep=0}, "{F(w, a)}"', from=1-1, to=2-1]
            \arrow["{F(p_1, q_1)}"', "\shortmid"{marking}, from=1-2, to=1-1]
            \arrow["{F(p_n, q_n)}"', "\shortmid"{marking}, from=1-3, to=1-2]
            \arrow[""{name=1, anchor=center, inner sep=0}, "{F(w', a')}", from=1-3, to=2-3]
            \arrow["{F(p, q)}", "\shortmid"{marking}, from=2-3, to=2-1]
            \arrow["{F(\omega, \alpha)}"{description}, draw=none, from=0, to=1]
        \end{tikzcd}\]
        \item \label{right-functor} A functor $G \colon \W \to \X^\A$ comprises an assignment on loose morphisms sending a loose morphism $p \colon W' \lto W$ in $\W$ to a morphism of spans,
        % https://q.uiver.app/#q=WzAsNixbMSwwLCJcXEFfMSJdLFsxLDEsIlxcWF8xIl0sWzAsMCwiXFxBXzAiXSxbMCwxLCJcXFhfMCJdLFsyLDEsIlxcWF8wIl0sWzIsMCwiXFxBXzAiXSxbMCwyLCJzIiwyXSxbMCw1LCJ0Il0sWzIsMywiRyhXJykiLDJdLFsxLDMsInMiXSxbMCwxLCJHKHApIiwxXSxbMSw0LCJ0IiwyXSxbNSw0LCJHKFcpIl1d
        \[\begin{tikzcd}
            {\A_0} & {\A_1} & {\A_0} \\
            {\X_0} & {\X_1} & {\X_0}
            \arrow["{G(W')}"', from=1-1, to=2-1]
            \arrow["s"', from=1-2, to=1-1]
            \arrow["t", from=1-2, to=1-3]
            \arrow["{G(p)}"{description}, from=1-2, to=2-2]
            \arrow["{G(W)}", from=1-3, to=2-3]
            \arrow["s", from=2-2, to=2-1]
            \arrow["t"', from=2-2, to=2-3]
        \end{tikzcd}\]
        which sends each loose morphism $q \colon A' \lto A$ in $\A$ to a loose morphism $G(p)(q) \colon G(W')(A') \lto G(W)(A)$ in $\X$, and sends each unary cell in $\A$
        % https://q.uiver.app/#q=WzAsNCxbMCwwLCJBXzAiXSxbMSwwLCJBXzEiXSxbMCwxLCJBIl0sWzEsMSwiQSciXSxbMSwwLCJxIiwyLHsic3R5bGUiOnsiYm9keSI6eyJuYW1lIjoiYmFycmVkIn19fV0sWzMsMiwicSciLDAseyJzdHlsZSI6eyJib2R5Ijp7Im5hbWUiOiJiYXJyZWQifX19XSxbMCwyLCJhIiwyXSxbMSwzLCJhJyJdLFs2LDcsIlxcYWxwaGEiLDEseyJzaG9ydGVuIjp7InNvdXJjZSI6MjAsInRhcmdldCI6MjB9LCJzdHlsZSI6eyJib2R5Ijp7Im5hbWUiOiJub25lIn0sImhlYWQiOnsibmFtZSI6Im5vbmUifX19XV0=
        \[\begin{tikzcd}
            {A_0} & {A_1} \\
            A & {A'}
            \arrow[""{name=0, anchor=center, inner sep=0}, "a"', from=1-1, to=2-1]
            \arrow["q"'{inner sep=.8ex}, "\shortmid"{marking}, from=1-2, to=1-1]
            \arrow[""{name=1, anchor=center, inner sep=0}, "{a'}", from=1-2, to=2-2]
            \arrow["{q'}"{inner sep=.8ex}, "\shortmid"{marking}, from=2-2, to=2-1]
            \arrow["\alpha"{description}, draw=none, from=0, to=1]
        \end{tikzcd}\]
        to a unary cell in $\X$ as follows.
        % https://q.uiver.app/#q=WzAsNCxbMCwwLCJHKFcpKEFfMCkiXSxbMSwwLCJHKFcnKShBXzEpIl0sWzAsMSwiRyhXKShBKSJdLFsxLDEsIkcoVycpKEEnKSJdLFsxLDAsIkcocCkocSkiLDIseyJzdHlsZSI6eyJib2R5Ijp7Im5hbWUiOiJiYXJyZWQifX19XSxbMywyLCJHKHApKHEnKSIsMCx7InN0eWxlIjp7ImJvZHkiOnsibmFtZSI6ImJhcnJlZCJ9fX1dLFswLDIsIkcoVykoYSkiLDJdLFsxLDMsIkcoVycpKGEnKSJdLFs2LDcsIkcocCkoXFxhbHBoYSkiLDEseyJzaG9ydGVuIjp7InNvdXJjZSI6MjAsInRhcmdldCI6MjB9LCJzdHlsZSI6eyJib2R5Ijp7Im5hbWUiOiJub25lIn0sImhlYWQiOnsibmFtZSI6Im5vbmUifX19XV0=
        \[\begin{tikzcd}[column sep=large]
            {G(W)(A_0)} & {G(W')(A_1)} \\
            {G(W)(A)} & {G(W')(A')}
            \arrow[""{name=0, anchor=center, inner sep=0}, "{G(W)(a)}"', from=1-1, to=2-1]
            \arrow["{G(p)(q)}"'{inner sep=.8ex}, "\shortmid"{marking}, from=1-2, to=1-1]
            \arrow[""{name=1, anchor=center, inner sep=0}, "{G(W')(a')}", from=1-2, to=2-2]
            \arrow["{G(p)(q')}"{inner sep=.8ex}, "\shortmid"{marking}, from=2-2, to=2-1]
            \arrow["{G(p)(\alpha)}"{description}, draw=none, from=0, to=1]
        \end{tikzcd}\]
        It also comprises an assignment of cells $\omega$ in $\W$ as above to a cell in $\X^\A$ with the following frame,
        % https://q.uiver.app/#q=WzAsNSxbMCwwLCJHKFdfMSkoey19KSJdLFsyLDAsIkcoV19uKSh7LX0pIl0sWzAsMSwiRyhXKSh7LX0pIl0sWzIsMSwiRyhXJykoey19KSJdLFsxLDAsIlxcY2RvdHMiXSxbMSw0LCJHKHBfbikoey19KSIsMix7InN0eWxlIjp7ImJvZHkiOnsibmFtZSI6ImJhcnJlZCJ9fX1dLFs0LDAsIkcocF8xKSh7LX0pIiwyLHsic3R5bGUiOnsiYm9keSI6eyJuYW1lIjoiYmFycmVkIn19fV0sWzMsMiwiRyhwKSh7LX0pIiwwLHsic3R5bGUiOnsiYm9keSI6eyJuYW1lIjoiYmFycmVkIn19fV0sWzAsMiwiRyh3KSh7LX0pIiwyXSxbMSwzLCJHKHcnKSh7LX0pIl0sWzgsOSwiRyhcXG9tZWdhKSh7LX0pIiwxLHsic2hvcnRlbiI6eyJzb3VyY2UiOjIwLCJ0YXJnZXQiOjIwfSwic3R5bGUiOnsiYm9keSI6eyJuYW1lIjoibm9uZSJ9LCJoZWFkIjp7Im5hbWUiOiJub25lIn19fV1d
        \[\begin{tikzcd}[column sep=large]
            {G(W_1)({-})} & \cdots & {G(W_n)({-})} \\
            {G(W)({-})} && {G(W')({-})}
            \arrow[""{name=0, anchor=center, inner sep=0}, "{G(w)({-})}"', from=1-1, to=2-1]
            \arrow["{G(p_1)({-})}"'{inner sep=.8ex}, "\shortmid"{marking}, from=1-2, to=1-1]
            \arrow["{G(p_n)({-})}"'{inner sep=.8ex}, "\shortmid"{marking}, from=1-3, to=1-2]
            \arrow[""{name=1, anchor=center, inner sep=0}, "{G(w')({-})}", from=1-3, to=2-3]
            \arrow["{G(p)({-})}"{inner sep=.8ex}, "\shortmid"{marking}, from=2-3, to=2-1]
            \arrow["{G(\omega)({-})}"{description}, draw=none, from=0, to=1]
        \end{tikzcd}\]
        hence, for each chain of loose morphisms $q_1, \ldots, q_n$ in $\A$, a cell in $\X$ as follows.
        % https://q.uiver.app/#q=WzAsNSxbMCwwLCJHKFdfMSkoQV8xKSJdLFsyLDAsIkcoV19uKShBX24pIl0sWzAsMSwiRyhXKShBXzEpIl0sWzIsMSwiRyhXJykoQV9uKSJdLFsxLDAsIlxcY2RvdHMiXSxbMSw0LCJHKHBfbikocV9uKSIsMix7InN0eWxlIjp7ImJvZHkiOnsibmFtZSI6ImJhcnJlZCJ9fX1dLFs0LDAsIkcocF8xKShxXzEpIiwyLHsic3R5bGUiOnsiYm9keSI6eyJuYW1lIjoiYmFycmVkIn19fV0sWzMsMiwiRyhwKShxXzEgXFxvZG90IFxcY2RvdHMgXFxvZG90IHFfbikiLDAseyJzdHlsZSI6eyJib2R5Ijp7Im5hbWUiOiJiYXJyZWQifX19XSxbMCwyLCJHKHcpKEFfMSkiLDJdLFsxLDMsIkcodycpKEFfbikiXSxbOCw5LCJHKFxcb21lZ2EpKHFfMSwgXFxsZG90cywgcV9uKSIsMSx7InNob3J0ZW4iOnsic291cmNlIjoyMCwidGFyZ2V0IjoyMH0sInN0eWxlIjp7ImJvZHkiOnsibmFtZSI6Im5vbmUifSwiaGVhZCI6eyJuYW1lIjoibm9uZSJ9fX1dXQ==
        \[\begin{tikzcd}[column sep=large]
            {G(W_1)(A_1)} & \cdots & {G(W_n)(A_n)} \\
            {G(W)(A_1)} && {G(W')(A_n)}
            \arrow[""{name=0, anchor=center, inner sep=0}, "{G(w)(A_1)}"', from=1-1, to=2-1]
            \arrow["{G(p_1)(q_1)}"'{inner sep=.8ex}, "\shortmid"{marking}, from=1-2, to=1-1]
            \arrow["{G(p_n)(q_n)}"'{inner sep=.8ex}, "\shortmid"{marking}, from=1-3, to=1-2]
            \arrow[""{name=1, anchor=center, inner sep=0}, "{G(w')(A_n)}", from=1-3, to=2-3]
            \arrow["{G(p)(q_1 \odot \cdots \odot q_n)}"{inner sep=.8ex}, "\shortmid"{marking}, from=2-3, to=2-1]
            \arrow["{G(\omega)(q_1, \ldots, q_n)}"{description}, draw=none, from=0, to=1]
        \end{tikzcd}\]
    \end{enumerate}
    It is clear that the action of \eqref{left-functor} and \eqref{right-functor} above on loose morphisms and unary cells is the same. To establish the one-dimensional universal property of the exponential, it remains only to verify that their action on arbitrary cells is the same.

    Observe that, since $\A$ is representable, every cell $\alpha$ in $\A$ factors uniquely through an opcartesian cell as follows.
    % https://q.uiver.app/#q=WzAsNyxbMCwwLCJBXzEiXSxbMiwwLCJBX24iXSxbMCwyLCJBIl0sWzIsMiwiQSciXSxbMSwwLCJcXGNkb3RzIl0sWzAsMSwiQV8xIl0sWzIsMSwiQV9uIl0sWzEsNCwicV9uIiwyLHsic3R5bGUiOnsiYm9keSI6eyJuYW1lIjoiYmFycmVkIn19fV0sWzQsMCwicV8xIiwyLHsic3R5bGUiOnsiYm9keSI6eyJuYW1lIjoiYmFycmVkIn19fV0sWzMsMiwicSIsMCx7InN0eWxlIjp7ImJvZHkiOnsibmFtZSI6ImJhcnJlZCJ9fX1dLFs1LDIsImEiLDJdLFs2LDMsImEnIl0sWzYsNSwicV8xIFxcb2RvdCBcXGNkb3RzIFxcb2RvdCBxX24iLDFdLFswLDUsIiIsMSx7ImxldmVsIjoyLCJzdHlsZSI6eyJoZWFkIjp7Im5hbWUiOiJub25lIn19fV0sWzEsNiwiIiwxLHsibGV2ZWwiOjIsInN0eWxlIjp7ImhlYWQiOnsibmFtZSI6Im5vbmUifX19XSxbMTAsMTEsIlxcY2hlY2tcXGFscGhhIiwxLHsic2hvcnRlbiI6eyJzb3VyY2UiOjIwLCJ0YXJnZXQiOjIwfSwic3R5bGUiOnsiYm9keSI6eyJuYW1lIjoibm9uZSJ9LCJoZWFkIjp7Im5hbWUiOiJub25lIn19fV0sWzEzLDE0LCJcXG9wY2FydCIsMSx7InNob3J0ZW4iOnsic291cmNlIjoyMCwidGFyZ2V0IjoyMH0sInN0eWxlIjp7ImJvZHkiOnsibmFtZSI6Im5vbmUifSwiaGVhZCI6eyJuYW1lIjoibm9uZSJ9fX1dXQ==
    \[\begin{tikzcd}
        {A_1} & \cdots & {A_n} \\
        {A_1} && {A_n} \\
        A && {A'}
        \arrow[""{name=0, anchor=center, inner sep=0}, Rightarrow, no head, from=1-1, to=2-1]
        \arrow["{q_1}"', "\shortmid"{marking}, from=1-2, to=1-1]
        \arrow["{q_n}"', "\shortmid"{marking}, from=1-3, to=1-2]
        \arrow[""{name=1, anchor=center, inner sep=0}, Rightarrow, no head, from=1-3, to=2-3]
        \arrow[""{name=2, anchor=center, inner sep=0}, "a"', from=2-1, to=3-1]
        \arrow["{q_1 \odot \cdots \odot q_n}"{description}, from=2-3, to=2-1]
        \arrow[""{name=3, anchor=center, inner sep=0}, "{a'}", from=2-3, to=3-3]
        \arrow["q", "\shortmid"{marking}, from=3-3, to=3-1]
        \arrow["\opcart"{description}, draw=none, from=0, to=1]
        \arrow["{\check\alpha}"{description}, draw=none, from=2, to=3]
    \end{tikzcd}\]
    Consequently, given $\omega$ and $\alpha$ as above, we have by \eqref{right-functor} a cell in $\X$ as follows.
    % https://q.uiver.app/#q=WzAsNyxbMCwwLCJHKFdfMSkoQV8xKSJdLFsyLDAsIkcoV19uKShBX24pIl0sWzAsMSwiRyhXKShBXzEpIl0sWzIsMSwiRyhXJykoQV9uKSJdLFsxLDAsIlxcY2RvdHMiXSxbMCwyLCJHKFcpKEEpIl0sWzIsMiwiRyhXJykoQSkiXSxbMSw0LCJHKHBfbikocV9uKSIsMix7InN0eWxlIjp7ImJvZHkiOnsibmFtZSI6ImJhcnJlZCJ9fX1dLFs0LDAsIkcocF8xKShxXzEpIiwyLHsic3R5bGUiOnsiYm9keSI6eyJuYW1lIjoiYmFycmVkIn19fV0sWzMsMiwiRyhwKShxXzEgXFxvZG90IFxcY2RvdHMgXFxvZG90IHFfbikiLDFdLFswLDIsIkcodykoQV8xKSIsMl0sWzEsMywiRyh3JykoQV9uKSJdLFszLDYsIkcoVycpKGEnKSJdLFsyLDUsIkcoVykoYSkiLDJdLFs2LDUsIkcocCkocSkiLDAseyJzdHlsZSI6eyJib2R5Ijp7Im5hbWUiOiJiYXJyZWQifX19XSxbMTAsMTEsIkcoXFxvbWVnYSkocV8xLCBcXGxkb3RzLCBxX24pIiwxLHsic2hvcnRlbiI6eyJzb3VyY2UiOjIwLCJ0YXJnZXQiOjIwfSwic3R5bGUiOnsiYm9keSI6eyJuYW1lIjoibm9uZSJ9LCJoZWFkIjp7Im5hbWUiOiJub25lIn19fV0sWzEzLDEyLCJHKHApKFxcY2hlY2tcXGFscGhhKSIsMSx7InNob3J0ZW4iOnsic291cmNlIjoyMCwidGFyZ2V0IjoyMH0sInN0eWxlIjp7ImJvZHkiOnsibmFtZSI6Im5vbmUifSwiaGVhZCI6eyJuYW1lIjoibm9uZSJ9fX1dXQ==
    \[\begin{tikzcd}[column sep=huge]
        {G(W_1)(A_1)} & \cdots & {G(W_n)(A_n)} \\
        {G(W)(A_1)} && {G(W')(A_n)} \\
        {G(W)(A)} && {G(W')(A)}
        \arrow[""{name=0, anchor=center, inner sep=0}, "{G(w)(A_1)}"', from=1-1, to=2-1]
        \arrow["{G(p_1)(q_1)}"'{inner sep=.8ex}, "\shortmid"{marking}, from=1-2, to=1-1]
        \arrow["{G(p_n)(q_n)}"'{inner sep=.8ex}, "\shortmid"{marking}, from=1-3, to=1-2]
        \arrow[""{name=1, anchor=center, inner sep=0}, "{G(w')(A_n)}", from=1-3, to=2-3]
        \arrow[""{name=2, anchor=center, inner sep=0}, "{G(W)(a)}"', from=2-1, to=3-1]
        \arrow["{G(p)(q_1 \odot \cdots \odot q_n)}"{description}, from=2-3, to=2-1]
        \arrow[""{name=3, anchor=center, inner sep=0}, "{G(W')(a')}", from=2-3, to=3-3]
        \arrow["{G(p)(q)}"{inner sep=.8ex}, "\shortmid"{marking}, from=3-3, to=3-1]
        \arrow["{G(\omega)(q_1, \ldots, q_n)}"{description}, draw=none, from=0, to=1]
        \arrow["{G(p)(\check\alpha)}"{description}, draw=none, from=2, to=3]
    \end{tikzcd}\]
    Conversely, for each chain of loose morphisms $q_1, \ldots, q_n$ in $\A$, we have by \eqref{left-functor} a cell in $\X$ as follows.
    % https://q.uiver.app/#q=WzAsNSxbMCwwLCJGKFdfMSwgQV8xKSJdLFsyLDAsIkYoV19uLCBBX24pIl0sWzAsMSwiRihXLCBBXzEpIl0sWzIsMSwiRihXJykoQV9uKSJdLFsxLDAsIlxcY2RvdHMiXSxbMSw0LCJGKHBfbiwgcV9uKSIsMix7InN0eWxlIjp7ImJvZHkiOnsibmFtZSI6ImJhcnJlZCJ9fX1dLFs0LDAsIkYocF8xLCBxXzEpIiwyLHsic3R5bGUiOnsiYm9keSI6eyJuYW1lIjoiYmFycmVkIn19fV0sWzMsMiwiRihwLCBxXzEgXFxvZG90IFxcY2RvdHMgXFxvZG90IHFfbikiLDAseyJzdHlsZSI6eyJib2R5Ijp7Im5hbWUiOiJiYXJyZWQifX19XSxbMCwyLCJGKHcsIEFfMSkiLDJdLFsxLDMsIkYodycsIEFfbikiXSxbOCw5LCJGKFxcb21lZ2EsIFxcb3BjYXJ0KSIsMSx7InNob3J0ZW4iOnsic291cmNlIjoyMCwidGFyZ2V0IjoyMH0sInN0eWxlIjp7ImJvZHkiOnsibmFtZSI6Im5vbmUifSwiaGVhZCI6eyJuYW1lIjoibm9uZSJ9fX1dXQ==
    \[\begin{tikzcd}[column sep=large]
        {F(W_1, A_1)} & \cdots & {F(W_n, A_n)} \\
        {F(W, A_1)} && {F(W')(A_n)}
        \arrow[""{name=0, anchor=center, inner sep=0}, "{F(w, A_1)}"', from=1-1, to=2-1]
        \arrow["{F(p_1, q_1)}"'{inner sep=.8ex}, "\shortmid"{marking}, from=1-2, to=1-1]
        \arrow["{F(p_n, q_n)}"'{inner sep=.8ex}, "\shortmid"{marking}, from=1-3, to=1-2]
        \arrow[""{name=1, anchor=center, inner sep=0}, "{F(w', A_n)}", from=1-3, to=2-3]
        \arrow["{F(p, q_1 \odot \cdots \odot q_n)}"{inner sep=.8ex}, "\shortmid"{marking}, from=2-3, to=2-1]
        \arrow["{F(\omega, \opcart)}"{description}, draw=none, from=0, to=1]
    \end{tikzcd}\]
    That this correspondence is bijective follows from opcartesianness. Consequently, there is a bijection $\VDbl(\W \times \A, \X)_0 \iso \VDbl(\W, \X^\A)_0$ of sets, which is easily seen to be natural in $\W$ and $\X$, and hence $\X^\A$ satisfies the one-dimensional aspect of the universal property of an exponential.

    Finally, since $\ph \times \A$ preserves copowers by $\b2$ using the explicit description in \cref{copower-by-2}, the one-dimensional aspect of the universal property of $\X^\A$ implies the two-dimensional aspect by \cite[Proposition~3.1]{blackwell1989two}.
\end{proof}

\begin{remark}
    \label{generalised-multicategories}
    The reader familiar with \citeauthor{pisani2014sequential}'s work on exponentiable multicategories~\cite{pisani2014sequential} will observe similarities in our analysis of exponentiability for \vdcs. Our notation for the 2-adjunction $\ph\up \adj \ph\down \colon \VDbl \to \Grph(\Cat)$ is chosen to align with \citeauthor{pisani2014sequential}'s notation for the analogous 2-adjunction between categories and multicategories~\cites[\S2.1]{pisani2014sequential}[\S6.7]{leinster2004higher}.
    % https://q.uiver.app/#q=WzAsMixbMCwwLCJcXENhdCJdLFsxLDAsIlxcTXVsdGljYXQiXSxbMCwxLCJcXHBoXFx1cCIsMCx7Im9mZnNldCI6LTIsInN0eWxlIjp7InRhaWwiOnsibmFtZSI6Imhvb2siLCJzaWRlIjoidG9wIn19fV0sWzEsMCwiXFxwaFxcZG93biIsMCx7Im9mZnNldCI6LTJ9XSxbMiwzLCIiLDAseyJsZXZlbCI6MSwic3R5bGUiOnsibmFtZSI6ImFkanVuY3Rpb24ifX1dXQ==
    \[\begin{tikzcd}[column sep=large]
        \Cat & \Multicat
        \arrow[""{name=0, anchor=center, inner sep=0}, "{\ph\up}", shift left=2, hook, from=1-1, to=1-2]
        \arrow[""{name=1, anchor=center, inner sep=0}, "{\ph\down}", shift left=2, from=1-2, to=1-1]
        \arrow["\dashv"{anchor=center, rotate=-90}, draw=none, from=0, to=1]
    \end{tikzcd}\]
    However, note that, to extend some concepts from multicategories to \vdcs{}, the appropriate analogue of the 2-adjunction above is $\o\ph \adj \u\ph \colon \VDbl \to \Cat$ rather than $\ph\up \adj \ph\down \colon \VDbl \to \Grph(\Cat)$. For instance, see \cref{sequential-adjoint} and the discussion preceding it.
\end{remark}

Representability is merely a sufficient condition for exponentiability, not a necessary condition. To demonstrate this, we exhibit another class of exponentiable \vdcs{}, not contained in the class of representable \vdcs. We first observe that categories, in addition to being coreflective in \vdcs{} by \cref{Cat-graphs-into-VDCs}, are also reflective therein (\cf~\cite[\S6]{fujii2025familial}).

\begin{proposition}
    \label{chaotic-vdc}
    The forgetful 2-functor $\u{\ph} \colon \VDbl \to \Cat$ admits a \ff{} right adjoint $\Ch \colon \Cat \to \VDbl$ sending each category to the representable \vdc{} having the same underlying category, a unique loose morphism between every pair of objects, and a unique cell filling every frame.
\end{proposition}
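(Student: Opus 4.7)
The plan is to construct $\Ch(\b A)$ explicitly, observe that its universal property follows essentially from the uniqueness of its structure, and deduce \ff{}ness from the resulting counit. For a category $\b A$, let $\Ch(\b A)$ have $\b A$ itself as its underlying category of objects and tight morphisms; a unique loose morphism between every ordered pair of objects; and a unique cell filling every admissible frame. Since cells are unique, all composition and identity data is forced, and the associativity and unitality axioms hold automatically. For the same reason, $\Ch(\b A)$ is representable: given any chain of loose morphisms, there is exactly one candidate composite loose morphism between the outer endpoints together with exactly one candidate opcartesian cell, and the required factorisation property is automatic from uniqueness of cells. In particular, the unique loose endomorphism on $A$ is the loose identity.

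To establish the 2-adjunction $\u{\ph} \adj \Ch$, I exhibit an isomorphism of hom-categories $\VDbl(\X, \Ch(\b A)) \iso \Cat(\u\X, \b A)$, 2-natural in $\X$ and $\b A$. Given a functor $F \colon \u\X \to \b A$, the unique extension $\tilde F \colon \X \to \Ch(\b A)$ is entirely forced: each loose morphism $p$ in $\X$ with source $X'$ and target $X$ must be sent to the unique loose morphism in $\Ch(\b A)$ between $FX'$ and $FX$, and each cell in $\X$ to the unique cell with the induced frame. Functoriality, namely preservation of identity cells and of cell composition, is automatic from uniqueness. Conversely, any functor $\X \to \Ch(\b A)$ is completely determined by its action on tight morphisms, which is its image under $\u{\ph}$. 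The two-dimensional aspect is analogous: a transformation $\Phi$ in the sense of \cref{transformation} between two such functors is determined by its underlying natural transformation, since each cell component $\Phi_p$ is uniquely determined and the naturality conditions are automatic.

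Finally, $\Ch$ is \ff{} because its counit is invertible. By construction $\u{\Ch(\b A)} = \b A$, and transposing the identity $1_{\Ch(\b A)}$ across the adjunction yields $1_{\b A}$, so the counit $\varepsilon_{\b A} \colon \u{\Ch(\b A)} \to \b A$ is literally the identity for every $\b A$. There is no genuine obstacle in this proof: the construction is entirely governed by uniqueness, so the only care required is in confirming that the forced data does assemble into a legitimate functor (respectively transformation), which is immediate because the axioms on both sides reduce to equalities between cells in $\Ch(\b A)$, of which there is only ever one available.
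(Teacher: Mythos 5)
Your proof is correct and fills in exactly the verification that the paper dismisses as ``Trivial'': the construction of $\Ch(\b A)$, the hom-isomorphism forced by uniqueness of cells, and \ffness{} via the identity counit. This is the same (indeed the only reasonable) approach, just written out in full.
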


\begin{proof}
    Trivial.
\end{proof}

Since $\Ch(\b A)$ is representable for each category $\b A$, it is exponentiable by \cref{representable-vdcs-are-exponentiable}. We may see that, in addition to the representable \vdcs, every loosely discrete \vdc{} on a category is exponentiable.\footnotemark{}
\footnotetext{This has also been observed by \textcite[Remark~6.1]{fujii2025familial}.}%

\begin{proposition}
    \label{oA-is-exponentiable}
    Let $\b A$ be a category. $\o{\b A}$ is exponentiable, but is representable if and only if $\b A = \b0$. Concretely, $\Y^{\o{\b A}} \iso \Ch(\u\Y^{\b A})$ for any \vdc{} $\Y$.
\end{proposition}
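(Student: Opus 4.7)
My plan is to treat the two assertions separately: the representability claim is immediate from the definition, and the exponentiability claim will be established by exhibiting the isomorphism $\Y^{\o{\b A}} \iso \Ch(\u\Y^{\b A})$ as a composite of natural isomorphisms arising from the adjunctions already at our disposal.

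For representability, I would observe that $\o{\b A}$ has no loose morphisms by construction, so it contains no loose identities whatsoever. Thus $\o{\b A}$ is representable if and only if it has no objects, \ie{} if and only if $\b A = \b 0$.

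For exponentiability, the strategy is to verify the required 2-natural isomorphism $\VDbl(\W \times \o{\b A}, \Y) \iso \VDbl(\W, \Ch(\u\Y^{\b A}))$ as a chain of 2-natural bijections, using the two coreflections of \cref{Cat-graphs-into-VDCs}, the reflection of \cref{chaotic-vdc}, and cartesian closure of $\Cat$. The key step is the reciprocity condition for $\o\ph \adj \u\ph$ established in \cref{reciprocity}: applied to $\W$ and $\b A$, it gives an isomorphism $\W \times \o{\b A} \iso \o{(\u\W \times \b A)}$, reflecting the intuition that a product with $\o{\b A}$ strips away the loose data. From here, the chain proceeds as follows:
\[
\VDbl(\W \times \o{\b A}, \Y)
\iso \VDbl(\o{(\u\W \times \b A)}, \Y)
\iso \Cat(\u\W \times \b A, \u\Y)
\iso \Cat(\u\W, \u\Y^{\b A})
\iso \VDbl(\W, \Ch(\u\Y^{\b A})),
\]
using, successively, reciprocity, the adjunction $\o\ph \adj \u\ph$, cartesian closure of $\Cat$, and the adjunction $\u\ph \adj \Ch$ from \cref{chaotic-vdc}. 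Each isomorphism is 2-natural in its arguments, so the composite is 2-natural in $\W$ and $\Y$, which establishes both exponentiability and the claimed formula.

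I do not expect any real obstacle, since all the ingredients have already been assembled: once one recognises that reciprocity is precisely the tool needed to reduce the product $\W \times \o{\b A}$ to a discrete \vdc, the computation is purely formal. Conceptually, the content of the result is that $\o{\b A}$ sees only the underlying category of $\Y$, so the exponential must be the cofree \vdc{} on the ordinary functor category $\u\Y^{\b A}$.
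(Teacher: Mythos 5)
Your proposal is correct and follows essentially the same route as the paper: the same chain of isomorphisms (reciprocity for $\o\ph \adj \u\ph$, the adjunction of \cref{Cat-graphs-into-VDCs}, cartesian closure of $\Cat$, and the adjunction $\u\ph \adj \Ch$), and the same observation that the absence of loose morphisms rules out representability except when $\b A = \b 0$.
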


\begin{proof}
    We have the following chain of isomorphims, natural in \vdcs{} $\X$ and $\Y$.
    \begin{align*}
        \VDbl(\X \times \o{\b A}, \Y) & \iso \VDbl(\o{\u\X \times \b A}, \Y) \tag{\cref{reciprocity}} \\
            & \iso \Cat(\u\X \times \b A, \u\Y) \tag{\cref{Cat-graphs-into-VDCs}} \\
            & \iso \Cat(\u\X, \u\Y^{\b A}) \tag{$\Cat$ is cartesian closed} \\
            & \iso \VDbl(\X, \Ch(\u\Y^{\b A})) \tag{\cref{chaotic-vdc}}
    \end{align*}
    Finally, note that $\o{\b A}$ has no loose morphisms, so cannot be representable unless it has no objects (in which case it is trivially representable).
\end{proof}

In particular, we have $\X^{\o{\b1}} \iso \Ch(\u\X)$ for every \vdc{} $\X$, and so the 2-adjunction $\ph \times \o{\b1} \adj \ph^{\o{\b1}}$ recovers the 2-adjunction $\o{{\u\ph}} \adj \Ch(\u{\ph})$ arising from composing the 2-adjunctions of \cref{Cat-graphs-into-VDCs,chaotic-vdc}, the left adjoint having the structure of a 2-comonad on $\VDbl$, and the right adjoint having the structure of a 2-monad.

We shall return to the question of which \vdcs{} are exponentiable in \cref{exponentiability-revisited}, where we shall give a full characterisation.

\subsection{Relation to multicategories}
\label{relation-to-multicategories}

$\Multicat$ is a full sub-2-category of $\VDbl$: its image comprises the \vdcs{} whose underlying category is terminal (in other words, the \vdcs{} having a single object and single tight morphism as in \cref{multicategory}). It follows from the work of \textcite{pisani2014sequential} that every representable multicategory (\ie{} monoidal category) is exponentiable~\cite[\S2.9]{pisani2014sequential}. In fact, the exponentiability of monoidal categories is a special case of a more general phenomenon: that multicategories may be powered by exponentiable \vdcs{}, in the sense of being equipped with a coherently associative and unital functor $\pow \colon \VDbl\exp\op \times \Multicat \to \Multicat$~\cite[Definition~6.1]{mcdermott2022what}.

\begin{proposition}
    \label{multicategories-are-powered-over-VDCs}
    $\Multicat$ is powered over $\VDbl\exp$ (the full sub-2-category of $\VDbl$ spanned by exponentiable \vdcs{}) and the 2-functor $\Sigma \colon \Multicat \ffto \VDbl$ strictly preserves these powers. Explicitly, for a multicategory $\M$ and \pdc{} $\A$, the power $\A \pow \M \defeq (\Sigma\M)^\A$ has objects the functors $\A_1 \to \b M_1$ and as multimorphisms $F_1, \ldots, F_n \to G$ families of multimorphisms
    \[\{ F_1(p_1), \ldots, F_n(p_n) \to G(p_1 \odot \cdots \odot p_n) \}_{p_1, \ldots, p_n \in \A_n}\]
    natural in $p_1, \ldots, p_n$. Consequently, $\Sigma$ preserves exponentials by monoidal categories.
\end{proposition}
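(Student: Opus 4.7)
The plan is to set $\A \pow \M \defeq (\Sigma\M)^\A$, interpreting this as the exponential in $\VDbl$ (which exists because $\A$ is exponentiable), and then to show that this vdc lies in the essential image of the fully faithful 2-functor $\Sigma$, so that it may be viewed as a multicategory. The key observation is the reciprocity of the 2-adjunction $\o\ph \adj \u\ph$ (\cref{reciprocity}): since $\u\ph \colon \VDbl \to \Cat$ preserves exponentials,
\[
\u{\big((\Sigma\M)^\A\big)} \iso \u{(\Sigma\M)}^{\u\A} \iso \b1^{\u\A} \iso \b1,
\]
using that $\Sigma\M$ has a terminal underlying category by construction. Hence $(\Sigma\M)^\A$ has a single object and single tight morphism, and is therefore equal to $\Sigma(\A \pow \M)$ for a unique multicategory $\A \pow \M$, witnessing that $\Sigma$ strictly preserves this power. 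The power universal property then follows immediately from fully faithfulness of $\Sigma$ together with the exponential adjunction in $\VDbl$: for any multicategory $\b N$,
\[
\Multicat(\b N, \A \pow \M) \iso \VDbl(\Sigma\b N, (\Sigma\M)^\A) \iso \VDbl(\A \times \Sigma\b N, \Sigma\M),
\]
2-natural in $\b N$, $\M$, and $\A$.

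The explicit description is obtained by specialising the construction of exponentials in $\VDbl$ from the proof of \cref{representable-vdcs-are-exponentiable} to the case in which both $\u{\Sigma\M}$ and the underlying category of the exponential are terminal: all tight data collapses, a graph morphism $\A_1 \to \b M_1$ (in the sense of \cref{Grph(Cat)-is-CC}) degenerates to a functor $\A_1 \to \b M_1$, and the description of cells in \cref{cell-in-exponential} reduces to families of multimorphisms $F_1(p_1), \ldots, F_n(p_n) \to G(p_1 \odot \cdots \odot p_n)$ indexed by chains of loose morphisms in the pdc $\A$, natural in those chains in the evident sense.

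For the final assertion, if $\M$ is a monoidal category then $\Sigma\M$ is a representable vdc, hence exponentiable by \cref{representable-vdcs-are-exponentiable}, so that $(\Sigma\b N)^{\Sigma\M}$ exists in $\VDbl$ for every multicategory $\b N$, and by the first paragraph equals $\Sigma(\Sigma\M \pow \b N)$. Since $\Sigma$ preserves binary products (by direct inspection, as both $\Sigma(\M \times \b N)$ and $\Sigma\M \times \Sigma\b N$ have a single object and single tight morphism) and is fully faithful, the multicategory $\Sigma\M \pow \b N$ satisfies the universal property of $\b N^\M$ in $\Multicat$; hence the latter exists and $\Sigma(\b N^\M) \iso (\Sigma\b N)^{\Sigma\M}$. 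The main point requiring care is verifying that the unadorned description of cells in the statement really does match the specialisation of \cref{cell-in-exponential} to the terminal-tight case, but this is a matter of observing that every piece of tight data becomes an identity and the naturality squares collapse accordingly.
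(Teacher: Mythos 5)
Your proposal is correct and follows essentially the same route as the paper: both use the reciprocity condition of \cref{reciprocity} to see that $(\Sigma\M)^\A$ has terminal underlying category and hence lies in the image of $\Sigma$, derive the power universal property from \ffness{} of $\Sigma$ and the exponential adjunction, and obtain the explicit description by specialising the construction in \cref{representable-vdcs-are-exponentiable}. The only (minor) divergence is in the final claim, where the paper verifies $\Sigma(\M^{\b A}) \iso (\Sigma\M)^{\Sigma\b A}$ by direct inspection of Pisani's definition of exponentials of multicategories, whereas you argue abstractly via product preservation, \ffness{}, and uniqueness of exponentials -- an equally valid (and arguably more self-contained) justification, which is essentially the argument of \cref{Sigma-reflects-exponentiability}.
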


\begin{proof}
    Let $\M$ be a multicategory and let $\A$ be an exponentiable \vdc{}. Since the underlying category of $\Sigma\M$ is terminal, so is that of $(\Sigma\M)^\A$ by \cref{reciprocity}: we denote the corresponding multicategory $\A \pow \M$. We therefore obtain a functor $\pow \colon \VDbl\exp\op \times \Multicat \to \Multicat$. That this exhibits $\Multicat$ as powered follows from the fact that exponentiation exhibits $\VDbl$ as powered over $\VDbl\exp$, and $\Multicat$ is a full subcategory of $\VDbl$. Unwrapping the description of the exponential in \cref{representable-vdcs-are-exponentiable} gives the stated definition of $\A \pow \M$ when $\A$ is representable.

    For $\b A$ a monoidal category, viewed as a representable multicategory, $\Sigma\b A$ is also representable, and $\Sigma\M^{\Sigma\b A}$ coincides with $\Sigma(\M^{\b A})$ by inspection of the definition of exponentials of multicategories~\cite[\S2.9]{pisani2014sequential}.
\end{proof}

\textcite{pisani2014sequential} gives a complete characterisation of the exponentiable multicategories: they are precisely the promonoidal categories~\cite[Proposition~2.8]{pisani2014sequential}. Given a small promonoidal category $\b A$ and a cocomplete monoidal category $\M$, the exponential multicategory $\M^{\b A}$ is representable, and the corresponding monoidal structure coincides with \citeauthor{day1970closed}'s~\cite{day1970closed} convolution monoidal structure~\cite[Proposition~2.12]{pisani2014sequential}. Consequently, the operation of powering a multicategory by a \vdc{} may be viewed as a substantial generalisation of convolution for monoidal categories.

We expect $\Sigma$ to preserve and reflect exponentiability; this would follow straightforwardly from a concrete description of exponentials (see \cref{exponentiable-vdcs-conjecture}). For now, we merely show that $\Sigma$ reflects exponentiability.

\begin{lemma}
    \label{Sigma-reflects-exponentiability}
    $\Sigma$ reflects exponentiability: if $\b A$ is a multicategory for which $\Sigma\b A$ is exponentiable as a \vdc{}, then $\b A$ is exponentiable as a multicategory.
\end{lemma}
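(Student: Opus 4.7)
The plan is to transport the exponential of $\Sigma\b A$ in $\VDbl$ back along the \ff{} embedding $\Sigma \colon \Multicat \ffto \VDbl$. Given a multicategory $\b M$, form the exponential $(\Sigma\b M)^{\Sigma\b A}$ in $\VDbl$, which exists by hypothesis, and check that it lies in the image of $\Sigma$; the required multicategory $\b M^{\b A}$ is then its $\Sigma$-preimage.

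The first step is to verify that $(\Sigma\b M)^{\Sigma\b A}$ has terminal underlying category, and hence lies in the image of $\Sigma$. This follows from \cref{reciprocity}: since $\u\ph$ preserves exponentials, we obtain
\[\u{((\Sigma\b M)^{\Sigma\b A})} \iso \u{(\Sigma\b M)}^{\u{(\Sigma\b A)}} \iso \b1^{\b1} \iso \b1.\]
Because $\Sigma$ is \ff{} with image the \vdcs{} having a single object and single tight morphism, there is then a unique multicategory $\b M^{\b A}$ equipped with an isomorphism $\Sigma(\b M^{\b A}) \iso (\Sigma\b M)^{\Sigma\b A}$.

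The second step is to observe that $\Sigma$ preserves finite products: the terminal multicategory maps to the terminal \vdc{} $\bbn 1$, and the binary product in $\VDbl$ of two \vdcs{} with terminal underlying category again has terminal underlying category (since products are computed componentwise), so agrees under $\Sigma$ with the binary product of multicategories. Combining these observations, we produce, for every multicategory $\b N$, a chain of natural isomorphisms
\begin{align*}
    \Multicat(\b N, \b M^{\b A}) & \iso \VDbl(\Sigma\b N, \Sigma(\b M^{\b A})) \\
    & \iso \VDbl(\Sigma\b N, (\Sigma\b M)^{\Sigma\b A}) \\
    & \iso \VDbl(\Sigma\b N \times \Sigma\b A, \Sigma\b M) \\
    & \iso \VDbl(\Sigma(\b N \times \b A), \Sigma\b M) \\
    & \iso \Multicat(\b N \times \b A, \b M),
\end{align*}
the outer isomorphisms using \ffness{} of $\Sigma$ and the middle using the exponential adjunction in $\VDbl$. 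This exhibits $\b M^{\b A}$ as an exponential in $\Multicat$; the two-dimensional enhancement is automatic since $\Sigma$ is locally an isomorphism. There is no real obstacle to speak of: the only thing to be slightly careful about is confirming that the image of $\Sigma$ is closed under binary products in $\VDbl$, which is the content of the reciprocity computation applied to products.
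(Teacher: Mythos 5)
Your proof is correct and takes essentially the same route as the paper: the paper likewise shows that $(\Sigma\b M)^{\Sigma\b A}$ has terminal underlying category via the reciprocity lemma (packaged there as the statement that $\Sigma$ preserves powers by exponentiable \vdcs{}), and then runs the same chain of natural isomorphisms using \ffness{} and product-preservation of $\Sigma$. The only difference is that you inline the verification that the exponential lies in the image of $\Sigma$, whereas the paper delegates it to its earlier proposition on powering multicategories over exponentiable \vdcs{}.
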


\begin{proof}
    Suppose the \vdc{} $\Sigma\b A$ is exponentiable. We have:
    \begin{align*}
        \Multicat(\M \times \b A, \b N) & \iso \VDbl(\Sigma(\M \times \b A), \Sigma\b N) \tag{$\Sigma$ is \ff} \\
            & \iso \VDbl(\Sigma\M \times \Sigma\b A, \Sigma\b N) \tag{$\Sigma$ preserves products} \\
            & \iso \VDbl(\Sigma\M, \Sigma\b N^{\Sigma\b A}) \tag{$\Sigma \b A$ is exponentiable} \\
            & \iso \VDbl(\Sigma\M, \Sigma(\Sigma\b A \pow \b N)) \tag{$\Sigma$ preserves powers} \\
            & \iso \Multicat(\M, \Sigma\b A \pow \b N) \tag{$\Sigma$ is \ff}
    \end{align*}
    which exhibits $\Sigma\b A \pow \b N$ as the exponential $\b N^{\b A}$.
\end{proof}

As a consequence, the failure of cartesian closure of multicategories implies the failure of cartesian closure of \vdcs.

\begin{corollary}
    \label{VDbl-is-not-CC}
    $\VDbl$ is not cartesian closed.
\end{corollary}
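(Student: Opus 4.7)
The plan is to argue by contradiction, leveraging the full faithfulness of $\Sigma \colon \Multicat \ffto \VDbl$ together with \cref{Sigma-reflects-exponentiability}. Suppose, towards a contradiction, that $\VDbl$ were cartesian closed. Then every object of $\VDbl$ would be exponentiable; in particular, for every multicategory $\b A$, the \vdc{} $\Sigma\b A$ would be exponentiable. By \cref{Sigma-reflects-exponentiability}, this would force every multicategory $\b A$ to be exponentiable in $\Multicat$, \ie{} $\Multicat$ would itself be cartesian closed.

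However, it is well established (\eg{} via the characterisation of \textcite{pisani2014sequential} recalled in the introduction, which says that a multicategory is exponentiable if and only if it is promonoidal) that not every multicategory is promonoidal, and hence that $\Multicat$ is not cartesian closed. This contradicts the consequence drawn above, completing the proof.

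The argument is essentially a one-liner given the preceding lemma, so the main work has already been done; there is no genuine obstacle. The only point that requires a moment's care is to justify the non-cartesian-closedness of $\Multicat$, for which one may simply cite a concrete non-promonoidal multicategory (any multicategory failing \citeauthor{pisani2014sequential}'s criterion will do), rather than reproving it here.
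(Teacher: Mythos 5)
Your argument is correct and is essentially the paper's own proof: both reduce to \cref{Sigma-reflects-exponentiability} and then invoke \textcite{pisani2014sequential}'s characterisation of exponentiable multicategories as the promonoidal ones to conclude that not every multicategory is exponentiable.
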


\begin{proof}
    If every \vdc{} were exponentiable, then every multicategory would also be exponentiable by \cref{Sigma-reflects-exponentiability}. However, there are non-exponentiable multicategories: for instance, no nonempty category, viewed as a multicategory with only unary morphisms, is exponentiable~\cite[Proposition~2.8]{pisani2014sequential}.
\end{proof}

It is worth noting that (a special case of) the power structure of \cref{multicategories-are-powered-over-VDCs} has been previously studied in the literature. In \cite{behr2023convolution}, \citeauthor{behr2023convolution} show that, given a small \pdc{} $\A$, the category of set-valued functors from the category $\A_1$, of loose morphisms and cells in $\A$, admits a colax monoidal structure that recovers the usual convolution monoidal structure of \textcite{day1970closed} when $\A$ is the delooping of a monoidal category. For the following, note that the correspondence between weakly representable \vdcs{} and colax \dcs{} (\cref{colax-dcs}) preserves the underlying categories of objects and tight morphisms and hence restricts to a correspondence between weakly representable multicategories and colax monoidal categories.

\begin{proposition}
    \label{power-of-Set}
    For a small \pdc{} $\A$, viewing $\Set$ as a cartesian multicategory, the power $\A \pow \Set$ is a weakly representable multicategory, and the corresponding colax monoidal structure on the category $\Set^{\A_1}$ is the convolution structure of \textcite[Theorem~3.5]{behr2023convolution}.
\end{proposition}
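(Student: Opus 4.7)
The plan is to construct, for each chain $F_1, \dots, F_n$ in $\Set^{\A_1}$, a weak loose composite in $\A \pow \Set$ by a convolution (coend) formula, and then to identify the resulting colax monoidal structure with that of \textcite{behr2023convolution}.

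First I would unpack the description of multimorphisms in $\A \pow \Set$ from \cref{multicategories-are-powered-over-VDCs}: a multimorphism $F_1, \dots, F_n \to G$ is a family of functions $F_1(p_1) \times \cdots \times F_n(p_n) \to G(p_1 \odot \cdots \odot p_n)$, dinatural in the chain $(p_1, \dots, p_n)$ of composable loose morphisms of $\A$ (with naturality taken with respect to unary cells, and coherence with respect to the associator of $\odot$).

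Next I would define the candidate weak loose composite by the coend
\[
(F_1 \otimes \cdots \otimes F_n)(r) \;\defeq\; \int^{(p_1, \dots, p_n)} \A_1\bigl(r,\, p_1 \odot \cdots \odot p_n\bigr) \times F_1(p_1) \times \cdots \times F_n(p_n),
\]
taken over the category of $n$-composable chains of loose morphisms in $\A$, equipped with the tautological multimorphism $F_1, \dots, F_n \to F_1 \otimes \cdots \otimes F_n$ whose components insert $(1_{p_1 \odot \cdots \odot p_n}, x_1, \dots, x_n)$. Weak opcartesianness then follows immediately from the couniversal property of the coend combined with the Yoneda lemma: natural transformations $F_1 \otimes \cdots \otimes F_n \to G$ correspond to natural families of functions $\A_1(r,\, p_1 \odot \cdots \odot p_n) \times F_1(p_1) \times \cdots \times F_n(p_n) \to G(r)$, which, by Yoneda in $r$, are precisely the multimorphism data $F_1, \dots, F_n \to G$. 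This establishes the universal property of \cref{opcartesian} in the weak regime ($l = n = 0$), which is the content of weak representability.

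To match the resulting structure with \textcite{behr2023convolution}, one observes that their binary tensor product is defined by the same coend along the loose-composition functor $\A_2 \xrightarrow{\odot} \A_1$, and their unit arises from the nullary analogue (a colimit over loose identities). The colax structure maps of their convolution arise from the failure of iterated $\odot$ to respect strict reassociation of chains, which is mirrored by the colax comparison cells obtained from our weak (as opposed to strict) loose composites. The two constructions agree on the nose once one checks that both universal colimits present the same functor.

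The main obstacle is the bookkeeping surrounding the indexing categories: the coend is taken over the category $\A_n$ of $n$-composable chains, and pseudoassociativity of $\odot$ means the colax comparison maps must be built from the associator of $\A$. Verifying that these comparison maps coincide with those of \cite{behr2023convolution} requires careful diagram chasing, but is forced once the universal properties of the two convolutions are matched.
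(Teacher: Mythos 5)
Your overall strategy is the same as the paper's: both arguments identify the multimorphisms of $\A \pow \Set$ with natural transformations out of a convolution coend, using the universal property of the coend together with the Yoneda lemma, and then observe that matching the multimorphisms forces the colax structure maps to agree. The paper runs the computation in the opposite direction (starting from \citeauthor{behr2023convolution}'s definition and reducing to the multimorphisms of $\A \pow \Set$), but that is an inessential difference.

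There is, however, a concrete error that breaks your key step: the variance of the hom in your coend. The objects of $\A \pow \Set$ are \emph{covariant} functors $\A_1 \to \Set$, so the convolution tensor must be
\[
(F_1 \otimes \cdots \otimes F_n)(r) \;=\; \int^{(p_1, \ldots, p_n) \in \A_n} \A_1\bigl(p_1 \odot \cdots \odot p_n,\, r\bigr) \times F_1(p_1) \times \cdots \times F_n(p_n),
\]
not $\A_1(r,\, p_1 \odot \cdots \odot p_n)$ as you wrote. With your orientation, the reduction ``by Yoneda in $r$'' fails: the end $\int_r \Set\bigl(\A_1(r, c), G(r)\bigr)$ with $G$ covariant is not an instance of the Yoneda lemma (the integrand is covariant in $r$ in both arguments, so this is an ordinary limit and does not collapse to $G(c)$). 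With the hom direction corrected, $\int_r \Set\bigl(\A_1(c, r), G(r)\bigr) \iso G(c)$ does apply, and your argument then coincides with the paper's chain of bijections
\[
\{ F_1(p_1) \times \cdots \times F_n(p_n) \to G(p_1 \odot \cdots \odot p_n) \}_{(p_1, \ldots, p_n) \in \A_n},
\]
which are exactly the multimorphisms of $\A \pow \Set$. The remainder of your outline (the tautological weakly opcartesian cell, the nullary case for the unit, and the remark that the colax comparison maps are determined by the universal property) is fine once this is fixed.
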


\begin{proof}
    A multimorphism $F_1, \ldots, F_n \to G$ in the convolution structure of \citeauthor{behr2023convolution} is given by a natural transformation
    \[\{ \int^{(p_1, \ldots, p_n) \in \A_n} \A_1(p_1 \odot \cdots \odot p_n, r) \times F_1(p_1) \times \cdots \times F_n(p_n) \to G(r) \}_{r \in \A_1}\]
    which, by the universal property of the coend, is equivalent to a natural family
    \[\{ \A_1(p_1 \odot \cdots \odot p_n, r) \times F_1(p_1) \times \cdots \times F_n(p_n) \to G(r) \}_{(p_1, \ldots, p_n) \in \A_n, r \in \A_1}\]
    which, by naturality, is equivalent to
    \[\{ F_1(p_1) \times \cdots \times F_n(p_n) \to G(p_1 \odot \cdots \odot p_n) \}_{(p_1, \ldots, p_n) \in \A_n}\]
    which are precisely the multimorphisms of $\A \pow \Set$.
\end{proof}

\begin{remark}
    \label{representability-of-exponential}
    We shall not study the representability of exponentials of \vdcs{} here, as it is tangential to our interests. However, the observation that the specific exponentials appearing in \cref{power-of-Set} are weakly representable suggests it would be worthwhile to investigate the general phenomenon. We conjecture that, for every small \pdc{} $\A$ and locally cocomplete\footnotemark{} and weakly representable \vdc{} $\X$, the exponential $\X^\A$ is again weakly representable. If $\A$ furthermore satisfies a suitable factorisation property, such as the \emph{AFP condition} of \textcite[Definition~3.1.1]{pare2013composition} or the \emph{$n$-cylindrical decomposition property} of \textcite[Definition~5.1]{behr2023convolution} (both of which are satisfied, for example, by \pdcs{} with companions and conjoints), and $\X$ admits non-nullary composites, we expect that $\X^\A$ also admits non-nullary composites. As observed in \cite[Example~4.5]{behr2023convolution}, loose identities rarely exist in exponentials, even assuming $\X$ admits them. While the absence of loose identities may seem undesirable, we will observe in \cref{representability-of-Lax} that this is less restrictive than might at first be imagined.
    \footnotetext{Local cocompleteness for \vdcs{} is defined in \cite[\S A]{arkor2026presheaves}.}%
\end{remark}

\section{The \vdc{} of loose monads}
\label{the-vdc-of-loose monads}

As outlined in the introduction, we will show in \cref{lax-functors-as-monads} that exponentiation of \vdcs{} is closely connected to \citeauthor{pare2011yoneda}'s \vdc{} of lax functors. To explain the connection, we must recall a second construction on \vdcs.

\begin{definition}[{\cite[Definition~5.3.1]{leinster2004higher}}]
    \label{LMnd}
    Let $\X$ be a \vdc{}. The \vdc{} $\Mod(\X)$ is defined as follows.
    \begin{enumerate}
        \item An object is a \emph{loose monad} in $\X$, comprising an object $\crr T \in \X$, a loose morphism $T \colon \crr T \lto \crr T$, and cells
        \[
        % https://q.uiver.app/#q=WzAsNSxbMCwwLCJcXGNyciBUIl0sWzEsMCwiXFxjcnIgVCJdLFsyLDAsIlxcY3JyIFQiXSxbMCwxLCJcXGNyciBUIl0sWzIsMSwiXFxjcnIgVCJdLFswLDMsIiIsMCx7ImxldmVsIjoyLCJzdHlsZSI6eyJoZWFkIjp7Im5hbWUiOiJub25lIn19fV0sWzEsMCwiVCIsMix7InN0eWxlIjp7ImJvZHkiOnsibmFtZSI6ImJhcnJlZCJ9fX1dLFsyLDEsIlQiLDIseyJzdHlsZSI6eyJib2R5Ijp7Im5hbWUiOiJiYXJyZWQifX19XSxbMiw0LCIiLDAseyJsZXZlbCI6Miwic3R5bGUiOnsiaGVhZCI6eyJuYW1lIjoibm9uZSJ9fX1dLFs0LDMsIlQiLDAseyJzdHlsZSI6eyJib2R5Ijp7Im5hbWUiOiJiYXJyZWQifX19XSxbOCw1LCJ7XFxjaXJjX1R9IiwxLHsibGV2ZWwiOjEsInN0eWxlIjp7ImJvZHkiOnsibmFtZSI6Im5vbmUifSwiaGVhZCI6eyJuYW1lIjoibm9uZSJ9fX1dXQ==
        \begin{tikzcd}
            {\crr T} & {\crr T} & {\crr T} \\
            {\crr T} && {\crr T}
            \arrow[""{name=0, anchor=center, inner sep=0}, Rightarrow, no head, from=1-1, to=2-1]
            \arrow["T"', "\shortmid"{marking}, from=1-2, to=1-1]
            \arrow["T"', "\shortmid"{marking}, from=1-3, to=1-2]
            \arrow[""{name=1, anchor=center, inner sep=0}, Rightarrow, no head, from=1-3, to=2-3]
            \arrow["T", "\shortmid"{marking}, from=2-3, to=2-1]
            \arrow["{{\circ_T}}"{description}, draw=none, from=1, to=0]
        \end{tikzcd}
        \hspace{8em}
        % https://q.uiver.app/#q=WzAsNCxbMSwwLCJcXGNyciBUIl0sWzAsMCwiXFxjcnIgVCJdLFsxLDEsIlxcY3JyIFQiXSxbMCwxLCJcXGNyciBUIl0sWzIsMywiVCIsMCx7InN0eWxlIjp7ImJvZHkiOnsibmFtZSI6ImJhcnJlZCJ9fX1dLFswLDIsIiIsMix7ImxldmVsIjoyLCJzdHlsZSI6eyJoZWFkIjp7Im5hbWUiOiJub25lIn19fV0sWzEsMywiIiwyLHsibGV2ZWwiOjIsInN0eWxlIjp7ImhlYWQiOnsibmFtZSI6Im5vbmUifX19XSxbMCwxLCIiLDIseyJsZXZlbCI6Miwic3R5bGUiOnsiaGVhZCI6eyJuYW1lIjoibm9uZSJ9fX1dLFs1LDYsIntcXElfVH0iLDEseyJsZXZlbCI6MSwic3R5bGUiOnsiYm9keSI6eyJuYW1lIjoibm9uZSJ9LCJoZWFkIjp7Im5hbWUiOiJub25lIn19fV1d
        \begin{tikzcd}
            {\crr T} & {\crr T} \\
            {\crr T} & {\crr T}
            \arrow[""{name=0, anchor=center, inner sep=0}, Rightarrow, no head, from=1-1, to=2-1]
            \arrow[Rightarrow, no head, from=1-2, to=1-1]
            \arrow[""{name=1, anchor=center, inner sep=0}, Rightarrow, no head, from=1-2, to=2-2]
            \arrow["T", "\shortmid"{marking}, from=2-2, to=2-1]
            \arrow["{{\I_T}}"{description}, draw=none, from=1, to=0]
        \end{tikzcd}
        \]
        subject to the following equations.
        \begin{align*}
            (\circ_T, 1_T) \d \circ_T & = (1_T, \circ_T) \d \circ_T &
            (\I_T, 1_T) \d \circ_T & = 1_T &
            (1_T, \I_T) \d \circ_T & = 1_T
        \end{align*}
        \item A tight morphism from $(\crr S, S, \circ_S, \I_S)$ to $(\crr T, T, \circ_T, \I_T)$ is a \emph{loose monad morphism}, comprising a tight morphism $\crr f \colon \crr S \to \crr T$ and a cell
        % https://q.uiver.app/#q=WzAsNCxbMSwwLCJcXGNyciBTIl0sWzAsMCwiXFxjcnIgUyJdLFsxLDEsIlxcY3JyIFQiXSxbMCwxLCJcXGNyciBUIl0sWzAsMSwiUyIsMix7InN0eWxlIjp7ImJvZHkiOnsibmFtZSI6ImJhcnJlZCJ9fX1dLFsyLDMsIlQiLDAseyJzdHlsZSI6eyJib2R5Ijp7Im5hbWUiOiJiYXJyZWQifX19XSxbMCwyLCJ7XFxjcnIgZn0iXSxbMSwzLCJ7XFxjcnIgZn0iLDJdLFs2LDcsImYiLDEseyJsZXZlbCI6MSwic3R5bGUiOnsiYm9keSI6eyJuYW1lIjoibm9uZSJ9LCJoZWFkIjp7Im5hbWUiOiJub25lIn19fV1d
        \[\begin{tikzcd}
            {\crr S} & {\crr S} \\
            {\crr T} & {\crr T}
            \arrow[""{name=0, anchor=center, inner sep=0}, "{{\crr f}}"', from=1-1, to=2-1]
            \arrow["S"', "\shortmid"{marking}, from=1-2, to=1-1]
            \arrow[""{name=1, anchor=center, inner sep=0}, "{{\crr f}}", from=1-2, to=2-2]
            \arrow["T", "\shortmid"{marking}, from=2-2, to=2-1]
            \arrow["f"{description}, draw=none, from=1, to=0]
        \end{tikzcd}\]
        subject to the following equations.
        \begin{align*}
            \circ_S \d f & = (f, f) \d \circ_T &
            \I_S \d f & = 1_{\crr f} \d \I_T
        \end{align*}
        \item A loose morphism is a \emph{loose monad module}, comprising a loose morphism $m \colon \crr{T'} \lto \crr T$ and cells
        \[
        % https://q.uiver.app/#q=WzAsNSxbMiwwLCJcXGNycntUJ30iXSxbMSwwLCJcXGNyciBUIl0sWzAsMCwiXFxjcnIgVCJdLFsyLDEsIlxcY3Jye1QnfSJdLFswLDEsIlxcY3JyIFQiXSxbMCwxLCJtIiwyLHsic3R5bGUiOnsiYm9keSI6eyJuYW1lIjoiYmFycmVkIn19fV0sWzMsNCwibSIsMCx7InN0eWxlIjp7ImJvZHkiOnsibmFtZSI6ImJhcnJlZCJ9fX1dLFswLDMsIiIsMix7ImxldmVsIjoyLCJzdHlsZSI6eyJoZWFkIjp7Im5hbWUiOiJub25lIn19fV0sWzEsMiwiVCIsMix7InN0eWxlIjp7ImJvZHkiOnsibmFtZSI6ImJhcnJlZCJ9fX1dLFsyLDQsIiIsMix7ImxldmVsIjoyLCJzdHlsZSI6eyJoZWFkIjp7Im5hbWUiOiJub25lIn19fV0sWzcsOSwie1xcbGFtYmRhX219IiwxLHsibGV2ZWwiOjEsInN0eWxlIjp7ImJvZHkiOnsibmFtZSI6Im5vbmUifSwiaGVhZCI6eyJuYW1lIjoibm9uZSJ9fX1dXQ==
        \begin{tikzcd}
            {\crr T} & {\crr T} & {\crr{T'}} \\
            {\crr T} && {\crr{T'}}
            \arrow[""{name=0, anchor=center, inner sep=0}, Rightarrow, no head, from=1-1, to=2-1]
            \arrow["T"', "\shortmid"{marking}, from=1-2, to=1-1]
            \arrow["m"', "\shortmid"{marking}, from=1-3, to=1-2]
            \arrow[""{name=1, anchor=center, inner sep=0}, Rightarrow, no head, from=1-3, to=2-3]
            \arrow["m", "\shortmid"{marking}, from=2-3, to=2-1]
            \arrow["{{\lambda_m}}"{description}, draw=none, from=1, to=0]
        \end{tikzcd}
        \qquad
        % https://q.uiver.app/#q=WzAsNSxbMiwwLCJcXGNycntUJ30iXSxbMSwwLCJcXGNycntUJ30iXSxbMCwwLCJcXGNyciBUIl0sWzIsMSwiXFxjcnJ7VCd9Il0sWzAsMSwiXFxjcnIgVCJdLFswLDEsIlQnIiwyLHsic3R5bGUiOnsiYm9keSI6eyJuYW1lIjoiYmFycmVkIn19fV0sWzMsNCwibSIsMCx7InN0eWxlIjp7ImJvZHkiOnsibmFtZSI6ImJhcnJlZCJ9fX1dLFswLDMsIiIsMix7ImxldmVsIjoyLCJzdHlsZSI6eyJoZWFkIjp7Im5hbWUiOiJub25lIn19fV0sWzEsMiwibSIsMix7InN0eWxlIjp7ImJvZHkiOnsibmFtZSI6ImJhcnJlZCJ9fX1dLFsyLDQsIiIsMix7ImxldmVsIjoyLCJzdHlsZSI6eyJoZWFkIjp7Im5hbWUiOiJub25lIn19fV0sWzcsOSwie1xccmhvX219IiwxLHsibGV2ZWwiOjEsInN0eWxlIjp7ImJvZHkiOnsibmFtZSI6Im5vbmUifSwiaGVhZCI6eyJuYW1lIjoibm9uZSJ9fX1dXQ==
        \begin{tikzcd}
            {\crr T} & {\crr{T'}} & {\crr{T'}} \\
            {\crr T} && {\crr{T'}}
            \arrow[""{name=0, anchor=center, inner sep=0}, Rightarrow, no head, from=1-1, to=2-1]
            \arrow["m"', "\shortmid"{marking}, from=1-2, to=1-1]
            \arrow["{T'}"', "\shortmid"{marking}, from=1-3, to=1-2]
            \arrow[""{name=1, anchor=center, inner sep=0}, Rightarrow, no head, from=1-3, to=2-3]
            \arrow["m", "\shortmid"{marking}, from=2-3, to=2-1]
            \arrow["{{\rho_m}}"{description}, draw=none, from=1, to=0]
        \end{tikzcd}
        \]
        subject to the following equations.
        \begin{align*}
            (\circ_T, 1_m) \d \lambda_m & = (1_T, \lambda_m) \d \lambda_m &
            (\I_T, 1_m) \d \lambda_m & = 1_m \\
            (1_m, \circ_{T'}) \d \rho_m & = (\rho_m, 1_{T'}) \d \rho_m &
            (1_m, \I_{T'}) \d \rho_m & = 1_m \\
            (\lambda_m, 1_{T'}) \d \rho_m & = (1_T, \rho_m) \d \lambda_m
        \end{align*}
        \item A cell is \emph{loose monad transformation}, comprising a cell in $\X$
        % https://q.uiver.app/#q=WzAsNSxbMiwwLCJcXGNycntTX259Il0sWzEsMCwiXFxjZG90cyJdLFswLDAsIlxcY3Jye1NfMH0iXSxbMiwxLCJcXGNycntUJ30iXSxbMCwxLCJcXGNyciBUIl0sWzAsMSwibV9uIiwyLHsic3R5bGUiOnsiYm9keSI6eyJuYW1lIjoiYmFycmVkIn19fV0sWzEsMiwibV8xIiwyLHsic3R5bGUiOnsiYm9keSI6eyJuYW1lIjoiYmFycmVkIn19fV0sWzMsNCwibSIsMCx7InN0eWxlIjp7ImJvZHkiOnsibmFtZSI6ImJhcnJlZCJ9fX1dLFswLDMsIlxcY3Jye2YnfSJdLFsyLDQsIlxcY3Jye2Z9IiwyXSxbOCw5LCJcXHBoaSIsMSx7InNob3J0ZW4iOnsic291cmNlIjoyMCwidGFyZ2V0IjoyMH0sInN0eWxlIjp7ImJvZHkiOnsibmFtZSI6Im5vbmUifSwiaGVhZCI6eyJuYW1lIjoibm9uZSJ9fX1dXQ==
        \[\begin{tikzcd}
            {\crr{S_0}} & \cdots & {\crr{S_n}} \\
            {\crr T} && {\crr{T'}}
            \arrow[""{name=0, anchor=center, inner sep=0}, "{\crr{f}}"', from=1-1, to=2-1]
            \arrow["{m_1}"'{inner sep=.8ex}, "\shortmid"{marking}, from=1-2, to=1-1]
            \arrow["{m_n}"'{inner sep=.8ex}, "\shortmid"{marking}, from=1-3, to=1-2]
            \arrow[""{name=1, anchor=center, inner sep=0}, "{\crr{f'}}", from=1-3, to=2-3]
            \arrow["m"{inner sep=.8ex}, "\shortmid"{marking}, from=2-3, to=2-1]
            \arrow["\phi"{description}, draw=none, from=1, to=0]
        \end{tikzcd}\]
        subject to the following equations\footnotemark{}.
        \begin{align*}
            (f, \phi) \d \lambda_m & = (\phi, f') \d \rho_m \tag{$n = 0$} \\
            (\lambda_{m_1}, 1_{m_2}, \ldots, 1_{m_n}) \d \phi & = (f, \phi) \d \lambda_m \tag{$n \ge 1$} \\
            (1_{m_1}, \ldots, 1_{m_{n - 1}}, \rho_{m_n}) \d \phi & = (\phi, f') \d \rho_m \tag{$n \ge 1$} \\
            (1_{m_1}, \ldots, \lambda_{m_{i + 1}}, \ldots, 1_{m_n}) \d \phi & = (1_{m_1}, \ldots, \rho_{m_i}, \ldots, 1_{m_n}) \d \phi \tag{$1 \leq i < n$}
        \end{align*}
    \end{enumerate}
    \footnotetext{Note that \cite[Definition~5.3.1]{leinster2004higher} is incomplete, as it omits the coherence condition for nullary cells (as observed in \cite[Footnote~3]{arkor2025nerve}).}%
    Identities and composition is given componentwise.
\end{definition}

Many category-like structures arise by applying the $\Mod$ construction to a simpler \vdc. The prototypical example, stemming from observations of \textcite{benabou1967introduction} and \textcite{burroni1971tcategories}, is the \vdc{} of categories and distributors.

\begin{example}
    \label{Mod-Span}
    $\Mod(\Span) \iso \Dist$. More generally, for any category $\E$ with pullbacks, $\Mod(\Span(\E)) \iso \Dist(\E)$ (see \cref{Span(E),Dist(E)}).
\end{example}

Many more instances of \vdcs{} arising from the $\Mod$ construction may be found in the work of \textcite{cruttwell2010unified}; we shall see examples of a different flavour shortly.

\subsection{Normality}
\label{normality}

The $\Mod$ construction admits a convenient universal property: it is the cofree \emph{normal} \vdc{} construction. This property will be central to our study of lax functors in \cref{lax-functors-as-monads}.

\begin{definition}
    \label{normal-vdc}
    A \emph{normal} \vdc{} is a \vdc{} equipped with a choice of loose identities. A \emph{normal functor} strictly preserves the chosen loose identities. Denote by $\VDbln$ the 2-category of normal \vdcs{}, normal functors, and transformations. There is a forgetful 2-functor $\ph' \colon \VDbln \to \VDbl$.
\end{definition}

Our definition of normal functor may seem overly strict. However, any functor that preserves loose identities up to isomorphism is isomorphic to a functor that preserves loose identities strictly (note that an analogous property does \emph{not} hold for non-nullary composites). Thus there is no loss in generality in considering the stricter notion, which allows us to work with stricter universal properties.

Just as every monoid acts on itself, every loose monad is a module for itself: this exhibits $\Mod(\X)$ as a normal \vdc.

\begin{lemma}[{\cite[Proposition~5.5]{cruttwell2010unified}}]
    \label{Mod-is-normal}
    Let $\X$ be a \vdc{}. The \vdc{} $\Mod(\X)$ is normal: the loose identity on a loose monad $T$ in $\X$ is $T$ itself.
\end{lemma}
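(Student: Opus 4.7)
The plan is to show that, for each loose monad $T$ in $\X$, the loose morphism $T \colon T \lto T$ in $\Mod(\X)$ (with both actions given by the monad multiplication $\circ_T$), together with the nullary cell $\I_T$, exhibits $T$ as the loose identity on the object $T$ of $\Mod(\X)$. First, I would verify that $T$ is a well-defined loose morphism in $\Mod(\X)$: the bimodule associativity, unit, and left-right compatibility axioms reduce verbatim to the monad associativity and unit axioms of $T$. Likewise, the axiom $(f, \phi) \d \lambda_m = (\phi, f') \d \rho_m$ required of a nullary loose monad transformation specialises, on taking $f = f' = 1_{\crr T}$, $\phi = \I_T$, and $m = T$, to the monad unit axioms, so $\I_T$ is a well-defined nullary cell of $\Mod(\X)$.

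The core of the argument is the opcartesian universal property of $\I_T$. Given a cell $\phi$ in $\Mod(\X)$ with top $m_1, \ldots, m_l, m'_1, \ldots, m'_n$ and with object $T$ at position $l$ (\ie{} $m_l$ has source $T$ and $m'_1$ has target $T$), I would construct the factored cell $\check\phi$ with top $m_1, \ldots, m_l, T, m'_1, \ldots, m'_n$ by taking its underlying $\X$-cell to be
\[\check\phi_{\X} \defeq (1_{m_1}, \ldots, 1_{m_{l-1}}, \rho_{m_l}, 1_{m'_1}, \ldots, 1_{m'_n}) \d \phi_{\X},\]
which agrees with $(1_{m_1}, \ldots, 1_{m_l}, \lambda_{m'_1}, 1_{m'_2}, \ldots, 1_{m'_n}) \d \phi_{\X}$ by the interior transformation axiom for $\phi$ at the relevant junction. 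The one-sided degenerate cases $l = 0$ or $n = 0$ use the available one of these formulae; the fully nullary case $l = n = 0$ uses the module action on the bottom loose morphism, the factorization there being forced directly by the nullary transformation axiom. Existence, \ie{} $\phi = (1, \ldots, \I_T, \ldots) \d \check\phi$, then reduces by associativity of composition in $\X$ to the right-module unit axiom $(1_{m_l}, \I_T) \d \rho_{m_l} = 1_{m_l}$; uniqueness runs the same computation in reverse, showing that any candidate $\check\phi'$ satisfies $\check\phi' = (1, \ldots, \rho_{m_l}, \ldots) \d \phi$ and therefore coincides with $\check\phi$. It remains to verify that $\check\phi$ is genuinely a loose monad transformation, whose boundary and interior axioms follow from those of $\phi$ together with the bimodule compatibility of $m_l$ and $m'_1$ with the inserted $T$.

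The hard part will be purely notational: the positional bookkeeping for cells of arbitrary arity and the degenerate boundary cases. Conceptually the argument amounts to a single application of the module unit and associativity axioms, so the main challenge is presenting it cleanly—fixing notation for the generic middle-insertion case and observing that the boundary cases follow by the same reasoning, with the bottom-morphism module actions replacing $\rho_{m_l}$ or $\lambda_{m'_1}$ as needed.
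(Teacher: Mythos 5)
Your argument is correct and is essentially the standard one: the paper itself offers no proof of this lemma, deferring entirely to the citation \cite[Proposition~5.5]{cruttwell2010unified}, and your construction (the module $T$ over itself via $\circ_T$, the nullary cell $\I_T$, and factorisation of an arbitrary cell through insertion of $\I_T$ via the adjacent module action) is exactly the cited argument. The only point I would tighten is uniqueness: it is not literally the existence computation ``in reverse'' but rather requires applying the interior compatibility axiom of the \emph{candidate} factorisation $\check\phi'$ at the junction adjacent to the inserted $T$ (reducing $(\ldots,\rho_{m_l},\I_T,\ldots) \d \check\phi'$ to $(\ldots,(1_T,\I_T)\d\circ_T,\ldots)\d\check\phi' = \check\phi'$ via the monad unit law), which is consistent with, but slightly more than, what your sketch states.
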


In fact, $\Mod$ is universal in this respect.

\begin{proposition}[{\cite[Proposition~5.14]{cruttwell2010unified}}]
    \label{Mod-is-right-adjoint}
    There is a 2-adjunction as follows.
    % https://q.uiver.app/#q=WzAsMixbMCwwLCJcXFZEYmxuIl0sWzEsMCwiXFxWRGJsIl0sWzAsMSwiXFxwaCciLDAseyJvZmZzZXQiOi0yfV0sWzEsMCwiXFxNb2QiLDAseyJvZmZzZXQiOi0yfV0sWzIsMywiIiwwLHsibGV2ZWwiOjEsInN0eWxlIjp7Im5hbWUiOiJhZGp1bmN0aW9uIn19XV0=
    \[\begin{tikzcd}[column sep=large]
        \VDbln & \VDbl
        \arrow[""{name=0, anchor=center, inner sep=0}, "{\ph'}", shift left=2, from=1-1, to=1-2]
        \arrow[""{name=1, anchor=center, inner sep=0}, "\Mod", shift left=2, from=1-2, to=1-1]
        \arrow["\dashv"{anchor=center, rotate=-90}, draw=none, from=0, to=1]
    \end{tikzcd}\]
\end{proposition}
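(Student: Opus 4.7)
The plan is to construct an explicit counit $\varepsilon_\X \colon (\Mod(\X))' \to \X$ for each \vdc{} $\X$ and verify its universal property. Define $\varepsilon_\X$ to be the evident forgetful functor: it sends each loose monad to its underlying loose morphism $T \colon \crr T \lto \crr T$, each loose monad morphism $(\crr f, f)$ to its underlying tight morphism $\crr f$, each loose monad module to its underlying loose morphism, and each loose monad transformation to its underlying cell in $\X$. Functoriality is immediate from the componentwise composition in $\Mod(\X)$ prescribed by \cref{LMnd}.

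Given a normal \vdc{} $\Y$ and a functor $F \colon \Y' \to \X$, I would construct the transposed normal functor $\tilde F \colon \Y \to \Mod(\X)$ as follows. Since $\Y$ is normal, each object $Y$ is equipped with a loose identity $I_Y$ and hence with a nullary opcartesian cell $\iota_Y$ by \cref{opcartesian}. Applying opcartesianness yields canonical multiplication and unit cells that exhibit each $I_Y$ as a loose monad on $Y$ in $\Y'$, canonical monad-morphism structure on each tight morphism of $\Y$, and canonical bimodule structure on each loose morphism of $\Y$. Applying $F$ to this data — which $F$ preserves on the nose, being a strict functor of \vdcs{} — produces loose monads, loose monad morphisms, and loose monad modules in $\X$, and sends each cell of $\Y$ to a loose monad transformation between the resulting data. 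The loose monad, module, and transformation axioms follow from uniqueness of the opcartesian factorisations in $\Y'$, transported through the functoriality of $F$.

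Normality of $\tilde F$ is by construction: $\tilde F(I_Y) = F(I_Y)$ is, by \cref{Mod-is-normal}, the chosen loose identity on $\tilde F(Y)$ in $\Mod(\X)$. The triangle identity $\varepsilon_\X \c \tilde F' = F$ is immediate from the definitions of $\varepsilon_\X$ and $\tilde F$. For uniqueness, any other lift $\tilde G$ must send $I_Y$ to the loose identity on $\tilde G(Y) = FY$ in $\Mod(\X)$, which by \cref{Mod-is-normal} is $F(I_Y)$ with its canonical monad structure; functoriality of $\tilde G$ and the constraint $\varepsilon_\X \c \tilde G' = F$ then force $\tilde G$ to agree with $\tilde F$ on the rest of the data.

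The main technical obstacle is verifying that the cells produced by the opcartesian factorisation procedure really do satisfy all the axioms enumerated in \cref{LMnd} — associativity and unitality of the multiplication, compatibility of the left and right module actions, and the four families of naturality conditions for monad transformations (with the nullary case being the most delicate). Each such equation reduces, via functoriality of $F$, to an equation between cells in $\Y'$ involving the $I_{(-)}$, which can be extracted from uniqueness of opcartesian factorisations. The 2-naturality of the resulting bijection $\VDbl(\Y', \X) \iso \VDbln(\Y, \Mod(\X))$ in both $\Y$ and $\X$ is then routine.
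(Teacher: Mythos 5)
Your route is essentially the paper's: the counit is the same forgetful functor, and your transpose $\tilde F$ is exactly the paper's unit $\eta_\Y$ (sending $Y$ to the loose identity $Y(1,1)$ with the monad structure induced by opcartesianness) followed by $\Mod(F)$. The paper packages the adjunction as explicit unit and counit plus the triangle identities, whereas you verify the couniversal property of the counit; the existence half of your argument is fine and is, if anything, more explicit than the paper about which of the axioms of \cref{LMnd} need checking. Two points deserve attention. First, a typing slip: the counit must send a loose monad to its \emph{carrier object} $\crr T$, not to its underlying loose morphism $T \colon \crr T \lto \crr T$ (a functor of \vdcs{} sends objects to objects); your assignment on tight morphisms shows this is what you intend, but as written it is not a functor. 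Second, the uniqueness clause is the one step your route requires and the paper's triangle-identity route avoids, and your justification of it is too quick: normality of $\tilde G$ together with \cref{Mod-is-normal} and $\varepsilon_\X \c \tilde G' = F$ pins down the carrier and the underlying loose morphism $F(I_Y)$ of the monad $\tilde G(Y)$, but not, on its own, its multiplication and unit cells, nor the action cells of the modules $\tilde G(p)$ -- these are extra data invisible to $\varepsilon_\X$. To force them you must additionally chase the monad-transformation axioms of \cref{LMnd} against the images under $\tilde G$ of the coherence cells of $\Y$ (the opcartesian nullary cells $\iota_Y$ and the induced unitors/multiplications), which is a genuine, if routine, calculation that should be acknowledged rather than folded into ``functoriality then forces the rest.''
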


Note that \textcite{cruttwell2010unified} only establish the existence of a \emph{pseudo}adjunction, since their normal functors preserve loose identities only up to isomorphism. Since we will have need to analyse the unit and counit of this 2-adjunction further in what follows, we find it helpful to provide a self-contained proof of \cref{Mod-is-right-adjoint}.\footnote{\textcite[\S7.1]{fujii2025familial} share our preference for the 2-adjunction rather than the pseudoadjunction.}

\begin{proof}
    First, it is clear that $\Mod$ is functorial, since its definition is diagrammatic in nature. We define the unit
    \begin{equation}
        \label{Mnd-unit}
        \eta_\X \colon \X \to \Mod(\X')
    \end{equation}
    at a normal \vdc{} $\X$ to be the functor sending an object $A$ to the canonical loose monad structure on the loose identity $A(1, 1)$; a tight morphism $f \colon A \to B$ to the canonical monad morphism on $f$; a loose morphism $p$ to the canonical monad module on $p$; and a cell $\phi$ to the monad transformation $\phi$.

    Explicitly, the unit of the loose monad structure on $A(1, 1)$ is given by the nullary opcartesian cell defining $A(1, 1)$ and the multiplication is given by the mediating cell induced via the universal property of the loose identity on $A$ by the identity cell on $A(1, 1)$. Similarly, the cell component of the monad morphism structure on $f$ is given by the mediating cell induced via the universal property of the loose identity on $A$ by the identity cell on $f$ (itself obtained by precomposing the opcartesian cell defining $B$ by $f$). The cells of the monad module structure on $p$ are induced in the same way as the multiplication cell for $A(1, 1)$ by the identity cell on $p$. In each case, the equations follow from the universal properties of the opcartesian cells.

    The functor $\eta_\X$ is normal, since it sends a loose identity $A(1, 1) \colon A \lto A$ in $\X$ to the monad module $A(1, 1) \colon A(1, 1) \lto A(1, 1)$, which is the identity on the loose monad $A(1, 1)$. It is clear that $\eta$ is 2-natural with respect to strictly normal functors and transformations.

    We define the counit
    \begin{equation}
        \varepsilon_\Y \colon \Mod(\Y)' \to \Y
    \end{equation}
    at a \vdc{} $\Y$ to be the forgetful functor sending a loose monad $T$ to its carrier $\crr T$; a monad morphism $(\crr f, f)$ to the tight morphism $\crr f$; a monad module $m$ to the loose morphism $m$; and a monad transformation $\phi$ to the cell $\phi$. The family $\varepsilon$ is trivially 2-natural.

    The left triangle identity says that the carrier of $A(1, 1)$ is $A$. The right triangle identity says that if we view a loose monad $T$ as a monad module $T \colon T \lto T$, which forms a loose monad in $\Mod(\Y)'$, its underlying loose monad is precisely $T$.
    \[
    % https://q.uiver.app/#q=WzAsMyxbMCwwLCJcXFgnIl0sWzEsMCwiXFxNb2QoXFxYJyknIl0sWzEsMSwiXFxYJyJdLFswLDEsIntcXGV0YV9cXFh9JyJdLFsxLDIsIlxcdmFyZXBzaWxvbl97XFxYJ30iXSxbMCwyLCIiLDIseyJsZXZlbCI6Miwic3R5bGUiOnsiaGVhZCI6eyJuYW1lIjoibm9uZSJ9fX1dXQ==
    \begin{tikzcd}
        {\X'} & {\Mod(\X')'} \\
        & {\X'}
        \arrow["{{\eta_\X}'}", from=1-1, to=1-2]
        \arrow[equals, from=1-1, to=2-2]
        \arrow["{\varepsilon_{\X'}}", from=1-2, to=2-2]
    \end{tikzcd}
    \hspace{4em}
    % https://q.uiver.app/#q=WzAsMyxbMCwwLCJcXE1vZChcXFkpIl0sWzEsMCwiXFxNb2QoXFxNb2QoXFxZKScpIl0sWzEsMSwiXFxNb2QoXFxZKSJdLFswLDEsIlxcZXRhX3tcXE1vZChcXFkpfSJdLFsxLDIsIlxcTW9kKFxcdmFyZXBzaWxvbl9cXFkpIl0sWzAsMiwiIiwyLHsibGV2ZWwiOjIsInN0eWxlIjp7ImhlYWQiOnsibmFtZSI6Im5vbmUifX19XV0=
    \begin{tikzcd}[column sep=large]
        {\Mod(\Y)} & {\Mod(\Mod(\Y)')} \\
        & {\Mod(\Y)}
        \arrow["{\eta_{\Mod(\Y)}}", from=1-1, to=1-2]
        \arrow[equals, from=1-1, to=2-2]
        \arrow["{\Mod(\varepsilon_\Y)}", from=1-2, to=2-2]
    \end{tikzcd}
    \qedshift
    \]
\end{proof}

\subsection{Lax-idempotence}
\label{lax-idempotence}

The counit of the 2-adjunction in \cref{Mod-is-right-adjoint} gives, for each \vdc{} $\Y$, a forgetful functor $\varepsilon_\Y \colon \Mod(\Y)' \to \Y$, while the unit gives, for each \emph{normal} \vdc{} $\X$, a normal functor $\eta_\X \colon \X \to \Mod(\X')$. We might hope that these are related by adjointness, and indeed this is true\footnote{This has been independently observed (without proof) by \textcite[Remark~7.8]{fujii2025familial}.}.

\begin{definition}[{\cite[Theorem~10.3.4]{dostal2018two}}]
    A 2-adjunction $L \adj R$ with unit $\eta$ and counit $\varepsilon$ is \emph{lax-idempotent} if there is an adjunction $L\eta \adj \varepsilon L \colon LRL \tto L$ with identity unit.
\end{definition}

\begin{theorem}
    \label{Mod-is-lax-idempotent}
    The 2-adjunction $\ph' \adj \Mod$ is lax-idempotent.
\end{theorem}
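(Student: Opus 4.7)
The plan is to verify lax-idempotence directly from the definition. Since the proof of \cref{Mod-is-right-adjoint} exhibits $\varepsilon_{\X'} \cdot (\eta_\X)' = 1_{\X'}$ as a strict equality, the candidate unit of the sought adjunction $(\eta_\X)' \adj \varepsilon_{\X'}$ in $\VDbl$ is the identity, and it suffices to construct a counit
\[
\kappa_\X \colon (\eta_\X)' \cdot \varepsilon_{\X'} \tto 1_{\Mod(\X')'}
\]
in $\VDbl$ and verify the two triangle identities.

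The components of $\kappa_\X$ are the natural guesses. At a loose monad $T$ in $\X'$, the tight component $(\kappa_\X)_T \colon \crr T(1,1) \to T$ in $\Mod(\X')$ has underlying tight morphism $1_{\crr T}$ and structure cell the unique unary cell $\crr T(1,1) \tto T$ (with identity tight sides) whose composition with the opcartesian nullary cell defining $\crr T(1,1)$ recovers the monad unit $\I_T$; that this satisfies the loose monad morphism axioms of \cref{LMnd} follows from the unit and multiplication laws of $T$ together with the universal property of opcartesian cells. At a loose monad module $m \colon T \lto T'$, the loose component $(\kappa_\X)_m$ is given by the identity cell $1_m$ of $\X$, regarded as a monad transformation from $(\eta_\X)'(m)$ (the $(\crr T(1,1), \crr{T'}(1,1))$-bimodule whose actions are induced by the loose identity structure of $\X$) to $m$ (as a $(T, T')$-bimodule). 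Naturality of $\kappa_\X$ with respect to monad transformations follows from functoriality of opcartesian factorisation.

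The triangle identities are immediate. Precomposing $\kappa_\X$ with $(\eta_\X)'$ at an object $A \in \X$ gives $(\kappa_\X)_{A(1,1)}$: since $\I_{A(1,1)}$ is itself the defining opcartesian cell, its unary transpose is the identity on $A(1,1)$, so $\kappa_\X \cdot (\eta_\X)' = 1$. Postcomposing $\kappa_\X$ with $\varepsilon_{\X'}$ projects onto the underlying tight morphisms of $\kappa_\X$, all of which are identities, yielding $\varepsilon_{\X'} \cdot \kappa_\X = 1$. 2-naturality of $\kappa$ in $\X$ with respect to normal functors is a routine check.

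The main obstacle is verifying that $1_m$ satisfies the monad transformation axioms of \cref{LMnd}. After unwinding the $(\eta_\X)'$-transported bimodule structure on $m$ in terms of opcartesian factorisation, these axioms reduce, via the universal property of the loose identities, to the left and right unitality axioms of $m$ as a $(T, T')$-bimodule.
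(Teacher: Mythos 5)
Your proposal is correct and follows essentially the same route as the paper: both exhibit the coreflective adjunction $(\eta_\X)' \adj \varepsilon_{\X'}$ with identity unit, taking the counit at a loose monad $T$ to be the monad morphism over $1_{\crr T}$ whose cell is the unary transpose of the unit $\I_T$, and at a module to be the identity cell, with the triangle identities reducing to the observation that $\I_{A(1,1)}$ transposes to the identity. You merely spell out in more detail the routine verifications (monad morphism and transformation axioms via the universal property of loose identities) that the paper leaves implicit.
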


\begin{proof}
    We must show that, for each normal \vdc{} $\X$, we have a coreflective adjunction of \vdcs{} as follows.
    % https://q.uiver.app/#q=WzAsMixbMCwwLCJcXFgnIl0sWzEsMCwiXFxNb2QoXFxYJyknIl0sWzAsMSwie1xcZXRhX1xcWH0nIiwwLHsib2Zmc2V0IjotMiwic3R5bGUiOnsidGFpbCI6eyJuYW1lIjoiaG9vayIsInNpZGUiOiJ0b3AifX19XSxbMSwwLCJcXHZhcmVwc2lsb25fe1xcWCd9IiwwLHsib2Zmc2V0IjotMn1dLFsyLDMsIiIsMCx7ImxldmVsIjoxLCJzdHlsZSI6eyJuYW1lIjoiYWRqdW5jdGlvbiJ9fV1d
    \[\begin{tikzcd}
        {\X'} & {\Mod(\X')'}
        \arrow[""{name=0, anchor=center, inner sep=0}, "{{\eta_\X}'}", shift left=2, hook, from=1-1, to=1-2]
        \arrow[""{name=1, anchor=center, inner sep=0}, "{\varepsilon_{\X'}}", shift left=2, from=1-2, to=1-1]
        \arrow["\dashv"{anchor=center, rotate=-90}, draw=none, from=0, to=1]
    \end{tikzcd}\]
    The unit of the adjunction is the identity, the carrier of each identity loose monad $A(1, 1)$ on an object $A$ simply being $A$. The counit ${\eta_\X}' \c \varepsilon_{\X'} \tto 1$ is the transformation specified by, for each loose monad $S$ on $A$, the loose monad morphism $(1_A, \widecheck{\I_S}) \colon A(1, 1) \to S$ defined by the factorisation of the unit $\I_S$ through the nullary opcartesian cell defining $A(1, 1)$; and, for each module $p \colon S \lto T$, the identity cell on $p$. That these satisfy the equations for a loose monad morphism and monad transformation respectively follows by the same reasoning that units in multicategories are initial (\cf~\cite[Proposition~4.12]{arkor2024formal}). The naturality condition of \cref{transformation} is trivially satisfied, since $\varepsilon_{\X'}$ is faithful on cells.
    % https://q.uiver.app/#q=WzAsNCxbMCwwLCJBKDEsIDEpIl0sWzEsMCwiQigxLCAxKSJdLFswLDEsIlMiXSxbMSwxLCJUIl0sWzAsMSwicCIsMCx7InN0eWxlIjp7ImJvZHkiOnsibmFtZSI6ImJhcnJlZCJ9fX1dLFsyLDMsInAiLDIseyJzdHlsZSI6eyJib2R5Ijp7Im5hbWUiOiJiYXJyZWQifX19XSxbMCwyLCIoMV9BLCBcXHdpZGVjaGVja3tcXElfU30pIiwyXSxbMSwzLCIoMV9CLCBcXHdpZGVjaGVja3tcXElfVH0pIl0sWzYsNywiMV9wIiwxLHsic2hvcnRlbiI6eyJzb3VyY2UiOjIwLCJ0YXJnZXQiOjIwfSwic3R5bGUiOnsiYm9keSI6eyJuYW1lIjoibm9uZSJ9LCJoZWFkIjp7Im5hbWUiOiJub25lIn19fV1d
    \[\begin{tikzcd}
        {A(1, 1)} & {B(1, 1)} \\
        S & T
        \arrow["p"{inner sep=.8ex}, "\shortmid"{marking}, from=1-1, to=1-2]
        \arrow[""{name=0, anchor=center, inner sep=0}, "{(1_A, \widecheck{\I_S})}"', from=1-1, to=2-1]
        \arrow[""{name=1, anchor=center, inner sep=0}, "{(1_B, \widecheck{\I_T})}", from=1-2, to=2-2]
        \arrow["p"'{inner sep=.8ex}, "\shortmid"{marking}, from=2-1, to=2-2]
        \arrow["{1_p}"{description}, draw=none, from=0, to=1]
    \end{tikzcd}\]
    The left triangle identity expresses that, for each object $A$, the unit of the loose monad $A(1, 1)$ is the identity. The right triangle identity expresses that, for each loose monad $S$ on $A$, the first component of $(1_A, \widecheck{\I_S})$ is the identity.
\end{proof}

\begin{remark}
    Since the 2-adjunction is lax-idempotent, so too is the induced 2-monad $\Mod({-}')$ on $\VDbln$.\footnote{Geoff Cruttwell and Michael Shulman have independently observed (in personal communication) that the 2-monad $\Mod({-}')$ is lax-idempotent.} This permits a simple characterisation of the pseudoalgebras for the 2-monad, which we shall show in future work are related to the \emph{exact \vdcs{}} of \cite[\S5]{arkor2025nerve} (\cf~\cite[Remark~5.4]{arkor2025nerve}).
\end{remark}

We record a useful consequence of lax-idempotence.

\begin{corollary}
    \label{Mod-unit-is-ff}
    For each normal \vdc{} $\X$, the unit component ${\eta_\X}' \colon \X' \to \Mod(\X')'$ \eqref{Mnd-unit} is \ff.
\end{corollary}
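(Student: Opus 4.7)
The plan is to exploit the opcartesian cells exhibiting the loose identities of the normal \vdc{} $\X$. The key preliminary observation is that the loose monad structure on $A(1, 1)$ (\cref{Mod-is-normal}) has unit cell $\I_{A(1, 1)}$ equal to the defining nullary opcartesian cell, and multiplication cell uniquely determined by opcartesianness; more generally, the canonical bimodule actions $\lambda_p, \rho_p$ making a loose morphism $p$ of $\X$ into a bimodule over loose identities, as well as the cells $1_x$ appearing in the image $\eta_\X x = (x, 1_x)$ of a tight morphism $x$, are all uniquely characterised by composition with opcartesian cells.

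For fully faithfulness on tight morphisms, I would appeal to \cref{Mod-is-lax-idempotent}: the coreflective adjunction ${\eta_\X}' \dashv \varepsilon_{\X'}$ with identity unit immediately implies that ${\eta_\X}'$ is fully faithful, and in particular bijective on hom-sets of tight morphisms. Concretely, the unit equation $\I_{A(1, 1)} \d f = 1_{\crr f} \d \I_{B(1, 1)}$ for a loose monad morphism $(\crr f, f) \colon A(1, 1) \to B(1, 1)$ in $\Mod(\X')$, combined with opcartesianness of $\I_{A(1, 1)}$, forces $f = 1_{\crr f}$; the multiplication axiom is then automatic for the same reason.

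For bijectivity on cells with a fixed frame $p_1, \ldots, p_n, x, x', p$, I would argue directly. The image of a cell $\phi$ of $\X$ with this frame under $\eta_\X$ is $\phi$ itself, viewed as a prospective monad transformation in $\Mod(\X')$, subject to the four families of naturality equations of \cref{LMnd} involving $\lambda_{p_i}$, $\rho_{p_i}$, $\lambda_p$, $\rho_p$, and $1_x, 1_{x'}$. The crucial observation is that each such equation relates two cells whose top frame contains some $X_i(1, 1)$, and by opcartesianness of the nullary cell defining $X_i(1, 1)$, both sides are uniquely determined by their composite with that opcartesian cell. Unfolding the defining characterisations of $\lambda$, $\rho$, and the $1_x$'s, both composites reduce to $\phi$. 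Hence the naturality equations hold automatically for every $\phi$, yielding the required bijection between cells in $\X$ and monad transformations with the corresponding frame in $\Mod(\X')$.

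The main obstacle is purely bookkeeping: each of the four equation families requires tracking composites across multiple loose morphisms and distinguishing the nullary from the non-nullary cases, but every step reduces to an elementary identity via opcartesianness. No conceptual difficulty is anticipated.
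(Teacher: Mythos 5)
Your proof is correct, and it is worth comparing the two routes. The paper disposes of the whole statement in one line: $\eta_\X$ is the left adjoint of the coreflective adjunction ${\eta_\X}' \adj \varepsilon_{\X'}$ of \cref{Mod-is-lax-idempotent}, hence \ff{}. You use that argument only for the tight-morphism part, and then verify the cell part by hand, checking that every cell of $\X$ with the relevant frame automatically satisfies the four families of monad-transformation equations, by precomposing each equation with the (strongly) opcartesian nullary cell $\I_{X_i}$ exhibiting the loose identity and reducing both sides to $\phi$ via the unit laws for the canonical actions $\lambda$, $\rho$ and the defining property of $1_x$. This is more laborious, but it is not wasted effort: as the paper itself remarks after defining \ffness{} for functors of \vdcs{}, a coreflective adjunction in $\VDbl$ formally yields only \emph{representable} \ffness{}, which is strictly weaker; the quick route additionally needs the observations that $\varepsilon_{\X'} \c {\eta_\X}' = 1$ makes $\eta_\X$ injective on cells of every arity and that cells of $\Mod(\X')$ are determined by their underlying cells in $\X'$, which together give surjectivity. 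Your direct verification makes explicit exactly the content that the paper's ``immediate'' leaves implicit, and it correctly relies on the full (not merely weak) opcartesianness of the nullary cells, which is what licenses inserting them into a frame flanked by other loose morphisms.
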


\begin{proof}
    Immediate from \cref{Mod-is-lax-idempotent,coreflection}, since ${\eta_\X}'$ is the left adjoint of a coreflective adjunction.
\end{proof}

\begin{lemma}
    \label{Mod-preserves-ffness}
    If a functor $F \colon \X \to \Y$ between \vdcs{} is \ff, then so is $\Mod(F)' \colon \Mod(\X)' \to \Mod(\Y)'$.
\end{lemma}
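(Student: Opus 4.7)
The plan is to verify directly that $\Mod(F)$ satisfies both defining conditions of a \ff{} functor of \vdcs: \ffness{} of the underlying functor of categories, and bijectivity on cells with each given frame. The key observation is that every piece of data in $\Mod(\X)$ (monad, module, monad morphism, monad transformation) is nothing more than a cell in $\X$ with a specified frame, subject to a list of equations between cells. Therefore \ffness{} of $F$, combined with its strict preservation of composites and identities (as a functor of \vdcs, per \cref{functor}), should transport all such cells and all such equations bijectively between $\X$ and $\Y$.

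First, I would establish \ffness{} of $\u{\Mod(F)} \colon \u{\Mod(\X)} \to \u{\Mod(\Y)}$. Given loose monads $S, T \in \Mod(\X)$ and a loose monad morphism $(\ob h, h) \colon FS \to FT$, I use \ffness{} of $\u F$ to obtain a unique tight morphism $\crr f \colon \crr S \to \crr T$ with $F\crr f = \ob h$, then \ffness{} of $F$ on cells to obtain a unique cell $f$ in $\X$ with frame $(\crr f, S, T, \crr f)$ satisfying $Ff = h$. The coherence equations required of $(\crr f, f)$ to be a loose monad morphism from $S$ to $T$ are equations between cells in $\X$; their images under $F$ are precisely the (already satisfied) coherence equations for $(\ob h, h)$, so faithfulness of $F$ on cells forces them to hold in $\X$. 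Hence $(\crr f, f)$ is the unique preimage.

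Next, for bijectivity on cells, consider any frame in $\Mod(\X)$ consisting of loose monad modules $m_1, \ldots, m_n, m$ and loose monad morphisms $(\crr f, f), (\crr{f'}, f')$; a cell with this frame in $\Mod(\X)$ is by definition a cell in $\X$ with the underlying frame $(\crr f, m_1, \ldots, m_n, \crr{f'}, m)$ subject to the compatibility equations of \cref{LMnd}. By \ffness{} of $F$ on cells, such cells in $\X$ correspond bijectively to cells in $\Y$ with the image frame; the same faithfulness argument as above shows that this bijection restricts to one between loose monad transformations, since each compatibility equation holds in $\X$ if and only if its image holds in $\Y$.

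There is no real obstacle to the argument: the entire proof amounts to observing that a \ff{} functor both transports cells bijectively and reflects equations between them, so it automatically transports the derived notions of monad, module, morphism, and transformation. The only mild bookkeeping is in checking that the frame of a cell in $\Mod(\X)$ is determined by (and uniquely recovers) the frame of the underlying cell in $\X$, which is immediate from the construction of $\Mod(F)$.
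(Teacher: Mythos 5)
Your proof is correct and takes essentially the same route as the paper's: every datum of $\Mod(\X)$ (monad morphism, module, transformation) is a cell of $\X$ with a prescribed frame subject to equations, so the bijections supplied by \ffness{} of $F$ restrict to these, with the axioms reflected by faithfulness. The paper's own proof is a terse two-sentence remark to this effect (that monad morphisms and transformations are determined by their underlying tight-morphism and cell components); your version merely spells out the frame-matching and axiom-reflection bookkeeping.
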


\begin{proof}
    The unit preservation condition for a loose monad morphism between identity loose monads implies that it is determined by the tight morphism component. Loose monad transformations are trivially determined by their underlying cells.
\end{proof}

\begin{proposition}
    \label{ff-transpose}
    If a functor $F \colon \X' \to \Y$ from a normal \vdc{} to a \vdc{} is \ff{}, then so is its transpose $\Mod(F)' \c {\eta_\X}' \colon \X' \to \Mod(\X')' \to \Mod(\Y)'$.
\end{proposition}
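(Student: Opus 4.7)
The plan is to observe that this is a direct combination of the two immediately preceding results, once we note that \ff{} functors of \vdcs{} are closed under composition.

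First, I would invoke \cref{Mod-preserves-ffness} applied to $F \colon \X' \to \Y$: since $F$ is \ff, so is $\Mod(F) \colon \Mod(\X') \to \Mod(\Y)$. Next, I would invoke \cref{Mod-unit-is-ff}: the unit component $\eta_\X \colon \X \to \Mod(\X')$ is \ff. It then suffices to check that the composite of \ff{} functors of \vdcs{} is itself \ff. This is straightforward from the definition: \ffness{} on the underlying categories is preserved by composition (a standard fact in $\Cat$), and the bijection on cells with a given frame is obtained by composing the respective bijections induced by each factor.

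The argument involves no obstacle: it is a purely formal consequence of the lax-idempotence of $\ph' \adj \Mod$ (which was used to establish \cref{Mod-unit-is-ff}) and the diagrammatic nature of the $\Mod$ construction (which was used to establish \cref{Mod-preserves-ffness}). Thus the proof reduces to a single sentence chaining these two results.
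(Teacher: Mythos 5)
Your proposal is correct and follows exactly the paper's own argument: the proof there simply cites \cref{Mod-unit-is-ff} and \cref{Mod-preserves-ffness} and observes that the transpose is the composite of two \ff{} functors. Your additional remark that \ff{} functors of \vdcs{} are closed under composition is left implicit in the paper but is the same (easy) verification.
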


\begin{proof}
    By \cref{Mod-unit-is-ff,Mod-preserves-ffness}, the transpose is the composite of two \ff{} functors.
\end{proof}

\subsection{Slicing over a monad}
\label{slicing-over-a-monad}

We now have almost all the properties of the $\Mod$ construction that we need for what follows. One further observation will be relevant: when we analyse the relationship between presheaves and discrete fibrations for \dcs{} in \cref{elements-construction}, we shall need to understand how the $\Mod$ construction interacts with slicing. The slice of a \pdc{} over an object is well known~\cite[\S1.7]{grandis1999limits}; more generally, we may slice over a loose monad in a \pdc{}~\cite[\S2.5]{grandis1999limits}. Since the 2-category $\VDbl$ is complete (\cref{VDbl-is-bicomplete}), we may similarly slice over loose monads in a \vdc, observing that a functor of \vdcs{} $\bbn 1 \to \X$ expresses the data of a loose monad in $\X$ (see \cref{loose monads-are-lax-functors}).

\begin{definition}
    \label{slice-vdc}
    Let $\X$ be a \vdc{} and let $T$ be a loose monad in $\X$. The \emph{slice \vdc} $\X/T$ is the slice object in the 2-category of \vdcs, \ie{} the following comma object in $\VDbl$.
    % https://q.uiver.app/#q=WzAsNCxbMCwxLCJcXFgiXSxbMSwxLCJcXFgiXSxbMSwwLCJcXGJibiAxIl0sWzAsMCwiXFxYL1QiXSxbMCwxLCIiLDIseyJsZXZlbCI6Miwic3R5bGUiOnsiaGVhZCI6eyJuYW1lIjoibm9uZSJ9fX1dLFsyLDEsIlQiXSxbMywyLCJcXHVuaXQiXSxbMywwLCJcXHBpIiwyXSxbMCwyLCIiLDEseyJzaG9ydGVuIjp7InNvdXJjZSI6MzAsInRhcmdldCI6MzB9LCJsZXZlbCI6Mn1dXQ==
    \[\begin{tikzcd}
        {\X/T} & {\bbn 1} \\
        \X & \X
        \arrow["\unit", from=1-1, to=1-2]
        \arrow["\pi"', from=1-1, to=2-1]
        \arrow["T", from=1-2, to=2-2]
        \arrow[shorten <=6pt, shorten >=6pt, Rightarrow, from=2-1, to=1-2]
        \arrow[equals, from=2-1, to=2-2]
    \end{tikzcd}\]
    For any object $X \in \X$ admitting a loose identity, denote by $\X/X$ the slice $\X/X(1, 1)$.
\end{definition}

Let us observe in passing the following, which will be used in the following proposition.

\begin{lemma}
    \label{prime-creates-limits}
    The 2-functor $\ph' \colon \VDbln \to \VDbl$ creates limits. Consequently, $\VDbln$ is a complete 2-category.
\end{lemma}

\begin{proof}
    The first statement may be verified directly; we shall not spell out the details, as it will follow more abstractly from the fact that $\ph'$ is 2-monadic (\cref{F-is-left-adjoint}). The consequence follows from the fact $\VDbl$ is complete (\cref{VDbl-is-bicomplete}).
\end{proof}

When $\X$ is normal (so that all loose identities exist), the slice $\X'/X(1, 1)$ is given by the slice $\X/X$ in the 2-category of normal \vdcs, observing that a normal functor $\bbn 1 \to \X$ expresses the data of an object in $\X$, and that the forgetful functor $\ph' \colon \VDbln \to \VDbl$ preserves limits (\cref{prime-creates-limits}). More generally, the slice $\X'/T$ of a normal \vdc{} $\X$ over a loose monad $T$ is normal.

\begin{proposition}
    \label{slice-normal-vdc}
    Let $\X$ be a normal \vdc{} and let $T$ be a loose monad in $\X$. Denote by $\X/T$ the following comma object in $\VDbln$.
    % https://q.uiver.app/#q=WzAsNCxbMCwwLCJcXFgvVCJdLFsxLDAsIlxcYmJuIDEiXSxbMSwxLCJcXE1vZChcXFgnKSJdLFswLDEsIlxcWCJdLFswLDEsIlxcdW5pdCJdLFsxLDIsIlQiXSxbMywyLCJcXGV0YV9cXFgiLDIseyJzdHlsZSI6eyJ0YWlsIjp7Im5hbWUiOiJob29rIiwic2lkZSI6InRvcCJ9fX1dLFswLDMsIlxccGkiLDJdLFszLDEsIiIsMSx7InNob3J0ZW4iOnsic291cmNlIjozMCwidGFyZ2V0IjozMH0sImxldmVsIjoyfV1d
    \[\begin{tikzcd}
        {\X/T} & {\bbn 1} \\
        \X & {\Mod(\X')}
        \arrow["\unit", from=1-1, to=1-2]
        \arrow["\pi"', from=1-1, to=2-1]
        \arrow["T", from=1-2, to=2-2]
        \arrow[between={0.3}{0.7}, Rightarrow, from=2-1, to=1-2]
        \arrow["{\eta_\X}"', hook, from=2-1, to=2-2]
    \end{tikzcd}\]
    The following 2-cell exhibits the slice $\X'/T$ of \cref{slice-vdc}, exhibiting it as a normal \vdc.
    % https://q.uiver.app/#q=WzAsNixbMCwwLCIoXFxYL1QpJyJdLFsxLDAsIlxcYmJuIDEiXSxbMSwxLCJcXE1vZChcXFgnKSciXSxbMCwxLCJcXFgnIl0sWzEsMiwiXFxYJyJdLFswLDIsIlxcWCciXSxbMCwxXSxbMSwyLCJUJyJdLFszLDIsIihcXGV0YV9cXFgpJyIsMSx7InN0eWxlIjp7InRhaWwiOnsibmFtZSI6Imhvb2siLCJzaWRlIjoidG9wIn19fV0sWzAsMywiXFxwaSciLDJdLFszLDEsIiIsMSx7InNob3J0ZW4iOnsic291cmNlIjozMCwidGFyZ2V0IjozMH0sImxldmVsIjoyfV0sWzIsNCwiXFx2YXJlcHNpbG9uX3tcXFgnfSJdLFs1LDQsIiIsMix7ImxldmVsIjoyLCJzdHlsZSI6eyJoZWFkIjp7Im5hbWUiOiJub25lIn19fV0sWzMsNSwiIiwyLHsibGV2ZWwiOjIsInN0eWxlIjp7ImhlYWQiOnsibmFtZSI6Im5vbmUifX19XSxbMTMsMTEsIj0iLDEseyJzaG9ydGVuIjp7InNvdXJjZSI6MjAsInRhcmdldCI6MjB9LCJzdHlsZSI6eyJib2R5Ijp7Im5hbWUiOiJub25lIn0sImhlYWQiOnsibmFtZSI6Im5vbmUifX19XV0=
    \[\begin{tikzcd}
        {(\X/T)'} & {\bbn 1} \\
        {\X'} & {\Mod(\X')'} \\
        {\X'} & {\X'}
        \arrow[from=1-1, to=1-2]
        \arrow["{\pi'}"', from=1-1, to=2-1]
        \arrow["{T'}", from=1-2, to=2-2]
        \arrow[between={0.3}{0.7}, Rightarrow, from=2-1, to=1-2]
        \arrow["{(\eta_\X)'}"{description}, hook, from=2-1, to=2-2]
        \arrow[""{name=0, anchor=center, inner sep=0}, equals, from=2-1, to=3-1]
        \arrow[""{name=1, anchor=center, inner sep=0}, "{\varepsilon_{\X'}}", from=2-2, to=3-2]
        \arrow[equals, from=3-1, to=3-2]
        \arrow["{=}"{description}, draw=none, from=0, to=1]
    \end{tikzcd}\]
\end{proposition}

\begin{proof}
    First, since $\ph'$ is continuous (\cref{prime-creates-limits}), applying $\ph'$ to the comma object diagram in $\VDbln$ produces a comma object in $\VDbl$. Second, since ${\eta_\X}' \adj \varepsilon_{\X'}$ by \cref{Mod-is-lax-idempotent}, postcomposing $\varepsilon_{\X'}$ (and, implicitly, pasting the unit of the adjunction, which is the identity) preserves the universal property of the comma object.
\end{proof}

We shall be interested in slicing \vdcs{} of the form $\Mod(\X)$, for which the following lemma will be useful.

\begin{lemma}[Pasting law for comma objects\footnote{While this lemma has surely been known since the advent of 2-category theory, we have been unable to find a classical reference. The bicategorical analogue appears, for instance, as \cite[Proposition~1.7]{carboni1994modulated}.}]
    \label{pasting-law}
    Given a diagram of the following shape in a 2-category, in which the right-hand square is a comma object,
    % https://q.uiver.app/#q=WzAsNixbMCwwLCJBIl0sWzEsMCwiQiJdLFswLDEsIkQiXSxbMSwxLCJFIl0sWzIsMCwiQyJdLFsyLDEsIkYiXSxbMCwxXSxbMCwyXSxbMiwzXSxbMSwzXSxbMSw0XSxbNCw1XSxbMyw1XSxbMyw0LCIiLDAseyJzaG9ydGVuIjp7InNvdXJjZSI6MzAsInRhcmdldCI6MzB9LCJsZXZlbCI6Mn1dXQ==
    \[\begin{tikzcd}
        A & B & C \\
        D & E & F
        \arrow[from=1-1, to=1-2]
        \arrow[from=1-1, to=2-1]
        \arrow[from=1-2, to=1-3]
        \arrow[from=1-2, to=2-2]
        \arrow[from=1-3, to=2-3]
        \arrow[from=2-1, to=2-2]
        \arrow[between={0.3}{0.7}, Rightarrow, from=2-2, to=1-3]
        \arrow[from=2-2, to=2-3]
    \end{tikzcd}\]
    the left-hand square is a pullback if and only if the outer rectangle is a comma object.
\end{lemma}

We mention in passing the following useful consequence of the pasting law, which can be used to give an alternative proof of \cref{slice-as-pullback}.

\begin{corollary}
    \label{slice-and-pullback}
    Let $f \colon A \to B$ and $a \colon 1 \to A$ be 1-cells in a 2-category with a terminal object, and suppose that the slice object $B/fa$ exists. If $f$ is representably \ff{}, then the slice object $A/a$ exists if and only if the following pullback exists, in which case they are isomorphic.
    % https://q.uiver.app/#q=WzAsNCxbMSwwLCJCL2ZhIl0sWzEsMSwiQiJdLFswLDEsIkEiXSxbMCwwLCJBL2EiXSxbMCwxLCJcXHBpX3tmYX0iXSxbMiwxLCJmIiwyXSxbMywyLCJcXHBpX2EiLDJdLFszLDBdLFs2LDQsIlxccGIiLDEseyJzaG9ydGVuIjp7InNvdXJjZSI6MjAsInRhcmdldCI6MjB9LCJzdHlsZSI6eyJib2R5Ijp7Im5hbWUiOiJub25lIn0sImhlYWQiOnsibmFtZSI6Im5vbmUifX19XV0=
    \[\begin{tikzcd}
        {A/a} & {B/fa} \\
        A & B
        \arrow[from=1-1, to=1-2]
        \arrow[""{name=0, anchor=center, inner sep=0}, "{\pi_a}"', from=1-1, to=2-1]
        \arrow[""{name=1, anchor=center, inner sep=0}, "{\pi_{fa}}", from=1-2, to=2-2]
        \arrow["f"', from=2-1, to=2-2]
        \arrow["\pb"{description}, draw=none, from=0, to=1]
    \end{tikzcd}\]
\end{corollary}

\begin{proof}
    Follows from applying the pasting law (\cref{pasting-law}) to the following diagram using that, since $f$ is representably \ff{}, postcomposition by $f$ preserves comma objects.
    % https://q.uiver.app/#q=WzAsNixbMSwwLCJCL2ZhIl0sWzEsMSwiQiJdLFswLDEsIkEiXSxbMCwwLCJBL2EiXSxbMiwwLCIxIl0sWzIsMSwiQiJdLFswLDEsIlxccGlfe2ZhfSIsMV0sWzIsMSwiZiIsMl0sWzMsMiwiXFxwaV9hIiwyXSxbMywwXSxbNCw1LCJmYSJdLFswLDQsIlxcdW5pdCJdLFsxLDUsIiIsMCx7ImxldmVsIjoyLCJzdHlsZSI6eyJoZWFkIjp7Im5hbWUiOiJub25lIn19fV0sWzEsNCwiIiwwLHsic2hvcnRlbiI6eyJzb3VyY2UiOjMwLCJ0YXJnZXQiOjMwfSwibGV2ZWwiOjJ9XV0=
    \[\begin{tikzcd}
        {A/a} & {B/fa} & 1 \\
        A & B & B
        \arrow[from=1-1, to=1-2]
        \arrow["{\pi_a}"', from=1-1, to=2-1]
        \arrow["\unit", from=1-2, to=1-3]
        \arrow["{\pi_{fa}}"{description}, from=1-2, to=2-2]
        \arrow["fa", from=1-3, to=2-3]
        \arrow["f"', from=2-1, to=2-2]
        \arrow[between={0.3}{0.7}, Rightarrow, from=2-2, to=1-3]
        \arrow[equals, from=2-2, to=2-3]
    \end{tikzcd}\qedshift\]
\end{proof}

The following describes the structure of the slice \vdc{} in alternative terms, which shows our definition matches that of \textcite[Definition~4.30]{kawase2025double}.

\begin{proposition}
    \label{slice-as-pullback}
    Let $\X$ be a normal \vdc{} and let $T$ be a loose monad in $\X$. The following diagram forms a pullback in $\VDbln$.
    % https://q.uiver.app/#q=WzAsNCxbMSwwLCJcXE1vZChcXFgnKS9UIl0sWzEsMSwiXFxNb2QoXFxYJykiXSxbMCwxLCJcXFgiXSxbMCwwLCJcXFgvVCJdLFswLDEsIlxccGkiXSxbMiwxLCJcXGV0YV9cXFgiLDIseyJzdHlsZSI6eyJ0YWlsIjp7Im5hbWUiOiJob29rIiwic2lkZSI6InRvcCJ9fX1dLFszLDIsIlxccGkiLDJdLFszLDAsIiIsMCx7InN0eWxlIjp7InRhaWwiOnsibmFtZSI6Imhvb2siLCJzaWRlIjoidG9wIn19fV0sWzYsNCwiXFxwYiIsMSx7InNob3J0ZW4iOnsic291cmNlIjoyMCwidGFyZ2V0IjoyMH0sInN0eWxlIjp7ImJvZHkiOnsibmFtZSI6Im5vbmUifSwiaGVhZCI6eyJuYW1lIjoibm9uZSJ9fX1dXQ==
    \[\begin{tikzcd}
        {\X/T} & {\Mod(\X')/T} \\
        \X & {\Mod(\X')}
        \arrow[hook, from=1-1, to=1-2]
        \arrow[""{name=0, anchor=center, inner sep=0}, "\pi"', from=1-1, to=2-1]
        \arrow[""{name=1, anchor=center, inner sep=0}, "\pi", from=1-2, to=2-2]
        \arrow["{\eta_\X}"', hook, from=2-1, to=2-2]
        \arrow["\pb"{description}, draw=none, from=0, to=1]
    \end{tikzcd}\]
\end{proposition}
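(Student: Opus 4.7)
The plan is to invoke \cref{slice-and-pullback} in $\VDbl$ with $A = \X$, $B = \Mod(\X)$, $f = \eta_\X$, and $a = T \colon \bbn 1 \to \X$ (the functor classifying the loose monad $T$). The hypotheses hold: $\eta_\X$ is \ff{} by \cref{Mod-unit-is-ff}, hence representably \ff; and the required slice $\Mod(\X)/(\eta_\X \c T)$ exists because $\VDbl$ is complete by \cref{products}. The lemma immediately identifies $\X/T$ with the pullback of $\Mod(\X)/(\eta_\X \c T) \to \Mod(\X)$ along $\eta_\X$.

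The remaining task is to reconcile $\Mod(\X)/(\eta_\X \c T)$ with the $\Mod(\X)/T$ appearing in the statement. Here $\Mod(\X)/T$ denotes the slice of the normal \vdc{} $\Mod(\X)$ over $T$ regarded as an object equipped with its canonical loose identity, which by \cref{Mod-is-normal} is $T$ itself viewed as a module over itself; whereas $\Mod(\X)/(\eta_\X \c T)$ is the slice over the loose monad in $\Mod(\X)$ with carrier $\crr T(1,1)$ obtained by transporting $T$ along $\eta_\X$. The unit $(1_{\crr T}, \I_T) \colon \crr T(1,1) \to T$ furnishes a canonical comparison between these two slices by post-composition, and I would show that this comparison becomes invertible after pulling back along $\eta_\X$.

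The key observation, applied uniformly in each dimension, is that a monad morphism out of an identity monad $X(1,1)$ is pinned down, via the unit axiom of a monad morphism together with opcartesianness of the unit cell of $X(1,1)$, by a single tight morphism $X \to \crr T$ in $\X$; analogously, a monad module transformation whose source modules lie over identity monads has its coherence conditions collapse, after precomposition with the opcartesian unit and invocation of the monad axioms of the target, to tautologies on the underlying cell datum in $\X$. Consequently, both pullbacks parameterise the same data at every dimension. The main obstacle will be the cell-level bookkeeping, but structurally this is the same reduction already implicit in the proof of \cref{Mod-is-right-adjoint}, and should proceed by a routine diagram chase.
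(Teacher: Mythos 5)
Your proposal is correct and follows the same route as the paper, whose entire proof reads: ``Immediate from \cref{slice-and-pullback}, since $\X \to \Mod(\X)$ is representably \ff{} by \cref{Mod-unit-is-ff}.'' The reconciliation you then carry out between $\Mod(\X)/(\eta_\X \c T)$ and $\Mod(\X)/T$ addresses a genuine subtlety that the paper leaves implicit, and your resolution --- that monad morphisms and module transformations out of identity loose monads are determined, via opcartesianness of the nullary unit cells and the monad axioms of $T$, by their underlying data in $\X$ --- is the right way to close it.
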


\begin{proof}
    The outer rectangle in the following diagram forms a comma object by definition (\cref{slice-normal-vdc}), after which the statement follows from the pasting law (\cref{pasting-law}).
    % https://q.uiver.app/#q=WzAsNixbMSwwLCJcXE1vZChcXFgnKS9UIl0sWzEsMSwiXFxNb2QoXFxYJykiXSxbMCwxLCJcXFgiXSxbMCwwLCJcXFgvVCJdLFsyLDAsIlxcYmJuMSJdLFsyLDEsIlxcTW9kKFxcWCcpIl0sWzAsMSwiXFxwaSIsMV0sWzIsMSwiXFxldGFfXFxYIiwyLHsic3R5bGUiOnsidGFpbCI6eyJuYW1lIjoiaG9vayIsInNpZGUiOiJ0b3AifX19XSxbMywyLCJcXHBpIiwyXSxbMywwLCIiLDAseyJzdHlsZSI6eyJ0YWlsIjp7Im5hbWUiOiJob29rIiwic2lkZSI6InRvcCJ9fX1dLFsxLDUsIiIsMSx7ImxldmVsIjoyLCJzdHlsZSI6eyJoZWFkIjp7Im5hbWUiOiJub25lIn19fV0sWzQsNSwiVCJdLFswLDQsIlxcdW5pdCJdLFsxLDQsIiIsMCx7InNob3J0ZW4iOnsic291cmNlIjozMCwidGFyZ2V0IjozMH0sImxldmVsIjoyfV1d
    \[\begin{tikzcd}
        {\X/T} & {\Mod(\X')/T} & \bbn1 \\
        \X & {\Mod(\X')} & {\Mod(\X')}
        \arrow[hook, from=1-1, to=1-2]
        \arrow["\pi"', from=1-1, to=2-1]
        \arrow["\unit", from=1-2, to=1-3]
        \arrow["\pi"{description}, from=1-2, to=2-2]
        \arrow["T", from=1-3, to=2-3]
        \arrow["{\eta_\X}"', hook, from=2-1, to=2-2]
        \arrow[between={0.3}{0.7}, Rightarrow, from=2-2, to=1-3]
        \arrow[equals, from=2-2, to=2-3]
    \end{tikzcd}\qedshift\]
\end{proof}

The following strengthens \cite[Corollary 4.34(ii)]{kawase2025double} (which additionally assumes that $\X$ is normal and admits restrictions along invertible tight morphisms). It should be compared to the classical result concerning slices of presheaf categories: since $\Mod$ is lax-idempotent, it behaves in a suitable sense like a free cocompletion of \vdcs{} (namely under \emph{collapses} of loose monads and their modules, \cf~\cite[Definition~5.2]{arkor2025nerve}).

\begin{proposition}
    \label{Mod-slice}
    Let $\X$ be a \vdc{} and let $T$ be a loose monad in $\X$. There is an isomorphism of normal \vdcs{} as follows.
    % https://q.uiver.app/#q=WzAsMyxbMCwwLCJcXE1vZChcXFgvVCkiXSxbMiwwLCJcXE1vZChcXFgpL1QiXSxbMSwxLCJcXE1vZChcXFgpIl0sWzAsMiwiXFxNb2QoXFxwaSkiLDJdLFsxLDIsIlxccGkiXSxbMCwxLCJcXGlzbyJdXQ==
    \[\begin{tikzcd}
        {\Mod(\X/T)} && {\Mod(\X)/T} \\
        & {\Mod(\X)}
        \arrow["\iso", from=1-1, to=1-3]
        \arrow["{\Mod(\pi)}"', from=1-1, to=2-2]
        \arrow["\pi", from=1-3, to=2-2]
    \end{tikzcd}\]
\end{proposition}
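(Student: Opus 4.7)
The plan is to apply $\Mod$---a right 2-adjoint by \cref{Mod-is-right-adjoint}, hence 2-limit preserving---to the comma square defining $\X/T$ in \cref{slice-vdc}. First, I would compute the image of the arrow $T \colon \bbn 1 \to \X$: since $\bbn 1$ admits a unique loose monad structure, $\Mod(\bbn 1) \iso \bbn 1$, and under this identification $\Mod(T) \colon \bbn 1 \to \Mod(\X)$ picks out the loose monad $T$ viewed as an object of $\Mod(\X)$, whose loose identity is $T$ itself by \cref{Mod-is-normal}. Preservation of the comma object then yields a canonical comparison functor $\Phi \colon \Mod(\X/T) \to \Mod(\X)/T$ over $\Mod(\X)$, which fits into the stated triangle.

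To verify that $\Phi$ is an isomorphism, I would unpack the universal property of $\X/T$ and check bijectivity on each kind of data. The key observation, on objects, is that a functor $\bbn 1 \to \X/T$ is, by the comma universal property, a loose monad $S$ in $\X$ together with a transformation $S \Rightarrow T \c \unit$; unpacking the naturality of this transformation against the iterated-multiplication cells in $\bbn 1$ yields exactly the data and axioms of a monad morphism $(\crr f, f) \colon S \to T$ in the sense of \cref{LMnd}. The analogous identifications on tight morphisms, loose morphisms, and cells proceed by replacing $\bbn 1$ with the walking tight morphism, the walking loose morphism, and the walking $n$-ary cell respectively, and then matching the comma data of \cref{slice-vdc} with the $\Mod$-data of \cref{LMnd}.

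The minor subtlety is that $\Mod$ lands in $\VDbln$, so the abstract preservation statement realizes $\Mod(\X/T)$ as a comma object in $\VDbln$, whereas $\Mod(\X)/T$ is defined as a comma object in $\VDbl$. I would sidestep this by the cell-by-cell verification described above rather than by invoking an abstract comparison between limits in $\VDbln$ and in $\VDbl$. The main technical obstacle is therefore the loose-morphism case, where a monad module in $\X/T$ must be identified with a monad module in $\X$ equipped with a compatible cell to $T$; the required compatibility of the left and right actions on both sides follows formally from the module axioms together with the naturality built into the comma transformation, so no serious difficulty arises.
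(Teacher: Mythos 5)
Your first paragraph is exactly the paper's proof: $\Mod$ is a right 2-adjoint (\cref{Mod-is-right-adjoint}), hence preserves comma objects, and $\Mod(T) \colon \Mod(\bbn 1) \iso \bbn 1 \to \Mod(\X)$ picks out the identity module on $T$, which is the loose identity $T(1,1) = T$ by \cref{Mod-is-normal}. Where you diverge is in how you close the argument: the $\VDbln$-versus-$\VDbl$ mismatch you flag is genuine, but the paper dispatches it abstractly rather than by hand. Since $\ph' \colon \VDbln \to \VDbl$ is a right 2-adjoint (to $\F$, \cref{F-is-left-adjoint}), it preserves comma objects, so the comma object in $\VDbln$ produced by $\Mod$ has the comma object in $\VDbl$ as its underlying \vdc{} -- this is exactly the observation recorded in the paragraph following \cref{slice-vdc}, which identifies slices of normal \vdcs{} taken in $\VDbln$ with those taken in $\VDbl$. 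Your substitute -- an explicit bijection on objects, tight morphisms, modules, and cells -- is correct (the identification of a loose monad in $\X/T$ with a loose monad in $\X$ equipped with a monad morphism to $T$ does unwind exactly as you describe), but it trades a one-line appeal to the adjoint triple $\F \adj \ph' \adj \Mod$ for a routine but lengthy verification, and the classifier objects you invoke for the tight/loose/cell cases are not constructed in the paper, so you would either have to build them or unwind the data directly. In short: same decomposition and same key facts; the paper's route is strictly shorter because it also uses \cref{F-is-left-adjoint} where you resort to computation.
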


\begin{proof}
    $\Mod$ is a right 2-adjoint by \cref{Mod-is-right-adjoint}, and so preserves slice objects, observing that $\Mod(T) \colon \Mod(\bbn 1) \iso \bbn 1 \to \Mod(\X)$ picks out the identity module on the loose monad $T$, which is $T$ itself.
\end{proof}

\section{The free normal \vdc}
\label{powers-and-copowers}

We continue our short detour from exponentiability to further explore the relationship between normal \vdcs{} and \vdcs{}. In particular, we observe that, in addition to admitting a right adjoint, $\ph' \colon \VDbln \to \VDbl$ also admits a left adjoint. Viewing normality as a representability condition for \vdcs{}, which are a kind of generalised multicategory~\cite{burroni1971tcategories}, the existence of the left adjoint $\F \colon \VDbl \to \VDbln$ is suggested by the theory of representability for generalised multicategories~\cite{hermida2001coherent} (though there is a subtlety, in that normality is only a \emph{partial} representability condition). This construction is implicit in the work of \textcite{dawson2006paths} and was made explicit in the work of \textcite{fujii2025familial}.

\begin{definition}[{\cite[\S7.2]{fujii2025familial}}]
    Let $\X$ be a \vdc{}. The \vdc{} $\F(\X)$ is defined as follows.
    \begin{enumerate}
        \item The underlying category is $\u\X$.
        \item The loose morphisms from $X$ to $Y$ comprise the loose morphisms $X \lto Y$ in $\X$ and, when $X = Y$, a (freely added) endomorphism $X(1, 1) \colon X \lto X$.
        \item A cell with the following frame (where $p_1, \ldots, p_n, p$ are loose morphisms in $\X$, and a dashed arrow denotes a finite (possibly empty) chain of the freely added endomorphisms $X_i(1, 1)$),
        % https://q.uiver.app/#q=WzAsOSxbMCwwLCJYXzAiXSxbNCwwLCJcXGNkb3RzIl0sWzYsMCwiWF9uIl0sWzAsMSwiWCJdLFs2LDEsIlgnIl0sWzEsMCwiWF8wIl0sWzIsMCwiWF8xIl0sWzMsMCwiWF8xIl0sWzUsMCwiWF9uIl0sWzAsMywieCIsMl0sWzIsNCwieCciXSxbNCwzLCJwIiwwLHsic3R5bGUiOnsiYm9keSI6eyJuYW1lIjoiYmFycmVkIn19fV0sWzUsMCwiIiwyLHsic3R5bGUiOnsiYm9keSI6eyJuYW1lIjoiZGFzaGVkIn19fV0sWzYsNSwicF8xIiwyLHsic3R5bGUiOnsiYm9keSI6eyJuYW1lIjoiYmFycmVkIn19fV0sWzcsNiwiIiwyLHsic3R5bGUiOnsiYm9keSI6eyJuYW1lIjoiZGFzaGVkIn19fV0sWzEsNywicF8yIiwyLHsic3R5bGUiOnsiYm9keSI6eyJuYW1lIjoiYmFycmVkIn19fV0sWzgsMSwicF9uIiwyLHsic3R5bGUiOnsiYm9keSI6eyJuYW1lIjoiYmFycmVkIn19fV0sWzIsOCwiIiwyLHsic3R5bGUiOnsiYm9keSI6eyJuYW1lIjoiZGFzaGVkIn19fV1d
        \[\begin{tikzcd}
            {X_0} & {X_0} & {X_1} & {X_1} & \cdots & {X_n} & {X_n} \\
            X &&&&&& {X'}
            \arrow["x"', from=1-1, to=2-1]
            \arrow[dashed, from=1-2, to=1-1]
            \arrow["{p_1}"'{inner sep=.8ex}, "\shortmid"{marking}, from=1-3, to=1-2]
            \arrow[dashed, from=1-4, to=1-3]
            \arrow["{p_2}"'{inner sep=.8ex}, "\shortmid"{marking}, from=1-5, to=1-4]
            \arrow["{p_n}"'{inner sep=.8ex}, "\shortmid"{marking}, from=1-6, to=1-5]
            \arrow[dashed, from=1-7, to=1-6]
            \arrow["{x'}", from=1-7, to=2-7]
            \arrow["p"{inner sep=.8ex}, "\shortmid"{marking}, from=2-7, to=2-1]
        \end{tikzcd}\]
        is given by a cell in $\X$ with the following frame (\ie{} given by removing the endomorphisms $X_i(1, 1)$ in the domain).
        % https://q.uiver.app/#q=WzAsNixbMiwwLCJcXGNkb3RzIl0sWzAsMSwiWCJdLFszLDEsIlgnIl0sWzAsMCwiWF8wIl0sWzEsMCwiWF8xIl0sWzMsMCwiWF9uIl0sWzIsMSwicCIsMCx7InN0eWxlIjp7ImJvZHkiOnsibmFtZSI6ImJhcnJlZCJ9fX1dLFs0LDMsInBfMSIsMix7InN0eWxlIjp7ImJvZHkiOnsibmFtZSI6ImJhcnJlZCJ9fX1dLFs1LDAsInBfbiIsMix7InN0eWxlIjp7ImJvZHkiOnsibmFtZSI6ImJhcnJlZCJ9fX1dLFswLDQsInBfMiIsMix7InN0eWxlIjp7ImJvZHkiOnsibmFtZSI6ImJhcnJlZCJ9fX1dLFs1LDIsIngnIl0sWzMsMSwieCIsMl1d
        \[\begin{tikzcd}
            {X_0} & {X_1} & \cdots & {X_n} \\
            X &&& {X'}
            \arrow["x"', from=1-1, to=2-1]
            \arrow["{p_1}"'{inner sep=.8ex}, "\shortmid"{marking}, from=1-2, to=1-1]
            \arrow["{p_2}"'{inner sep=.8ex}, "\shortmid"{marking}, from=1-3, to=1-2]
            \arrow["{p_n}"'{inner sep=.8ex}, "\shortmid"{marking}, from=1-4, to=1-3]
            \arrow["{x'}", from=1-4, to=2-4]
            \arrow["p"{inner sep=.8ex}, "\shortmid"{marking}, from=2-4, to=2-1]
        \end{tikzcd}\]
        Furthermore, for each tight morphism $x \colon X \to X'$, there is a unique cell with the following frame.
        % https://q.uiver.app/#q=WzAsNSxbMCwwLCJYIl0sWzEsMCwiXFxjZG90cyJdLFsyLDAsIlgiXSxbMCwxLCJYJyJdLFsyLDEsIlgnIl0sWzEsMCwiWCgxLCAxKSIsMix7InN0eWxlIjp7ImJvZHkiOnsibmFtZSI6ImJhcnJlZCJ9fX1dLFsyLDEsIlgoMSwgMSkiLDIseyJzdHlsZSI6eyJib2R5Ijp7Im5hbWUiOiJiYXJyZWQifX19XSxbMCwzLCJ4IiwyXSxbMiw0LCJ4Il0sWzQsMywiWCcoMSwgMSkiLDAseyJzdHlsZSI6eyJib2R5Ijp7Im5hbWUiOiJiYXJyZWQifX19XV0=
        \[\begin{tikzcd}
            X & \cdots & X \\
            {X'} && {X'}
            \arrow["x"', from=1-1, to=2-1]
            \arrow["{X(1, 1)}"'{inner sep=.8ex}, "\shortmid"{marking}, from=1-2, to=1-1]
            \arrow["{X(1, 1)}"'{inner sep=.8ex}, "\shortmid"{marking}, from=1-3, to=1-2]
            \arrow["x", from=1-3, to=2-3]
            \arrow["{X'(1, 1)}"{inner sep=.8ex}, "\shortmid"{marking}, from=2-3, to=2-1]
        \end{tikzcd}\]
    \end{enumerate}
    Identities and composites are inherited from $\X$. The \vdc{} $\F(\X)$ is normal, the unique cell
    % https://q.uiver.app/#q=WzAsNCxbMCwwLCJYIl0sWzEsMCwiWCJdLFswLDEsIlgiXSxbMSwxLCJYIl0sWzAsMSwiIiwwLHsibGV2ZWwiOjIsInN0eWxlIjp7ImhlYWQiOnsibmFtZSI6Im5vbmUifX19XSxbMywyLCJYKDEsIDEpIiwwLHsic3R5bGUiOnsiYm9keSI6eyJuYW1lIjoiYmFycmVkIn19fV0sWzAsMiwiIiwyLHsibGV2ZWwiOjIsInN0eWxlIjp7ImhlYWQiOnsibmFtZSI6Im5vbmUifX19XSxbMSwzLCIiLDAseyJsZXZlbCI6Miwic3R5bGUiOnsiaGVhZCI6eyJuYW1lIjoibm9uZSJ9fX1dLFs2LDcsIiEiLDEseyJzaG9ydGVuIjp7InNvdXJjZSI6MjAsInRhcmdldCI6MjB9LCJzdHlsZSI6eyJib2R5Ijp7Im5hbWUiOiJub25lIn0sImhlYWQiOnsibmFtZSI6Im5vbmUifX19XV0=
    \[\begin{tikzcd}[column sep=large]
        X & X \\
        X & X
        \arrow[equals, from=1-1, to=1-2]
        \arrow[""{name=0, anchor=center, inner sep=0}, equals, from=1-1, to=2-1]
        \arrow[""{name=1, anchor=center, inner sep=0}, equals, from=1-2, to=2-2]
        \arrow["{X(1, 1)}"{inner sep=.8ex}, "\shortmid"{marking}, from=2-2, to=2-1]
        \arrow["{!}"{description}, draw=none, from=0, to=1]
    \end{tikzcd}\]
    witnessing the opcartesian cell associated to the loose identity $X(1, 1)$ on an object $X$.
\end{definition}

\begin{proposition}
    \label{F-is-left-adjoint}
    There is a lax-idempotent and monadic 2-adjunction as follows.
    % https://q.uiver.app/#q=WzAsMixbMCwwLCJcXFZEYmxuIl0sWzEsMCwiXFxWRGJsIl0sWzAsMSwiXFxwaCciLDIseyJvZmZzZXQiOjJ9XSxbMSwwLCJcXEYiLDIseyJvZmZzZXQiOjJ9XSxbMywyLCIiLDIseyJsZXZlbCI6MSwic3R5bGUiOnsibmFtZSI6ImFkanVuY3Rpb24ifX1dXQ==
    \[\begin{tikzcd}[column sep=large]
        \VDbln & \VDbl
        \arrow[""{name=0, anchor=center, inner sep=0}, "{\ph'}"', shift right=2, from=1-1, to=1-2]
        \arrow[""{name=1, anchor=center, inner sep=0}, "\F"', shift right=2, from=1-2, to=1-1]
        \arrow["\dashv"{anchor=center, rotate=-90}, draw=none, from=1, to=0]
    \end{tikzcd}\]
\end{proposition}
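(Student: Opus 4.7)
The plan is to construct the 2-adjunction explicitly, and then derive lax-idempotence and monadicity from general properties of the $\F$ construction. First, I would define the unit $\eta_\X \colon \X \to \F(\X)'$ at each \vdc{} $\X$ as the evident inclusion (introducing no formal identities); and the counit $\varepsilon_\Y \colon \F(\Y') \to \Y$ at each normal \vdc{} $\Y$ as the functor that collapses each formal identity $X(1,1)$ of $\F(\Y')$ to the chosen loose identity $X(1,1)$ of $\Y$, sends the unique unitor cells of $\F(\Y')$ to the corresponding loose-identity cells of $\Y$ at each tight morphism, and sends a cell of $\F(\Y')$ with codomain a loose morphism of $\Y'$ (which, by the definition of $\F$, is given by a cell in $\Y'$ on the frame with formal identities omitted) to the cell in $\Y$ obtained by pasting with the opcartesian cells of the chosen loose identities. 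Strict normality of $\varepsilon_\Y$ is then immediate, and both triangle identities hold by inspection since neither direction ever introduces a formal identity that cannot be collapsed; 2-naturality of $\eta$ and $\varepsilon$ is routine.

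For lax-idempotence, I would verify the paper's condition: that the adjunction $\F\eta \dashv \varepsilon\F$ in $\VDbln$ has identity unit. The identity unit condition is precisely the left triangle identity already established. The required counit $\F\eta_\X \circ \varepsilon_{\F(\X)} \Rightarrow 1_{\F(\F(\X)')}$ acts as the identity cell on each loose morphism of $\F(\F(\X)')$ except on the loose morphism arising as the formal identity added by the inner $\F$ to $\X$, where the necessary cell from the outer formal identity to the inner one is provided by the unit cell of the chosen loose identity $X(1,1)$ in the normal \vdc{} $\F(\X)$; such a cell exists by the very definition of $\F$. The triangle identities for this subadjunction are straightforward consequences of the universal property of loose identities.

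Finally, for monadicity, I would write $T \defeq \ph' \F$ for the induced 2-monad on $\VDbl$ and show that the Eilenberg--Moore comparison 2-functor $\VDbln \to \VDbl^T$ is an isomorphism of 2-categories. Since $T$ is lax-idempotent, a $T$-algebra structure on a \vdc{} $\X$ amounts to a morphism $a \colon T\X \to \X$ that is left adjoint to $\eta_\X$ with identity counit $a \circ \eta_\X = 1_\X$. Unwinding, $a$ supplies at each object $X$ of $\X$ a loose morphism $a(X(1,1))$ together with a candidate unit cell provided by the unit of the adjunction, and algebra morphisms correspond strictly to normal functors by 2-naturality. The main obstacle is showing that the loose morphism picked out by an algebra structure is opcartesian in the sense of \cref{opcartesian} (rather than merely equipped with a unit cell); this follows from the algebra associativity axiom applied to the higher-arity cells of $\F(\X)$ whose domains contain several consecutive formal identities, together with the uniqueness of the factorisations provided by the algebra structure itself.
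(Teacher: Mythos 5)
Your proof is correct in substance, but it takes a more explicit route than the paper at two points, so a comparison is worthwhile. For the existence of the adjunction, the paper simply cites Fujii--Lack--Street for the fact that $\F$ is a left 2-adjoint, whereas you construct the unit and counit by hand; your description is the right one, with one caveat: the counit $\varepsilon_\Y$ sends a cell of $\F(\Y')$ (i.e.\ a cell $\phi$ of $\Y$ on the frame with formal identities deleted) to the unique \emph{factorisation} of $\phi$ through the nullary opcartesian cells of $\Y$, not to a pasting with them -- pasting goes in the other direction, and it is precisely here that the full universal property of \cref{opcartesian} (rather than the mere existence of unit cells) is used. For lax-idempotence the two arguments genuinely diverge: the paper deduces it abstractly from the adjoint triple $\F \adj \ph' \adj \Mod$, using that $\ph' \adj \Mod$ is lax-idempotent (\cref{Mod-is-lax-idempotent}) and that a 2-monad left-adjoint to a colax-idempotent 2-comonad is lax-idempotent; you instead verify the defining condition directly by exhibiting the counit of $\F\eta \adj \varepsilon\F$, whose only non-identity component sits at the inner formal identity and is supplied by the unique nullary cell into it. Your computation is right (the composite sends the inner formal identity to the outer one and fixes everything else), and it has the virtue of being self-contained, at the cost of missing the structural point that the two adjunctions around $\ph'$ determine each other. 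For monadicity both arguments coincide: reduce algebra structures to left-adjoint retractions of the unit via lax-idempotence and identify these with normality.

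Two small imprecisions worth tightening. First, for a general lax-idempotent 2-monad, a left-adjoint retraction of $\eta_\X$ gives a \emph{normal pseudo}-algebra; that it is a \emph{strict} algebra here requires checking the associativity axiom on the nose, which does hold for this particular monad because all cells of $\F(\F(\X)')'$ are determined by cells of $\X$ and factorisations through opcartesian cells are unique. Second, the opcartesianness of the chosen loose identity does not really need the associativity axiom as you suggest: existence of the factorisation comes from naturality of the unit $1 \tto \eta_\X \circ a$, and uniqueness comes from applying $a$ to a candidate factorisation and using the triangle identities (which force $a$ of the unit component to be an identity cell), which is what the paper means by ``the unit of the adjunction and triangle identities specify the nullary opcartesian cells''.
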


\begin{proof}
    That $\F$ defines a left 2-adjoint is stated in \cite[\S7.2]{fujii2025familial}. For lax-idempotence, observe that, since $\ph' \adj \Mod$ is lax-idempotent by \cref{Mod-is-lax-idempotent}, it induces a colax-idempotent 2-comonad. A 2-monad that is left-adjoint to a colax-idempotent 2-comonad is lax-idempotent. The $\F'$-algebras are consequently characterised in terms of existence of a left-adjoint retraction of the unit $\X \to \F(\X)'$, which corresponds precisely to the normality condition: the left adjoint provides the interpretation of the formal loose identities in $\F(\X)$ as loose morphisms in $\X$, as well as the interpretation of cells involving them; and the unit of the adjunction and triangle identities specify the nullary opcartesian cells in $\X$.
\end{proof}

Consequently, we have a 2-adjoint triple $\F \adj \ph' \adj \Mod$, inducing an adjoint 2-monad--2-comonad pair $\F' \adj \Mod'$. This situation is useful, as we may take advantage of the calculus of adjoint triples, properties of one adjunction being deducible from properties of the other~\cite{eilenberg1965adjoint}.

\begin{corollary}
    $\ph' \adj \Mod$ is 2-comonadic.
\end{corollary}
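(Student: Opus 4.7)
The plan is to derive the 2-comonadicity of $\ph' \adj \Mod$ from the 2-monadicity of $\F \adj \ph'$ by exploiting the calculus of adjoint triples. From $\F \adj \ph' \adj \Mod$, composition of hom-isomorphisms produces a 2-adjunction $\ph'\F \adj \ph'\Mod$ between the 2-monad $\ph'\F$ (induced by $\F \adj \ph'$) and the 2-comonad $\ph'\Mod$ (induced by $\ph' \adj \Mod$), both regarded as 2-endofunctors of $\VDbl$: explicitly, $\VDbl(\ph'\F X, Y) \iso \VDbln(\F X, \Mod Y) \iso \VDbl(X, \ph'\Mod Y)$. Moreover, this 2-adjunction is compatible with the monad and comonad structures, since their units and multiplications (respectively counits and comultiplications) are inherited from the units and counits of the constituent 2-adjunctions via the triangle identities.

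I would then invoke the 2-categorical upgrade of \textcite{eilenberg1965adjoint}'s classical observation: whenever a 2-monad $T$ is 2-adjoint to a 2-comonad $C$ on a 2-category, compatibly with their structures, adjoint-transposition of structure maps induces a 2-isomorphism $\Alg(T) \iso \Coalg(C)$ commuting with the forgetful 2-functors to the base. Specialising to $\ph'\F \adj \ph'\Mod$ produces a 2-isomorphism $\Alg(\ph'\F) \iso \Coalg(\ph'\Mod)$ over $\VDbl$. Combining with the 2-equivalence $\VDbln \equiv \Alg(\ph'\F)$ supplied by 2-monadicity of $\F \adj \ph'$ (\cref{F-is-left-adjoint}), this yields a 2-equivalence $\VDbln \equiv \Coalg(\ph'\Mod)$ over $\VDbl$; unwinding the transpositions identifies the composite with the canonical comparison 2-functor for $\ph' \adj \Mod$, which is therefore a 2-equivalence.

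The chief point requiring care is the 2-categorical upgrade of the algebra--coalgebra correspondence itself. On objects the transposition is the standard one, sending an algebra $TA \to A$ to its adjoint-transpose coalgebra $A \to CA$; its extension to (co)algebra morphisms and to 2-cells between them is forced by 2-functoriality of the hom-isomorphism for $\ph'\F \adj \ph'\Mod$, making the 2-categorical correspondence a formal consequence of the 1-categorical one. Checking that the resulting composite equivalence agrees with the canonical comparison 2-functor for $\ph' \adj \Mod$ is then a routine diagram chase with the unit of $\ph' \adj \Mod$ and the counit of $\F \adj \ph'$.
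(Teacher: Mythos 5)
Your proposal is correct and follows essentially the same route as the paper: the paper's proof consists of exactly this appeal to the $\Cat$-enriched analogue of \textcite{eilenberg1965adjoint}'s algebra--coalgebra correspondence for adjoint triples, combined with the 2-monadicity of $\F \adj \ph'$ from \cref{F-is-left-adjoint}. Your write-up merely spells out the details that the paper leaves to the citation.
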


\begin{proof}
    Follows from 2-monadicity of $\F \adj \ph'$ by the $\Cat$-enriched analogue of \cite[Proposition~3.3]{eilenberg1965adjoint}.
\end{proof}

\begin{remark}
    \label{no-further-adjoints}
    $\F$ does not preserve the terminal object $\bbn 1$, so does not admit a further left adjoint. $\Mod$ does not admit a further right adjoint (although it does preserve some colimits, \eg{} coproducts). Suppose that it did, so that we have a natural isomorphism as follows for all \vdcs{} $\X$ and normal \vdcs{} $\Y$.
    \[\VDbln(\Mod(\X), \Y) \iso \VDbl(\X, R\Y)\]
    In particular, for any \vdc{} $\X$ containing no loose monads, we have
    \[\VDbl(\X, R\Y) \iso \VDbln(\Mod(\X), \Y) \iso \VDbln(\bbn 0, \Y) \iso \b1\]
    where $\bbn 0 \defeq \o{\b0}$ denotes the empty normal \vdc. Taking $\X$ variously to be the free-standing object, free-standing tight morphism, free-standing loose morphism, and free-standing $n$-ary cell for each $n \in \N$, none of which contain a loose monad, we deduce that $R\Y$ is necessarily the terminal \vdc{}. However, taking $\X$ to be any \vdc{} containing a loose monad and taking $\Y \defeq \bbn 0$, we have
    \[\VDbl(\X, R\bbn 0) \iso \VDbln(\Mod(\X), \bbn 0) \iso \b0\]
    which is a contradiction.
\end{remark}

\begin{remark}
    \label{terminal-vdc-via-F}
    The 2-functor $\Ch \colon \Cat \to \VDbl$ of \cref{chaotic-vdc} is similar in nature to the 2-functor $\F(\o\ph)' \colon \Cat \to \VDbl$, the difference being that the former adjoins a unique loose morphism $!_{a, a'} \colon a \lto a'$ between every pair of objects $a, a'$, whereas the latter does so only for $a = a'$. Consequently, for a category $\b A$, we have $\Ch(\b A) \iso \F(\o{\b A})'$ if and only if $\b A$ has at most one object. In particular, $\F(\o{\b1})' \iso \bbn 1$, since $\Ch$ is a right adjoint and thus preserves the terminal object.
\end{remark}

As we observed in \cref{VDbl-is-bicomplete},  $\VDbl$ is known to be complete and cocomplete as a 2-category and hence admit powers and copowers by categories. However, the methodology used to establish this fact relies upon the construction of limits in 2-categories of coalgebras and colimits in 2-categories of algebras~\cite{dawson2006paths}, and therefore does not produce concrete descriptions of either. In contrast, the adjoint triple between \vdcs{} and normal \vdcs{} gives a concrete description of both powers and copowers (in particular, recovering the concrete description of copowers by $\b2$ in \cref{copower-by-2}).

\begin{proposition}
    \label{power-and-copower}
    The 2-category $\VDbl$ is powered and copowered over $\Cat$. For a category $\b A$ and \vdcs{} $\X$ and $\Y$:
    \begin{align*}
        \b A \copow \X & \iso \F(\o{\b A})' \times \X &
        \b A \pow \Y & \iso \Y^{\F(\o{\b A})'}
    \end{align*}
\end{proposition}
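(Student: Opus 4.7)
The existence of powers and copowers in $\VDbl$ is guaranteed by 2-(co)completeness (\cref{products}), so the content of the proposition is the explicit description. The plan is to first observe that $\F(\o{\b A})'$ is exponentiable, then directly verify the copower formula by inspecting cells, and finally derive the power formula by combining exponentiability with the copower formula.

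First, I would verify that $\F(\o{\b A})'$ is representable. Since $\o{\b A}$ has no loose morphisms, the only loose morphisms in $\F(\o{\b A})'$ are the loose identities $a(1, 1)$, and any composable chain thereof lies at a fixed object $a$ and composes to $a(1, 1)$. The opcartesianness of the corresponding cell is immediate from the uniqueness clause in the definition of $\F$: every cell in $\F(\o{\b A})$ whose top and bottom are chains of loose identities is the unique cell with its (necessarily equal) left and right tight components. Hence $\F(\o{\b A})'$ is exponentiable by \cref{representable-vdcs-are-exponentiable}.

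Second, to establish the copower formula, I would directly verify
\[\VDbl(\F(\o{\b A})' \times \X, \Y) \iso [\b A, \VDbl(\X, \Y)],\]
naturally in $\Y$. The key observation is that a cell in $\F(\o{\b A})' \times \X$ is uniquely specified by a morphism $f \colon a \to b$ in $\b A$, serving as both left and right tight component on the $\F(\o{\b A})'$-side (these necessarily coinciding by the definition of $\F$), together with a cell in $\X$ of the corresponding frame. Hence a functor $F \colon \F(\o{\b A})' \times \X \to \Y$ is equivalent to a family of functors $F_a \defeq F(a, \ph) \colon \X \to \Y$ for $a \in \b A$, together with transformations $F_f \colon F_a \tto F_b$ for each $f \colon a \to b$ in $\b A$ (with tight component $F(f, 1_X)$ at $X$ and loose component given by $F$ of the unique cell over $f$ paired with $1_p$), satisfying the expected functoriality in $\b A$; that is, a functor $\b A \to \VDbl(\X, \Y)$. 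A parallel analysis for transformations of \vdc{} functors completes the isomorphism.

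Finally, the power formula follows formally by combining the copower formula with exponentiability of $\F(\o{\b A})'$:
\[\VDbl(\X, \Y^{\F(\o{\b A})'}) \iso \VDbl(\F(\o{\b A})' \times \X, \Y) \iso [\b A, \VDbl(\X, \Y)],\]
exhibiting $\Y^{\F(\o{\b A})'}$ as the power $\b A \pow \Y$. The main obstacle is the cell-level bookkeeping in the second step; however, the rigidity of cells in $\F(\o{\b A})'$ (each being uniquely determined by its tight data) keeps the verification routine.
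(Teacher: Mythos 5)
Your proposal is correct and follows essentially the same route as the paper: observe that $\F(\o{\b A})'$ is representable (its only loose morphisms being loose identities) and hence exponentiable by \cref{representable-vdcs-are-exponentiable}, verify the copower isomorphism $\VDbl(\F(\o{\b A})' \times \X, \Y) \iso \Cat(\b A, \VDbl(\X, \Y))$ directly by analysing the (rigid) cells of $\F(\o{\b A})' \times \X$, and deduce the power formula from exponentiability. The paper organises the explicit bijection via evaluation at the functors ${!}_a \colon \bbn 1 \to \F(\o{\b A})'$ and transformations ${!}_\alpha$, but this is the same computation you carry out by hand.
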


\begin{proof}
    To see that the proposed description of the power is well formed, observe that $\F(\o{\b A})'$ is representable, its only loose morphisms being loose identities, hence is exponentiable by \cref{representable-vdcs-are-exponentiable}. We have $\VDbl(\X, \Y^{\F(\o{\b A})'}) \iso \VDbl(\F(\o{\b A})' \times \X, \Y)$, and so it suffices to verify the proposed description of the copower. Since we already know that $\VDbl$ is copowered (\cref{VDbl-is-bicomplete}), it is necessary only to verify the one-dimensional aspect of the universal property, since this identifies the copower up to isomorphism.

    We have a functor $\b A \to \VDbl(\X, \F(\o{\b A})' \times \X) \iso \VDbl(\X, \F(\o{\b A})') \times \VDbl(\X, \X)$ sending $a \in \b A$ to the pair comprising the constant functor on $a$ and the identity functor on $\X$; we must show that the induced function $\VDbl(\F(\o{\b A})' \times \X, \Y)_0 \to \Cat(\b A, \VDbl(\X, \Y))_0$ is invertible. A functor $F_{\ph} \colon \b A \to \VDbl(\X, \Y)$ comprises, for each object $a \in \b A$, a functor $F_a \colon \X \to \Y$ and, for each morphism $\alpha \colon a \to a'$, a transformation $F_\alpha \colon F_a \tto F_{a'}$, functorial in $a \in \b A$. Given this data, we define a functor $F_{({-}_1)}({-}_2) \colon \F(\o{\b A})' \times \X \to \Y$, sending $a \in \b A$ and $X \in \X$ to $F_a(X) \in \Y$. A loose morphism from $(a', X')$ to $(a, X)$ in $\F(\o{\b A})' \times \X$, which necessarily has $a = a'$ and comprises a loose morphism $p \colon X' \lto X$ in $\X$, is sent to the loose morphism $F_a(p) \colon F_a(X') \lto F_a(X)$ in $\Y$. The action on tight morphisms and cells is analogous. It is easy to verify that this assignment is inverse to the function $\VDbl(\F(\o{\b A})' \times \X, \Y)_0 \to \Cat(\b A, \VDbl(\X, \Y))_0$.
\end{proof}

It follows from \cref{power-and-copower} that the embedding $\F(\o\ph)' \colon \Cat \to \VDbl$ plays an analogous role for \vdcs{} that the cocartesian (\aka{} sequential) multicategory construction plays for multicategories \cites[Proposition~2.1]{pisani2013remarks}[Corollary~3.4]{pisani2014sequential}. For instance, recall from \cite[Corollary~3.10]{pisani2014sequential} that the cocartesian multicategory construction admits both a left adjoint and a right adjoint: in particular, it is left-adjoint to the category of monoids construction. These facts have analogues in the setting of \vdcs. To describe the left adjoint, we make use of the notion of a generalised congruence on a category and its quotient~\cite[Definition~3.6 \& \S3.9]{bednarczyk1999generalized}.

\begin{definition}
    \label{loose-connected-components}
    Given a \vdc{} $\X$, denote by $\u Q_\X \colon \u\X \to \LoCo(\X)$ the quotient functor with respect to the generalised congruence $\sim$ on the category $\u\X$, for which $\sim$ is generated by the relation: $X \sim X'$ if there exists a loose morphism $X' \lto X$ in $\X$, and $x \sim x'$ if there exists a cell with a frame as follows in $\X$.
    % https://q.uiver.app/#q=WzAsNSxbMCwwLCJYXzAiXSxbMSwwLCJcXGNkb3RzIl0sWzIsMCwiWF9uIl0sWzAsMSwiWCJdLFsyLDEsIlgnIl0sWzEsMCwicF8xIiwyLHsic3R5bGUiOnsiYm9keSI6eyJuYW1lIjoiYmFycmVkIn19fV0sWzIsMSwicF9uIiwyLHsic3R5bGUiOnsiYm9keSI6eyJuYW1lIjoiYmFycmVkIn19fV0sWzAsMywieCIsMl0sWzIsNCwieCciXSxbNCwzLCJwIiwwLHsic3R5bGUiOnsiYm9keSI6eyJuYW1lIjoiYmFycmVkIn19fV1d
    \[\begin{tikzcd}
        {X_0} & \cdots & {X_n} \\
        X && {X'}
        \arrow["x"', from=1-1, to=2-1]
        \arrow["{p_1}"'{inner sep=.8ex}, "\shortmid"{marking}, from=1-2, to=1-1]
        \arrow["{p_n}"'{inner sep=.8ex}, "\shortmid"{marking}, from=1-3, to=1-2]
        \arrow["{x'}", from=1-3, to=2-3]
        \arrow["p"{inner sep=.8ex}, "\shortmid"{marking}, from=2-3, to=2-1]
    \end{tikzcd}\qedshift\]
\end{definition}

\begin{proposition}
    \label{sequential-adjoint}
    The 2-functor $\F(\o\ph)' \colon \Cat \to \VDbl$ is \ff{} and admits both a left and a right adjoint.
    \[\LoCo \adj \F(\o\ph)' \adj \u{\Mod\ph'} \colon \VDbl \to \Cat\]
\end{proposition}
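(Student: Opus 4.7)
The strategy is to obtain the adjunction by composing three previously established 2-adjunctions, then verify fully faithfulness by inspecting the unit of the composite.

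First, the 2-adjunctions $\o\ph \adj \u\ph$ (\cref{Cat-graphs-into-VDCs}), $\F \adj \ph'$ (\cref{F-is-left-adjoint}), and $\ph' \adj \Mod$ (\cref{Mod-is-right-adjoint}) compose to give a 2-adjunction whose left adjoint is $\ph' \c \F \c \o\ph = \F(\o\ph)'$ and whose right adjoint is $\u\ph \c \ph' \c \Mod = \u{\Mod\ph'}$, yielding the desired 2-adjunction.

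For \ffness, since $\F(\o\ph)'$ is a left 2-adjoint, it suffices to show that the unit of the composite adjunction is a 2-natural isomorphism. As $\o\ph$ is \ff{} (with identity unit) and $\F$ acts as the identity on underlying categories, the component $\eta_{\b A} \colon \b A \to \u{\Mod(\F(\o{\b A})')'}$ coincides with the underlying-category functor of the unit of $\ph' \adj \Mod$ at the normal \vdc{} $\F(\o{\b A})$. By the description given in the proof of \cref{Mod-is-right-adjoint}, this functor sends each object $a \in \b A$ to the loose monad $(a, a(1,1))$ and each morphism $f \colon a \to b$ to the loose monad morphism whose tight component is $f$ and whose cell component is the unique cell of $\F(\o{\b A})'$ with the requisite frame.

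It remains to verify that $\eta_{\b A}$ is invertible. By construction of $\F$, the only loose morphisms in $\F(\o{\b A})'$ are the formal loose identities $a(1,1) \colon a \lto a$ for $a \in \b A$, and the cells of $\F(\o{\b A})'$ are uniquely determined by their tight frames. Consequently, every loose monad in $\F(\o{\b A})'$ has underlying loose morphism $a(1,1)$ for some $a$, with unit and multiplication cells being the unique cells filling the respective frames (the monad axioms holding trivially by uniqueness); similarly, every loose monad morphism between such monads is determined by its tight component, with the monad morphism axioms again holding automatically. This produces an inverse of $\eta_{\b A}$ on objects and morphisms, so $\eta_{\b A}$ is an isomorphism of categories, and \ffness{} of $\F(\o\ph)'$ follows. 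The main task — unpacking the composite unit and matching it against the explicit structure of $\F(\o{\b A})'$ — is routine; no substantive obstacle arises.
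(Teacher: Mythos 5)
Your proof is correct. The adjointness half is exactly the paper's argument: compose the three 2-adjunctions of \cref{Cat-graphs-into-VDCs}, \cref{F-is-left-adjoint}, and \cref{Mod-is-right-adjoint}. For \ffness{} you take a slightly different route: the paper simply observes that a functor out of $\F(\o{\b A})'$ has its action on loose morphisms (and cells) uniquely determined, so that $\Cat(\b A, \b B) \to \VDbl(\F(\o{\b A})', \F(\o{\b B})')$ is directly seen to be an isomorphism; you instead invoke the general fact that a left 2-adjoint is \ff{} iff the unit is invertible, and then check that $\u{\Mod(\F(\o{\b A})')'} \iso \b A$ by classifying the loose monads and monad morphisms in $\F(\o{\b A})'$. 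Both arguments rest on the same structural fact (that $\F(\o{\b A})'$ has only formal loose identities and unique cells filling each frame); yours is marginally longer but has the side benefit of making the unit isomorphism explicit, which is a useful computation in its own right. No gaps.
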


\begin{proof}
    First, \ffness{} follows because the action on loose morphisms is uniquely determined. The right adjoint is given by composition of the 2-adjunctions of \cref{Cat-graphs-into-VDCs,Mod-is-right-adjoint,F-is-left-adjoint}.
    % https://q.uiver.app/#q=WzAsMyxbMCwwLCJcXENhdCJdLFsxLDAsIlxcVkRibCJdLFsyLDAsIlxcVkRibCJdLFswLDEsIlxcb1xccGgiLDAseyJvZmZzZXQiOi0yLCJzdHlsZSI6eyJ0YWlsIjp7Im5hbWUiOiJob29rIiwic2lkZSI6InRvcCJ9fX1dLFsxLDIsIlxcRlxccGgnIiwwLHsib2Zmc2V0IjotMn1dLFsyLDEsIlxcTW9kXFxwaCciLDAseyJvZmZzZXQiOi0yfV0sWzEsMCwiXFx1XFxwaCIsMCx7Im9mZnNldCI6LTJ9XSxbMyw2LCIiLDAseyJsZXZlbCI6MSwic3R5bGUiOnsibmFtZSI6ImFkanVuY3Rpb24ifX1dLFs0LDUsIiIsMCx7ImxldmVsIjoxLCJzdHlsZSI6eyJuYW1lIjoiYWRqdW5jdGlvbiJ9fV1d
    \[\begin{tikzcd}[column sep=huge]
        \Cat & \VDbl & \VDbl
        \arrow[""{name=0, anchor=center, inner sep=0}, "{\o\ph}", shift left=2, hook, from=1-1, to=1-2]
        \arrow[""{name=1, anchor=center, inner sep=0}, "{\u\ph}", shift left=2, from=1-2, to=1-1]
        \arrow[""{name=2, anchor=center, inner sep=0}, "{\F\ph'}", shift left=2, from=1-2, to=1-3]
        \arrow[""{name=3, anchor=center, inner sep=0}, "{\Mod\ph'}", shift left=2, from=1-3, to=1-2]
        \arrow["\dashv"{anchor=center, rotate=-90}, draw=none, from=0, to=1]
        \arrow["\dashv"{anchor=center, rotate=-90}, draw=none, from=2, to=3]
    \end{tikzcd}\]

    For the left adjoint, observe that, given a \vdc{} $\X$, the functor ${\u Q_\X \colon \u\X \to \LoCo(\X)}$ of categories underlies a unique functor $Q_\X \colon \X \to \F(\o{\LoCo(\X)})'$ of \vdcs{}, since the only loose morphisms and the only cells in the latter are identities.

    Given a functor $F \colon \X \to \F(\o{\b Y})'$ of \vdcs{}, the underlying functor $\u F \colon \u\X \to \b Y$ of categories induces a generalised congruence~\cite[Definition~3.2]{bednarczyk1999generalized}. Given a loose morphism $p \colon X' \lto X$ in $\X$, we necessarily have $F(X') = F(X)$, because the only loose morphisms in $\F(\o{\b Y})'$ are identities; similarly, given a cell $\chi$ framed by tight morphisms $x$ and $x'$, we necessarily have $F(x) = F(x')$. Consequently, the generalised congruence of \cref{loose-connected-components} is a sub-congruence of the generalised congruence induced by $\u F$, so that, by \cite[Corollary~3.11]{bednarczyk1999generalized}, there is a unique functor $\tilde F \colon \LoCo(\X) \to \b Y$ rendering the following diagram on the left commutative; and which is consequently also the unique functor rendering the following diagram on the right commutative.
    \[
    % https://q.uiver.app/#q=WzAsMyxbMCwwLCJcXExvQ28oXFxYKSJdLFswLDEsIlxcdVxcWCJdLFsxLDEsIlxcYiBZIl0sWzEsMCwiXFx1IFFfXFxYIl0sWzEsMiwiXFx1IEYiLDJdLFswLDIsIlxcdGlsZGUgRiIsMCx7InN0eWxlIjp7ImJvZHkiOnsibmFtZSI6ImRhc2hlZCJ9fX1dXQ==
    \begin{tikzcd}
        {\LoCo(\X)} & \\
        {\u\X} & {\b Y}
        \arrow["{\tilde F}", dashed, from=1-1, to=2-2]
        \arrow["{\u Q_\X}", from=2-1, to=1-1]
        \arrow["{\u F}"', from=2-1, to=2-2]
    \end{tikzcd}
    \hspace{6em}
    % https://q.uiver.app/#q=WzAsMyxbMCwwLCJcXEYoXFxve1xcTG9DbyhcXFgpfSknIl0sWzAsMSwiXFx1XFxYIl0sWzEsMSwiXFxGKFxcb3tcXGIgWX0pJyJdLFsxLDAsIlFfXFxYIl0sWzEsMiwiRiIsMl0sWzAsMiwiXFxGKFxcb3tcXHRpbGRlIEZ9KSciLDAseyJzdHlsZSI6eyJib2R5Ijp7Im5hbWUiOiJkYXNoZWQifX19XV0=
    \begin{tikzcd}
        {\F(\o{\LoCo(\X)})'} & \\
        {\u\X} & {\F(\o{\b Y})'}
        \arrow["{\F(\o{\tilde F})'}", dashed, from=1-1, to=2-2]
        \arrow["{Q_\X}", from=2-1, to=1-1]
        \arrow["F"', from=2-1, to=2-2]
    \end{tikzcd}
    \]
    To show this satisfies the two-dimensional universal property, it suffices by \cite[Proposition~3.1]{blackwell1989two} to show that $\LoCo$ preserves copowers by $\b2$. Using \cref{power-and-copower}, preservation amounts to invertibility of the canonical functor $\LoCo(\F(\o{\b 2})' \times \X) \to \b2 \times \LoCo(\X)$ for all \vdcs{} $\X$, which holds because the generalised congruence on $\u{\F(\o{\b 2})' \times \X}$ is determined by that on $\u\X$, since $\F(\o{\b 2})$ has no nontrivial loose morphisms.
\end{proof}

\begin{corollary}
    $\F(\o\ph)' \colon \Cat \to \VDbl$ preserves exponentials.
\end{corollary}

\begin{proof}
    Since $\F(\o\ph)'$ is right-adjoint by \cref{sequential-adjoint}, it preserves powers, \ie{} $\F(\o{\b X^{\b A}})'$ exhibits the power $\b A \pow \F(\o{\b X})'$. By \cref{power-and-copower}, the power is precisely the exponential $(\F(\o{\b X})')^{\F(\o{\b A})'}$.
\end{proof}

\begin{remark}
    Since $\VDbl$ is copowered over $\Cat$, we obtain, for each fixed \vdc{} $\X$, a 2-adjunction $\ph \copow \X \adj \VDbl(\X, {-}) \colon \VDbl \to \Cat$.
    \begin{enumerate}
        \item When $\X = \o{\b1}$, this recovers the free--forgetful 2-adjunction $\o\ph \adj \u\ph$ of \cref{Cat-graphs-into-VDCs}.
        \item When $\X = \bbn 1$, this recovers the 2-adjunction $\F(\o\ph)' \adj \u{\Mod\ph'}$ of \cref{sequential-adjoint}.
        \qedhere
    \end{enumerate}
\end{remark}

Therefore, $\Cat$ is embedded coreflectively in $\VDbl$ in two ways, identifying the categories as both the `loosely empty' \vdcs{} (having no loose morphisms) and the `loosely discrete' \vdcs{} (having only loose identities); and is embedded reflectively in two ways, identifying the categories as both the `loosely discrete' \vdcs{} and the `loosely codiscrete' \vdcs{} (by \cref{chaotic-vdc}). Note that $\u{\F(\o\ph)'} \colon \Cat \to \Cat$ is equal to the identity, so that $\u\ph \colon \VDbl \to \Cat$ provides a retraction of each of the reflective and coreflective inclusions.

\section{Lax functors as monads}
\label{lax-functors-as-monads}

We are now in position to explain the relationship between exponentiation for \vdcs{} and the \vdc{} of lax functors introduced by \textcite{pare2011yoneda}. As a warm up, we observe that the category of loose monads and monad morphisms in a \vdc{} $\X$ is given by the category of functors from the terminal \vdc{} $\bbn 1$ into $\X$.

\begin{lemma}
    \label{loose monads-are-lax-functors}
    $\VDbl(\bbn 1, \X) \iso \u{\Mod(\X)'}$.
\end{lemma}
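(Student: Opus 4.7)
The plan is to show that $\bbn 1$ is the free-standing loose monad in $\VDbl$: a functor $F \colon \bbn 1 \to \X$ is precisely the data of a loose monad in $\X$, and a transformation between such functors is precisely a loose monad morphism.

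To unpack the bijection on objects, set $\crr T \defeq F(*)$ and $T \defeq F(p)$, where $p$ denotes the unique loose morphism of $\bbn 1$; the nullary and binary cells of $\bbn 1$ are sent to cells $\I_T \colon 1_{\crr T} \tto T$ and $\circ_T \colon T, T \Rightarrow T$ in $\X$. Strict preservation of identity cells forces the unary cell of $\bbn 1$ to be sent to $1_T$. Every $n$-ary cell of $\bbn 1$ for $n \geq 3$ can be written as an iterated composite of the nullary and binary cells in multiple ways; since $\bbn 1$ contains exactly one cell of each arity, all such composites coincide there, and $F$ must therefore send them to equal cells in $\X$. These coincidences instantiate precisely to the associativity and unitality axioms of \cref{LMnd}. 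Conversely, any loose monad in $\X$ determines such a functor by defining the images of the higher-arity cells as iterated composites of $\circ_T$ and $\I_T$; the monad axioms ensure this is well-defined, and strict preservation of composition is then automatic.

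For morphisms, a transformation $\Phi \colon F \tto G$ comprises a tight morphism $\crr f \defeq \Phi_*$ and a unary cell $f \defeq \Phi_p$; the naturality equation of \cref{transformation} applied to the nullary and binary cells of $\bbn 1$ yields precisely the two equations defining a loose monad morphism in \cref{LMnd}, while naturality on higher-arity cells is redundant since those cells decompose into nullary and binary ones. Since composition in $\VDbl(\bbn 1, \X)$ and composition of loose monad morphisms are both componentwise, the assignment is a functor. No serious obstacle is anticipated, as the proof is a direct unfolding of definitions.

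Alternatively, one can derive the isomorphism abstractly by combining $\bbn 1 \iso \F(\o{\b1})'$ (\cref{terminal-vdc-via-F}) with the adjunctions $\F \adj \ph' \adj \Mod$ (\cref{Mod-is-right-adjoint,F-is-left-adjoint}) and the identification $\u{\ph} \iso \VDbl(\o{\b1}, {-})$, yielding
\[
\VDbl(\bbn 1, \X) \iso \VDbln(\F(\o{\b1}), \Mod(\X)) \iso \VDbl(\o{\b1}, \Mod(\X)') \iso \u{\Mod(\X)'}.
\]
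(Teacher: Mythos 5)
Your proof is correct, and in fact contains the paper's proof as its final paragraph: the paper's argument is exactly the observation that $\bbn 1 \iso \F(\o{\b1})'$ (\cref{terminal-vdc-via-F}) combined with \cref{sequential-adjoint}, whose adjunction $\F(\o\ph)' \adj \u{\Mod\ph'}$ is precisely the composite of the adjunctions $\o\ph \adj \u\ph$, $\F \adj \ph'$ and $\ph' \adj \Mod$ that you chain together, evaluated at $\b A = \b1$. Your main argument -- the direct verification that $\bbn 1$ is the free-standing loose monad -- is a genuinely more elementary route that the paper leaves implicit (it is alluded to only in passing, in \cref{slice-vdc}). It is sound: strict preservation of composition and identities forces the images of the $n$-ary cells of $\bbn 1$ to satisfy $(\mu_{m_1}, \ldots, \mu_{m_n}) \d \mu_n = \mu_{m_1 + \cdots + m_n}$ and $\mu_1 = 1_T$, which reduces to the monad axioms for $\mu_0 = \I_T$ and $\mu_2 = \circ_T$ and conversely determines all higher $\mu_n$ by a standard coherence argument; and naturality of a transformation at the nullary and binary cells gives exactly the two loose monad morphism equations of \cref{LMnd}. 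What the explicit route buys is a self-contained check requiring none of the machinery of \cref{powers-and-copowers}; what the abstract route buys is brevity and the ability to generalise (it is the same argument that yields the copowers in \cref{power-and-copower}).
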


\begin{proof}
    This is immediate from the fact that $\bbn 1 \iso \bb F(\o{\b1})'$ (\cref{terminal-vdc-via-F}) together with \cref{sequential-adjoint}. For concreteness, we spell out the correspondence. A functor $\bbn1 \to \X$ sends the unique object of $\bbn1$ to an object $\crr T$ in $\X$; the unique loose morphism to $T \colon \crr T \lto \crr T$; and the unique $n$-ary cell to an $n$-ary cell of the following form.
    % https://q.uiver.app/#q=WzAsNyxbMCwwLCJcXGNyciBUIl0sWzMsMCwiXFxjcnIgVCJdLFs0LDAsIlxcY3JyIFQiXSxbMCwxLCJcXGNyciBUIl0sWzQsMSwiXFxjcnIgVCJdLFsxLDAsIlxcY3JyIFQiXSxbMiwwLCJcXGNkb3RzIl0sWzAsMywiIiwwLHsibGV2ZWwiOjIsInN0eWxlIjp7ImhlYWQiOnsibmFtZSI6Im5vbmUifX19XSxbMiwxLCJUIiwyLHsic3R5bGUiOnsiYm9keSI6eyJuYW1lIjoiYmFycmVkIn19fV0sWzIsNCwiIiwwLHsibGV2ZWwiOjIsInN0eWxlIjp7ImhlYWQiOnsibmFtZSI6Im5vbmUifX19XSxbNCwzLCJUIiwwLHsic3R5bGUiOnsiYm9keSI6eyJuYW1lIjoiYmFycmVkIn19fV0sWzUsMCwiVCIsMix7InN0eWxlIjp7ImJvZHkiOnsibmFtZSI6ImJhcnJlZCJ9fX1dLFsxLDYsIlQiLDIseyJzdHlsZSI6eyJib2R5Ijp7Im5hbWUiOiJiYXJyZWQifX19XSxbNiw1LCJUIiwyLHsic3R5bGUiOnsiYm9keSI6eyJuYW1lIjoiYmFycmVkIn19fV0sWzksNywie1xcY2lyY15uX1R9IiwxLHsibGV2ZWwiOjEsInN0eWxlIjp7ImJvZHkiOnsibmFtZSI6Im5vbmUifSwiaGVhZCI6eyJuYW1lIjoibm9uZSJ9fX1dXQ==
    \[\begin{tikzcd}
        {\crr T} & {\crr T} & \cdots & {\crr T} & {\crr T} \\
        {\crr T} &&&& {\crr T}
        \arrow[""{name=0, anchor=center, inner sep=0}, equals, from=1-1, to=2-1]
        \arrow["T"'{inner sep=.8ex}, "\shortmid"{marking}, from=1-2, to=1-1]
        \arrow["T"'{inner sep=.8ex}, "\shortmid"{marking}, from=1-3, to=1-2]
        \arrow["T"'{inner sep=.8ex}, "\shortmid"{marking}, from=1-4, to=1-3]
        \arrow["T"'{inner sep=.8ex}, "\shortmid"{marking}, from=1-5, to=1-4]
        \arrow[""{name=1, anchor=center, inner sep=0}, equals, from=1-5, to=2-5]
        \arrow["T"{inner sep=.8ex}, "\shortmid"{marking}, from=2-5, to=2-1]
        \arrow["{{\circ^n_T}}"{description}, draw=none, from=1, to=0]
    \end{tikzcd}\]
    This clearly suffices to define a loose monad in $\X$, defining the multiplication cell $\c_T \defeq \c^2_T$ and the unit cell $\I_T \defeq \c^0_T$. Conversely, given a loose monad in $\X$, we may define an $n$-ary multiplication cell $\c^n_T$ in terms of repeated binary multiplication $\c_T$ (for $n > 1$) or the unit cell $\I_T$ (for $n = 0$); the associativity and unitality equations imply that this choice is unique. The correspondence between transformations and loose monad morphisms is then essentially immediate.
\end{proof}

The following observation, which extends the above from an isomorphism of categories to an isomorphism of \vdcs{}, provides our original motivation for considering exponentiation of \vdcs.

\begin{theorem}
    \label{Lax-is-Mod}
    Let $\A$ be a \pdc{} and let $\X$ be a \vdc. The \vdc{} $\Lax(\A, \X)$ of \cite[Theorem~4.3]{pare2011yoneda}\;\footnotemark{} is isomorphic to $\Mod(\X^\A)'$.
    \footnotetext{\citeauthor{pare2011yoneda} only defined $\Lax(\A, \X)$ under the additional assumption that $\X$ is representable, but the definition extends in the evident way.}%
\end{theorem}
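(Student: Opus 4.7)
My plan is to unfold both $\Mod(\X^\A)$ and Paré's $\Lax(\A, \X)$ in parallel and observe that their descriptions match term-by-term at every level of structure, with source, target, identities, and composition inherited diagrammatically on both sides. The key input is the explicit description of $\X^\A$ from the proof of \cref{representable-vdcs-are-exponentiable}: an $n$-ary cell in $\X^\A$ is a family of cells in $\X$ indexed by chains $p_1, \ldots, p_n$ of composable loose morphisms in $\A$, whose codomain in $\X$ is obtained by evaluating the bottom loose morphism of the frame at the composite $p_1 \odot \cdots \odot p_n$ in $\A$. Specialising to $n = 2$ and $n = 0$, with all slots equal to a given endoloose-morphism $T \colon F \lto F$ in $\X^\A$, thus yields exactly the shape of, respectively, the binary laxity cells and the unit laxity cells of a lax functor.

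Unfolding at the object level, a loose monad $(F, T, \circ_T, \I_T)$ in $\X^\A$ therefore consists of: a functor $\u F \colon \u\A \to \u\X$; a graph morphism $T$ assigning to each loose morphism $p$ in $\A$ a loose morphism $Tp$ in $\X$ over the $\u F$-image of the endpoints of $p$, and functorially acting on unary cells; a family of binary cells $Tp_1, Tp_2 \tto T(p_1 \odot p_2)$ in $\X$ indexed by composable pairs; and a family of nullary cells with codomain $T(A(1,1))$ indexed by objects $A$ of $\A$. These are exactly the data of a lax functor $\A \to \X$ whose object assignment is $\u F$; naturality of the cell families in the $p_i$ is forced by the compatibility axioms on cells of $\X^\A$, and the monad associativity and unit laws translate --- via the canonical isomorphism $\iso$ witnessing associativity and unitality of $\odot$ in $\A$, which enters the composition law for cells of $\X^\A$ --- directly into the standard lax functor coherence axioms. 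Analogous unfolding at the higher levels identifies loose monad morphisms with transformations of lax functors, loose monad modules with modules between lax functors, and loose monad transformations with multimodulations. In each case the data on the $\Mod(\X^\A)$ side is computed componentwise from $\X$, matching Paré's componentwise definitions in \cite{pare2011yoneda}.

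The main obstacle --- really the only nonroutine content --- is pinning down how the associator and unitor coherences for $\odot$ in $\A$, which appear implicitly in the composition law for cells of $\X^\A$ through the isomorphism $\iso$, correspond under the unfolding to Paré's coherence axioms for lax functors, transformations, modules, and multimodulations. Once this alignment is established, the four levelwise bijections glue into an isomorphism of \vdcs{} $\Mod(\X^\A) \iso \Lax(\A, \X)$, with preservation of source, target, identities, and composition on both sides being automatic because both $\Mod$ and $\ph^\A$ are defined by purely diagrammatic universal properties. As a sanity check, specialising to $\A = \bbn 1$ gives $\X^{\bbn 1} \iso \X$, and the isomorphism reduces to the identification of loose monads in $\X$ with functors $\bbn 1 \to \X$, which is \cref{loose monads-are-lax-functors}.
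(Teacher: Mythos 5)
Your proposal is correct and matches the paper's proof in its essentials: both arguments unfold $\Mod(\X^\A)$ using the explicit description of the exponential from \cref{representable-vdcs-are-exponentiable} and match the result level-by-level against Par\'e's definitions of lax functors, modules, and multimodulations, with the coherence alignment left at the same level of detail in each case. The only real difference is that the paper dispatches the object and tight-morphism levels abstractly via the chain $\u{\Lax(\A, \X)} \iso \VDbl(\A, \X) \iso \VDbl(\bbn 1, \X^\A) \iso \u{\Mod(\X^\A)'}$ (combining \cref{Dbl_l-in-VDbl,representable-vdcs-are-exponentiable,loose monads-are-lax-functors}), whereas you unfold a loose monad in $\X^\A$ by hand and only invoke \cref{loose monads-are-lax-functors} as a sanity check --- the same content, obtained slightly less economically.
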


The reader who does not have a copy of \cite{pare2011yoneda} to hand is invited to unwind the definition of $\Mod(\X^\A)'$, whereupon they will arrive at \citeauthor{pare2011yoneda}'s definition; we shall not reproduce the explicit description here, which spans more than five pages \ibid.

\begin{proof}
    First, we observe that the underlying categories are isomorphic. Using \cref{Dbl_l-in-VDbl} for the first isomorphism, \cref{representable-vdcs-are-exponentiable} for the second isomorphism, and \cref{loose monads-are-lax-functors} for the final isomorphism, we have the following.
    \[\u{\Lax(\A, \X)} \iso \VDbl(\A, \X) \iso \VDbl(\bbn 1, \X^\A) \iso \u{\Mod(\X^\A)'}\]
    Next, we must check the data of the loose morphisms and cells. A loose morphism in $\Mod(\X^\A)$ is given by a module between loose monads. Explicitly, it is given by a loose morphism in $\X^\A$, \ie a graph morphism, equipped with two cells in $\X^\A$ corresponding to the left and right actions. Comparing with the data and axioms for a module in \cite[Definition~3.2]{pare2011yoneda}, (M1 \& M2) express the data of a graph morphism; (M3) expresses the data of the left and right actions; (M4) expresses functoriality of the graph morphism (see \cref{graph-morphism}); (M5) expresses naturality of the left and right actions; (M6) expresses the compatibility of the right action with the multiplication, the compatibility of the left action with the right action, and the compatibility of the left action with the multiplication; (M7) expresses the compatibility of the right action with the unit, and the compatibility of the left action with the unit.

    A cell in $\Mod(\X^\A)$ is given by a cell in $\X^\A$ satisfying several compatibility conditions. Comparing with the data and axioms for a multimodulation in \cite[Definition~4.2]{pare2011yoneda}, (mm1) expresses the data of a cell; (mm2) expresses naturality; (mm3\textsubscript{l}) expresses to the compatibility condition for the left action with itself; (mm3\textsubscript{r}) expresses to the compatibility condition for the right action with itself; (mm3\textsubscript{i}) expresses to the compatibility condition for the left action with the right action; (mm3\textsubscript{0}) expresses the nullary compatibility condition.

    \textcite{pare2011yoneda} does not define composition of cells in $\Lax(\A, \X)$, therein deferring them to \cite{pare2013composition}, where they are also not defined explicitly. However, it is clear that they must coincide with those of $\Mod(\X^\A)$.
\end{proof}

Motivated by \cref{Lax-is-Mod}, we may extend \citeauthor{pare2011yoneda}'s construction to exponentiable \vdcs.

\begin{definition}
    Let $\A$ and $\X$ be \vdcs{} and suppose that $\A$ is exponentiable. Define the \vdc{} $\Lax(\A, \X) \defeq \Mod(\X^\A)$.
\end{definition}

Beyond being purely of conceptual interest, this observation has many practical consequences. For one, it provides a simpler construction of $\Lax(\A, \B)$ that makes concrete computations substantially easier; for another, it provides a universal property of $\Lax(\A, \B)$ that means that concrete computations are often rendered unnecessary. In the remainder of the paper, we will explore some of these consequences.

First, normality of the \vdc{} of lax functors follows as immediate corollary of \cref{Lax-is-Mod}.

\begin{corollary}[{\cites[Theorem~4.3]{pare2011yoneda}[Theorem~5.1.10]{pare2013composition}}]
    \label{Lax-is-normal}
    $\Lax(\A, \X)$ is normal.
\end{corollary}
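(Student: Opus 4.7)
The plan is to derive this as a one-line consequence of \cref{Lax-is-Mod} together with \cref{Mod-is-normal}. Specifically, \cref{Lax-is-Mod} furnishes an isomorphism of \vdcs{} $\Lax(\A, \X) \iso \Mod(\X^\A)$, and \cref{Mod-is-normal} asserts that, for any \vdc{} $\Y$, the \vdc{} $\Mod(\Y)$ is normal, with the loose identity on a loose monad $T$ being $T$ itself. Taking $\Y = \X^\A$ thus yields normality of $\Mod(\X^\A)$, and since normality is preserved by isomorphism of \vdcs{} (being a property asserting the existence of opcartesian nullary cells, which are reflected and created by any isomorphism), we conclude that $\Lax(\A, \X)$ is normal.

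There is no real obstacle here: the entire substance of the result is packaged into the preceding theorem, and the present corollary is simply the observation that one of the useful universal properties of the $\Mod$ construction transfers along the isomorphism. It may nevertheless be worth noting explicitly, for the reader's benefit, what the loose identity on a lax functor $F \colon \A \to \X$ looks like under the identification: under the isomorphism of \cref{Lax-is-Mod}, $F$ corresponds to a loose monad in $\X^\A$ whose underlying loose morphism assigns to each loose morphism $p$ of $\A$ the loose morphism $F(p)$ of $\X$, and the loose identity on $F$ is this very loose monad viewed as a module over itself.
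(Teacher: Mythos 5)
Your proposal is correct and coincides exactly with the paper's own argument, which deduces the corollary immediately from \cref{Lax-is-Mod} and \cref{Mod-is-normal}. The additional unwinding of what the loose identity on a lax functor looks like under the identification is accurate and harmless, though not needed.
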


\begin{proof}
    Immediate from \cref{Lax-is-Mod,Mod-is-normal}.
\end{proof}

\begin{remark}
    \label{representability-of-Lax}
    If $\X$ admits non-nullary composites, restrictions, and local reflexive coequalisers, then $\Mod(\X)$ is representable: this is stated in \cite[Theorem~A.1]{cruttwell2010unified} under the further assumptions that $\X$ admits arbitrary composites and local coequalisers, but inspection of the proof reveals the stronger assumptions to be unnecessary. It is straightforward to show that, if $\X$ admits restrictions and local reflexive coequalisers, then so too does each exponential $\X^\A$. Consequently, supposing the conjecture in \cref{representability-of-exponential} holds, it would follow that, given a \pdc{} $\A$ satisfying a suitable factorisation condition and a locally cocomplete \pdc{} $\X$ with restrictions, the \vdc{} $\Lax(\A, \X)$ is representable (and is furthermore locally cocomplete with restrictions), which would recover \cite[Theorem~4.0.1]{pare2013composition}. Our expectation is that this proof strategy would result in a substantial simplication of the proof \ibid, given the relative simplicity of $\X^\A$ compared to $\Lax(\A, \X)$.
\end{remark}

In the following subsections, we will explore some of the more interesting consequences of \cref{Lax-is-Mod}.

\subsection{Enrichment of double categories}
\label{enrichment}

The reader of \cite{pare2011yoneda} may be led to wonder whether the assignment $(\A, \B) \mapsto \Lax(\A, \B)$ of pairs of \pdcs{} to the normal \vdc{} of lax functors between them defines an enrichment of \pdcs{} in normal \vdcs{}. While verifying this by hand is not a difficult exercise, it does involve some tedious calculations. On the other hand, it is a straightforward corollary of \cref{Lax-is-Mod}. We require a preparatory lemma.

\begin{lemma}
    \label{enriched-subcategory}
    Let $(\cl D, \otimes, I)$ be a symmetric monoidal bicategory and let $\C \ffto \cl D$ be a full sub-bicategory. Suppose that, for each $A, B \in \C$, the internal hom $[A, B]$ exists in $\cl D$. Then $\C$ is canonically equipped with a $\cl D$-enriched structure.
\end{lemma}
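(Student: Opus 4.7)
The plan is to equip $\C$ with the $\scr D$-enriched structure whose hom-objects are the internal homs: $\C(A, B) \defeq [A, B]$ for $A, B \in \C$. Composition and identity 1-cells will arise by transposition under the parameterised biadjunctions $\ph \otimes A \adj [A, \ph]$, and the pentagon and triangle coherences will follow from the universal property of the internal hom together with the coherence of the monoidal bicategory $\scr D$.

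First I would fix the data. For each triple $A, B, C \in \C$, define the composition
\[
m_{A, B, C} \colon [B, C] \otimes [A, B] \to [A, C]
\]
as the transpose of the evaluation composite
\[
([B, C] \otimes [A, B]) \otimes A \to [B, C] \otimes ([A, B] \otimes A) \to [B, C] \otimes B \to C,
\]
where the first arrow is the associator of $\scr D$ and the latter two are the counits of the adjunctions. For each $A \in \C$, the identity $j_A \colon I \to [A, A]$ is the transpose of the unit isomorphism $I \otimes A \iso A$.

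Next I would construct the associator and unitor 2-cells for this enrichment. Associativity reduces, under transposition along $\ph \otimes A \adj [A, \ph]$, to comparing two parallel 1-cells $[C, D] \otimes [B, C] \otimes [A, B] \otimes A \to D$; both are the three-fold iterated evaluation and are thus canonically isomorphic using the coherence of $\scr D$. Similarly, the unit axioms transpose to comparisons between composites of evaluations with unitors, which hold by coherence. The pentagon and triangle for the enriched structure then hold because, by essential uniqueness of transposes, it suffices to check the corresponding equations of 2-cells after transposition, where they reduce to instances of Mac~Lane coherence in $\scr D$.

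The main (genuinely minor) obstacle is conceptual rather than computational: in the bicategorical setting, the transposition $\scr D(X \otimes A, B) \equiv \scr D(X, [A, B])$ is an equivalence rather than an isomorphism of categories, so the composition 1-cell and coherence 2-cells are determined only up to coherent invertible 2-cells, and one must verify that a coherent choice can be made. This is automatic from the usual argument that representing data for a pseudofunctor assembles into a structure satisfying the expected coherence, once one observes that the fullness of $\C \ffto \scr D$ guarantees $\C(A, B) \equiv \scr D(A, B) \equiv \scr D(I \otimes A, B) \equiv \scr D(I, [A, B])$, so that the underlying bicategory of the resulting enrichment agrees with $\C$ itself.
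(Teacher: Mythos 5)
Your proposal is correct and follows essentially the same route as the paper: the paper simply cites the canonical self-enrichment of a closed monoidal bicategory (Lack, Proposition~3.9.6) for the construction you spell out explicitly via transposed evaluation and unit maps, and then, exactly as you do, invokes the local equivalence $\C(A,B) \equiv \scr D(A,B) \equiv \scr D(I,[A,B])$ coming from \ffness{} of the inclusion to identify the underlying bicategory with $\C$. The only quibble is that this last step uses \ffness{}, not merely fullness, of $\C \ffto \scr D$.
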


\begin{proof}
    The enriched structure on $\C$ is defined in the same way as the canonical enriched structure on a closed monoidal bicategory~\cite[Proposition~3.9.6]{lack1995algebra}, with $\C(A, B) \defeq [A, B]$. Full faithfulness ensures that the underlying bicategory of the enriched bicategory coincides with $\C$, since $\cl D(I, [A, B]) \equiv \cl D(A, B) \equiv \C(A, B)$.
\end{proof}

\begin{corollary}
    \label{Dbl-is-enriched}
    The 2-category $\Dbl_l$ of pseudo double categories and lax functors admits the following enrichments.
    \begin{enumerate}
        \item In the cartesian monoidal 2-category $\VDbl$ via $\Dbl_l(\A, \B) \defeq \B^\A$.
        \item In the cartesian monoidal 2-category $\VDbln$ via $\Dbl_l(\A, \B) \defeq \Lax(\A, \B)$.
    \end{enumerate}
\end{corollary}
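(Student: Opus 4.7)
The plan is to deduce both enrichments by invoking \cref{enriched-subcategory} and the universal property of $\Mod$.

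\emph{Part (1).} I would apply \cref{enriched-subcategory} with $\scr D \defeq \VDbl$, which is symmetric monoidal under the cartesian structure guaranteed by \cref{products}, and $\C \defeq \Dbl_l$, which is a full sub-2-category of $\VDbl$ by \cref{Dbl_l-in-VDbl}. For any $\A, \B \in \Dbl_l$, the internal hom $\B^\A$ exists in $\VDbl$ by \cref{representable-vdcs-are-exponentiable}. The lemma then produces the $\VDbl$-enrichment on $\Dbl_l$.

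\emph{Part (2).} Since lax functors between \pdcs{} are not required to be normal, $\Dbl_l$ does not sit as a full sub-bicategory of $\VDbln$, so \cref{enriched-subcategory} does not apply directly. Instead, I would transport the enrichment from part (1) along the right 2-adjoint $\Mod \colon \VDbl \to \VDbln$ from \cref{Mod-is-right-adjoint}. The crucial observation is that $\Mod$ is strong symmetric monoidal with respect to the cartesian structures: as a right 2-adjoint, it preserves binary products, and it also preserves the terminal object, since $\bbn 1$ is terminal in both $\VDbl$ and $\VDbln$ (using that $\ph'$ is itself a right adjoint by \cref{F-is-left-adjoint}, whence $\bbn 1 \in \VDbln$ is terminal and coincides with $\bbn 1 \in \VDbl$) and $\Mod(\bbn 1) \iso \bbn 1$ by inspection, every datum being uniquely determined. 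Change of base along this strong monoidal 2-functor transports the $\VDbl$-enrichment from (1) into a $\VDbln$-enrichment whose hom-objects are $\Mod(\B^\A) \iso \Lax(\A, \B)$ by \cref{Lax-is-Mod}.

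Finally, I would verify that the underlying bicategory of the transported enrichment is still $\Dbl_l$. Using the adjunction $\ph' \adj \Mod$ together with the exponential--product adjunction, we have
\[\VDbln(\bbn 1, \Mod(\B^\A)) \iso \VDbl(\bbn 1, \B^\A) \iso \VDbl(\A, \B) \iso \Dbl_l(\A, \B),\]
2-naturally in $\A$ and $\B$, which identifies the transported composition and identities with those of $\Dbl_l$. The step requiring the most care is the strong monoidality of $\Mod$; the remainder is a formal application of change of base for enriched bicategories, together with the universal property already established in \cref{Lax-is-Mod}.
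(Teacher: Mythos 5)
Your proposal is correct and follows essentially the same route as the paper: part (1) via \cref{enriched-subcategory} applied to the full inclusion $\Dbl_l \ffto \VDbl$ with exponentials supplied by \cref{representable-vdcs-are-exponentiable}, and part (2) via change of base along the product-preserving right 2-adjoint $\Mod$ together with \cref{Lax-is-Mod}, checking the underlying 2-category with the same isomorphism $\VDbln(\bbn 1, \Lax(\A, \B)) \iso \VDbl(\A, \B)$. Your additional remarks on the preservation of the terminal object and on why \cref{enriched-subcategory} cannot be applied directly to $\VDbln$ are accurate but not points the paper dwells on.
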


\begin{proof}
    First, the inclusion $\Dbl_l \to \VDbl$ is a \ff{} 2-functor (\cref{Dbl_l-in-VDbl}). (1) thus follows from \cref{enriched-subcategory} using that representable \vdcs{} are exponentiable (\cref{representable-vdcs-are-exponentiable}). For (2), we use change of base along $\Mod$, using that $\Mod$ is right-adjoint and so preserves products, along with the characterisation of $\Lax(\A, \B)$ in \cref{Lax-is-Mod}. It is clear that the underlying 2-category is precisely $\Dbl_l$, since $\VDbln(\bbn 1, \Lax(\A, \B)) \iso \VDbl(\A, \B)$.
\end{proof}

\begin{remark}
    Bicategories enriched in the cartesian monoidal 2-category $\VDbln$ are generalisations of the locally cubical bicategories of \cite[Definition~11]{garner2009low}, which are bicategories enriched in the cartesian monoidal 2-category $\Dbl_p$ of \pdcs{} and pseudo functors (which, by the remark at the start of \cref{normality}, is biequivalent to the 2-category of \pdcs{} and \emph{normal} pseudo functors). Note that each of the 2-functors in the following pseudo-commutative square preserves products and so, by change of base for enriched bicategories~\cite[\S13.2]{garner2016enriched}, every locally cubical bicategory induces bicategories enriched in $\Dbl_l$, $\VDbln$, and $\VDbl$. However, only change of base along the vertical functors below preserves underlying 2-categories (since normal functors from $\bbn 1$ pick out objects, whereas non-normal functors from $\bbn 1$ pick out loose monads).
    % https://q.uiver.app/#q=WzAsNCxbMCwwLCJcXERibF9wIl0sWzEsMCwiXFxEYmxfbCJdLFswLDEsIlxcVkRibG4iXSxbMSwxLCJcXFZEYmwiXSxbMCwyXSxbMCwxXSxbMSwzLCIiLDIseyJzdHlsZSI6eyJ0YWlsIjp7Im5hbWUiOiJob29rIiwic2lkZSI6InRvcCJ9fX1dLFsyLDNdXQ==
    \[\begin{tikzcd}
        {\Dbl_p} & {\Dbl_l} \\
        \VDbln & \VDbl
        \arrow[from=1-1, to=1-2]
        \arrow[from=1-1, to=2-1]
        \arrow[hook, from=1-2, to=2-2]
        \arrow[from=2-1, to=2-2]
    \end{tikzcd}\qedshift\]
\end{remark}

\subsection{Skew-closed structure}
\label{skew-closed-structure}

The universal properties of exponentiation and of $\Mod$ supply $\Lax$ with a universal property of its own.

\begin{lemma}
    \label{UP-of-Lax}
    For each normal \vdc{} $\W$, exponentiable \vdc{} $\A$, and \vdc{} $\X$, we have a 2-natural isomorphism of categories:
    \[\VDbl(\U\W \times \A, \X) \iso \VDbln(\W, \Lax(\A, \X))\]
    Consequently, for each \vdc{} $\W$, exponentiable \vdc{} $\A$, and \vdc{} $\X$, we have a 2-natural isomorphism of categories:
    \[\VDbl(\U{(\F\W)} \times \A, \X) \iso \VDbl(\W, \U{\Lax(\A, \X)})\]
\end{lemma}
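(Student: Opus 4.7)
The plan is to chain together three 2-natural isomorphisms already established in the paper, and then to deduce the second statement from the first by substitution. For the first isomorphism, I would begin by invoking exponentiability of $\A$ (\cref{representable-vdcs-are-exponentiable}, together with the hypothesis on $\A$) to obtain $\VDbl(\W' \times \A, \X) \iso \VDbl(\W', \X^\A)$. Next, I would apply the 2-adjunction $\ph' \adj \Mod$ of \cref{Mod-is-right-adjoint} to rewrite this as $\VDbln(\W, \Mod(\X^\A))$; this is the step that exchanges $\VDbl$ for $\VDbln$ on the right. Finally, I would appeal to \cref{Lax-is-Mod} to identify $\Mod(\X^\A)$ with $\Lax(\A, \X)$, yielding the desired isomorphism.

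For the second isomorphism, I would specialise the first by replacing $\W$ with $\F\W$ (which is a normal \vdc{} by \cref{F-is-left-adjoint}), giving $\VDbl((\F\W)' \times \A, \X) \iso \VDbln(\F\W, \Lax(\A, \X))$. Applying the 2-adjunction $\F \adj \ph'$ of \cref{F-is-left-adjoint} on the right-hand side then rewrites the latter as $\VDbl(\W, \Lax(\A, \X)')$, as required.

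The main obstacle is essentially nonexistent: both isomorphisms follow by formal composition of universal properties that have been carefully established earlier in the paper. The only routine verification is that each of the three isomorphisms in the chain is 2-natural in the relevant variable, so that their composite is 2-natural in $\W$, $\A$, and $\X$ simultaneously; this is automatic, since each step is a genuine 2-adjunction or 2-natural isomorphism.
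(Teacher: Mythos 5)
Your proposal is correct and matches the paper's proof exactly: the paper likewise composes the adjunction $\ph \times \A \adj \ph^\A$ with $\ph' \adj \Mod$ and invokes \cref{Lax-is-Mod}, then obtains the second isomorphism by precomposing $\F \adj \ph'$.
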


\begin{proof}
    For the first isomorphism, we use \cref{Lax-is-Mod} together with the following 2-adjunction.
    % https://q.uiver.app/#q=WzAsMyxbMCwwLCJcXFZEYmxuIl0sWzEsMCwiXFxWRGJsIl0sWzIsMCwiXFxWRGJsIl0sWzEsMiwiXFxwaCBcXHRpbWVzIFxcQSIsMCx7Im9mZnNldCI6LTJ9XSxbMCwxLCJcXHBoJyIsMCx7Im9mZnNldCI6LTJ9XSxbMSwwLCJcXE1vZCIsMCx7Im9mZnNldCI6LTJ9XSxbMiwxLCJcXHBoXlxcQSIsMCx7Im9mZnNldCI6LTJ9XSxbNCw1LCIiLDAseyJsZXZlbCI6MSwic3R5bGUiOnsibmFtZSI6ImFkanVuY3Rpb24ifX1dLFszLDYsIiIsMCx7ImxldmVsIjoxLCJzdHlsZSI6eyJuYW1lIjoiYWRqdW5jdGlvbiJ9fV1d
    \[\begin{tikzcd}[column sep=large]
        \VDbln & \VDbl & \VDbl
        \arrow[""{name=0, anchor=center, inner sep=0}, "{\ph'}", shift left=2, from=1-1, to=1-2]
        \arrow[""{name=1, anchor=center, inner sep=0}, "\Mod", shift left=2, from=1-2, to=1-1]
        \arrow[""{name=2, anchor=center, inner sep=0}, "{\ph \times \A}", shift left=2, from=1-2, to=1-3]
        \arrow[""{name=3, anchor=center, inner sep=0}, "{\ph^\A}", shift left=2, from=1-3, to=1-2]
        \arrow["\dashv"{anchor=center, rotate=-90}, draw=none, from=0, to=1]
        \arrow["\dashv"{anchor=center, rotate=-90}, draw=none, from=2, to=3]
    \end{tikzcd}\]
    For the second isomorphism, we precompose the adjunction $\F \adj \U\ph$.
\end{proof}

\begin{remark}
    In particular, for a category $\b A$ and \vdcs{} $\X$ and $\Y$, where $\X$ is exponentiable, we have the following chain of natural isomorphisms,
    \begin{align*}
        \VDbl(\F(\o{\b A})' \times \X, \Y) & \iso \VDbl(\o{\b A}, \Lax(\X, \Y)') \tag{\cref{UP-of-Lax}} \\
            & \iso \Cat(\b A, \u{\Lax(\X, \Y)'}) \tag{\cref{Cat-graphs-into-VDCs}} \\
            & \iso \Cat(\b A, \VDbl(\X, \Y)) \tag{\cref{Lax-is-Mod}}
    \end{align*}
    recovering the description of the copower $\b A \copow \X$ in \cref{power-and-copower}. However, note that this does not give an alternative proof of \cref{power-and-copower}, as it imposes the stronger assumption that $\X$ be exponentiable.
\end{remark}

\Cref{UP-of-Lax} indicates that $\Lax({-}, {-})$ forms something resembling a closed monoidal structure on the 2-category of \vdcs. Since not every \vdc{} is exponentiable, such a monoidal structure will be at best partially closed, but we may still hope that the monoidal structure itself is totally defined. This is, in fact, very close to being the case: while the closed structure expressed by $\Lax({-}, {-})$ does not correspond to a monoidal structure, it does correspond to a \emph{left-skew monoidal} structure in the sense of \textcite{szlachanyi2012skew}.

\begin{theorem}
    \label{partial-closed-structure}
    The assignment $(\W, \X) \mapsto \U{(\F\W)} \times \X$ equips $\VDbl$ with the structure of a right-normal left-skew monoidal 2-category $(\VDbl, \skt, \bbn 1)$. Furthermore, for each exponentiable \vdc{} $\A$, we have $\ph \skt \A \adj \U{\Lax(\A, {-})}$.
\end{theorem}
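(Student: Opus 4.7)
The plan is to address the two claims of the theorem separately. The adjunction $\ph \skt \A \adj \U{\Lax(\A, {-})}$ for exponentiable $\A$ is essentially a direct restatement of \cref{UP-of-Lax}: the 2-natural isomorphism $\VDbl(\U{(\F\W)} \times \A, \X) \iso \VDbl(\W, \U{\Lax(\A, \X)})$ exhibits $\ph \skt \A$ as 2-left-adjoint to $\U{\Lax(\A, {-})}$, with 2-naturality in $\W$ and $\X$ inherited from the 2-naturality of the constituent 2-adjunctions $\F \adj \ph'$ and $\ph' \adj \Mod$ together with that of the cartesian product.

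For the skew monoidal structure, the idea is to apply a standard categorical recipe: any 2-monad $T$ on a cartesian 2-category $(\mathcal{C}, \times, I)$ canonically gives rise to a left-skew monoidal structure on $\mathcal{C}$ via $\W \skt \X \defeq T\W \times \X$ with unit $I$. In the case at hand, $T = \U{(\F\ph)}$ is the 2-monad on $\VDbl$ induced by the 2-adjunction $\F \adj \ph'$ of \cref{F-is-left-adjoint}, and the cartesian structure is that of \cref{products}. Explicitly, the left unitor $\lambda_\X \colon T\bbn 1 \times \X \to \bbn 1 \times \X \iso \X$ is induced by the unique morphism $T\bbn 1 \to \bbn 1$; the right unitor $\rho_\W \colon \W \to T\W \iso T\W \times \bbn 1$ is the unit $\eta_\W$ of $T$; and the associator $\alpha_{\W, \X, \Y} \colon T(T\W \times \X) \times \Y \to T\W \times T\X \times \Y$ is obtained by pairing $\mu_\W \c T\pi_1$ with $T\pi_2$ and $1_\Y$, using the multiplication $\mu$ of $T$.

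The coherence axioms (pentagon and three unitality triangles) reduce, via the universal property of products, to the 2-monad axioms for $(T, \eta, \mu)$, which are routine. The main subtlety, which I expect to be the principal obstacle, is verifying the right-normality condition: one exploits the lax-idempotence of $\F \adj \ph'$ (\cref{F-is-left-adjoint}) to see that the relevant unitor exhibits the expected invertibility property, and then checks that this matches the precise convention for ``right-normal'' used by the author. 2-functoriality of $\skt$ in both arguments follows formally from the 2-functoriality of $T$ and of the cartesian product, so no further work is needed there.
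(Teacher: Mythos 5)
The overall architecture of your proposal coincides with the paper's: the left-skew monoidal structure is obtained from the 2-monad $\W \mapsto \U{(\F\W)}$ via its unique colax monoidal structure with respect to the cartesian structure (the paper cites the two-dimensional analogue of Lack--Street's Example~3.8, which packages exactly the unitors, associator, and coherence checks you spell out), and the right-closedness with respect to an exponentiable $\A$ is read off from \cref{UP-of-Lax}, exactly as you do.

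The gap is in your verification of right-normality. You correctly identify the right unitor as $\rho_\W = \eta_\W \colon \W \to \U{(\F\W)} \times \bbn 1 \iso \U{(\F\W)}$, the unit of the 2-monad, but you then propose to deduce ``the expected invertibility property'' from the lax-idempotence of $\F \adj \ph'$. Lax-idempotence cannot deliver this: it yields an adjunction involving $\mu$ and the unit at \emph{free} algebras (\cref{F-is-left-adjoint,Mod-is-lax-idempotent}), not invertibility of $\eta_\W$ itself, and $\eta_\W$ genuinely adjoins new loose morphisms (the formal loose identities) to $\W$, so it is not invertible whenever $\W$ has an object. Your plan is thus internally inconsistent: the map you have singled out as the relevant unitor is one you can see directly is not an isomorphism, and lax-idempotence does not change that. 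The paper's justification is of a different and more elementary character --- right-normality is extracted from the isomorphism $\U{(\F\W)} \times \bbn 1 \iso \U{(\F\W)}$, that is, from the terminality of the unit object $\bbn 1$, with no appeal to lax-idempotence. Relatedly, deferring the check that your argument ``matches the precise convention for right-normal'' is not a loose end you may postpone: identifying which unit constraint is being asserted invertible is the entire content of the claim, and it is exactly the point at which your proposed route breaks down.
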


\begin{proof}
    Every 2-monad admits a unique colax monoidal structure with respect to cartesian monoidal structure, and so the given assignment defines a left-skew monoidal structure by the $\Cat$-enriched analogue of \cite[Example~3.8]{lack2012skew}. Right-normality follows because $(\F\W)' \times \bbn 1 \iso (\F\W)'$. That this skew-monoidal structure is right-closed with respect to exponentiable \vdcs{} follows from \cref{UP-of-Lax}.
\end{proof}

\begin{corollary}
    The 2-category $\Dbl_l$ is equipped with right-normal left-skew monoidal structure $(\skt, \bbn 1)$.
\end{corollary}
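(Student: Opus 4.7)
The plan is to show that the left-skew monoidal structure on $\VDbl$ from \cref{partial-closed-structure} restricts along the \ff{} 2-functor $\Dbl_l \ffto \VDbl$ of \cref{Dbl_l-in-VDbl}, which identifies \pdcs{} with representable \vdcs{}. Since this inclusion is full on both 1-cells and 2-cells, the components of the associator and unitors (being 1-cells between \pdcs{}) automatically lie in $\Dbl_l$ once we know their domains and codomains do, and the pentagon and triangle axioms transfer by faithfulness. Hence, it suffices to check that $\Dbl_l$ is closed under the tensor $\skt$ and contains the unit $\bbn 1$; the latter is immediate, as $\bbn 1$ has a single loose morphism which is trivially opcartesian as a loose identity.

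Equivalently, I would argue that the 2-monad $(\F\ph)'$ on $\VDbl$ inducing the skew-monoidal structure restricts to a 2-monad on $\Dbl_l$. Since $\A \skt \B = \U{(\F\A)} \times \B$ and products in $\VDbl$ are computed componentwise, products of representable \vdcs{} are representable; the task then reduces to verifying that $(\F\A)'$ is representable whenever $\A$ is. From the definition of $\F$, the formal loose identities $X(1,1)$ adjoined by $\F$ are opcartesian by construction, providing loose identities in $(\F\A)'$; meanwhile, non-nullary loose composites in $(\F\A)'$ are inherited from those in $\A$, with the formal identities acting as inert spacers in the source of any cell.

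The main technical step will be verifying that these inherited composites retain the opcartesian universal property: for an arbitrary chain in $(\F\A)'$ mixing loose morphisms of $\A$ with formal identities, every cell out of the chain must factor uniquely through the opcartesian cell coming from $\A$. This follows from the definition of cells in $\F\A$, in which formal identities may be excised from the source to yield a bijection with cells in $\A$, but some care is required to check that the factorisation is well-behaved across all shapes of multiary cell and all positions of formal identities. Right-normality of the inherited structure is then immediate from the corresponding part of the proof of \cref{partial-closed-structure}.
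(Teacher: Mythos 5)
Your proposal is correct and follows essentially the same route as the paper: use fullness of $\Dbl_l \ffto \VDbl$ to reduce to checking that $\skt$ (and the unit $\bbn 1$) restricts to representable \vdcs{}, which comes down to $\F$ and the cartesian product preserving representability. The paper dispatches the latter point in one line where you (reasonably) flag the opcartesianness of the inherited composites in $(\F\A)'$ as the step needing verification, but your excision argument is the right one and the details go through.
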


\begin{proof}
    Since $\Dbl_l$ is a full sub-2-category of $\VDbl$ (\cref{Dbl_l-in-VDbl}), it suffices to check that $\skt$ restricts to pseudo double categories. This is trivial, as $\F$ preserves representability, as does the cartesian product.
\end{proof}

\begin{remark}
    One might hope that the universal property of \cref{UP-of-Lax} extends from an isomorphism of categories to an isomorphism of \vdcs{}, \ie{} that the tensor--hom adjunction holds internally. However, while we do obtain a functor
    \[\Lax(\A \skt \B, \X) \to \Lax(\A, \Lax(\B, \X)')\]
    for $\A$ and $\B$ exponentiable, the functor is not invertible, since $\skt$ is merely skew monoidal (in particular, since the associator is non-invertible).

    It is worth observing that we also have a functor
    \[\Lax(\A, \Lax(\B, \X)') \to \Lax(\A \times \B, \X)\]
    for $\A$ and $\B$ exponentiable. This is obtained by the following sequence of 2-natural transformations,
    \begin{iffseq}
        \W \to \Lax(\A, \Lax(\B, \X)')' \\
        \W \skt \A \to \Lax(\B, \X)' \\
        (\W \skt \A) \skt \B \to \X \internaldblbackslash\cmidrule{1-1}
        \W \skt (\A \times \B) \to \X \\
        \W \to \Lax(\A \times \B, \X)'
    \end{iffseq}
    where the noninvertible step is given by
    precomposition with the functor
    \[\W \skt (\A \times \B) = (\F\W)' \times (\A \times \B) \iso ((\F\W)' \times \A) \times \B \to \F((\F\W)' \times \A)' \times \B = (\W \skt \A) \skt \B\]
    induced by the unit of the adjunction $\F \adj \ph'$. In particular, taking $\A = \B = \bbn 1$, we obtain a functor
    \[\Mod(\Mod(\X)') \iso \Lax(\bbn 1, \Lax(\bbn 1, \X)') \to \Lax(\bbn 1 \times \bbn 1, \X) \iso \Mod(\X)\]
    which coincides with the whiskering $\Mod \c \varepsilon_\X$ (\cref{Mod-is-right-adjoint}). This suggests a connection to the theory of distributive laws for lax functors between 2-categories and between strict \dcs~\cite{faul20212dimensional,femic2023bifunctor}.
\end{remark}

\subsection{Exponentiable normal \vdcs}

The structure of exponentials in $\VDbl$ may at first appear surprising, since the objects of an exponential are not functors of \vdcs, in contrast to the behaviour of exponentials in familiar 2-categories like $\Cat$. This distinction is encapsulated in the difference between the \vdc{} $\o{\b1}$, which represents the forgetful functor $\u\ph \colon \VDbl \to \Cat$, and the \vdc{} $\bbn1$, which is the terminal object in $\VDbl$. For normal \vdcs{}, there is no such distinction: the forgetful functor $\u\ph \colon \VDbln \to \Cat$ is indeed represented by $\bbn1$ (which is terminal in $\VDbln$ as well as $\VDbl$).
Consequently, exponentials of normal \vdcs{} behave more familiarly than exponentials of arbitrary \vdcs. In fact, we may obtain a description of exponentials of normal \vdcs{} from our description of \vdcs{} of lax functors.

\begin{definition}
    Let $\A$ be an exponentiable \vdc{} and let $\X$ be a \vdc{}. Denote by $\LaxN(\A, \X)$ the normal full sub-\vdc{} of $\Lax(\A', \X')$ spanned by the normal functors.
\end{definition}

\begin{theorem}
    \label{representable-normal-vdcs-are-exponentiable}
    Let $\A$ be a \pdc{} and let $\X$ be a normal \vdc. $\LaxN(\A, \X)$ exhibits the exponential $\X^\A$ in the 2-category $\VDbln$.
\end{theorem}

\begin{proof}
    For each normal \vdc{} $\W$, normal functors $G \colon \W \to \LaxN(\A, \X)$ are precisely normal functors $\W \to \Lax(\A', \X')$ for which, for each object $W \in \W$, the functor $G(W) \colon \A \to \X$ is normal, \ie{} for each object $A \in \A$, $G(W)(A(1, 1)) = (G(W)(A))(1, 1)$. However, normal functors $G \colon \W \to \Lax(\A', \X')$ are, by \cref{UP-of-Lax}, in 2-natural bijection with functors $F \colon (\W \times \A)' \iso \W' \times \A' \to \X'$. Normality of $G(W)$ then corresponds to normality of the functor $F(W, {-}) \colon \A' \to \X'$, which is precisely the condition that $F(W(1, 1), A(1, 1)) = F(W, A)(1, 1)$. Consequently, for each $G(W)$ to be normal is precisely for $F$ to be normal. The two-dimensional universal property of the exponential follows from the fact that $\VDbln$ is a locally full sub-2-category of $\VDbl$.
\end{proof}

This description of exponentials of normal \vdcs{} allows us to internalise the adjunction $\ph' \adj \Mod$ of \cref{Mod-is-right-adjoint}, at least with respect to exponentiable \vdcs.

\begin{corollary}
    \label{Mod-is-internal-right-adjoint}
    For every \pdc{} $\A$ and \vdc{} $\X$, there is an isomorphism of normal \vdcs{} $\Lax(\A', \X) \iso \LaxN(\A, \Mod(\X))$.
\end{corollary}

\begin{proof}
    We have the following isomorphism, 2-natural in $\W$.
    \begin{align*}
        \VDbln(\W, \Lax(\A', \X)) & \iso \VDbl(\W' \times \A', \X) \tag{\cref{UP-of-Lax}} \\
        & \iso \VDbl((\W \times \A)', \X) \tag{$\ph'$ is continuous} \\
        & \iso \VDbln(\W \times \A, \Mod(\X)) \tag{\cref{Mod-is-right-adjoint}} \\
        & \iso \VDbln(\W, \LaxN(\A, \Mod(\X))) \tag{\cref{representable-normal-vdcs-are-exponentiable}}
    \end{align*}
\end{proof}

\subsection{Exponentiable \ve s}

As mentioned in \cref{virtual-equipments}, a class of normal \vdcs{} of particular interest are the \ve s. Given that \ve s provide a convenient setting for \fct, it is useful to have techniques to construct new \ve s from existing ones. As we shall show, exponentiation provides one such technique.

\begin{proposition}
    \label{restrictions-in-exponential}
    Let $\A$ be a representable \vdc{} and let $\X$ be a \vdc. If $\X$ admits restrictions, then so does $\X^\A$.
\end{proposition}

\begin{proof}
    Consider a diagram of the following shape in $\X^\A$.
    % https://q.uiver.app/#q=WzAsNCxbMCwxLCJHIl0sWzEsMSwiRyciXSxbMCwwLCJGIl0sWzEsMCwiRiciXSxbMSwwLCJcXFhpIiwwLHsic3R5bGUiOnsiYm9keSI6eyJuYW1lIjoiYmFycmVkIn19fV0sWzIsMCwiXFxQaGkiLDJdLFszLDEsIlxcUGhpJyJdXQ==
    \[\begin{tikzcd}
        F & {F'} \\
        G & {G'}
        \arrow["\Phi"', from=1-1, to=2-1]
        \arrow["{\Phi'}", from=1-2, to=2-2]
        \arrow["\Xi"{inner sep=.8ex}, "\shortmid"{marking}, from=2-2, to=2-1]
    \end{tikzcd}\]
    We define a loose morphism $\Xi(\Phi, \Phi')$ in $\X^\A$ as follows, using the description of the latter in \cref{graph-morphism}.
    \begin{enumerate}
        \item Each loose morphism $p \colon X \lto Y$ in $\A$ is sent to the loose morphism $\Xi(\Phi_Y, \Phi'_X)(p)$ defined as the following restriction in $\X$.
        % https://q.uiver.app/#q=WzAsNCxbMCwxLCJHWSJdLFsxLDEsIkcnWCJdLFswLDAsIkZZIl0sWzEsMCwiRidYIl0sWzEsMCwiXFxYaSBwIiwwLHsic3R5bGUiOnsiYm9keSI6eyJuYW1lIjoiYmFycmVkIn19fV0sWzIsMCwiXFxQaGlfWSIsMl0sWzMsMSwiXFxQaGknX1giXSxbMywyLCJcXFhpKFxcUGhpX1ksIFxcUGhpJ19YKShwKSIsMix7InN0eWxlIjp7ImJvZHkiOnsibmFtZSI6ImJhcnJlZCJ9fX1dLFs1LDYsIlxcY2FydCIsMSx7InNob3J0ZW4iOnsic291cmNlIjoyMCwidGFyZ2V0IjoyMH0sInN0eWxlIjp7ImJvZHkiOnsibmFtZSI6Im5vbmUifSwiaGVhZCI6eyJuYW1lIjoibm9uZSJ9fX1dXQ==
        \[\begin{tikzcd}[column sep=6em]
            FY & {F'X} \\
            GY & {G'X}
            \arrow[""{name=0, anchor=center, inner sep=0}, "{\Phi_Y}"', from=1-1, to=2-1]
            \arrow["{\Xi(\Phi_Y, \Phi'_X)(p)}"'{inner sep=.8ex}, "\shortmid"{marking}, from=1-2, to=1-1]
            \arrow[""{name=1, anchor=center, inner sep=0}, "{\Phi'_X}", from=1-2, to=2-2]
            \arrow["{\Xi p}"{inner sep=.8ex}, "\shortmid"{marking}, from=2-2, to=2-1]
            \arrow["\cart"{description}, draw=none, from=0, to=1]
        \end{tikzcd}\]
        \item Each cell in $\A$
        % https://q.uiver.app/#q=WzAsNCxbMSwwLCJYIl0sWzAsMCwiWSJdLFsxLDEsIlgnIl0sWzAsMSwiWSciXSxbMCwxLCJwIiwyLHsic3R5bGUiOnsiYm9keSI6eyJuYW1lIjoiYmFycmVkIn19fV0sWzEsMywieSIsMl0sWzAsMiwieCJdLFsyLDMsInAnIiwwLHsic3R5bGUiOnsiYm9keSI6eyJuYW1lIjoiYmFycmVkIn19fV0sWzYsNSwiXFxhbHBoYSIsMSx7InNob3J0ZW4iOnsic291cmNlIjoyMCwidGFyZ2V0IjoyMH0sInN0eWxlIjp7ImJvZHkiOnsibmFtZSI6Im5vbmUifSwiaGVhZCI6eyJuYW1lIjoibm9uZSJ9fX1dXQ==
        \[\begin{tikzcd}
            Y & X \\
            {Y'} & {X'}
            \arrow[""{name=0, anchor=center, inner sep=0}, "y"', from=1-1, to=2-1]
            \arrow["p"'{inner sep=.8ex}, "\shortmid"{marking}, from=1-2, to=1-1]
            \arrow[""{name=1, anchor=center, inner sep=0}, "x", from=1-2, to=2-2]
            \arrow["{p'}"{inner sep=.8ex}, "\shortmid"{marking}, from=2-2, to=2-1]
            \arrow["\alpha"{description}, draw=none, from=1, to=0]
        \end{tikzcd}\]
        induces a unique cell $\Xi(\Phi_Y, \Phi'_X)(\alpha)$ satisfying the following equation.
        \[
        % https://q.uiver.app/#q=WzAsNixbMCwxLCJHWSJdLFsxLDEsIkcnWCJdLFswLDAsIkZZIl0sWzEsMCwiRidYIl0sWzAsMiwiR1knIl0sWzEsMiwiRydYJyJdLFsxLDAsIlxcWGkgcCIsMV0sWzIsMCwiXFxQaGlfWSIsMl0sWzMsMSwiXFxQaGknX1giXSxbMywyLCJcXFhpKFxcUGhpX1ksIFxcUGhpJ19YKShwKSIsMix7InN0eWxlIjp7ImJvZHkiOnsibmFtZSI6ImJhcnJlZCJ9fX1dLFsxLDUsIkcneCJdLFswLDQsIkd5IiwyXSxbNSw0LCJcXFhpIHAnIiwwLHsic3R5bGUiOnsiYm9keSI6eyJuYW1lIjoiYmFycmVkIn19fV0sWzcsOCwiXFxjYXJ0IiwxLHsic2hvcnRlbiI6eyJzb3VyY2UiOjIwLCJ0YXJnZXQiOjIwfSwic3R5bGUiOnsiYm9keSI6eyJuYW1lIjoibm9uZSJ9LCJoZWFkIjp7Im5hbWUiOiJub25lIn19fV0sWzExLDEwLCJcXFhpXFxhbHBoYSIsMSx7InNob3J0ZW4iOnsic291cmNlIjoyMCwidGFyZ2V0IjoyMH0sInN0eWxlIjp7ImJvZHkiOnsibmFtZSI6Im5vbmUifSwiaGVhZCI6eyJuYW1lIjoibm9uZSJ9fX1dXQ==
        \begin{tikzcd}[column sep=6em]
            FY & {F'X} \\
            GY & {G'X} \\
            {GY'} & {G'X'}
            \arrow[""{name=0, anchor=center, inner sep=0}, "{\Phi_Y}"', from=1-1, to=2-1]
            \arrow["{\Xi(\Phi_Y, \Phi'_X)(p)}"'{inner sep=.8ex}, "\shortmid"{marking}, from=1-2, to=1-1]
            \arrow[""{name=1, anchor=center, inner sep=0}, "{\Phi'_X}", from=1-2, to=2-2]
            \arrow[""{name=2, anchor=center, inner sep=0}, "Gy"', from=2-1, to=3-1]
            \arrow["{\Xi p}"{description}, from=2-2, to=2-1]
            \arrow[""{name=3, anchor=center, inner sep=0}, "{G'x}", from=2-2, to=3-2]
            \arrow["{\Xi p'}"{inner sep=.8ex}, "\shortmid"{marking}, from=3-2, to=3-1]
            \arrow["\cart"{description}, draw=none, from=0, to=1]
            \arrow["{\Xi\alpha}"{description}, draw=none, from=2, to=3]
        \end{tikzcd}
        \quad = \quad
        % https://q.uiver.app/#q=WzAsNixbMCwwLCJGWSJdLFsxLDAsIkYnWCJdLFswLDIsIkdZJyJdLFsxLDIsIkcnWCciXSxbMCwxLCJGWSciXSxbMSwxLCJGJ1gnIl0sWzEsMCwiXFxYaShcXFBoaV9ZLCBcXFBoaSdfWCkocCkiLDIseyJzdHlsZSI6eyJib2R5Ijp7Im5hbWUiOiJiYXJyZWQifX19XSxbMywyLCJcXFhpIHAnIiwwLHsic3R5bGUiOnsiYm9keSI6eyJuYW1lIjoiYmFycmVkIn19fV0sWzUsMywiXFxQaGlfe1gnfSJdLFs0LDIsIlxcUGhpX3tZJ30iLDJdLFs1LDQsIlxcWGkgcCcoXFxQaGlfe1knfSwgXFxQaGknX3tYJ30pIiwxXSxbMCw0LCJGeSIsMl0sWzEsNSwiRid4Il0sWzksOCwiXFxjYXJ0IiwxLHsic2hvcnRlbiI6eyJzb3VyY2UiOjIwLCJ0YXJnZXQiOjIwfSwic3R5bGUiOnsiYm9keSI6eyJuYW1lIjoibm9uZSJ9LCJoZWFkIjp7Im5hbWUiOiJub25lIn19fV0sWzExLDEyLCJcXFhpKFxcUGhpX1ksIFxcUGhpJ19YKShcXGFscGhhKSIsMSx7InNob3J0ZW4iOnsic291cmNlIjoyMCwidGFyZ2V0IjoyMH0sInN0eWxlIjp7ImJvZHkiOnsibmFtZSI6Im5vbmUifSwiaGVhZCI6eyJuYW1lIjoibm9uZSJ9fX1dXQ==
        \begin{tikzcd}[column sep=7em]
            FY & {F'X} \\
            {FY'} & {F'X'} \\
            {GY'} & {G'X'}
            \arrow[""{name=0, anchor=center, inner sep=0}, "Fy"', from=1-1, to=2-1]
            \arrow["{\Xi(\Phi_Y, \Phi'_X)(p)}"'{inner sep=.8ex}, "\shortmid"{marking}, from=1-2, to=1-1]
            \arrow[""{name=1, anchor=center, inner sep=0}, "{F'x}", from=1-2, to=2-2]
            \arrow[""{name=2, anchor=center, inner sep=0}, "{\Phi_{Y'}}"', from=2-1, to=3-1]
            \arrow["{\Xi p'(\Phi_{Y'}, \Phi'_{X'})}"{description}, from=2-2, to=2-1]
            \arrow[""{name=3, anchor=center, inner sep=0}, "{\Phi_{X'}}", from=2-2, to=3-2]
            \arrow["{\Xi p'}"{inner sep=.8ex}, "\shortmid"{marking}, from=3-2, to=3-1]
            \arrow["{\Xi(\Phi_Y, \Phi'_X)(\alpha)}"{description}, draw=none, from=0, to=1]
            \arrow["\cart"{description}, draw=none, from=2, to=3]
        \end{tikzcd}
        \]
    \end{enumerate}
    That this assignment preserves identities and composites of cells follows from the aforementioned uniqueness. The cartesian cells in (1) defines a cell of the following shape in $\X^\A$.
    % https://q.uiver.app/#q=WzAsNCxbMCwxLCJHIl0sWzEsMSwiRyciXSxbMCwwLCJGIl0sWzEsMCwiRiciXSxbMSwwLCJcXFhpIiwwLHsic3R5bGUiOnsiYm9keSI6eyJuYW1lIjoiYmFycmVkIn19fV0sWzIsMCwiXFxQaGkiLDJdLFszLDEsIlxcUGhpJyJdLFszLDIsIlxcWGkoXFxQaGksIFxcUGhpJykiLDIseyJzdHlsZSI6eyJib2R5Ijp7Im5hbWUiOiJiYXJyZWQifX19XV0=
    \[\begin{tikzcd}[column sep=large]
        F & {F'} \\
        G & {G'}
        \arrow["\Phi"', from=1-1, to=2-1]
        \arrow["{\Xi(\Phi, \Phi')}"'{inner sep=.8ex}, "\shortmid"{marking}, from=1-2, to=1-1]
        \arrow["{\Phi'}", from=1-2, to=2-2]
        \arrow["\Xi"{inner sep=.8ex}, "\shortmid"{marking}, from=2-2, to=2-1]
    \end{tikzcd}\]
    This cell is cartesian in $\X^\A$, the universal property following from the universal properties of each cartesian cell (1) in $\X$, using the description of the exponential in \cref{representable-vdcs-are-exponentiable}.
\end{proof}

There is little hope for $\X^\A$ to be a \ve{} since, as discussed in \cref{representability-of-exponential}, the exponential seldom admits loose identities. However, $\Lax(\A, \X) = \Mod(\X^\A)$ is a \ve{} as soon as $\X^\A$ admits restrictions.

\begin{proposition}[{\cite[Proposition~7.4]{cruttwell2010unified}}]
    \label{restrictions-in-Mod}
    Let $\X$ be a \vdc. The forgetful functor $\varepsilon_\X \colon \Mod(\X)' \to \X$ creates restrictions. Consequently, if $\X$ admits restrictions, then so does $\Mod(\X)$.
\end{proposition}

\begin{proof}
    The claim is stated in slightly less generality \ibid, but inspection of the proof reveals it to verify the statement.
\end{proof}

Note that normal functors between \ve s automatically preserve restrictions~\cite[Theorem~7.24]{cruttwell2010unified}. This justifies the following definition.

\begin{definition}
    Denote by $\VEquip$ the full and locally full sub-2-category of $\VDbln$ (\cref{normal-vdc}) spanned by the \ve s (\cref{virtual-equipment}).
\end{definition}

\begin{corollary}
    Let $\A$ and $\X$ be \ve s. If $\A$ is representable, then $\LaxN(\A, \X)$ exhibits the exponential $\X^\A$ in the 2-category $\VEquip$.
\end{corollary}

\begin{proof}
    First, note that the forgetful 2-functor $\VEquip \ffto \VDbln$ creates products, restrictions in products being formed componentwise.
    Since $\VEquip$ is a full and locally full sub-2-category of $\VDbl$, the claim will follow as soon as we show that $\LaxN(\A, \X)$ is a \ve{} when $\X$ is. From \cref{restrictions-in-exponential,restrictions-in-Mod}, we know that $\Lax(\A, \X)$ is a \ve. Since $\LaxN(\A, \X) \to \Lax(\A', \X')$ is full on loose morphisms and cells, $\LaxN(\A, \X)$ is consequently also a \ve.
\end{proof}

\section{Presheaves for \dcs}
\label{Yoneda-theory}

In this section, we examine some implications of \cref{Lax-is-Mod} for the \vdc{} $\Lax(\A\opt, \Span)$ of presheaves on a \dc{} $\A$, which was studied extensively by \textcite{pare2011yoneda}. We begin by spelling out the definition of the exponential $\Span^{\A\opt}$ using \cref{representable-vdcs-are-exponentiable}, which admits some simplification over the general case.

\begin{definition}
    For a \vdc{} $\X$, denote by $\X_n \iso \X_1 \times_{\X_0} \cdots \times_{\X_0} \X_1$ the category whose objects are chains of loose morphisms of length $n$, and whose morphisms comprise $n$ cells in $\X$ with compatible source and target.
    % https://q.uiver.app/#q=WzAsOCxbMCwwLCJYXzAiXSxbMSwwLCJYXzEiXSxbMiwwLCJcXGNkb3RzIl0sWzMsMCwiWF9uIl0sWzAsMSwiWF8wJyJdLFsxLDEsIlhfMSciXSxbMywxLCJYX24nIl0sWzIsMSwiXFxjZG90cyJdLFszLDYsInhfbiciXSxbMCw0LCJ4XzAnIiwyXSxbMSw1LCJ4XzEnIiwxXSxbMSwwLCJwXzEiLDIseyJzdHlsZSI6eyJib2R5Ijp7Im5hbWUiOiJiYXJyZWQifX19XSxbMiwxLCJwXzIiLDIseyJzdHlsZSI6eyJib2R5Ijp7Im5hbWUiOiJiYXJyZWQifX19XSxbMywyLCJwX24iLDIseyJzdHlsZSI6eyJib2R5Ijp7Im5hbWUiOiJiYXJyZWQifX19XSxbNiw3LCJwX24nIiwwLHsic3R5bGUiOnsiYm9keSI6eyJuYW1lIjoiYmFycmVkIn19fV0sWzcsNSwicF8yJyIsMCx7InN0eWxlIjp7ImJvZHkiOnsibmFtZSI6ImJhcnJlZCJ9fX1dLFs1LDQsInBfMSciLDAseyJzdHlsZSI6eyJib2R5Ijp7Im5hbWUiOiJiYXJyZWQifX19XSxbMiw3LCJcXGNkb3RzIiwxLHsic3R5bGUiOnsiYm9keSI6eyJuYW1lIjoibm9uZSJ9LCJoZWFkIjp7Im5hbWUiOiJub25lIn19fV0sWzksMTAsIlxcY2hpXzEiLDEseyJzaG9ydGVuIjp7InNvdXJjZSI6MjAsInRhcmdldCI6MjB9LCJzdHlsZSI6eyJib2R5Ijp7Im5hbWUiOiJub25lIn0sImhlYWQiOnsibmFtZSI6Im5vbmUifX19XSxbMTAsMTcsIlxcY2hpXzIiLDEseyJzaG9ydGVuIjp7InNvdXJjZSI6MjAsInRhcmdldCI6MjB9LCJzdHlsZSI6eyJib2R5Ijp7Im5hbWUiOiJub25lIn0sImhlYWQiOnsibmFtZSI6Im5vbmUifX19XSxbMTcsOCwiXFxjaGlfbiIsMSx7InNob3J0ZW4iOnsic291cmNlIjoyMCwidGFyZ2V0IjoyMH0sInN0eWxlIjp7ImJvZHkiOnsibmFtZSI6Im5vbmUifSwiaGVhZCI6eyJuYW1lIjoibm9uZSJ9fX1dXQ==
    \[\begin{tikzcd}
        {X_0} & {X_1} & \cdots & {X_n} \\
        {X_0'} & {X_1'} & \cdots & {X_n'}
        \arrow[""{name=0, anchor=center, inner sep=0}, "{x_0'}"', from=1-1, to=2-1]
        \arrow["{p_1}"'{inner sep=.8ex}, "\shortmid"{marking}, from=1-2, to=1-1]
        \arrow[""{name=1, anchor=center, inner sep=0}, "{x_1'}"{description}, from=1-2, to=2-2]
        \arrow["{p_2}"'{inner sep=.8ex}, "\shortmid"{marking}, from=1-3, to=1-2]
        \arrow[""{name=2, anchor=center, inner sep=0}, "\cdots"{description}, draw=none, from=1-3, to=2-3]
        \arrow["{p_n}"'{inner sep=.8ex}, "\shortmid"{marking}, from=1-4, to=1-3]
        \arrow[""{name=3, anchor=center, inner sep=0}, "{x_n'}", from=1-4, to=2-4]
        \arrow["{p_1'}"{inner sep=.8ex}, "\shortmid"{marking}, from=2-2, to=2-1]
        \arrow["{p_2'}"{inner sep=.8ex}, "\shortmid"{marking}, from=2-3, to=2-2]
        \arrow["{p_n'}"{inner sep=.8ex}, "\shortmid"{marking}, from=2-4, to=2-3]
        \arrow["{\chi_1}"{description}, draw=none, from=0, to=1]
        \arrow["{\chi_2}"{description}, draw=none, from=1, to=2]
        \arrow["{\chi_n}"{description}, draw=none, from=2, to=3]
    \end{tikzcd}\]
    Supposing $\X$ is representable, there is a functor ${\odot}_n \colon \X_n \to \X$ sending each chain of loose morphisms $p_1, \ldots, p_n$ to the loose composite $p_1 \odot \cdots \odot p_n$, and each chain of cells $\chi_1, \ldots, \chi_n$ to the loose composite $\chi_1 \odot \cdots \chi_n$ (\cref{Dbl_l-in-VDbl}).
\end{definition}

\begin{example}
    \label{Span-A-op}
    Observe that the category $\Span_1$ of spans and span morphisms is isomorphic to $\Set^{\{ \cdot \from \cdot \to \cdot \}}$, the category of $\Set$-valued functors from the free-standing span, and that the inclusion $1 \to \{ \cdot \from \cdot \to \cdot \}$ that picks out the middle object of the span induces a functor $m \colon \Set^{\{ \cdot \from \cdot \to \cdot \}} \to \Set$.

    For a \pdc{} $\A$, the underlying category of $\Span^{\A\opt}$ is the presheaf category $\Set^{(\A_0)\op}$. By the description of $\Span_1$ above, a loose morphism from $F'$ to $F$ is given by a functor and a pair of natural transformations as follows.
    % https://q.uiver.app/#q=WzAsNixbMiwwLCJcXEFfMFxcb3AiXSxbMCwwLCJcXEFfMFxcb3AiXSxbMSwwLCJcXEFfMVxcb3AiXSxbMSwxLCJcXFNldCJdLFsyLDEsIlxcU2V0Il0sWzAsMSwiXFxTZXQiXSxbMiwzLCJcXFhpIiwxXSxbMiwwLCJzXFxvcCJdLFsyLDEsInRcXG9wIiwyXSxbMCw0LCJGJyJdLFsxLDUsIkYiLDJdLFs0LDMsIiIsMSx7ImxldmVsIjoyLCJzdHlsZSI6eyJoZWFkIjp7Im5hbWUiOiJub25lIn19fV0sWzMsNSwiIiwxLHsibGV2ZWwiOjIsInN0eWxlIjp7ImhlYWQiOnsibmFtZSI6Im5vbmUifX19XSxbNiwxMCwiXFxYaV90IiwyLHsic2hvcnRlbiI6eyJzb3VyY2UiOjMwLCJ0YXJnZXQiOjMwfX1dLFs2LDksIlxcWGlfcyIsMCx7InNob3J0ZW4iOnsic291cmNlIjozMCwidGFyZ2V0IjozMH19XV0=
    \[\begin{tikzcd}
        {\A_0\op} & {\A_1\op} & {\A_0\op} \\
        \Set & \Set & \Set
        \arrow[""{name=0, anchor=center, inner sep=0}, "F"', from=1-1, to=2-1]
        \arrow["{t\op}"', from=1-2, to=1-1]
        \arrow["{s\op}", from=1-2, to=1-3]
        \arrow[""{name=1, anchor=center, inner sep=0}, "\Xi"{description}, from=1-2, to=2-2]
        \arrow[""{name=2, anchor=center, inner sep=0}, "{F'}", from=1-3, to=2-3]
        \arrow[equals, from=2-2, to=2-1]
        \arrow[equals, from=2-3, to=2-2]
        \arrow["{\Xi_t}"', between={0.3}{0.7}, Rightarrow, from=1, to=0]
        \arrow["{\Xi_s}", between={0.3}{0.7}, Rightarrow, from=1, to=2]
    \end{tikzcd}\]
    A cell with the following frame (left) is given by a natural transformation (right).
    \[
    % https://q.uiver.app/#q=WzAsNSxbMiwwLCJGX24iXSxbMiwxLCJGJyJdLFsxLDAsIlxcY2RvdHMiXSxbMCwxLCJGIl0sWzAsMCwiRl8wIl0sWzAsMSwiXFxQaGknIl0sWzAsMiwiXFxYaV9uIiwyLHsic3R5bGUiOnsiYm9keSI6eyJuYW1lIjoiYmFycmVkIn19fV0sWzEsMywiXFxYaSIsMCx7InN0eWxlIjp7ImJvZHkiOnsibmFtZSI6ImJhcnJlZCJ9fX1dLFsyLDQsIlxcWGlfMSIsMix7InN0eWxlIjp7ImJvZHkiOnsibmFtZSI6ImJhcnJlZCJ9fX1dLFs0LDMsIlxcUGhpIiwyXSxbNSw5LCJcXHhpIiwxLHsic2hvcnRlbiI6eyJzb3VyY2UiOjIwLCJ0YXJnZXQiOjIwfSwic3R5bGUiOnsiYm9keSI6eyJuYW1lIjoibm9uZSJ9LCJoZWFkIjp7Im5hbWUiOiJub25lIn19fV1d
    \begin{tikzcd}
        {F_0} & \cdots & {F_n} \\
        F && {F'}
        \arrow[""{name=0, anchor=center, inner sep=0}, "\Phi"', from=1-1, to=2-1]
        \arrow["{\Xi_1}"', "\shortmid"{marking}, from=1-2, to=1-1]
        \arrow["{\Xi_n}"', "\shortmid"{marking}, from=1-3, to=1-2]
        \arrow[""{name=1, anchor=center, inner sep=0}, "{\Phi'}", from=1-3, to=2-3]
        \arrow["\Xi", "\shortmid"{marking}, from=2-3, to=2-1]
        \arrow["\xi"{description}, draw=none, from=1, to=0]
    \end{tikzcd}
    \hspace{4em}
    % https://q.uiver.app/#q=WzAsNSxbMCwwLCJcXEFfblxcb3AiXSxbMCwxLCJcXFNwYW5fbiJdLFsxLDEsIlxcU3Bhbl8xIl0sWzIsMCwiXFxBXzFcXG9wIl0sWzIsMSwiXFxTZXQiXSxbMCwxLCJcXHRwe1xcWGlfMSwgXFxsZG90cywgXFxYaV9ufSIsMl0sWzEsMiwiXFxvZG90X24iLDJdLFswLDMsIlxcb2RvdF9uXFxvcCJdLFszLDQsIlxcWGkiXSxbMiw0LCJtIiwyXSxbNSw4LCJcXHhpIiwwLHsic2hvcnRlbiI6eyJzb3VyY2UiOjMwLCJ0YXJnZXQiOjMwfX1dXQ==
    \begin{tikzcd}
        {\A_n\op} && {\A_1\op} \\
        {\Span_n} & {\Span_1} & \Set
        \arrow["{\odot_n\op}", from=1-1, to=1-3]
        \arrow[""{name=0, anchor=center, inner sep=0}, "{\tp{\Xi_1, \ldots, \Xi_n}}"', from=1-1, to=2-1]
        \arrow[""{name=1, anchor=center, inner sep=0}, "\Xi", from=1-3, to=2-3]
        \arrow["{\odot_n}"', from=2-1, to=2-2]
        \arrow["m"', from=2-2, to=2-3]
        \arrow["\xi", between={0.3}{0.7}, Rightarrow, from=0, to=1]
    \end{tikzcd}
    \]
    such that the following equations hold.
    \[
    % https://q.uiver.app/#q=WzAsNixbMCwwLCJcXEFfblxcb3AiXSxbMCwxLCJcXFNwYW5fbiJdLFsxLDAsIlxcQV8xXFxvcCJdLFsxLDEsIlxcU2V0Il0sWzIsMCwiXFxBXzBcXG9wIl0sWzIsMSwiXFxTZXQiXSxbMCwxLCJcXHRwe1xcWGlfMSwgXFxsZG90cywgXFxYaV9ufSIsMV0sWzAsMiwiXFxvZG90X25cXG9wIl0sWzIsMywiXFxYaSIsMV0sWzMsNSwiIiwxLHsibGV2ZWwiOjIsInN0eWxlIjp7ImhlYWQiOnsibmFtZSI6Im5vbmUifX19XSxbNCw1LCJGJyJdLFsyLDQsInNcXG9wIl0sWzEsMywibSBcXGMgXFxvZG90X24iLDJdLFs2LDgsIlxceGkiLDAseyJzaG9ydGVuIjp7InNvdXJjZSI6NDAsInRhcmdldCI6MzB9fV0sWzgsMTAsIlxcWGlfcyIsMCx7InNob3J0ZW4iOnsic291cmNlIjozMCwidGFyZ2V0IjozMH19XV0=
    \begin{tikzcd}[column sep=large]
        {\A_n\op} & {\A_1\op} & {\A_0\op} \\
        {\Span_n} & \Set & \Set
        \arrow["{\odot_n\op}", from=1-1, to=1-2]
        \arrow[""{name=0, anchor=center, inner sep=0}, "{\tp{\Xi_1, \ldots, \Xi_n}}"{description}, from=1-1, to=2-1]
        \arrow["{s\op}", from=1-2, to=1-3]
        \arrow[""{name=1, anchor=center, inner sep=0}, "\Xi"{description}, from=1-2, to=2-2]
        \arrow[""{name=2, anchor=center, inner sep=0}, "{F'}", from=1-3, to=2-3]
        \arrow["{m \c \odot_n}"', from=2-1, to=2-2]
        \arrow[equals, from=2-2, to=2-3]
        \arrow["\xi", between={0.4}{0.7}, Rightarrow, from=0, to=1]
        \arrow["{\Xi_s}", between={0.3}{0.7}, Rightarrow, from=1, to=2]
    \end{tikzcd}
    \quad = \quad
    % https://q.uiver.app/#q=WzAsNixbMCwwLCJcXEFfblxcb3AiXSxbMCwxLCJcXFNwYW5fbiJdLFsxLDAsIlxcQV8xXFxvcCJdLFsxLDEsIlxcU2V0Il0sWzIsMCwiXFxBXzBcXG9wIl0sWzIsMSwiXFxTZXQiXSxbMCwxLCJcXHRwe1xcWGlfMSwgXFxsZG90cywgXFxYaV9ufSIsMV0sWzAsMiwiXFxvZG90X25cXG9wIl0sWzMsNSwiIiwxLHsibGV2ZWwiOjIsInN0eWxlIjp7ImhlYWQiOnsibmFtZSI6Im5vbmUifX19XSxbNCw1LCJGJyIsMCx7ImN1cnZlIjotMn1dLFsyLDQsInNcXG9wIl0sWzQsNSwiRl9uIiwxLHsiY3VydmUiOjJ9XSxbMiwzLCJcXFhpX24iLDFdLFsxLDMsIm0gXFxjIFxcb2RvdF9uIiwyXSxbMTEsOSwiXFxQaGknIiwwLHsic2hvcnRlbiI6eyJzb3VyY2UiOjIwLCJ0YXJnZXQiOjIwfX1dLFs2LDEyLCJcXHBpX24iLDAseyJzaG9ydGVuIjp7InNvdXJjZSI6NDAsInRhcmdldCI6MzB9fV0sWzEyLDExLCIoXFxYaV9uKV9zIiwwLHsic2hvcnRlbiI6eyJzb3VyY2UiOjIwLCJ0YXJnZXQiOjIwfSwiZWRnZV9hbGlnbm1lbnQiOnsic291cmNlIjpmYWxzZSwidGFyZ2V0IjpmYWxzZX19XV0=
    \begin{tikzcd}[column sep=large]
        {\A_n\op} & {\A_1\op} & {\A_0\op} \\
        {\Span_n} & \Set & \Set
        \arrow["{\odot_n\op}", from=1-1, to=1-2]
        \arrow[""{name=0, anchor=center, inner sep=0}, "{\tp{\Xi_1, \ldots, \Xi_n}}"{description}, from=1-1, to=2-1]
        \arrow["{s\op}", from=1-2, to=1-3]
        \arrow[""{name=1, anchor=center, inner sep=0}, "{\Xi_n}"{description}, from=1-2, to=2-2]
        \arrow[""{name=1p, anchor=center, inner sep=0}, phantom, from=1-2, to=2-2, start anchor=center, end anchor=center]
        \arrow[""{name=2, anchor=center, inner sep=0}, "{F'}", curve={height=-12pt}, from=1-3, to=2-3]
        \arrow[""{name=3, anchor=center, inner sep=0}, "{F_n}"{description}, curve={height=12pt}, from=1-3, to=2-3]
        \arrow[""{name=3p, anchor=center, inner sep=0}, phantom, from=1-3, to=2-3, start anchor=center, end anchor=center, curve={height=12pt}]
        \arrow["{m \c \odot_n}"', from=2-1, to=2-2]
        \arrow[equals, from=2-2, to=2-3]
        \arrow["{\pi_n}", between={0.4}{0.7}, Rightarrow, from=0, to=1]
        \arrow["{(\Xi_n)_s}", between={0.2}{0.8}, Rightarrow, from=1p, to=3p]
        \arrow["{\Phi'}", between={0.2}{0.8}, Rightarrow, from=3, to=2]
    \end{tikzcd}
    \]
    \[
    % https://q.uiver.app/#q=WzAsNixbMCwwLCJcXEFfblxcb3AiXSxbMCwxLCJcXFNwYW5fbiJdLFsxLDAsIlxcQV8xXFxvcCJdLFsxLDEsIlxcU2V0Il0sWzIsMCwiXFxBXzBcXG9wIl0sWzIsMSwiXFxTZXQiXSxbMCwxLCJcXHRwe1xcWGlfMSwgXFxsZG90cywgXFxYaV9ufSIsMV0sWzAsMiwiXFxvZG90X25cXG9wIl0sWzIsMywiXFxYaSIsMV0sWzMsNSwiIiwxLHsibGV2ZWwiOjIsInN0eWxlIjp7ImhlYWQiOnsibmFtZSI6Im5vbmUifX19XSxbNCw1LCJGIl0sWzIsNCwidFxcb3AiXSxbMSwzLCJtIFxcYyBcXG9kb3RfbiIsMl0sWzYsOCwiXFx4aSIsMCx7InNob3J0ZW4iOnsic291cmNlIjo0MCwidGFyZ2V0IjozMH19XSxbOCwxMCwiXFxYaV90IiwwLHsic2hvcnRlbiI6eyJzb3VyY2UiOjMwLCJ0YXJnZXQiOjMwfX1dXQ==
    \begin{tikzcd}[column sep=large]
        {\A_n\op} & {\A_1\op} & {\A_0\op} \\
        {\Span_n} & \Set & \Set
        \arrow["{\odot_n\op}", from=1-1, to=1-2]
        \arrow[""{name=0, anchor=center, inner sep=0}, "{\tp{\Xi_1, \ldots, \Xi_n}}"{description}, from=1-1, to=2-1]
        \arrow["{t\op}", from=1-2, to=1-3]
        \arrow[""{name=1, anchor=center, inner sep=0}, "\Xi"{description}, from=1-2, to=2-2]
        \arrow[""{name=2, anchor=center, inner sep=0}, "F", from=1-3, to=2-3]
        \arrow["{m \c \odot_n}"', from=2-1, to=2-2]
        \arrow[equals, from=2-2, to=2-3]
        \arrow["\xi", between={0.4}{0.7}, Rightarrow, from=0, to=1]
        \arrow["{\Xi_t}", between={0.3}{0.7}, Rightarrow, from=1, to=2]
    \end{tikzcd}
    \quad = \quad
    % https://q.uiver.app/#q=WzAsNixbMCwwLCJcXEFfblxcb3AiXSxbMCwxLCJcXFNwYW5fbiJdLFsxLDAsIlxcQV8xXFxvcCJdLFsxLDEsIlxcU2V0Il0sWzIsMCwiXFxBXzBcXG9wIl0sWzIsMSwiXFxTZXQiXSxbMCwxLCJcXHRwe1xcWGlfMSwgXFxsZG90cywgXFxYaV9ufSIsMV0sWzAsMiwiXFxvZG90X25cXG9wIl0sWzMsNSwiIiwxLHsibGV2ZWwiOjIsInN0eWxlIjp7ImhlYWQiOnsibmFtZSI6Im5vbmUifX19XSxbNCw1LCJGIiwwLHsiY3VydmUiOi0yfV0sWzIsNCwidFxcb3AiXSxbNCw1LCJGXzAiLDEseyJjdXJ2ZSI6Mn1dLFsyLDMsIlxcWGlfMSIsMV0sWzEsMywibSBcXGMgXFxvZG90X24iLDJdLFsxMSw5LCJcXFBoaSIsMCx7InNob3J0ZW4iOnsic291cmNlIjoyMCwidGFyZ2V0IjoyMH19XSxbNiwxMiwiXFxwaV8xIiwwLHsic2hvcnRlbiI6eyJzb3VyY2UiOjQwLCJ0YXJnZXQiOjMwfX1dLFsxMiwxMSwiKFxcWGlfMSlfdCIsMCx7InNob3J0ZW4iOnsic291cmNlIjoyMCwidGFyZ2V0IjoyMH0sImVkZ2VfYWxpZ25tZW50Ijp7InNvdXJjZSI6ZmFsc2UsInRhcmdldCI6ZmFsc2V9fV1d
    \begin{tikzcd}[column sep=large]
        {\A_n\op} & {\A_1\op} & {\A_0\op} \\
        {\Span_n} & \Set & \Set
        \arrow["{\odot_n\op}", from=1-1, to=1-2]
        \arrow[""{name=0, anchor=center, inner sep=0}, "{\tp{\Xi_1, \ldots, \Xi_n}}"{description}, from=1-1, to=2-1]
        \arrow["{t\op}", from=1-2, to=1-3]
        \arrow[""{name=1, anchor=center, inner sep=0}, "{\Xi_1}"{description}, from=1-2, to=2-2]
        \arrow[""{name=1p, anchor=center, inner sep=0}, phantom, from=1-2, to=2-2, start anchor=center, end anchor=center]
        \arrow[""{name=2, anchor=center, inner sep=0}, "F", curve={height=-12pt}, from=1-3, to=2-3]
        \arrow[""{name=3, anchor=center, inner sep=0}, "{F_0}"{description}, curve={height=12pt}, from=1-3, to=2-3]
        \arrow[""{name=3p, anchor=center, inner sep=0}, phantom, from=1-3, to=2-3, start anchor=center, end anchor=center, curve={height=12pt}]
        \arrow["{m \c \odot_n}"', from=2-1, to=2-2]
        \arrow[equals, from=2-2, to=2-3]
        \arrow["{\pi_1}", between={0.4}{0.7}, Rightarrow, from=0, to=1]
        \arrow["{(\Xi_1)_t}", between={0.2}{0.8}, Rightarrow, from=1p, to=3p]
        \arrow["\Phi", between={0.2}{0.8}, Rightarrow, from=3, to=2]
    \end{tikzcd}
    \]
    Above, we have denoted by $\tp{\Xi_1, \ldots, \Xi_n}$ the functor sending a chain of loose morphisms $p_1, \ldots, p_n$ in $\A$ to the following chain of spans.
    % https://q.uiver.app/#q=WzAsNyxbMSwwLCJcXFhpXzEocF8xKSJdLFswLDEsIkZfMChYXzApIl0sWzIsMSwiRl8xKFhfMSkiXSxbMywwLCJcXFhpXzIocF8yKSJdLFs0LDEsIkZfMihYXzIpIl0sWzUsMCwiXFxjZG90cyJdLFs2LDEsIkZfbihYX24pIl0sWzAsMSwiXFxYaV90KHBfMSkiLDJdLFswLDIsIlxcWGlfcyhwXzEpIl0sWzMsMiwiXFxYaV90KHBfMikiLDJdLFszLDQsIlxcWGlfcyhwXzIpIl0sWzUsNCwiXFxYaV90KHBfMykiLDJdLFs1LDYsIlxcWGlfcyhwX24pIl1d
    \[\begin{tikzcd}
        & {\Xi_1(p_1)} && {\Xi_2(p_2)} && \cdots \\
        {F_0(X_0)} && {F_1(X_1)} && {F_2(X_2)} && {F_n(X_n)}
        \arrow["{\Xi_t(p_1)}"', from=1-2, to=2-1]
        \arrow["{\Xi_s(p_1)}", from=1-2, to=2-3]
        \arrow["{\Xi_t(p_2)}"', from=1-4, to=2-3]
        \arrow["{\Xi_s(p_2)}", from=1-4, to=2-5]
        \arrow["{\Xi_t(p_3)}"', from=1-6, to=2-5]
        \arrow["{\Xi_s(p_n)}", from=1-6, to=2-7]
    \end{tikzcd}\]
    The identity cell on $\Xi$ is given by the identity natural transformation on $\Xi \colon {\A_1}\op \to \Set$.
    % https://q.uiver.app/#q=WzAsNSxbMCwwLCJcXEFfMVxcb3AiXSxbMCwxLCJcXFNwYW5fMSJdLFsxLDEsIlxcU3Bhbl8xIl0sWzIsMCwiXFxBXzFcXG9wIl0sWzIsMSwiXFxTZXQiXSxbMCwxLCJcXFhpIiwyXSxbMSwyLCIiLDIseyJsZXZlbCI6Miwic3R5bGUiOnsiaGVhZCI6eyJuYW1lIjoibm9uZSJ9fX1dLFswLDMsIiIsMCx7ImxldmVsIjoyLCJzdHlsZSI6eyJoZWFkIjp7Im5hbWUiOiJub25lIn19fV0sWzMsNCwiXFxYaSJdLFsyLDQsIm0iLDJdLFs1LDgsIj0iLDEseyJzaG9ydGVuIjp7InNvdXJjZSI6MzAsInRhcmdldCI6MzB9LCJzdHlsZSI6eyJib2R5Ijp7Im5hbWUiOiJub25lIn0sImhlYWQiOnsibmFtZSI6Im5vbmUifX19XV0=
    \[\begin{tikzcd}
        {\A_1\op} && {\A_1\op} \\
        {\Span_1} & {\Span_1} & \Set
        \arrow[equals, from=1-1, to=1-3]
        \arrow[""{name=0, anchor=center, inner sep=0}, "\Xi"', from=1-1, to=2-1]
        \arrow[""{name=1, anchor=center, inner sep=0}, "\Xi", from=1-3, to=2-3]
        \arrow[equals, from=2-1, to=2-2]
        \arrow["m"', from=2-2, to=2-3]
        \arrow["{=}"{description}, draw=none, from=0, to=1]
    \end{tikzcd}\]
    Composition is given as follows.
    % https://q.uiver.app/#q=WzAsNyxbMiwwLCJcXEFfblxcb3AiXSxbMiwxLCJcXFNwYW5fbiJdLFszLDEsIlxcU3Bhbl8xIl0sWzQsMCwiXFxBXzFcXG9wIl0sWzQsMSwiXFxTZXQiXSxbMCwxLCJcXFNwYW5fe21fMSArIFxcY2RvdHMgKyBtX259Il0sWzAsMCwiXFxBX3ttXzEgKyBcXGNkb3RzICsgbV9ufVxcb3AiXSxbMCwxLCJcXHRwe1xcWGlfMSwgXFxsZG90cywgXFxYaV9ufSIsMV0sWzEsMiwiXFxvZG90X24iLDJdLFswLDMsIlxcb2RvdF9uXFxvcCJdLFszLDQsIlxcWGkiXSxbMiw0LCJtIiwyXSxbNiw1LCJcXHRwe1xcWGlfMV4xLCBcXGxkb3RzLCBcXFhpX3ttX259Xm59IiwyXSxbNiwwLCJcXHRwe1xcb2RvdF97bV8xfSwgXFxsZG90cywgXFxvZG90X3ttX259fVxcb3AiXSxbNSwxLCJcXHRwe1xcb2RvdF97bV8xfSwgXFxsZG90cywgXFxvZG90X3ttX259fSIsMl0sWzcsMTAsIlxceGkiLDAseyJzaG9ydGVuIjp7InNvdXJjZSI6NDAsInRhcmdldCI6NDB9fV0sWzEyLDcsIlxcdHB7XFx4aV8xLCBcXGxkb3RzLCBcXHhpX259IiwwLHsic2hvcnRlbiI6eyJzb3VyY2UiOjQwLCJ0YXJnZXQiOjQwfX1dXQ==
    \[\begin{tikzcd}[column sep=large]
        {\A_{m_1 + \cdots + m_n}\op} && {\A_n\op} && {\A_1\op} \\
        {\Span_{m_1 + \cdots + m_n}} && {\Span_n} & {\Span_1} & \Set
        \arrow["{\tp{\odot_{m_1}, \ldots, \odot_{m_n}}\op}", from=1-1, to=1-3]
        \arrow[""{name=0, anchor=center, inner sep=0}, "{\tp{\Xi_1^1, \ldots, \Xi_{m_n}^n}}"', from=1-1, to=2-1]
        \arrow["{\odot_n\op}", from=1-3, to=1-5]
        \arrow[""{name=1, anchor=center, inner sep=0}, "{\tp{\Xi_1, \ldots, \Xi_n}}"{description}, from=1-3, to=2-3]
        \arrow[""{name=2, anchor=center, inner sep=0}, "\Xi", from=1-5, to=2-5]
        \arrow["{\tp{\odot_{m_1}, \ldots, \odot_{m_n}}}"', from=2-1, to=2-3]
        \arrow["{\odot_n}"', from=2-3, to=2-4]
        \arrow["m"', from=2-4, to=2-5]
        \arrow["{\tp{\xi_1, \ldots, \xi_n}}", between={0.4}{0.6}, Rightarrow, from=0, to=1]
        \arrow["\xi", between={0.4}{0.6}, Rightarrow, from=1, to=2]
    \end{tikzcd}\qedshift\]
\end{example}

\begin{example}
    Let $\E$ be a small category, viewed as a loosely discrete strict \dc{} $\dc E \defeq \F(\o\E)'$, so that $\E = \dc E_0 = \dc E_1 = \dc E_n$ ($n \ge 0$). By \cref{power-and-copower}, $\Span^{\dc E}$ is the power $\E \pow \Span$. Consequently, either by direct computation using \cref{Span-A-op}, or by observing that the 2-functor $\Span\ph \colon \Cat_{\tx{pb}} \to \Dbl_l$ preserves powers (\cf~\cite[Proposition~3.27]{dawson2010span}) -- where $\Cat_{\tx{pb}}$ is the 2-category of categories with pullbacks, pullback-preserving functors, and natural transformations -- we have that $\Span^{\dc E}$ is representable and is equivalent to the \pdc{} $\Span(\Set^\E)$. By \cref{Mod-Span,Lax-is-Mod}, we have
    \[\Dist(\Set^\E) \iso \Mod(\Span(\Set^\E)) \iso \Mod(\Span^{\dc E}) \iso \Lax(\dc E, \Span)\]
    whose underlying isomorphism of 2-categories, by \cref{sequential-adjoint}, recovers the symmetry between categories internal to presheaf categories and $\Cat$-valued presheaves.
    \[\Cat(\Set^\E) \iso \VDbl(\dc E, \Span) \iso \Cat^\E \qedhere\]
\end{example}

Just as, to every category $\b A$, there is an associated hom-functor $\b A({-}, {-}) \colon \b A\op \times \b A \to \Set$ sending every pair of objects to the set of morphisms between them, to every \pdc{} $\A$, there is an associated lax hom-functor $\A({-}, {-}) \colon \A\opt \times \A \to \Span$ sending every pair of objects to the set of tight morphisms between them, and every pair of loose morphisms to the set of cells between them.

\begin{proposition}[{\cite[\S2.1]{pare2011yoneda}}]
    \label{hom-functor}
    For every \pdc{} $\A$, there is a lax hom-functor $\A({-}, {-}) \colon \A\opt \times \A \to \Span$.
\end{proposition}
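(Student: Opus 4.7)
The plan is to construct $\A({-}, {-})$ directly as a functor of virtual double categories; since both source and target are representable, this suffices by \cref{Dbl_l-in-VDbl} to produce a lax functor between the corresponding pseudo double categories. On objects, the assignment is forced: $\A({-}, {-})(B, A) \defeq \u\A(B, A)$. On a tight morphism $(g \colon B' \to B, f \colon A \to A')$ of $\A\opt \times \A$, I would send $h \mapsto g \d h \d f$, which is functorial by associativity and unitality of tight composition in $\u\A$.

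On a loose morphism $(q \colon B_0 \lto B_1, p \colon A_0 \lto A_1)$ of $\A\opt \times \A$, I would take the span whose apex $C_{q, p}$ is the set of cells in $\A$ with $q$ on top, $p$ on the bottom, and arbitrary tight boundaries, and whose two legs send each such cell to its left and right tight morphisms. On a multi-cell $(\psi, \phi)$ of $\A\opt \times \A$, where $\psi$ is a cell of $\A\opt$ and $\phi$ a multi-cell of $\A$ of matching shape, I would define the induced span morphism between iterated pullbacks as follows: an element of the iterated pullback $C_{q_1, p_1} \times_{\u\A(B_1, A_1)} \cdots \times_{\u\A(B_{n-1}, A_{n-1})} C_{q_n, p_n}$ is, by the universal property of pullback in $\Set$, a compatible tuple of cells $(\chi_1, \ldots, \chi_n)$ in $\A$ sharing intermediate tight boundaries. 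Composing these with $\phi$ as the outer cell of a VDC composition produces a multi-cell in $\A$ with top $q_1, \ldots, q_n$ and bottom $p$; factoring the top through the opcartesian cell witnessing $q_1 \odot \cdots \odot q_n$ (which exists by representability of $\A$) and then composing vertically with the cell in $\A$ corresponding to $\psi$ (top $q$, bottom $q_1 \odot \cdots \odot q_n$) yields the required element of $C_{q, p}$.

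Strict preservation of identities and composition, as required by \cref{functor}, then reduces to the associativity and unitality of VDC composition in $\A$ together with the functoriality of pullback in $\Set$. The main obstacle I foresee is bookkeeping: the direction-reversal in $\A\opt$, the right-to-left convention for loose composition, and the indexing of iterated pullbacks conspire to make the diagrams cumbersome. However, the construction is forced by its action on objects, and every remaining verification reduces to a routine application of the coherence axioms of $\A$ and the universal property of iterated pullbacks in $\Set$.
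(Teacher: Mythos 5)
Your construction is correct and is essentially the paper's: the span assigned to a pair of loose morphisms is exactly $\A_1(q, p)$ with its two tight-boundary projections, and your action on a general $n$-ary cell (paste the tuple $(\chi_1, \ldots, \chi_n)$ with $\phi$, factor the top through the opcartesian cell exhibiting $q_1 \odot \cdots \odot q_n$, then compose with the cell of $\A$ underlying $\psi$) specialises at $n = 0$ and $n = 2$ to the unitor and compositor that the paper writes down explicitly in the classical lax-functor presentation. The only difference is presentational -- you give the uniform $n$-ary cell action of the corresponding functor of \vdcs{} directly, which is equivalent by \cref{Dbl_l-in-VDbl} -- and, like the paper, you defer the remaining verifications to coherence for $\A$, that is, to uniqueness of factorisations through opcartesian cells.
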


\begin{proof}
    The action on objects and tight morphisms is given by $\A_0({-}, {-})$ and the action on loose morphisms and cells is given by $\A_1({-}, {-})$ together with the source and target projections, as in the following diagram. Below, $s_{{-}, {-}}$ and $t_{{-}, {-}}$ are postcomposition by $s$ and $t$ respectively.
    % https://q.uiver.app/#q=WzAsNixbMCwwLCJcXEFfMFxcb3AgXFx0aW1lcyBcXEFfMCJdLFsxLDAsIlxcQV8xXFxvcCBcXHRpbWVzIFxcQV8xIl0sWzIsMCwiXFxBXzBcXG9wIFxcdGltZXMgXFxBXzAiXSxbMCwxLCJcXFNldCJdLFsxLDEsIlxcU2V0Il0sWzIsMSwiXFxTZXQiXSxbMSwwLCJzXFxvcCBcXHRpbWVzIHMiLDJdLFsxLDIsInRcXG9wIFxcdGltZXMgdCJdLFswLDMsIlxcQV8wKHstfSwgey19KSIsMl0sWzEsNCwiXFxBXzEoey19LCB7LX0pIiwxXSxbMiw1LCJcXEFfMCh7LX0sIHstfSkiXSxbMyw0LCIiLDAseyJsZXZlbCI6Miwic3R5bGUiOnsiaGVhZCI6eyJuYW1lIjoibm9uZSJ9fX1dLFs0LDUsIiIsMCx7ImxldmVsIjoyLCJzdHlsZSI6eyJoZWFkIjp7Im5hbWUiOiJub25lIn19fV0sWzksOCwic197ey19LCB7LX19IiwxLHsic2hvcnRlbiI6eyJzb3VyY2UiOjMwLCJ0YXJnZXQiOjIwfX1dLFs5LDEwLCJ0X3t7LX0sIHstfX0iLDEseyJzaG9ydGVuIjp7InNvdXJjZSI6MzAsInRhcmdldCI6MjB9fV1d
    \[\begin{tikzcd}[column sep=large]
        {\A_0\op \times \A_0} & {\A_1\op \times \A_1} & {\A_0\op \times \A_0} \\
        \Set & \Set & \Set
        \arrow[""{name=0, anchor=center, inner sep=0}, "{\A_0({-}, {-})}"', from=1-1, to=2-1]
        \arrow["{s\op \times s}"', from=1-2, to=1-1]
        \arrow["{t\op \times t}", from=1-2, to=1-3]
        \arrow[""{name=1, anchor=center, inner sep=0}, "{\A_1({-}, {-})}"{description}, from=1-2, to=2-2]
        \arrow[""{name=2, anchor=center, inner sep=0}, "{\A_0({-}, {-})}", from=1-3, to=2-3]
        \arrow[equals, from=2-1, to=2-2]
        \arrow[equals, from=2-2, to=2-3]
        \arrow["{s_{{-}, {-}}}"{description}, shorten <=19pt, shorten >=13pt, Rightarrow, from=1, to=0]
        \arrow["{t_{{-}, {-}}}"{description}, shorten <=19pt, shorten >=13pt, Rightarrow, from=1, to=2]
    \end{tikzcd}\]
    For each pair of objects $A$ and $A'$, the unitor is given by the following functor.
    % https://q.uiver.app/#q=WzAsNCxbMCwxLCJcXEFfMChBLCBBJykiXSxbMSwxLCJcXEFfMShcXElfQSwgXFxJX3tBJ30pIl0sWzIsMSwiXFxBXzAoQSwgQScpIl0sWzEsMCwiXFxBXzAoQSwgQScpIl0sWzMsMSwiXFxJX3tcXHBofSIsMV0sWzMsMCwiIiwyLHsibGV2ZWwiOjIsInN0eWxlIjp7ImhlYWQiOnsibmFtZSI6Im5vbmUifX19XSxbMywyLCIiLDIseyJsZXZlbCI6Miwic3R5bGUiOnsiaGVhZCI6eyJuYW1lIjoibm9uZSJ9fX1dLFsxLDAsInNfe1xcSV9BLCBcXElfe0EnfX0iXSxbMSwyLCJ0X3tcXElfQSwgXFxJX3tBJ319IiwyXV0=
    \[\begin{tikzcd}[column sep=large]
        & {\A_0(A, A')} \\
        {\A_0(A, A')} & {\A_1(\I_A, \I_{A'})} & {\A_0(A, A')}
        \arrow[equals, from=1-2, to=2-1]
        \arrow["{\I_{\ph}}"{description}, from=1-2, to=2-2]
        \arrow[equals, from=1-2, to=2-3]
        \arrow["{s_{\I_A, \I_{A'}}}", from=2-2, to=2-1]
        \arrow["{t_{\I_A, \I_{A'}}}"', from=2-2, to=2-3]
    \end{tikzcd}\]
    For each pair of composable pairs of loose morphisms $A \xlfrom p B \xlfrom q C$ and $A' \xlfrom {p'} B' \xlfrom {q'} C'$ in $\A$, the compositor is given by the following functor.
    % https://q.uiver.app/#q=WzAsOCxbMCwxLCJcXEFfMChBLCBBJykiXSxbMiwxLCJcXEFfMShwIFxcb2RvdCBxLCBwJyBcXG9kb3QgcScpIl0sWzQsMSwiXFxBXzAoQywgQycpIl0sWzIsMCwiXFxBXzEocCwgcCcpIFxcdGltZXNfe1xcQV8wKEIsIEInKX0gXFxBXzEocSwgcScpIl0sWzAsMCwiXFxBXzAoQSwgQScpIl0sWzQsMCwiXFxBXzAoQywgQycpIl0sWzEsMCwiXFxBXzEocCwgcCcpIl0sWzMsMCwiXFxBXzEocSwgcScpIl0sWzMsMSwiXFxwaCBcXG9kb3QgXFxwaCIsMV0sWzEsMCwic197cCBcXG9kb3QgcSwgcCcgXFxvZG90IHEnfSJdLFsxLDIsInRfe3AgXFxvZG90IHEsIHAnIFxcb2RvdCBxJ30iLDJdLFs0LDAsIiIsMix7ImxldmVsIjoyLCJzdHlsZSI6eyJoZWFkIjp7Im5hbWUiOiJub25lIn19fV0sWzUsMiwiIiwyLHsibGV2ZWwiOjIsInN0eWxlIjp7ImhlYWQiOnsibmFtZSI6Im5vbmUifX19XSxbMyw2LCJcXHBpXzEiLDJdLFszLDcsIlxccGlfMiJdLFs2LDQsInNfe3AsIHAnfSIsMl0sWzcsNSwidF97cCwgcCd9Il1d
    \[\begin{tikzcd}
        {\A_0(A, A')} & {\A_1(p, p')} & {\A_1(p, p') \times_{\A_0(B, B')} \A_1(q, q')} & {\A_1(q, q')} & {\A_0(C, C')} \\
        {\A_0(A, A')} && {\A_1(p \odot q, p' \odot q')} && {\A_0(C, C')}
        \arrow[equals, from=1-1, to=2-1]
        \arrow["{s_{p, p'}}"', from=1-2, to=1-1]
        \arrow["{\pi_1}"', from=1-3, to=1-2]
        \arrow["{\pi_2}", from=1-3, to=1-4]
        \arrow["{\ph \odot \ph}"{description}, from=1-3, to=2-3]
        \arrow["{t_{p, p'}}", from=1-4, to=1-5]
        \arrow[equals, from=1-5, to=2-5]
        \arrow["{s_{p \odot q, p' \odot q'}}", from=2-3, to=2-1]
        \arrow["{t_{p \odot q, p' \odot q'}}"', from=2-3, to=2-5]
    \end{tikzcd}\]
    That this forms a lax functor follows from coherence for the \pdc{} $\A$.
\end{proof}

By transposing the hom-functor, we obtain a functor of \vdcs{} as follows.
\begin{equation}
    \label{exponential-embedding}
    \A({-}_2, {-}_1) \colon \A \to \Span^{\A\opt}
\end{equation}
Since the underlying category of $\Span^{\A\opt}$ is the category of presheaves on the category $\u\A$ underlying $\A$, we automatically obtain a Yoneda lemma with respect to the objects and tight morphisms of $\A$, given simply by the ordinary Yoneda lemma with respect to $\u\A$. However, we also have a Yoneda lemma with respect to the loose morphisms and cells of $\A$; the idea is contained in \cite[Theorem~4.4]{pare2011yoneda}, but the following proposition is simpler than the theorem \ibid, as it concerns cells in $\Span^{\A\opt}$ rather than cells in $\Lax(\A\opt, \Span)$.

\begin{proposition}[Yoneda lemma]
    \label{Yoneda-lemma}
    Cells in $\Span^{\A\opt}$ of the following form
    % https://q.uiver.app/#q=WzAsNSxbMCwwLCJcXEEoey19LCBBXzApIl0sWzEsMCwiXFxjZG90cyJdLFsyLDAsIlxcQSh7LX0sIEFfbikiXSxbMCwxLCJGIl0sWzIsMSwiRiciXSxbMSwwLCJcXEEoey19LCBwXzEpIiwyLHsic3R5bGUiOnsiYm9keSI6eyJuYW1lIjoiYmFycmVkIn19fV0sWzIsMSwiXFxBKHstfSwgcF9uKSIsMix7InN0eWxlIjp7ImJvZHkiOnsibmFtZSI6ImJhcnJlZCJ9fX1dLFs0LDMsIlxcWGkiLDAseyJzdHlsZSI6eyJib2R5Ijp7Im5hbWUiOiJiYXJyZWQifX19XSxbMCwzLCJhXzAiLDJdLFsyLDQsImFfbiJdLFs4LDksIlxcYWxwaGEiLDEseyJzaG9ydGVuIjp7InNvdXJjZSI6MjAsInRhcmdldCI6MjB9LCJzdHlsZSI6eyJib2R5Ijp7Im5hbWUiOiJub25lIn0sImhlYWQiOnsibmFtZSI6Im5vbmUifX19XV0=
    \[\begin{tikzcd}[column sep=large]
        {\A({-}, A_0)} & \cdots & {\A({-}, A_n)} \\
        F && {F'}
        \arrow[""{name=0, anchor=center, inner sep=0}, "{a_0}"', from=1-1, to=2-1]
        \arrow["{\A({-}, p_1)}"'{inner sep=.8ex}, "\shortmid"{marking}, from=1-2, to=1-1]
        \arrow["{\A({-}, p_n)}"'{inner sep=.8ex}, "\shortmid"{marking}, from=1-3, to=1-2]
        \arrow[""{name=1, anchor=center, inner sep=0}, "{a_n}", from=1-3, to=2-3]
        \arrow["\Xi"{inner sep=.8ex}, "\shortmid"{marking}, from=2-3, to=2-1]
        \arrow["\alpha"{description}, draw=none, from=0, to=1]
    \end{tikzcd}\]
    are in natural bijection with elements of $\Xi(p_1 \odot \cdots \odot p_n)$.
\end{proposition}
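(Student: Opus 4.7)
The plan is to unfold the definition of a cell in $\Span^{\A\opt}$ at the given frame using the explicit description from the proof of \cref{representable-vdcs-are-exponentiable} (spelled out in more detail in \cref{Span-A-op}), and then reduce to the ordinary $\Set$-valued Yoneda lemma applied component-wise. The content of the statement, informally, is that cells into $\Xi$ out of a chain of representable loose morphisms are freely generated by their image on the identity $n$-tuple, so there is essentially nothing to prove beyond invoking representability of each $\A({-}, p_i)$ together with functoriality of the loose composition operation $\odot$ in $\A$.

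First, by \cref{cell-in-exponential}, a cell $\alpha$ with the displayed frame consists of, for every chain of loose morphisms $q_1 \colon B_1 \lto B_0, \ldots, q_n \colon B_n \lto B_{n-1}$ in $\A\opt$ (equivalently, a chain $q_n, \ldots, q_1$ in $\A$), a span morphism in $\Span$ whose legs are determined by $a_0$ and $a_n$ and whose domain is the iterated pullback computing $\A(B_0, A_0) \times_{\A(B_1,A_1)} \A_1(q_1, p_1) \times \cdots \times_{\A(B_n,A_n)} \A_1(q_n, p_n)$, landing in $\Xi(q_1 \odot \cdots \odot q_n)$ as described in \cref{Span-A-op}. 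Unpacking further, an element of this iterated pullback is precisely a compatible chain of cells $\beta_i \colon q_i \tto p_i$ in $\A$ together with object maps $B_i \to A_i$; so the datum of $\alpha$ at $(q_1, \ldots, q_n)$ is a function assigning to each such tuple $(\beta_1, \ldots, \beta_n)$ an element of $\Xi(q_1 \odot \cdots \odot q_n)$, subject to naturality in the $\beta_i$'s.

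Next, I will apply Yoneda. By functoriality, the tuple $(1_{p_1}, \ldots, 1_{p_n})$ lies in the iterated pullback at the distinguished chain $(p_1, \ldots, p_n)$, and the assignment
\[
\alpha \;\longmapsto\; \alpha(p_1, \ldots, p_n)(1_{p_1}, \ldots, 1_{p_n}) \;\in\; \Xi(p_1 \odot \cdots \odot p_n)
\]
defines a function in one direction. For the inverse, given $e \in \Xi(p_1 \odot \cdots \odot p_n)$, define $\alpha(q_1, \ldots, q_n)(\beta_1, \ldots, \beta_n)$ to be the image of $e$ under $\Xi(\beta_1 \odot \cdots \odot \beta_n) \colon \Xi(p_1 \odot \cdots \odot p_n) \to \Xi(q_1 \odot \cdots \odot q_n)$; this is forced by naturality in each $\beta_i$. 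That the two assignments are mutually inverse is immediate from $\Xi$ being a functor (so it sends identity tuples to identities).

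The main thing to verify is that the assignment built from a single $e$ satisfies the naturality conditions of \cref{cell-in-exponential} with respect to cells $\varpi$ in $\A\opt$, as well as compatibility with the source and target projections of the spans displayed in \cref{Span-A-op}. Both reduce to functoriality of $\Xi$ on $\A_1\op$ and to the behaviour of $\odot$ under composition of cells, so they follow mechanically from the coherence of $\A$ as a \pdc{} and functoriality of $\Xi$; no new computation beyond tracking indices is required. The main obstacle is purely notational: keeping the several indexings (chains in $\A$, components of the span projections, and coordinates of the iterated pullback) aligned, which I would handle by stating the bijection first on the underlying datum and only afterward checking the constraints, exactly as in the proof of the classical multivariable Yoneda lemma.
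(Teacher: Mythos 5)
Your proof is correct and takes essentially the same route as the paper's: both reduce the statement to the ordinary (multivariable) Yoneda lemma by identifying a cell with a natural transformation out of the iterated pullback of representables $\A({-},p_1)\times_{\A({-},A_1)}\cdots\times_{\A({-},A_{n-1})}\A({-},p_n)$ into $\Xi$ and evaluating at the identity chain $(1_{p_1},\ldots,1_{p_n})$, with the inverse forced by naturality. The extra verifications you flag (naturality in the $\beta_i$ and compatibility with the source/target projections) are precisely what the paper compresses into ``naturality implies that this is the only possible assignment''.
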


\begin{proof}
    By the Yoneda lemma for the category $\u\A$, the tight morphisms framing $\alpha$ correspond to elements $a_0 \in F(A_0)$ and $a_n \in F'(A_n)$. By definition (\cref{Span-A-op}), the cell $\alpha$ corresponds to a natural transformation as follows.
    \[\A({-}, p_1) \times_{\A({-}, A_1)} \A({-}, p_2) \times_{\A({-}, A_2)} \cdots \times_{\A({-}, A_{n - 1})} \A({-}, p_n) \quad \tto \quad \Xi\]
    Since $\A_n \iso \A_1 \times_{\A_0} \cdots \times_{\A_0} \A_1$, the domain of this natural transformation is isomorphic to the representable presheaf $\A_n({-}, (p_1, \ldots, p_n))$, and so the statement follows from the Yoneda lemma with respect to presheaves on $\A_n$.
\end{proof}

\begin{corollary}
    Every chain of representables $\A({-}, p_1), \ldots, \A({-}, p_n)$ in $\Span^{\A\opt}$ admits a weak composite $\A({-}, p_1 \odot \cdots \odot p_n)$.
\end{corollary}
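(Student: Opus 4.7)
The plan is to exhibit a canonical cell
\[\beta \colon \A({-}, p_1), \ldots, \A({-}, p_n) \tto \A({-}, p_1 \odot \cdots \odot p_n)\]
in $\Span^{\A\opt}$, with identity framing tight morphisms on both sides, and to verify that $\beta$ is weakly opcartesian by applying the Yoneda lemma (\cref{Yoneda-lemma}) in two different ways: once to produce $\beta$ itself, and once to produce (and verify uniqueness of) the factorisation of an arbitrary globular cell through $\beta$. No direct manipulation of spans or pullbacks should be required; the whole argument is a formal exercise in the Yoneda bijection.

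For the construction, I specialise \cref{Yoneda-lemma} to $F \defeq \A({-}, A_0)$, $F' \defeq \A({-}, A_n)$, and $\Xi \defeq \A({-}, p_1 \odot \cdots \odot p_n)$, so that cells of the asserted form with identity framing tight morphisms (corresponding to the pair $(1_{A_0}, 1_{A_n})$) are in natural bijection with $\A_1(p_1 \odot \cdots \odot p_n, p_1 \odot \cdots \odot p_n)$; I then take $\beta$ to be the cell corresponding to $1_{p_1 \odot \cdots \odot p_n}$. For the universal property, suppose given a cell $\alpha \colon \A({-}, p_1), \ldots, \A({-}, p_n) \tto \Xi$ with arbitrary framing tight morphisms $a_0, a_n$. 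Two applications of \cref{Yoneda-lemma} — one to $n$-ary cells into $\Xi$, one to unary cells into $\Xi$ — identify both the set of such cells $\alpha$ and the set of unary cells $\A({-}, p_1 \odot \cdots \odot p_n) \tto \Xi$ with the same framing, with elements of $\Xi(p_1 \odot \cdots \odot p_n)$. Hence there is a unique unary cell $\check\alpha$ whose Yoneda element agrees with that of $\alpha$, and this $\check\alpha$ is the only possible candidate for the factorisation.

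The one substantive step — and thus the expected obstacle — is confirming that the pasted cell $(\beta) \d \check\alpha$ genuinely equals $\alpha$, rather than merely being classified by the same element of $\Xi(p_1 \odot \cdots \odot p_n)$. This reduces to naturality of the Yoneda correspondence: the element assigned to $(\beta) \d \check\alpha$ is computed by tracking the distinguished element $(1_{p_1}, \ldots, 1_{p_n})$ through $\beta$ (yielding $1_{p_1 \odot \cdots \odot p_n}$ by construction) and then through $\check\alpha$ (yielding by definition the element that represents $\alpha$). Since the Yoneda bijection is a bijection, equality of classifying elements implies equality of cells, and $\A({-}, p_1 \odot \cdots \odot p_n)$ is the required weak composite.
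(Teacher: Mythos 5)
Your proof is correct and takes essentially the same approach as the paper: the paper's (one-line) proof likewise obtains the globular cell from the identity element $1_{p_1 \odot \cdots \odot p_n}$ via the Yoneda lemma (\cref{Yoneda-lemma}) and deduces weak opcartesianness from a second application of the same lemma. Your write-up merely makes explicit the compatibility check -- that pasting with $\beta$ tracks the classifying element correctly -- which the paper leaves implicit.
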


\begin{proof}
    Taking $\Xi = \A({-}, p_1 \odot \cdots \odot p_n)$ and $a_0$ and $a_n$ to be identities in \cref{Yoneda-lemma}, we get a globular cell of the appropriate shape, which is weakly opcartesian by another application of \cref{Yoneda-lemma}.
\end{proof}

This gives us all the pieces needed to recover the Yoneda embedding of \textcite[Theorem~4.8]{pare2011yoneda}: as desired, it is obtained by transposing the hom-functor, and then applying the loose monads and modules construction.

\begin{proposition}
    \label{presheaf-embedding}
    For each \pdc{} $\A$, there is a \ff{} functor $\A({-}_2, {-}_1) \colon \A \to \Span^{\A\opt}$ of \vdcs, hence also a \ff{} normal functor $Y_\A \colon \A \to \Lax(\A\opt, \Span)$.
\end{proposition}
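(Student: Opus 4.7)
The plan is to construct the functor as the transpose of the lax hom-functor from \cref{hom-functor} under the exponential--product adjunction for $\A\opt$ (which is exponentiable by \cref{representable-vdcs-are-exponentiable} since $\A\opt$ is representable). This yields a functor $\A({-}_2, {-}_1) \colon \A \to \Span^{\A\opt}$ of \vdcs; the remaining tasks are to verify \ffness{} and to deduce the normal version.

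For \ffness{}, I would split the verification into the underlying category and the cells. By \cref{reciprocity}, $\u\ph$ preserves exponentials, so the underlying category of $\Span^{\A\opt}$ is $\Set^{\u\A\op}$; unwinding the construction of the transpose identifies the induced functor $\u\A \to \Set^{\u\A\op}$ with the ordinary Yoneda embedding on $\u\A$, which is \ff. For the cells, a frame in $\Span^{\A\opt}$ between representables is exactly the shape treated by \cref{Yoneda-lemma}, specialised to $\Xi = \A({-}, p)$ for a target loose morphism $p \colon A_0' \lto A_n'$ in $\A$ (with the framing $a_0, a_n$ between representables corresponding by the ordinary Yoneda lemma to tight morphisms $A_0 \to A_0'$ and $A_n \to A_n'$ in $\A$). \Cref{Yoneda-lemma} then puts cells with such a frame in bijection with elements of $\A({-}, p)(p_1 \odot \cdots \odot p_n)$, namely cells in $\A$ with loose source the composite $p_1 \odot \cdots \odot p_n$ and loose target $p$; the universal property of the opcartesian cell (\cref{opcartesian}) exhibiting this composite then converts these to cells in $\A$ with the original multiary loose source $p_1, \ldots, p_n$, matching the frame we started with.

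For the normal functor $Y_\A$, I would invoke \cref{ff-transpose} with $\X = \A$ (which is normal, being a \pdc) and $F = \A({-}_2, {-}_1)$: the transpose $\Mod(F) \c \eta_\A \colon \A \to \Mod(\Span^{\A\opt})$ is then \ff, and \cref{Lax-is-Mod} identifies the codomain with $\Lax(\A\opt, \Span)$, yielding the required $Y_\A$.

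The main subtlety is the bookkeeping in the cell-bijection step: one must check carefully that the chain of bijections (Yoneda on the underlying category for the tight frame, \cref{Yoneda-lemma} for the unary reduction, and opcartesianness for the re-expansion into a multiary source) composes to precisely the action of $\A({-}_2, {-}_1)$ on cells. Once this matching is confirmed, the remainder of the argument is a formal consequence of the structural results already in hand.
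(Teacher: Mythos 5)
Your proposal is correct and follows essentially the same route as the paper: transpose the lax hom-functor of \cref{hom-functor}, deduce \ffness{} on tight morphisms from the ordinary Yoneda embedding for $\u\A$ and on cells from \cref{Yoneda-lemma} (together with opcartesianness, which is implicit in the paper's appeal to that lemma), and then obtain $Y_\A$ via \cref{ff-transpose} and \cref{Lax-is-Mod}. The extra bookkeeping you flag in the cell-bijection step is exactly the content the paper compresses into ``\ffness{} on cells follows from \cref{Yoneda-lemma}''.
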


\begin{proof}
    As described above \eqref{exponential-embedding}, the functor $\A({-}_2, {-}_1)$ is obtained by transposing the lax-hom functor of \cref{hom-functor}. Full faithfulness on tight morphisms follows from \ffness{} of the Yoneda embedding $\u\A \ffto \Set^{\u\A\op}$, while \ffness{} on cells follows from \cref{Yoneda-lemma}. Therefore, by \cref{Mod-is-right-adjoint,Lax-is-Mod}, we obtain a normal functor of \vdcs{} $\A \to \Mod(\Span^{\A\opt}) \iso \Lax(\A\op, \Span)$, which is \ff{} by \cref{ff-transpose}.
\end{proof}

Explicitly, the embedding $Y_\A \colon \A \to \Lax(\A\opt, \Span)$ sends an object $A \in \A$ to the image under $\A({-}_2, {-}_1) \colon \A \to \Span^{\A\opt}$ of the identity loose monad $A(1, 1)$ on $A$.
The Yoneda lemmas for $\Lax(\A\opt, \Span)$~\cite[Theorems~2.3, 3.18 \& 4.4]{pare2011yoneda} thereby follow readily from the Yoneda lemmas for $\Span^{\A\opt}$. We leave this as an instructive exercise for the reader. (For a hint, note that the Yoneda lemma for lax functors invokes both the ordinary Yoneda lemma and \cref{Yoneda-lemma}; whereas the Yoneda lemma for multimodulations makes use of the property that cells in $\Lax(\A\opt, \Span)$ are precisely cells in $\Span^{\A\opt}$ satisfying some additional properties.) We suggest this indicates that $\A({-}_2, {-}_1) \colon \A \to \Span^{\A\opt}$ may be the more fundamental of the two `Yoneda embeddings' for \dcs.

\begin{remark}
    In \cite{frohlich2024yoneda}, \citeauthor{frohlich2024yoneda} study a variant of the Yoneda embedding with respect to \emph{normal} lax functors $\A\opt \to \Dist$. However, by \cref{Mod-is-internal-right-adjoint}, we have an isomorphism of normal \vdcs{},
    \[\Lax(\A\opt, \Span) \iso \LaxN(\A\opt, \Mod(\Span)) \iso \LaxN(\A\opt, \Dist)\]
    under which the theory of \citeauthor{frohlich2024yoneda} is entirely equivalent to \citeauthor{pare2011yoneda}'s~\cite{pare2011yoneda}. For instance their normal lax hom-functor $\A\opt \times \A \to \Dist$~\cite[Construction~3.4.1]{frohlich2024yoneda} is obtained from \cref{hom-functor} by transposition, and their Yoneda lemma~\cite[Theorem~4.2]{frohlich2024yoneda} follows from \citeauthor{pare2011yoneda}'s likewise.
\end{remark}

We shall need one other construct associated to the Yoneda embedding for what follows.

\begin{lemma}
    \label{nerve}
    Every functor $F \colon \A \to \B$ of \vdcs{} with $\A$ exponentiable induces a functor $N_F \colon \B \to \Span^{\A\opt}$, the \emph{nerve} of $F$.
\end{lemma}

\begin{proof}
    Functors $\B \to \Span^{\A\opt}$ are in 2-natural bijection with functors $\A\opt \times \B \to \Span$, the latter of which may be induced by $F$ via the following composite.
    % https://q.uiver.app/#q=WzAsMyxbMCwwLCJcXEFcXG9wdCBcXHRpbWVzIFxcQiJdLFsxLDAsIlxcQlxcb3B0IFxcdGltZXMgXFxCIl0sWzIsMCwiXFxTcGFuIl0sWzEsMiwiXFxCKHstfSwgey19KSJdLFswLDEsIkZcXG9wdCBcXHRpbWVzIFxcQiJdXQ==
    \[\begin{tikzcd}[column sep=large]
        {\A\opt \times \B} & {\B\opt \times \B} & \Span
        \arrow["{F\opt \times \B}", from=1-1, to=1-2]
        \arrow["{\B({-}, {-})}", from=1-2, to=1-3]
    \end{tikzcd}\]
    (Observe that, when $\B$ is also exponentiable, the nerve is isomorphic to the following composite.)
    % https://q.uiver.app/#q=WzAsMyxbMCwwLCJcXEIiXSxbMSwwLCJcXFNwYW5ee1xcQlxcb3B0fSJdLFsyLDAsIlxcU3Bhbl57XFxBXFxvcHR9Il0sWzAsMSwiXFxCKHstfV8yLCB7LX1fMSkiXSxbMSwyLCJcXFNwYW5ee0ZcXG9wdH0iXV0=
    \[\begin{tikzcd}[column sep=huge]
        \B & {\Span^{\B\opt}} & {\Span^{\A\opt}}
        \arrow["{\B({-}_2, {-}_1)}", from=1-1, to=1-2]
        \arrow["{\Span^{F\opt}}", from=1-2, to=1-3]
    \end{tikzcd}\qedshift\]
\end{proof}

\section{Exponentiability revisited}
\label{exponentiability-revisited}

In \cref{exponentiability}, we presented several sufficient conditions for a \vdc{} to be exponentiable: most importantly, in \cref{representable-vdcs-are-exponentiable}, we showed that every representable \vdc{} is exponentiable. In this section, we give a full characterisation of exponentiability for \vdcs{} by reducing the exponentiability of an arbitrary \vdc{} $\A$ to the existence of a specific exponential, namely $\Span^\A$. This is similar to characterisations of exponentiability for other multicategory-like structures: for instance, a locale $A$ is exponentiable if and only if the exponential $S^A$ exists, where $S$ is the Sierpi\'nski space~\cite{hyland1979function}; a topos $\b A$ is exponentiable if and only if the exponential $\b S^{\b A}$ exists, where $\b S$ is the object classifier~\cite{johnstone1982continuous}; and a multicategory $\b M$ is exponentiable if and only if the exponential $\Set^{\b M}$ exists (as we shall prove in \cref{exponentiable-multicategory}). Our proof strategy in this section follows the characterisation of exponentiable locales in \textcite[\S3]{townsend2006categorical} and, in particular, invokes the exponentiability of representable \vdcs{} from \cref{exponentiability}.

First, we observe an elementary commutation property between exponentials and equalisers.

\begin{lemma}
    \label{exponentials-and-equalisers}
    Let $A, X, Y$ be objects in a 2-category $\C$ for which the exponentials $X^A$ and $Y^A$ exist. Suppose that the following diagram exhibits an equaliser.
    % https://q.uiver.app/#q=WzAsMyxbMCwwLCJFIl0sWzEsMCwiWCJdLFsyLDAsIlkiXSxbMSwyLCJmIiwwLHsib2Zmc2V0IjotMX1dLFsxLDIsImciLDIseyJvZmZzZXQiOjF9XSxbMCwxLCJlIl1d
    \[\begin{tikzcd}
        E & X & Y
        \arrow["e", from=1-1, to=1-2]
        \arrow["f", shift left, from=1-2, to=1-3]
        \arrow["g"', shift right, from=1-2, to=1-3]
    \end{tikzcd}\]
    The parallel pair $f^A, g^A \colon X^A \rightrightarrows Y^A$ admits an equaliser if and only if the exponential $E^A$ exists, in which case $e^A \colon E^A \to X^A$ exhibits the equaliser.
\end{lemma}

\begin{proof}
    Applying the Yoneda embedding to the equaliser diagram, and taking the value at $\ph \times A$, we obtain the following equaliser diagram in $\Cat^{\C\op}$, in which the two rightmost 2-presheaves are representable by assumption.
    % https://q.uiver.app/#q=WzAsMyxbMCwwLCJcXEMoey19IFxcdGltZXMgQSwgRSkiXSxbMSwwLCJcXEMoey19IFxcdGltZXMgQSwgWCkiXSxbMiwwLCJcXEMoey19IFxcdGltZXMgQSwgWSkiXSxbMSwyLCJcXEMoey19IFxcdGltZXMgQSwgZikiLDAseyJvZmZzZXQiOi0xfV0sWzEsMiwiXFxDKHstfSBcXHRpbWVzIEEsIGcpIiwyLHsib2Zmc2V0IjoxfV0sWzAsMSwiXFxDKHstfSBcXHRpbWVzIEEsIGUpIl1d
    \[\begin{tikzcd}[column sep=huge]
        {\C({-} \times A, E)} & {\C({-} \times A, X)} & {\C({-} \times A, Y)}
        \arrow["{\C({-} \times A, e)}", from=1-1, to=1-2]
        \arrow["{\C({-} \times A, f)}", shift left, from=1-2, to=1-3]
        \arrow["{\C({-} \times A, g)}"', shift right, from=1-2, to=1-3]
    \end{tikzcd}\]
    Existence of the requisite exponential, which is equivalent to the representability of $\C({-} \times A, E)$, is thus equivalent to the existence of the requisite equaliser.
\end{proof}

The key lemma in this section is the following, which states that, as soon as objects in a 2-category can be embedded into exponentials of a fixed object $S$, the exponentiability of a given object $A$ can be reduced to the existence of the exponential $S^A$.

\begin{lemma}
    \label{exponential-via-subobject}
    Let $\C$ be a 2-category with binary products and equalisers. Fix an object $S \in \C$. Suppose that, for every object $X \in \C$, there is an object $E_X \in \C$ such that (1) the exponential $S^{E_X}$ exists; (2) $X$ is a regular subobject of $S^{E_X}$. Then an object $A \in \C$ is exponentiable if and only if the exponential $S^A$ exists.
\end{lemma}

\begin{proof}
    First note that, for each object $X \in \C$, we have, by assumption, an equaliser diagram $X \monoto S^{E_X} \rightrightarrows Y$ for some object $Y \in \C$. Again by assunption, we have a regular monomorphism $Y \monoto S^{E_Y}$. Since equaliser diagrams are closed under postcomposition by monomorphisms, $X \monoto S^{E_X} \rightrightarrows S^{E_Y}$ is also an equaliser diagram.

    Now suppose that the exponential $S^A$ exists. Since exponentiable objects are closed under products, \cref{exponentials-and-equalisers} implies that the equaliser of $S^{E_X \times A} \rightrightarrows S^{E_Y \times A}$ is the exponential $X^A$. The converse is trivial.
\end{proof}

It remains to verify the assumptions of \cref{exponential-via-subobject} in the case of the 2-category $\VDbl$.

\begin{lemma}
    \label{subobject-classifier}
    $\VDbl$ admits a classifier $t \colon \bbn 1 \to \Omega$ for functors that are injective on objects, injective on loose morphisms, and \ff.
\end{lemma}

\begin{proof}
    Define the \vdc{} $\Omega$ as follows.
    \begin{itemize}
        \item The underlying category is the codiscrete category on a pair of objects, denoted $\bot$ and $\top$.
        \item There are unique loose morphisms $\bot \lto \bot$, $\bot \lto \top$, and $\top \lto \bot$. There are two loose endomorphisms on $\top$, denoted $f \colon \top \lto \top$ and $t \colon \top \lto \top$.
        \item There is a unique cell for every frame.
    \end{itemize}
    Define the functor $t \colon \bbn 1 \to \Omega$ to pick the trivial loose monad $t \colon \top \lto \top$. A functor $\chi \colon \B \to \Omega$ is specified by assigning, to each object, either $\top$ or $\bot$; and, to each loose morphism whose domain and codomain have been assigned $\top$, either $t$ or $f$. This is precisely the data of a functor into $\B$, obtained by pulling $\chi$ back along $t$, that is injective on objects, injective on loose morphisms, and \ff. Since $\Omega$ is codiscrete on tight morphisms and cells, every transformation between such functors is given by pulling back a transformation between functors $\B \to \Omega$.
\end{proof}

\begin{theorem}[{\cite[Theorems~1.21 \& 2.12]{dawson2006paths}}]
    \label{path-construction}
    The inclusion of the 2-category of strict \dcs{} and strict functors into $\VDbl$ admits a comonadic left adjoint, inducing a lax-idempotent 2-monad $\P$ on $\VDbl$.
\end{theorem}

In particular, for each \vdc{} $\X$, the component of the unit $\eta_\X \colon \X \to \P(\X)$ is the identity on the underlying categories, injective on loose morphisms, and \ff{} on cells, so that every \vdc{} embeds \ff ly into a strict \dc.

\begin{theorem}
    \label{exponentiable-iff-Span-exponential-exists}
    A \vdc{} $\A$ is exponentiable if and only if the exponential $\Span^\A$ exists.
\end{theorem}

\begin{proof}
    We appeal to \cref{exponential-via-subobject}, with respect to the \vdc{} $\Span$ in $\VDbl$, recalling that $\VDbl$ is complete by \cref{VDbl-is-bicomplete}. Let $\X$ be a \vdc. To satisfy the remaining assumption, observe that, since $\P(\X)$ is a strict \dc{} (in particular, representable), it is exponentiable by \cref{representable-vdcs-are-exponentiable}. By \cref{presheaf-embedding,path-construction}, for every \vdc{}, we have a \ff{} functor $\X \ffto \P(\X) \ffto \Span^{\P(\X)\opt}$ that is injective on objects and loose morphisms, hence a regular monomorphism by \cref{subobject-classifier}.
\end{proof}

\begin{remark}
    In a preliminary version of this paper, \cref{exponentiable-iff-Span-exponential-exists} was presented as a conjecture (see \cref{exponentiable-vdcs-conjecture}). Since then, Kevin Carlson and Ea Thompson have announced an independent proof of the statement~\cite{thompson2025exponentiable}, which should be viewed as contemporaneous to our own.
\end{remark}

In passing, we note that this proof strategy also gives a characterisation of exponentiability for multicategories. As we have mentioned in \cref{relation-to-multicategories}, and shall discuss in more detailed shortly, \textcite[Proposition~2.12]{pisani2014sequential} established that the exponentiable multicategories are precisely the promonoidal categories of \textcite{day1970closed}. Given that promonoidal categories may be characterised as the cocontinuously monoidal presheaf categories~\cite{day1970closed}, the following characterisation is not surprising, but does appears to be new.

\begin{proposition}
    \label{exponentiable-multicategory}
    A multicategory $\M$ is exponentiable if and only if the exponential $\Set^\M$ exists.
\end{proposition}

\begin{proof}
    The proof is essentially identical to that of \cref{exponentiable-iff-Span-exponential-exists}, instead using the construction of the free strict monoidal category on a multicategory~\cite[\S7]{hermida2000representable}, and classifying \ff{} functors that are injective on objects via the codiscrete multicategory on two objects.
\end{proof}

\subsection{An exponentiability conjecture}
\label{exponentiability-conjecture}

The characterisation of exponentiable \vdcs{} in \cref{exponentiable-iff-Span-exponential-exists}, while complete, is not particularly conducive to verification in specific instances. It would therefore be useful to have an intrinsic characterisation of exponentiability for \vdcs. While we shall not do so here, we shall draw on the connection between \vdcs{} and multicategories to make a conjecture regarding such a characterisation.

Multicategories may be characterised as the normal colax comonoids in the monoidal bicategory $\b{\cl Dist}$ of categories and distributors (\ie{} the bicategory underlying the \pdc{} $\Dist$ of \cref{Dist})~\cites[\S1]{day2003lax}[\S B.5]{cruttwell2010unified}. Restricting to the \emph{pseudo} comonoids in $\b{\cl Dist}$ results in the notion of \emph{promonoidal category}~\cite{day1970closed}. In the terminology of \textcite{leinster2004higher}, promonoidal categories qua pseudo comonoids may be presented in two ways: in the \emph{unbiased} presentation, to define a promonoidal category $\b P$, one specifies functors $(\b P^n)\op \times \b P \to \Set$ for all $n \in \N$, which define the sets of $n$-ary multimorphisms; in the \emph{biased} presentation, one only specifies such functors for $n \le 2$ -- the multimorphisms of higher arity being defined freely by composites of lower arity multimorphisms. The unbiased perspective is most naturally viewed as being a property of a multicategory, which intuitively corresponds to the ability to decompose multimorphisms appropriately: this property is called \emph{malleability} by \textcite[Definition~2.3.3 \& Proposition~2.3.10]{roman2023monoidal}. The biased perspective, which is that to which the term \emph{promonoidal category} is typically associated in the literature, is most naturally viewed as a structure of interest in its own right. An advantage of the biased presentation over the unbiased presentation is that it is finitary.

The reason we are interested in malleability for multicategories is that it characterises exponentiability: in other words, a multicategory is exponentiable if and only if it arises from a promonoidal category~\cite[Proposition~2.8]{pisani2014sequential}. Consequently, we expect that exponentiability for \vdcs{} may be characterised similarly, in terms of a decomposition property pertaining to multiary cells. The following definition makes this intuition precise.

\begin{definition}
    Let $T$ be the monad on $\Span(\Grph)$ whose algebras are categories, inducing a monad $\Mod(T)$ on $\Dist(\Grph)$~\cite[Examples~4.7 \& 8.13]{cruttwell2010unified}. A \vdc{} is \emph{malleable} if, viewed as a \emph{$\Mod(T)$-monoid} in the sense of \textcite[Definition~4.3]{cruttwell2010unified}, its multiplication cell is cartesian.\footnote{More generally, we may define malleability analogously for any generalised multicategory in the sense of \textcite{cruttwell2010unified}. We expect that malleability is typically a sufficient, and often necessary, condition for exponentiability.}
\end{definition}

In contrast to malleable multicategories, it is less clear that malleable \vdcs{} admit a finitary presentation (and consequently that there exists an appropriate notion of \emph{pro-double category} generalising the notion of \emph{probicategory}~\cite{day1973embedding}). While we do expect the malleability property to be expressible in terms of decomposition into composites involving solely nullary, unary, and binary cells, it seems unlikely that an analogue of the reassociation operation in the biased presentation of a promonoidal category suffices for \vdcs{}, due to additional constraints imposed by tight morphisms in a \vdc{} that are not present for a multicategory.

These considerations lead to the following conjecture (in which $(1 \iff 4)$ has been verified by \cref{exponentiable-iff-Span-exponential-exists}).

\begin{conjecture}
    \label{exponentiable-vdcs-conjecture}
    The following are equivalent for a \vdc{} $\X$.
    \begin{enumerate}
        \item $\X$ is exponentiable.
        \item $\X$ is malleable.
        \item $\X$ satisfies a restricted form of malleability, expressing every cell as a composite of nullary, unary, and binary cells.\footnote{We shall not give a precise condition, but have in mind an analogue of \cite[Definition~2.3.9]{roman2023monoidal}.}
        \item The exponential $\Span^\X$ exists.
        \qedhere
    \end{enumerate}
\end{conjecture}

\section{Local cocompletion and graded categories}
\label{local-cocompletion}

In \cref{relation-to-multicategories}, we recalled that, given a small monoidal category $\M$, the convolution monoidal structure on the category of presheaves $\Set^{\M\op}$ is given by the exponential multicategory structure~\cite{pisani2014sequential}: this characterises it essentially uniquely via a universal property. However, the convolution monoidal structure also admits a second universal property: it is the free monoidal cocompletion of $\M$~\cite{im1986universal}. Given that, for a \dc{} $\A$, the exponential $\Span^{\A\opt}$ plays an analogous role to $\Set^{\M\op}$, we might wonder whether it also admits a second universal property of a similar flavour to the free monoidal cocompletion.

There is a construction for bicategories that extends the construction of a monoidal category of presheaves on a monoidal category: given a bicategory $\cl A$, there is a bicategory $\widehat{\cl A}$ with the same objects as $\cl A$ and whose hom-categories are the categories of presheaves on the hom-categories of $\cl A$, \ie{} $\widehat{\cl A}(A, B) \defeq \widehat{\cl A(A, B)}$, with composition given by a convolution formula~\cite[\S4]{day1973embedding}. This bicategory $\widehat{\cl A}$ admits an analogous universal property to $\Set^{\M\op}$: it is the \emph{local cocompletion} of $\cl A$, in the sense that it exhibits the free locally cocomplete bicategory extending $\cl A$~\cite[\S5]{kelly2002categories}. (Abstractly, the local cocompletion may be seen to arise from change of base for enriched bicategories~\cite[\S15.8]{garner2016enriched}, though we shall not make use of this perspective here.)

A general advantage of the formalism of double categories over that of bicategories is that double categorical universal properties, such as adjunctions of \dcs, naturally capture local (\ie{} hom-wise) structure in bicategories (in contrast, for instance, to biadjunctions of bicategories, which merely capture global structure). As an illustration of this phenomenon, we shall show that, while the explicit description of the local cocompletion of a bicategory appears to involve `local exponentiation' (whose universal property is not immediately clear), it is an instance of exponentiation for \vdcs. To demonstrate this, we shall need a preliminary concept.

Recall that, for categories, the \emph{full image} of a functor is the category whose objects are those of the domain, and whose morphisms are those of the codomain. For a functor between \vdcs{}, there are two notions of image: the \emph{tight image}, whose tight morphisms are those of the codomain (and whose loose morphisms are those of the domain); and the \emph{loose image}, whose loose morphisms are those in the codomain (and whose tight morphisms are those of the domain). It is the latter that is relevant for what follows.

\begin{definition}
    The \emph{loose image} of a functor $F \colon \A \to \B$ between \vdcs{} is the \vdc{} whose underlying category is $\u\A$, for which a loose morphism $A' \lto A$ is a loose morphism $F(A') \lto F(A)$ in $\B$ and for which a cell with the frame on the left below is a cell in $\B$ with frame on the right below.
    \[
    % https://q.uiver.app/#q=WzAsNSxbMCwwLCJBXzAiXSxbMSwwLCJcXGNkb3RzIl0sWzIsMCwiQV9uIl0sWzAsMSwiQSJdLFsyLDEsIkEnIl0sWzEsMCwicF8xIiwyLHsic3R5bGUiOnsiYm9keSI6eyJuYW1lIjoiYmFycmVkIn19fV0sWzIsMSwicF9uIiwyLHsic3R5bGUiOnsiYm9keSI6eyJuYW1lIjoiYmFycmVkIn19fV0sWzAsMywiYSIsMl0sWzIsNCwiYSciXSxbNCwzLCJwIiwwLHsic3R5bGUiOnsiYm9keSI6eyJuYW1lIjoiYmFycmVkIn19fV1d
    \begin{tikzcd}
        {A_0} & \cdots & {A_n} \\
        A && {A'}
        \arrow["a"', from=1-1, to=2-1]
        \arrow["{p_1}"'{inner sep=.8ex}, "\shortmid"{marking}, from=1-2, to=1-1]
        \arrow["{p_n}"'{inner sep=.8ex}, "\shortmid"{marking}, from=1-3, to=1-2]
        \arrow["{a'}", from=1-3, to=2-3]
        \arrow["p"{inner sep=.8ex}, "\shortmid"{marking}, from=2-3, to=2-1]
    \end{tikzcd}
    \hspace{6em}
    % https://q.uiver.app/#q=WzAsNSxbMCwwLCJGKEFfMCkiXSxbMSwwLCJcXGNkb3RzIl0sWzIsMCwiRihBX24pIl0sWzAsMSwiRihBKSJdLFsyLDEsIkYoQScpIl0sWzEsMCwicF8xIiwyLHsic3R5bGUiOnsiYm9keSI6eyJuYW1lIjoiYmFycmVkIn19fV0sWzIsMSwicF9uIiwyLHsic3R5bGUiOnsiYm9keSI6eyJuYW1lIjoiYmFycmVkIn19fV0sWzAsMywiRihhKSIsMl0sWzIsNCwiRihhJykiXSxbNCwzLCJwIiwwLHsic3R5bGUiOnsiYm9keSI6eyJuYW1lIjoiYmFycmVkIn19fV1d
    \begin{tikzcd}
        {F(A_0)} & \cdots & {F(A_n)} \\
        {F(A)} && {F(A')}
        \arrow["{F(a)}"', from=1-1, to=2-1]
        \arrow["{p_1}"'{inner sep=.8ex}, "\shortmid"{marking}, from=1-2, to=1-1]
        \arrow["{p_n}"'{inner sep=.8ex}, "\shortmid"{marking}, from=1-3, to=1-2]
        \arrow["{F(a')}", from=1-3, to=2-3]
        \arrow["p"{inner sep=.8ex}, "\shortmid"{marking}, from=2-3, to=2-1]
    \end{tikzcd}
    \qedshift
    \]
\end{definition}

Every (virtual) bicategory may be viewed as a \emph{tightly discrete} (virtual) \dc, \ie{} a (virtual) \dc{} in which the only tight morphisms are identities. Observe that, viewing a bicategory $\cl A$ as such, we have $\cl A\opt = \cl A\co$. With the notion of loose image at hand, we may exhibit the local cocompletion of a bicategory as arising from exponentiation of \vdcs.

\begin{theorem}
    \label{local-cocompletion-via-exponentiation}
    Let $\cl A$ be a locally small bicategory, viewed as a tightly discrete \pdc. The loose image of the embedding $\cl A({-}_2, {-}_1) \colon \cl A \to \Span^{\cl A\co}$ of \cref{presheaf-embedding} is tightly discrete and representable, and exhibits the local cocompletion $\cl A \to \widehat{\cl A}$.
\end{theorem}

\begin{proof}
    We unwrap the definition of the loose image of the embedding. Trivially, the loose image of any functor from a tightly discrete \vdc{} will itself be tightly discrete, \ie{} a virtual bicategory.
    \begin{itemize}
        \item An object is an object of $\cl A$.
        \item A 1-cell from $X'$ to $X$ comprises the following data, as described in \cref{Span-A-op}.
        % https://q.uiver.app/#q=WzAsNixbMiwwLCJcXG9ie1xcY2wgQX0iXSxbMCwwLCJcXG9ie1xcY2wgQX0iXSxbMSwwLCJcXGNsIEFfMVxcb3AiXSxbMSwxLCJcXFNldCJdLFsyLDEsIlxcU2V0Il0sWzAsMSwiXFxTZXQiXSxbMiwzLCJcXFhpIiwxXSxbMiwwLCJzXFxvcCJdLFsyLDEsInRcXG9wIiwyXSxbMCw0LCJcXGNsIEFfMCh7LX0sIFgnKSJdLFsxLDUsIlxcY2wgQV8wKHstfSwgWCkiLDJdLFs0LDMsIiIsMSx7ImxldmVsIjoyLCJzdHlsZSI6eyJoZWFkIjp7Im5hbWUiOiJub25lIn19fV0sWzMsNSwiIiwxLHsibGV2ZWwiOjIsInN0eWxlIjp7ImhlYWQiOnsibmFtZSI6Im5vbmUifX19XSxbNiwxMCwiIiwyLHsic2hvcnRlbiI6eyJzb3VyY2UiOjMwLCJ0YXJnZXQiOjMwfX1dLFs2LDksIiIsMCx7InNob3J0ZW4iOnsic291cmNlIjozMCwidGFyZ2V0IjozMH19XV0=
        \[\begin{tikzcd}
            {\ob{\cl A}} & {\cl A_1\op} & {\ob{\cl A}} \\
            \Set & \Set & \Set
            \arrow[""{name=0, anchor=center, inner sep=0}, "{\cl A_0({-}, X)}"', from=1-1, to=2-1]
            \arrow["{t\op}"', from=1-2, to=1-1]
            \arrow["{s\op}", from=1-2, to=1-3]
            \arrow[""{name=1, anchor=center, inner sep=0}, "\Xi"{description}, from=1-2, to=2-2]
            \arrow[""{name=2, anchor=center, inner sep=0}, "{\cl A_0({-}, X')}", from=1-3, to=2-3]
            \arrow[equals, from=2-2, to=2-1]
            \arrow[equals, from=2-3, to=2-2]
            \arrow[between={0.3}{0.7}, Rightarrow, from=1, to=0]
            \arrow[between={0.3}{0.7}, Rightarrow, from=1, to=2]
        \end{tikzcd}\]
        Here, since $\cl A_0$ is discrete, $\cl A_0({-}, X)$ is the characteristic function, which is equal to a singleton set when applied to $X$, and the empty set otherwise. Consequently, the existence of the two natural transformations above implies that the set $\Xi(p)$ is empty when the domain and codomain of $p$ are not $X'$ and $X$ respectively; subject to this constraint, the two natural transformations are uniquely determined. Consequently, the data of such a 1-cell is equivalently specified by a presheaf on the hom-category $\cl A(X', X)$.
        \item A 2-cell,
        % https://q.uiver.app/#q=WzAsNSxbMCwwLCJYXzAiXSxbMSwwLCJcXGNkb3RzIl0sWzIsMCwiWF9uIl0sWzAsMSwiWF8wIl0sWzIsMSwiWF9uIl0sWzEsMCwiXFxYaV8xIiwyLHsic3R5bGUiOnsiYm9keSI6eyJuYW1lIjoiYmFycmVkIn19fV0sWzIsMSwiXFxYaV9uIiwyLHsic3R5bGUiOnsiYm9keSI6eyJuYW1lIjoiYmFycmVkIn19fV0sWzAsMywiIiwyLHsibGV2ZWwiOjIsInN0eWxlIjp7ImhlYWQiOnsibmFtZSI6Im5vbmUifX19XSxbMiw0LCIiLDAseyJsZXZlbCI6Miwic3R5bGUiOnsiaGVhZCI6eyJuYW1lIjoibm9uZSJ9fX1dLFs0LDMsIlxcWGkiLDAseyJzdHlsZSI6eyJib2R5Ijp7Im5hbWUiOiJiYXJyZWQifX19XSxbNyw4LCJcXHhpIiwxLHsic2hvcnRlbiI6eyJzb3VyY2UiOjIwLCJ0YXJnZXQiOjIwfSwic3R5bGUiOnsiYm9keSI6eyJuYW1lIjoibm9uZSJ9LCJoZWFkIjp7Im5hbWUiOiJub25lIn19fV1d
        \[\begin{tikzcd}
            {X_0} & \cdots & {X_n} \\
            {X_0} && {X_n}
            \arrow[""{name=0, anchor=center, inner sep=0}, equals, from=1-1, to=2-1]
            \arrow["{\Xi_1}"'{inner sep=.8ex}, "\shortmid"{marking}, from=1-2, to=1-1]
            \arrow["{\Xi_n}"'{inner sep=.8ex}, "\shortmid"{marking}, from=1-3, to=1-2]
            \arrow[""{name=1, anchor=center, inner sep=0}, equals, from=1-3, to=2-3]
            \arrow["\Xi"{inner sep=.8ex}, "\shortmid"{marking}, from=2-3, to=2-1]
            \arrow["\xi"{description}, draw=none, from=0, to=1]
        \end{tikzcd}\]
        where $\Xi_{i + 1} \colon \cl A(X_{i + 1}, X_i)\op \to \Set$ for each $0 \leq i < n$, is given by a natural transformation
        % https://q.uiver.app/#q=WzAsNSxbMCwwLCJcXGNsIEFfblxcb3AiXSxbMCwxLCJcXFNwYW5fbiJdLFsxLDEsIlxcU3Bhbl8xIl0sWzIsMCwiXFxjbCBBXzFcXG9wIl0sWzIsMSwiXFxTZXQiXSxbMCwxLCJcXHRwe1xcWGlfMSwgXFxsZG90cywgXFxYaV9ufSIsMl0sWzEsMiwiXFxvZG90X24iLDJdLFswLDMsIlxcb2RvdF9uXFxvcCJdLFszLDQsIlxcWGkiXSxbMiw0LCJtIiwyXSxbNSw4LCJcXHhpIiwwLHsic2hvcnRlbiI6eyJzb3VyY2UiOjMwLCJ0YXJnZXQiOjMwfX1dXQ==
        \[\begin{tikzcd}
            {\cl A_n\op} && {\cl A_1\op} \\
            {\Span_n} & {\Span_1} & \Set
            \arrow["{\odot_n\op}", from=1-1, to=1-3]
            \arrow[""{name=0, anchor=center, inner sep=0}, "{\tp{\Xi_1, \ldots, \Xi_n}}"', from=1-1, to=2-1]
            \arrow[""{name=1, anchor=center, inner sep=0}, "\Xi", from=1-3, to=2-3]
            \arrow["{\odot_n}"', from=2-1, to=2-2]
            \arrow["m"', from=2-2, to=2-3]
            \arrow["\xi", between={0.3}{0.7}, Rightarrow, from=0, to=1]
        \end{tikzcd}\]
        satisfying the two equations expressed in \cref{Span-A-op}, hence, for each family $p_i \colon X_{i + 1} \to X_i$ of 1-cells in $\cl A$ (for $0 \leq i < n$), a function
        \[\xi_{p_1, \ldots, p_n} \colon \Xi_1(p_1) \times \cdots \times \Xi_n(p_n) \to \Xi(p_1 \odot \cdots \odot p_n)\]
        natural in $(p_i)_{0 \leq i < n}$. As in the one-object setting, this natural family may equivalently be expressed in terms of a coend as follows, exhibiting the loose image as representable, hence a bicategory.
        \[\xi \colon \int^{p_1, \ldots, p_n} \Xi_1(p_1) \times \cdots \times \Xi_n(p_n) \times \cl A(X_n, X_0)(p_1 \odot \cdots \odot p_n, {-}) \to \Xi\]
    \end{itemize}
    The local cocompletion $\cl A$ is described explicitly, for instance, in \cite[\S5]{kelly2002categories}. It follows by inspection that it agrees with the description of the loose image above.
\end{proof}

Though we shall not do so here, it would be interesting to explore the extent to which \cref{local-cocompletion-via-exponentiation} may be extended from bicategories to \dcs{}, \ie{} whether the loose image of $\A({-}_2, {-}_1) \colon \A \to \Span^{\A\opt}$, for a general \pdc{} $\A$, may also be viewed as some notion of local cocompletion of $\A$.

\subsection{Categories graded by bicategories}

We take the opportunity in this subsection to establish an alternative characterisation of the \pdc{} $\Lax(\cl A\co, \Span)$ of contravariant lax functors from a bicategory $\cl A$ to the bicategory of spans (\ie{} the bicategory underlying the \pdc{} $\Span$ of \cref{Span}), in particular establishing that it arises as the \pdc{} of categories enriched in the bicategory $\widehat{\cl A}$. This serves two purposes: it gives an application of our characterisation of the local cocompletion in \cref{local-cocompletion-via-exponentiation}; and it allows us to draw connections with other constructions in enriched category theory. We shall first establish the characterisation and then discuss the relevant connections.

To begin, we recall the definition of enrichment in a virtual bicategory. Just as internal categories may be defined as loose monads in a \vdc{} of spans (\cref{Mod-Span}), enriched categories may be defined as loose monads in a \vdc{} of matrices. The following definition appears as \cites[Definition~2.62]{kawase2025double}[\S6]{fujii2025familial}; when $\cl V$ is a bicategory with local coproducts, $\cl V\h\Mat$ is representable and agrees with the proarrow equipment of $\cl V$-matrices defined in \cite[\S1]{betti1983variation}.

\begin{definition}
    \label{V-Mat}
    Let $\cl V$ be a virtual bicategory. Define a \vdc{} $\cl V\h\Mat$ as follows.
    \begin{enumerate}
        \item An object is a \emph{$\cl V$-set}, \ie{} a set $\ob A$ together with a function $\ex{\ph} \colon \ob A \to \ob{\cl V}$ defining the \emph{extent} of each element.
        \item A tight morphism is a \emph{$\cl V$-function}, \ie{} a function $\ob f \colon \ob A \to \ob B$ such that, for each $a \in \ob A$, we have $\ex{\ob f a} = \ex a$.
        \item A loose morphism  $m \colon A' \lto A$ is a \emph{$\cl V$-matrix}, \ie{} an assignment taking each $a \in \ob A$ and $a' \in \ob{A'}$ to a loose morphism $m(a, a') \colon \ex{a'} \lto \ex a$ in $\cl V$.
        \item A cell with frame on the left below is a \emph{$\cl V$-transformation}, \ie{} an assignment taking each $a_0 \in \ob{A_0}, \ldots, a_n \in \ob{A_n}$ to a cell in $\cl V$ as on the right below.
        \[
        % https://q.uiver.app/#q=WzAsNSxbMCwwLCJBXzAiXSxbMSwwLCJcXGNkb3RzIl0sWzIsMCwiQV9uIl0sWzAsMSwiQSJdLFsyLDEsIkEnIl0sWzEsMCwibV8xIiwyLHsic3R5bGUiOnsiYm9keSI6eyJuYW1lIjoiYmFycmVkIn19fV0sWzIsMSwibV9uIiwyLHsic3R5bGUiOnsiYm9keSI6eyJuYW1lIjoiYmFycmVkIn19fV0sWzAsMywiZiIsMl0sWzIsNCwiZiciXSxbNCwzLCJtIiwwLHsic3R5bGUiOnsiYm9keSI6eyJuYW1lIjoiYmFycmVkIn19fV0sWzcsOCwiXFxhbHBoYSIsMSx7InNob3J0ZW4iOnsic291cmNlIjoyMCwidGFyZ2V0IjoyMH0sInN0eWxlIjp7ImJvZHkiOnsibmFtZSI6Im5vbmUifSwiaGVhZCI6eyJuYW1lIjoibm9uZSJ9fX1dXQ==
        \begin{tikzcd}
            {A_0} & \cdots & {A_n} \\
            A && {A'}
            \arrow[""{name=0, anchor=center, inner sep=0}, "f"', from=1-1, to=2-1]
            \arrow["{m_1}"'{inner sep=.8ex}, "\shortmid"{marking}, from=1-2, to=1-1]
            \arrow["{m_n}"'{inner sep=.8ex}, "\shortmid"{marking}, from=1-3, to=1-2]
            \arrow[""{name=1, anchor=center, inner sep=0}, "{f'}", from=1-3, to=2-3]
            \arrow["m"{inner sep=.8ex}, "\shortmid"{marking}, from=2-3, to=2-1]
            \arrow["\alpha"{description}, draw=none, from=0, to=1]
        \end{tikzcd}
        \hspace{4em}
        % https://q.uiver.app/#q=WzAsNSxbMCwwLCJcXGV4e2FfMH0iXSxbMSwwLCJcXGNkb3RzIl0sWzIsMCwiXFxleHthX259Il0sWzAsMSwiXFxleHtcXG9iIGYoYV8wKX0iXSxbMiwxLCJcXGV4e1xcb2J7Zid9KGFfbil9Il0sWzEsMCwibV8xKGFfMCwgYV8xKSIsMix7InN0eWxlIjp7ImJvZHkiOnsibmFtZSI6ImJhcnJlZCJ9fX1dLFsyLDEsIm1fbihhX3tuIC0gMX0sIGFfbikiLDIseyJzdHlsZSI6eyJib2R5Ijp7Im5hbWUiOiJiYXJyZWQifX19XSxbMCwzLCIiLDIseyJsZXZlbCI6Miwic3R5bGUiOnsiaGVhZCI6eyJuYW1lIjoibm9uZSJ9fX1dLFsyLDQsIiIsMCx7ImxldmVsIjoyLCJzdHlsZSI6eyJoZWFkIjp7Im5hbWUiOiJub25lIn19fV0sWzQsMywibShmKGFfMCksIGYnKGFfbikpIiwwLHsic3R5bGUiOnsiYm9keSI6eyJuYW1lIjoiYmFycmVkIn19fV0sWzcsOCwiXFxhbHBoYV97YV8wLCBcXGxkb3RzLCBhX259IiwxLHsic2hvcnRlbiI6eyJzb3VyY2UiOjIwLCJ0YXJnZXQiOjIwfSwic3R5bGUiOnsiYm9keSI6eyJuYW1lIjoibm9uZSJ9LCJoZWFkIjp7Im5hbWUiOiJub25lIn19fV1d
        \begin{tikzcd}[column sep=huge]
            {\ex{a_0}} & \cdots & {\ex{a_n}} \\
            {\ex{\ob f(a_0)}} && {\ex{\ob{f'}(a_n)}}
            \arrow[""{name=0, anchor=center, inner sep=0}, equals, from=1-1, to=2-1]
            \arrow["{m_1(a_0, a_1)}"'{inner sep=.8ex}, "\shortmid"{marking}, from=1-2, to=1-1]
            \arrow["{m_n(a_{n - 1}, a_n)}"'{inner sep=.8ex}, "\shortmid"{marking}, from=1-3, to=1-2]
            \arrow[""{name=1, anchor=center, inner sep=0}, equals, from=1-3, to=2-3]
            \arrow["{m(f(a_0), f'(a_n))}"{inner sep=.8ex}, "\shortmid"{marking}, from=2-3, to=2-1]
            \arrow["{\alpha_{a_0, \ldots, a_n}}"{description}, draw=none, from=0, to=1]
        \end{tikzcd}
        \]
    \end{enumerate}
    Composition and identities of tight morphisms are simply those of functions; composition and identities of cells are as in $\cl V$. There is an inclusion functor $\cl V \to \cl V\h\Mat$ sending each object $V \in \cl V$ to the singleton set, whose unique element has extent $V$. $\ph\h\Mat$ extends to a pointed 2-functor $\b{VBicat}_l \to \VDbl$ via postcomposition, where $\b{VBicat}_l$ is the full sub-2-category of $\VDbl$ spanned by the virtual bicategories.
\end{definition}

The following definition appears as \cites[Definition~2.67]{kawase2025double}[\S7]{fujii2025familial}; when $\cl V$ is a locally cocomplete bicategory, $\cl V\h\Dist$ is representable and agrees with the proarrow equipment of $\cl V$-modules defined in \cite[\S3]{betti1983variation}.

\begin{definition}
    \label{V-Dist}
    For a virtual bicategory $\cl V$, define the normal \vdc{} $\cl V\h\Dist \defeq \Mod(\cl V\h\Mat)$ of $\cl V$-enriched categories, $\cl V$-enriched functors, $\cl V$-enriched distributors, and $\cl V$-enriched natural transformations.
\end{definition}

The reader who has not previously encountered the notion of enrichment in a bicategory is invited to expand \cref{V-Dist}; the explicit definition will not be necessary for what follows. To establish the promised characterisation of $\Lax(\cl A\co, \Span)$, observe that both it and $\cl V\h\Dist$ arise by applying the $\Mod$ construction to simpler \vdcs{} (\cref{Lax-is-Mod,V-Dist}). Thus, to establish that $\Lax(\cl A\co, \Span)$ arises as a \vdc{} of enriched categories, it suffices to establish that the exponential \vdc{} $\Span^{\cl A\co}$ is equivalent to a \vdc{} of matrices.

\begin{proposition}
    \label{local-cocompletion-matrices-are-indexed-spans}
    For each small bicategory $\cl A$, there is an equivalence of \pdcs, 2-natural in $\cl A$:
    \[\widehat{\cl A}\h\Mat \equiv \Span^{\cl A\co}\]
\end{proposition}

\begin{proof}
    First, observe that $\widehat{\cl A}\h\Mat$ is representable, since $\widehat{\cl A}$ is, by definition, a locally cocomplete bicategory. The functor $\cl A \to \widehat{\cl A} \to \widehat{\cl A}\h\Mat$ induces a nerve $\widehat{\cl A}\h\Mat \to \Span^{\cl A\co}$ by \cref{nerve}: this is an equivalence. The underlying functor of categories is simply the equivalence
    \[\Set/\ob{\widehat{\cl A}} = \Set/\ob{\cl A} \equiv \Set^{\ob{\cl A}}\]
    between sets over $\ob{\cl A}$ and sets indexed by $\ob{\cl A}$. Given $\widehat{\cl A}$-sets $\epsilon \colon \ob A \to \ob{\widehat{\cl A}}$ and $\epsilon' \colon \ob{A'} \to \ob{\widehat{\cl A}}$ (where $\ob{\widehat{\cl A}} = \ob{\cl A}$), a loose morphism $\Xi$ in $\Span^{\cl A\op}$ (as in \cref{Span-A-op}) between the corresponding indexed sets
    % https://q.uiver.app/#q=WzAsNixbMiwwLCJcXG9ie1xcY2wgQX0iXSxbMCwwLCJcXG9ie1xcY2wgQX0iXSxbMSwwLCJcXGNsIEFfMVxcb3AiXSxbMSwxLCJcXFNldCJdLFsyLDEsIlxcU2V0Il0sWzAsMSwiXFxTZXQiXSxbMiwzLCJcXFhpIiwxXSxbMiwwLCJzXFxvcCJdLFsyLDEsInRcXG9wIiwyXSxbMCw0LCIoXFxlcHNpbG9uJylcXGludiJdLFsxLDUsIlxcZXBzaWxvblxcaW52IiwyXSxbNCwzLCIiLDEseyJsZXZlbCI6Miwic3R5bGUiOnsiaGVhZCI6eyJuYW1lIjoibm9uZSJ9fX1dLFszLDUsIiIsMSx7ImxldmVsIjoyLCJzdHlsZSI6eyJoZWFkIjp7Im5hbWUiOiJub25lIn19fV0sWzYsMTAsIlxcWGlfdCIsMix7InNob3J0ZW4iOnsic291cmNlIjozMCwidGFyZ2V0IjozMH0sImVkZ2VfYWxpZ25tZW50Ijp7InNvdXJjZSI6ZmFsc2UsInRhcmdldCI6ZmFsc2V9fV0sWzYsOSwiXFxYaV9zIiwwLHsic2hvcnRlbiI6eyJzb3VyY2UiOjMwLCJ0YXJnZXQiOjMwfSwiZWRnZV9hbGlnbm1lbnQiOnsic291cmNlIjpmYWxzZSwidGFyZ2V0IjpmYWxzZX19XV0=
    \[\begin{tikzcd}
        {\ob{\cl A}} & {\cl A_1\op} & {\ob{\cl A}} \\
        \Set & \Set & \Set
        \arrow[""{name=0, anchor=center, inner sep=0}, "{\epsilon\inv}"', from=1-1, to=2-1]
        \arrow[""{name=0p, anchor=center, inner sep=0}, phantom, from=1-1, to=2-1, start anchor=center, end anchor=center]
        \arrow["{t\op}"', from=1-2, to=1-1]
        \arrow["{s\op}", from=1-2, to=1-3]
        \arrow[""{name=1, anchor=center, inner sep=0}, "\Xi"{description}, from=1-2, to=2-2]
        \arrow[""{name=1p, anchor=center, inner sep=0}, phantom, from=1-2, to=2-2, start anchor=center, end anchor=center]
        \arrow[""{name=1p, anchor=center, inner sep=0}, phantom, from=1-2, to=2-2, start anchor=center, end anchor=center]
        \arrow[""{name=2, anchor=center, inner sep=0}, "{(\epsilon')\inv}", from=1-3, to=2-3]
        \arrow[""{name=2p, anchor=center, inner sep=0}, phantom, from=1-3, to=2-3, start anchor=center, end anchor=center]
        \arrow[equals, from=2-2, to=2-1]
        \arrow[equals, from=2-3, to=2-2]
        \arrow["{\Xi_t}"', between={0.3}{0.7}, Rightarrow, from=1p, to=0p]
        \arrow["{\Xi_s}", between={0.3}{0.7}, Rightarrow, from=1p, to=2p]
    \end{tikzcd}\]
    is isomorphic to the image, under the nerve, of the $\widehat{\cl A}$-matrix that sends $a \in \ob A$ and $a' \in \ob{A'}$ to the presheaf $\Xi_{\ex{a'}, \ex a} \colon \cl A(\ex{a'}, \ex a)\op \to \Set$. Thus, the nerve is essentially surjective on loose morphisms. By \cref{local-cocompletion-via-exponentiation}, a cell in $\widehat{\cl A}\h\Mat$ comprises, for each $a_0 \in \ob{A_0}, \ldots, a_n \in \ob{A_n}$, a cell with the following frame in $\Span^{\cl A\co}$. Using once more the correspondence between sets fibred over $\ob{\cl A}$ and $\ob{\cl A}$-indexed sets, this data is in bijection with cells in $\Span^{\cl A\co}$ between the corresponding indexed sets. Thus, the nerve is \ff.
    % https://q.uiver.app/#q=WzAsNSxbMCwwLCJcXGNsIEEoey19LCBcXGV4e2FfMH0pIl0sWzEsMCwiXFxjZG90cyJdLFsyLDAsIlxcY2wgQSh7LX0sIFxcZXh7YV9ufSkiXSxbMCwxLCJcXGNsIEEoey19LCBcXGV4e1xcb2IgZihhXzApfSkiXSxbMiwxLCJcXGNsIEEoey19LCBcXGV4e1xcb2J7Zid9KGFfbil9KSJdLFsxLDAsIm1fMShhXzAsIGFfMSkiLDIseyJzdHlsZSI6eyJib2R5Ijp7Im5hbWUiOiJiYXJyZWQifX19XSxbMiwxLCJtX24oYV97biAtIDF9LCBhX24pIiwyLHsic3R5bGUiOnsiYm9keSI6eyJuYW1lIjoiYmFycmVkIn19fV0sWzAsMywiIiwyLHsibGV2ZWwiOjIsInN0eWxlIjp7ImhlYWQiOnsibmFtZSI6Im5vbmUifX19XSxbMiw0LCIiLDAseyJsZXZlbCI6Miwic3R5bGUiOnsiaGVhZCI6eyJuYW1lIjoibm9uZSJ9fX1dLFs0LDMsIm0oZihhXzApLCBmJyhhX24pKSIsMCx7InN0eWxlIjp7ImJvZHkiOnsibmFtZSI6ImJhcnJlZCJ9fX1dXQ==
    \[\begin{tikzcd}[column sep=huge]
        {\cl A({-}, \ex{a_0})} & \cdots & {\cl A({-}, \ex{a_n})} \\
        {\cl A({-}, \ex{\ob f(a_0)})} && {\cl A({-}, \ex{\ob{f'}(a_n)})}
        \arrow[equals, from=1-1, to=2-1]
        \arrow["{m_1(a_0, a_1)}"'{inner sep=.8ex}, "\shortmid"{marking}, from=1-2, to=1-1]
        \arrow["{m_n(a_{n - 1}, a_n)}"'{inner sep=.8ex}, "\shortmid"{marking}, from=1-3, to=1-2]
        \arrow[equals, from=1-3, to=2-3]
        \arrow["{m(f(a_0), f'(a_n))}"{inner sep=.8ex}, "\shortmid"{marking}, from=2-3, to=2-1]
    \end{tikzcd}\qedshift\]
\end{proof}

We consequently obtain our promised characterisation.

\begin{theorem}[Cockett--Niefield--Wood]
    \label{lax-functors-are-graded-categories}
    For each small bicategory $\cl A$, there is an equivalence of \pdcs, 2-natural in $\cl A$:
    \[\widehat{\cl A}\h\Dist \equiv \Lax(\cl A\co, \Span)\]
\end{theorem}

\begin{proof}
    First, observe that $\widehat{\cl A}\h\Dist$ is representable, since $\widehat{\cl A}$ is, by definition, a locally cocomplete bicategory. The equivalence then follows immediately from applying $\Mod$ to the equivalence of \cref{local-cocompletion-matrices-are-indexed-spans}, using \cref{Lax-is-Mod}.
\end{proof}

\Cref{lax-functors-are-graded-categories} recovers an unpublished correspondence due to Cockett, Niefield and Wood~\cite{niefield2012double}. When $\cl A$ is a 2-category, the equivalence of 2-categories underlying \cref{lax-functors-are-graded-categories} appears as \cite[Proposition~6.1]{cockett2014restriction}.

When $\cl A = \Sigma\b V$ is the delooping of a monoidal category, a $\widehat{\cl A}$-enriched category is what \textcite{wood1976indicial} calls a \emph{large $\b V$-category} and what is now generally referred to as a \emph{$\b V$-graded category}~\cite{lucyshyn2025graded}. $\b V$-graded categories generalise $\b V$-enriched categories and provide a useful setting in which to consider notions of strength~\cite{wood1976indicial}. In the following section, we will show that contravariant lax functors into $\Span$ correspond to discrete fibrations of \dcs{} (\cref{elements-correspondence}). In particular, considering discrete fibrations over $\Sigma\b V$ recovers the notion of \emph{$\b V$-graduated bicategory} of \textcite[Definition~8]{guitart1980tenseurs}, which was introduced as a reformulation of the notion of $\b V$-graded category.

As with \cref{local-cocompletion-via-exponentiation}, it would be interesting to explore the extent to which \cref{lax-functors-are-graded-categories} extends from bicategories to \pdcs.

\section{The elements construction}
\label{elements-construction}

The data of the exponential $\Span^{\A\opt}$ in \cref{Span-A-op} is expressed in terms of presheaves (of categories) and natural transformations therebetween, and we are inevitably led to try transferring this data across the correspondence between presheaves and discrete fibrations. This motivates the following definition.

\begin{definition}
    \label{LvAmnIso}
    Let $\A$ be a \pdc. We define a \vdc{} $\LvAmnIso(\A)$ (for \emph{levelwise amnestic isofibrations}) as follows.
    \begin{itemize}
        \item The underlying category is the full subcategory of the slice category $\Cat/\A_0$ spanned by the amnestic isofibrations\footnotemark{}.
        \footnotetext{An amnestic isofibration is analogous to a discrete (op)fibration, but in which we only ask for unique lifts of \emph{iso}morphisms.}
        \item A loose morphism from $D' \colon \D' \to \A_0$ to $D \colon \D \to \A_0$ is given by an amnestic isofibration $E \colon \E \to \A_1$ and a span rendering the following diagram commutative.
        % https://q.uiver.app/#q=WzAsNixbMiwxLCJcXEFfMCJdLFsxLDEsIlxcQV8xIl0sWzAsMSwiXFxBXzAiXSxbMiwwLCJcXEQnIl0sWzAsMCwiXFxEIl0sWzEsMCwiXFxFIl0sWzMsMCwiRCciXSxbNCwyLCJEIiwyXSxbNSwzLCJFX3MiXSxbNSw0LCJFX3QiLDJdLFsxLDAsInMiLDJdLFsxLDIsInQiXSxbNSwxLCJFIiwxXV0=
        \[\begin{tikzcd}
            \D & \E & {\D'} \\
            {\A_0} & {\A_1} & {\A_0}
            \arrow["D"', from=1-1, to=2-1]
            \arrow["{E_t}"', from=1-2, to=1-1]
            \arrow["{E_s}", from=1-2, to=1-3]
            \arrow["E"{description}, from=1-2, to=2-2]
            \arrow["{D'}", from=1-3, to=2-3]
            \arrow["t", from=2-2, to=2-1]
            \arrow["s"', from=2-2, to=2-3]
        \end{tikzcd}\]
        \item A cell $\zeta$ as on the left below (where each loose morphism $E_i$ on the left is given explicitly by the data of functors $E_i$, $(E_i)_s$ and $(E_i)_t$, as on the right below)
        \[
        % https://q.uiver.app/#q=WzAsNSxbMiwwLCJEX24iXSxbMiwxLCJEJyJdLFsxLDAsIlxcY2RvdHMiXSxbMCwxLCJEIl0sWzAsMCwiRF8wIl0sWzAsMSwiSCciXSxbMCwyLCJFX24iLDIseyJzdHlsZSI6eyJib2R5Ijp7Im5hbWUiOiJiYXJyZWQifX19XSxbMSwzLCJFIiwwLHsic3R5bGUiOnsiYm9keSI6eyJuYW1lIjoiYmFycmVkIn19fV0sWzIsNCwiRV8xIiwyLHsic3R5bGUiOnsiYm9keSI6eyJuYW1lIjoiYmFycmVkIn19fV0sWzQsMywiSCIsMl0sWzUsOSwiXFx6ZXRhIiwxLHsic2hvcnRlbiI6eyJzb3VyY2UiOjIwLCJ0YXJnZXQiOjIwfSwic3R5bGUiOnsiYm9keSI6eyJuYW1lIjoibm9uZSJ9LCJoZWFkIjp7Im5hbWUiOiJub25lIn19fV1d
        \begin{tikzcd}
            {D_0} & \cdots & {D_n} \\
            D && {D'}
            \arrow[""{name=0, anchor=center, inner sep=0}, "H"', from=1-1, to=2-1]
            \arrow["{E_1}"'{inner sep=.8ex}, "\shortmid"{marking}, from=1-2, to=1-1]
            \arrow["{E_n}"'{inner sep=.8ex}, "\shortmid"{marking}, from=1-3, to=1-2]
            \arrow[""{name=1, anchor=center, inner sep=0}, "{H'}", from=1-3, to=2-3]
            \arrow["E"{inner sep=.8ex}, "\shortmid"{marking}, from=2-3, to=2-1]
            \arrow["\zeta"{description}, draw=none, from=1, to=0]
        \end{tikzcd}
        \hspace{3em}
        % https://q.uiver.app/#q=WzAsNixbMCwwLCJcXERfe2kgLSAxfSJdLFswLDEsIlxcQV8wIl0sWzEsMCwiXFxFX2kiXSxbMSwxLCJcXEFfMSJdLFsyLDAsIlxcRF9pIl0sWzIsMSwiXFxBXzAiXSxbMCwxLCJEX3tpIC0gMX0iLDJdLFsyLDMsIkVfaSIsMV0sWzMsMSwidCJdLFszLDUsInMiLDJdLFsyLDAsIihFX2kpX3QiLDJdLFsyLDQsIihFX2kpX3MiXSxbNCw1LCJEX2kiXV0=
        \begin{tikzcd}
            {\D_{i - 1}} & {\E_i} & {\D_i} \\
            {\A_0} & {\A_1} & {\A_0}
            \arrow["{D_{i - 1}}"', from=1-1, to=2-1]
            \arrow["{(E_i)_t}"', from=1-2, to=1-1]
            \arrow["{(E_i)_s}", from=1-2, to=1-3]
            \arrow["{E_i}"{description}, from=1-2, to=2-2]
            \arrow["{D_i}", from=1-3, to=2-3]
            \arrow["t", from=2-2, to=2-1]
            \arrow["s"', from=2-2, to=2-3]
        \end{tikzcd}
        \]
        is given by a functor $\zeta$ making the following two diagrams commute.
        \[
        % https://q.uiver.app/#q=WzAsNCxbMCwwLCJcXEVfezEsIFxcbGRvdHMsIG59Il0sWzAsMSwiXFxBX24iXSxbMSwxLCJcXEFfMSJdLFsxLDAsIlxcRSJdLFswLDEsIkVfezEsIFxcbGRvdHMsIG59IiwyXSxbMSwyLCJcXG9kb3RfbiIsMl0sWzAsMywiXFx6ZXRhIl0sWzMsMiwiRSJdXQ==
        \begin{tikzcd}
            {\E_{1, \ldots, n}} & \E \\
            {\A_n} & {\A_1}
            \arrow["\zeta", from=1-1, to=1-2]
            \arrow["{E_{1, \ldots, n}}"', from=1-1, to=2-1]
            \arrow["E", from=1-2, to=2-2]
            \arrow["{\odot_n}"', from=2-1, to=2-2]
        \end{tikzcd}
        \hspace{3em}
        % https://q.uiver.app/#q=WzAsOCxbMiwwLCJcXEVfezEsIFxcbGRvdHMsIG59Il0sWzEsMCwiXFxFXzEiXSxbMCwxLCJcXEQiXSxbMiwxLCJcXEUiXSxbMCwwLCJcXERfMCJdLFszLDAsIlxcRV9uIl0sWzQsMCwiXFxEX24iXSxbNCwxLCJcXEQnIl0sWzAsMSwiXFxwaV8xIiwyXSxbMCwzLCJcXHpldGEiLDFdLFszLDIsIkVfdCJdLFsxLDQsIihFXzEpX3QiLDJdLFs0LDIsIkgiLDJdLFswLDUsIlxccGlfbiJdLFs1LDYsIihFX24pX3MiXSxbNiw3LCJIJyJdLFszLDcsIkVfcyIsMl1d
        \begin{tikzcd}[column sep=large]
            {\D_0} & {\E_1} & {\E_{1, \ldots, n}} & {\E_n} & {\D_n} \\
            \D && \E && {\D'}
            \arrow["H"', from=1-1, to=2-1]
            \arrow["{(E_1)_t}"', from=1-2, to=1-1]
            \arrow["{\pi_1}"', from=1-3, to=1-2]
            \arrow["{\pi_n}", from=1-3, to=1-4]
            \arrow["\zeta"{description}, from=1-3, to=2-3]
            \arrow["{(E_n)_s}", from=1-4, to=1-5]
            \arrow["{H'}", from=1-5, to=2-5]
            \arrow["{E_t}", from=2-3, to=2-1]
            \arrow["{E_s}"', from=2-3, to=2-5]
        \end{tikzcd}
        \]
        Above, we have denoted by $\E_{1, \ldots, n}$ the limit (\ie{} wide pullback) of the following chain of spans, and by $E_{1, \ldots, n} \colon \E_{1, \ldots, n} \to \A_n$ the canonical functor induced by the functors $E_1, \ldots, E_n$, using that $\A_n \iso \A_1 \times_{\A_0} \cdots \times_{\A_0} \A_1$.
        % https://q.uiver.app/#q=WzAsNyxbMSwwLCJcXEVfMSJdLFswLDEsIlxcRF8wIl0sWzIsMSwiXFxEXzEiXSxbMywwLCJcXEVfMiJdLFs0LDEsIlxcRF8yIl0sWzUsMCwiXFxjZG90cyJdLFs2LDEsIlxcRF9uIl0sWzAsMSwiKEVfMSlfdCIsMl0sWzAsMiwiKEVfMSlfcyIsMV0sWzMsMiwiKEVfMilfdCIsMV0sWzMsNCwiKEVfMilfcyIsMV0sWzUsNCwiKEVfMylfdCIsMV0sWzUsNiwiKEVfbilfcyJdXQ==
        \[\begin{tikzcd}
            & {\E_1} && {\E_2} && \cdots \\
            {\D_0} && {\D_1} && {\D_2} && {\D_n}
            \arrow["{(E_1)_t}"', from=1-2, to=2-1]
            \arrow["{(E_1)_s}"{description}, from=1-2, to=2-3]
            \arrow["{(E_2)_t}"{description}, from=1-4, to=2-3]
            \arrow["{(E_2)_s}"{description}, from=1-4, to=2-5]
            \arrow["{(E_3)_t}"{description}, from=1-6, to=2-5]
            \arrow["{(E_n)_s}", from=1-6, to=2-7]
        \end{tikzcd}\]
        \item The identity cell on $E \colon \E \to \A_1$ is given by the identity functor on $\E$.
        \item For composition, observe that the diagram on the left below commutes, but the diagram on the right below commutes only up to isomorphism, using the structural isomorphisms in $\A$.
        \[
        % https://q.uiver.app/#q=WzAsNixbMSwwLCJcXEVfezEsIFxcbGRvdHMsIG59Il0sWzEsMSwiXFxBX24iXSxbMiwxLCJcXEFfMSJdLFsyLDAsIlxcRSJdLFswLDAsIlxcRV97MSwgXFxsZG90cywgXFxzdW1fezEgXFxsZXEgaSBcXGxlcSBufSBtX2l9Il0sWzAsMSwiXFxBX3tcXHN1bV97MSBcXGxlcSBpIFxcbGVxIG59IG1faX0iXSxbMCwxLCJFX3sxLCBcXGxkb3RzLCBufSIsMV0sWzEsMiwiXFxvZG90X24iLDJdLFswLDMsIlxcemV0YSJdLFszLDIsIkUiXSxbNCwwLCJcXHpldGFfMSwgXFxsZG90cywgXFx6ZXRhX24iXSxbNSwxLCJcXHRwe1xcb2RvdF97bV8xfSwgXFxsZG90cywgXFxvZG90X3ttX259fSIsMl0sWzQsNSwiRV97MSwgXFxsZG90cywgXFxzdW1fezEgXFxsZXEgaSBcXGxlcSBufSBtX2l9IiwxXV0=
        \begin{tikzcd}[column sep=large]
            {\E_{1, \ldots, \sum_{1 \leq i \leq n} m_i}} & {\E_{1, \ldots, n}} & \E \\
            {\A_{\sum_{1 \leq i \leq n} m_i}} & {\A_n} & {\A_1}
            \arrow["{\zeta_1, \ldots, \zeta_n}", from=1-1, to=1-2]
            \arrow["{E_{1, \ldots, \sum_{1 \leq i \leq n} m_i}}"{description}, from=1-1, to=2-1]
            \arrow["\zeta", from=1-2, to=1-3]
            \arrow["{E_{1, \ldots, n}}"{description}, from=1-2, to=2-2]
            \arrow["E", from=1-3, to=2-3]
            \arrow["{\tp{\odot_{m_1}, \ldots, \odot_{m_n}}}"', from=2-1, to=2-2]
            \arrow["{\odot_n}"', from=2-2, to=2-3]
        \end{tikzcd}
        \hspace{2em}
        % https://q.uiver.app/#q=WzAsNSxbMSwwLCJcXEVfezEsIFxcbGRvdHMsIG59Il0sWzIsMSwiXFxBXzEiXSxbMiwwLCJcXEUiXSxbMCwwLCJcXEVfezEsIFxcbGRvdHMsIFxcc3VtX3sxIFxcbGVxIGkgXFxsZXEgbn0gbV9pfSJdLFswLDEsIlxcQV97XFxzdW1fezEgXFxsZXEgaSBcXGxlcSBufSBtX2l9Il0sWzAsMiwiXFx6ZXRhIl0sWzIsMSwiRSJdLFszLDAsIlxcemV0YV8xLCBcXGxkb3RzLCBcXHpldGFfbiJdLFs0LDEsIlxcb2RvdF97XFxzdW1fezEgXFxsZXEgaSBcXGxlcSBufSBtX2l9IiwyXSxbMyw0LCJFX3sxLCBcXGxkb3RzLCBcXHN1bV97MSBcXGxlcSBpIFxcbGVxIG59IG1faX0iLDFdLFs5LDYsIlxcaXNvIiwxLHsic2hvcnRlbiI6eyJzb3VyY2UiOjIwLCJ0YXJnZXQiOjIwfSwic3R5bGUiOnsiYm9keSI6eyJuYW1lIjoibm9uZSJ9LCJoZWFkIjp7Im5hbWUiOiJub25lIn19fV1d
        \begin{tikzcd}[column sep=large]
            {\E_{1, \ldots, \sum_{1 \leq i \leq n} m_i}} & {\E_{1, \ldots, n}} & \E \\
            {\A_{\sum_{1 \leq i \leq n} m_i}} && {\A_1}
            \arrow["{\zeta_1, \ldots, \zeta_n}", from=1-1, to=1-2]
            \arrow[""{name=0, anchor=center, inner sep=0}, "{E_{1, \ldots, \sum_{1 \leq i \leq n} m_i}}"{description}, from=1-1, to=2-1]
            \arrow["\zeta", from=1-2, to=1-3]
            \arrow[""{name=1, anchor=center, inner sep=0}, "E", from=1-3, to=2-3]
            \arrow["{\odot_{\sum_{1 \leq i \leq n} m_i}}"', from=2-1, to=2-3]
            \arrow["\iso"{description}, draw=none, from=0, to=1]
        \end{tikzcd}
        \]
        Since $E \colon \E \to \A_1$ is an amnestic isofibration, the invertible natural transformation on the right above lifts, defining a functor $\E_{1, \ldots, \sum_{1 \leq i \leq n} m_i} \to \E$ as below, which we take to be the composite.
        % https://q.uiver.app/#q=WzAsNSxbMSwwLCJcXEVfezEsIFxcbGRvdHMsIG59Il0sWzIsMSwiXFxBXzEiXSxbMiwwLCJcXEUiXSxbMCwwLCJcXEVfezEsIFxcbGRvdHMsIFxcc3VtX3sxIFxcbGVxIGkgXFxsZXEgbn0gbV9pfSJdLFswLDEsIlxcQV97XFxzdW1fezEgXFxsZXEgaSBcXGxlcSBufSBtX2l9Il0sWzAsMiwiXFx6ZXRhIl0sWzIsMSwiRSJdLFszLDAsIlxcemV0YV8xLCBcXGxkb3RzLCBcXHpldGFfbiJdLFszLDQsIkVfezEsIFxcbGRvdHMsIFxcc3VtX3sxIFxcbGVxIGkgXFxsZXEgbn0gbV9pfSIsMl0sWzQsMSwiXFxvZG90X3tcXHN1bV97MSBcXGxlcSBpIFxcbGVxIG59IG1faX0iLDJdLFszLDIsIiIsMSx7ImN1cnZlIjo1LCJzdHlsZSI6eyJib2R5Ijp7Im5hbWUiOiJkYXNoZWQifX19XSxbMCwxMCwiXFxpc28iLDEseyJzaG9ydGVuIjp7InNvdXJjZSI6MjAsInRhcmdldCI6MjB9LCJzdHlsZSI6eyJib2R5Ijp7Im5hbWUiOiJub25lIn0sImhlYWQiOnsibmFtZSI6Im5vbmUifX19XV0=
        \[\begin{tikzcd}[column sep=large]
            {\E_{1, \ldots, \sum_{1 \leq i \leq n} m_i}} & {\E_{1, \ldots, n}} & \E \\
            {\A_{\sum_{1 \leq i \leq n} m_i}} && {\A_1}
            \arrow["{\zeta_1, \ldots, \zeta_n}", from=1-1, to=1-2]
            \arrow[""{name=0, anchor=center, inner sep=0}, curve={height=30pt}, dashed, from=1-1, to=1-3]
            \arrow["{E_{1, \ldots, \sum_{1 \leq i \leq n} m_i}}"', from=1-1, to=2-1]
            \arrow["\zeta", from=1-2, to=1-3]
            \arrow["E", from=1-3, to=2-3]
            \arrow["{\odot_{\sum_{1 \leq i \leq n} m_i}}"', from=2-1, to=2-3]
            \arrow["\iso"{description}, draw=none, from=1-2, to=0]
        \end{tikzcd}\]
    \end{itemize}
    That composition of cells is indeed associative and unital follows from uniqueness of liftings for the amnestic isofibrations. $\LvAmnIso$ extends to a 2-functor $\Dbl_l\op \to \VDbl$ via pullback of categories (under which amnestic isofibrations are stable).

    Define $\LvDFib(\A)$ (for \emph{levelwise discrete fibrations}) to be the full sub-\vdc{} of $\LvAmnIso(\A)$ whose objects and loose morphisms are the discrete fibrations over $\A_0$ and $\A_1$ respectively, which likewise extends to a 2-functor $\Dbl_l\op \to \VDbl$.
\end{definition}

\begin{remark}
    We introduced $\LvDFib(\A)$ by way of $\LvAmnIso(\A)$ above to emphasise which properties of discrete fibrations are important in defining the \vdc{} structure (namely, the isomorphism lifting property). However, in what follows, we will focus on $\LvDFib(\A)$, since it is this construction that is relevant to the theory of \cref{Yoneda-theory}. Let us simply remark that several of the properties below (\eg{} \cref{LvDFib-as-slice,DFib-fibre-as-slice}) also hold at the greater level of generality of amnestic isofibrations.
\end{remark}

This definition is justified by the following, which establishes that $\LvDFib(\A)$ is indeed the fibrational analogue of $\Span^{\A\opt}$.

\begin{proposition}
    \label{Span-A-op-equiv-DFib-A}
    For each \pdc{} $\A$, there is an equivalence of \vdcs{} $\Span^{\A\opt} \equiv \LvDFib(\A)$, 2-natural in $\A$.
\end{proposition}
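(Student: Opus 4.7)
The plan is to construct the equivalence by applying the classical Grothendieck (elements) construction levelwise. Recall that for any small category $\b B$, there is an equivalence of categories $\El_{\b B} \colon \Set^{\b B\op} \equiv \DFib(\b B)$, which is contravariantly pseudonatural in $\b B$: for each functor $u \colon \b B \to \b B'$ and presheaf $F \colon \b B'\op \to \Set$, there is a canonical isomorphism $\El(F \c u\op) \iso u^* \El(F)$, under which a natural transformation $\alpha \colon F \tto G \c u\op$ corresponds to a functor $\El(F) \to \El(G)$ over $u$.

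I would first apply $\El_{\A_0}$ to obtain the equivalence of underlying categories between presheaves on $\A_0$ and discrete fibrations over $\A_0$. Next, pseudonaturality of $\El$ along $s, t \colon \A_1 \to \A_0$ translates the data of a loose morphism in $\Span^{\A\opt}$ (a presheaf $\Xi$ on $\A_1$ equipped with natural transformations $\Xi_t \colon \Xi \tto F \c t\op$ and $\Xi_s \colon \Xi \tto F' \c s\op$, as made explicit in \cref{Span-A-op}) into the equivalent data of a discrete fibration $\El(\Xi) \to \A_1$ together with functors $\El(\Xi) \to \El(F)$ and $\El(\Xi) \to \El(F')$ lying over $t$ and $s$ respectively, which is precisely the data of a loose morphism in $\LvDFib(\A)$. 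For cells, the key observation is that $\El_{\A_n}$ preserves limits, so $\El(\langle \Xi_1, \ldots, \Xi_n \rangle)$ is canonically identified with the iterated pullback $\E_{1, \ldots, n}$ from \cref{LvAmnIso}. A natural transformation $\xi \colon \langle \Xi_1, \ldots, \Xi_n \rangle \tto \Xi \c \odot\op$ therefore corresponds to a functor $\zeta \colon \E_{1, \ldots, n} \to \El(\Xi)$ over $\odot \colon \A_n \to \A_1$, and the source and target compatibility axioms on $\xi$ become exactly the source and target square commutativities required of a cell in $\LvDFib(\A)$.

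It remains to verify preservation of identities and composition. Identity preservation is routine. For composition: on the $\Span^{\A\opt}$ side, composition of cells is given by whiskering and pasting of natural transformations; on the $\LvDFib(\A)$ side, it requires lifting a functor along an amnestic isofibration to rectify the associativity isomorphism of loose composition in $\A$. Since discrete fibrations are amnestic isofibrations, this lift is canonical, and I would argue that it coincides with the pseudonatural coherence of $\El$: the associator 2-cell of $\odot$ in $\Cat$ is transported under $\El$ to an isomorphism between the two iterated-pullback orderings, which is precisely what the amnestic isofibration lifting produces. The main obstacle is this final coherence verification --- tracking the interaction between the non-strict associativity of loose composition in $\A$ and the pseudonaturality isomorphisms of $\El$ --- which requires careful but ultimately routine bookkeeping, leveraging that discreteness of the relevant fibrations forces lifts to be unique.
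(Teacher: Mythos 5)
Your proposal is correct and takes essentially the same approach as the paper: the paper's proof is a one-sentence appeal to transferring the structure of $\Span^{\A\opt}$ levelwise across the presheaf/discrete-fibration correspondence, using that precomposition of presheaves corresponds to pullback of fibrations, which is exactly what you spell out. Your additional observation that the amnestic-isofibration lift in the definition of composition in $\LvDFib(\A)$ matches the application of $\Xi$ to the structural isomorphisms of $\A$ on the presheaf side is the right way to discharge the one coherence point the paper leaves implicit.
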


\begin{proof}
    The explicit description of $\Span^{\A\opt}$ in \cref{Span-A-op} is formulated in terms of presheaves on $\A_n$ (for each $n \in \N$) and natural transformations therebetween. Thus, we may transfer the descriptions of the data of the objects, tight morphisms, loose morphisms, cells, identities and composites in $\Span^{\A\opt}$ across the correspondence between presheaves (and natural transformations therebetween), and discrete fibrations (and commutative squares therebetween): this produces precisely the \vdc{} $\LvDFib(\A)$. To illustrate the manipulations involved in transferring data across the equivalence, consider the data of an $n$-ary cell in $\Span^{\A\opt}$ described in \cref{Span-A-op}.
    % https://q.uiver.app/#q=WzAsNSxbMCwwLCJcXEFfblxcb3AiXSxbMCwxLCJcXFNwYW5fbiJdLFsxLDEsIlxcU3Bhbl8xIl0sWzIsMCwiXFxBXzFcXG9wIl0sWzIsMSwiXFxTZXQiXSxbMCwxLCJcXHRwe1xcWGlfMSwgXFxsZG90cywgXFxYaV9ufSIsMl0sWzEsMiwiXFxvZG90X24iLDJdLFswLDMsIlxcb2RvdF9uXFxvcCJdLFszLDQsIlxcWGkiXSxbMiw0LCJtIiwyXSxbNSw4LCJcXHhpIiwwLHsic2hvcnRlbiI6eyJzb3VyY2UiOjMwLCJ0YXJnZXQiOjMwfX1dXQ==
    \[\begin{tikzcd}
        {\A_n\op} && {\A_1\op} \\
        {\Span_n} & {\Span_1} & \Set
        \arrow["{\odot_n\op}", from=1-1, to=1-3]
        \arrow[""{name=0, anchor=center, inner sep=0}, "{\tp{\Xi_1, \ldots, \Xi_n}}"', from=1-1, to=2-1]
        \arrow[""{name=1, anchor=center, inner sep=0}, "\Xi", from=1-3, to=2-3]
        \arrow["{\odot_n}"', from=2-1, to=2-2]
        \arrow["m"', from=2-2, to=2-3]
        \arrow["\xi", between={0.3}{0.7}, Rightarrow, from=0, to=1]
    \end{tikzcd}\]
    Denote by $E_1, \ldots, E_n, E$ the discrete fibrations corresponding, via the category of elements construction, to the presheaves $\Xi_1, \ldots, \Xi_n, \Xi$. Recalling that precomposition for presheaves corresponds to pullback for discrete fibrations, the discrete fibration corresponding to the codomain of $\xi$ is precisely the pullback of $E$ along $\odot_n \colon \A_n \to \A_1$. We may directly compute the category of elements of the domain of $\xi$: for instance, the objects comprise pairs of a chain $p_1, \ldots, p_n$ of loose morphisms in $\A$ and an element of the limit (\ie{} wide pullback) of the diagram $\tp{\Xi_1, \ldots, \Xi_n}$. Denoting this discrete fibration suggestively by $E_{1, \ldots, n} \colon \E_{1, \ldots, n} \to \A_n$, we see that $\xi$ corresponds to a functor from $\E_{1, \ldots, n}$ into the aforementioned pullback, whose composition with the projection into $\A_n$ must be $E_{1, \ldots, n}$. This is precisely the data of an $n$-ary cell in $\LvDFib(\A)$. Verification that the composition of cells in the two \vdcs{} agree is a similarly straightforward exercise. 2-naturality follows likewise from 2-naturality of the correspondence between presheaves and discrete fibrations on categories.
\end{proof}

As is suggested by \cref{Lax-is-Mod}, it is instructive to consider loose monads in $\LvDFib(\A)$. These admit a particularly simple description in the special case in which $\A$ is strict, by virtue of the fact that strict double categories are precisely loose monads in $\Span(\Cat)$ (in contrast, \pdcs{} are merely pseudomonads, the consideration of which requires three-dimensional structure as in \cite{cruttwell2022double}). To explain this, we will need a couple of preparatory lemmas.

\begin{lemma}
    \label{LvDFib-as-slice}
    For each strict \dc{} $\A$, the \vdc{} $\LvDFib(\A)$ is isomorphic to the full sub-\vdc{} $\Span(\Cat)\dfibsl\A$ of the slice $\Span(\Cat)/\A$ (\cref{slice-vdc}) spanned by the functors and span morphisms whose components are discrete fibrations.
\end{lemma}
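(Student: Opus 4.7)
The plan is to unfold the slice \vdc{} $\Span(\Cat)/\A$ explicitly via the universal property of the defining comma object (\cref{slice-vdc}), and then to match the resulting data term-by-term with the construction of $\LvDFib(\A)$ in \cref{LvAmnIso}.

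First, I would note that a strict double category is precisely a category internal to $\Cat$, and therefore the same data as a loose monad in $\Span(\Cat)$: the carrier is $\A_0$, the loose morphism is the span $\A_0 \xleftarrow{t} \A_1 \xrightarrow{s} \A_0$, and the unit and multiplication cells are the identity-assigning and composition functors of $\A$. Consequently, $\Span(\Cat)/\A$ is well defined as a \vdc{}, and by the universal property of the comma object, a functor from any \vdc{} $W$ into $\Span(\Cat)/\A$ amounts to a functor $F \colon W \to \Span(\Cat)$ together with a transformation from $F$ to the constant functor at $\A$.

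Next, I would specialise $W$ to the free-standing object, tight morphism, loose morphism, and $n$-ary cell shape in turn, using \cref{transformation} to unfold the transformation data. This produces the following concrete description of $\Span(\Cat)/\A$: an object is a functor $D \colon \D \to \A_0$; a tight morphism is a commuting triangle of functors over $\A_0$; a loose morphism from $(\D', D')$ to $(\D, D)$ is a span of categories $\D \xleftarrow{E_t} \E \xrightarrow{E_s} \D'$ together with a functor $E \colon \E \to \A_1$ compatible with the source and target projections; and a cell is a functor $\zeta$ between iterated pullback categories commuting with the composition functor $\odot \colon \A_n \to \A_1$ of $\A$. Restricting to those objects and loose morphisms whose $D$ and $E$ components are discrete fibrations then yields the sub-\vdc{} appearing in the lemma statement, whose data coincides with that of \cref{LvAmnIso} at every level.

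The main technical point will be verifying that composition of cells agrees on both sides. In $\LvDFib(\A)$, composition is defined in \cref{LvAmnIso} via a unique lift along the discrete fibration $E$ of an isomorphism arising from the coherence constraints of the pseudo double category $\A$; but since $\A$ is strict, this coherence isomorphism is the identity, so the unique lift reduces to the honest composite functor between iterated pullbacks inherited from $\Span(\Cat)/\A$. Unitality and associativity of the inherited composition are then automatic, and the bijection on data at every level assembles into the claimed isomorphism of \vdcs.
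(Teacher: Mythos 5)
Your proposal is correct and follows essentially the same route as the paper: the paper's proof consists precisely of the observation you identify as the main technical point, namely that strictness of $\A$ makes the composite $\A_{\sum m_i} \to \A_n \to \A_1$ commute on the nose, so the isomorphism lift in the definition of composition in $\LvDFib(\A)$ is trivial and composition reduces to composition of spans via pullback. The additional unfolding of $\Span(\Cat)/\A$ via the comma-object universal property is a sound (if more explicit) way of establishing the term-by-term match of data that the paper treats as evident.
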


\begin{proof}
    The objects, tight morphisms, loose morphisms, cells, and identities of the two \vdcs{} are trivially in bijection with one another; the only subtlety is the relationship between composition of cells in the two \vdcs. Examining the definition of composition of cells in $\LvDFib(\A)$ (\cref{LvAmnIso}), under the assumption that $\A$ is strict, we observe that following diagram commutes strictly (rather than up to isomorphism as in a \pdc).
    % https://q.uiver.app/#q=WzAsMyxbMCwxLCJcXEFfbiJdLFsxLDEsIlxcQV8xIl0sWzAsMCwiXFxBX3tcXHN1bV97MSBcXGxlcSBpIFxcbGVxIG59IG1faX0iXSxbMCwxLCJcXG9kb3RfbiIsMl0sWzIsMCwiXFx0cHtcXG9kb3Rfe21fMX0sIFxcbGRvdHMsIFxcb2RvdF97bV9ufX0iLDJdLFsyLDEsIlxcb2RvdF97XFxzdW1fezEgXFxsZXEgaSBcXGxlcSBufSBtX2l9Il1d
    \[\begin{tikzcd}
        {\A_{\sum_{1 \leq i \leq n} m_i}} \\
        {\A_n} & {\A_1}
        \arrow["{\tp{\odot_{m_1}, \ldots, \odot_{m_n}}}"', from=1-1, to=2-1]
        \arrow["{\odot_{\sum_{1 \leq i \leq n} m_i}}", from=1-1, to=2-2]
        \arrow["{\odot_n}"', from=2-1, to=2-2]
    \end{tikzcd}\]
    Consequently, the lift appearing in the definition of composition of cells is trivial, so that composition is given by the following composite functor, in which the left-hand functor is induced by the universal property of the pullback $\E_{1, \ldots, n}$.
    \[\E_{1, \ldots, \sum_{1 \leq i \leq n} m_i} \xto{\zeta_1, \ldots, \zeta_n} \E_{1, \ldots, n} \xto\zeta \E\]
    This corresponds exactly to the composition of cells in $\Span(\Cat)/\A$.
\end{proof}

\begin{remark}
    Let $(\b A, \otimes, I)$ be a strict monoidal category. By \cref{Span-A-op-equiv-DFib-A,LvDFib-as-slice}, we have the following equivalence of \vdcs.
    \[\Span(\Cat)\dfibsl(\Sigma\b A) \iso \LvDFib(\Sigma\b A) \equiv \Span^{\Sigma(\b A\op)}\]
    By fixing an object of the foremost \vdc{} (\viz{} a discrete fibration over $(\Sigma\b A)_0 = \b1$), the equivalence of \vdcs{} restricts to an equivalence of multicategories (viewed as one-object \vdcs).

    In particular, the identity functor on the terminal category is sent under the equivalence to the terminal presheaf $\b1\op \to \Set$. As a result, by inspection of \cref{Span-A-op}, we obtain an equivalence between the following (representable) multicategories.
    \begin{enumerate}
        \item The multicategory whose objects are discrete fibrations over the strict monoidal category $\b A$, for which a multimorphism $E_1, \ldots, E_n \to E$ is given by a functor $\E_1 \times \cdots \times \E_n \to \E$ rendering the following diagram commutative, and for which composition is induced by the universal property of the products.
        % https://q.uiver.app/#q=WzAsNCxbMCwwLCJcXEVfMSBcXHRpbWVzIFxcY2RvdHMgXFx0aW1lcyBcXEVfbiJdLFswLDEsIlxcYiBBIFxcdGltZXMgXFxjZG90cyBcXHRpbWVzIFxcYiBBIl0sWzEsMSwiXFxiIEEiXSxbMSwwLCJcXEUiXSxbMCwxLCJFXzEgXFx0aW1lcyBcXGNkb3RzIFxcdGltZXMgRV9uIiwyXSxbMSwyLCJcXG90aW1lc19uIiwyXSxbMywyLCJFIl0sWzAsMywiIiwwLHsic3R5bGUiOnsiYm9keSI6eyJuYW1lIjoiZGFzaGVkIn19fV1d
        \[\begin{tikzcd}
            {\E_1 \times \cdots \times \E_n} & \E \\
            {\b A \times \cdots \times \b A} & {\b A}
            \arrow[dashed, from=1-1, to=1-2]
            \arrow["{E_1 \times \cdots \times E_n}"', from=1-1, to=2-1]
            \arrow["E", from=1-2, to=2-2]
            \arrow["{\otimes_n}"', from=2-1, to=2-2]
        \end{tikzcd}\]
        \item The exponential multicategory $\Set^{\b A\op}$.
    \end{enumerate}
    As discussed in \cref{relation-to-multicategories}, (2) is precisely the convolution monoidal structure induced by $\b A$. We thereby recover \cite[Theorem~73]{eberhart2019template}. The authors of \cite{eberhart2019template} indicated that it would be desirable to extend their work to the setting in which $\b A$ is not assumed strict~\cite[\S6]{eberhart2019template}, for which they anticipated working in a three-dimensional setting. Our \cref{LvDFib-as-slice} suggests that an alternative, potentially simpler approach would be to work with $\LvDFib(\Sigma\b A)$ rather than $\Span(\Cat)\dfibsl(\Sigma\b A)$: in this case, the only difference in the description of (1) above is that composition is no longer given solely by the universal property of the products, but involves lifting a coherence isomorphism along $E$ as in \cref{LvAmnIso}.
\end{remark}

\begin{definition}
    $\DFib$ is the full subcategory of the arrow category $\Cat^\to$ spanned by the discrete fibrations.
\end{definition}

\begin{lemma}
    \label{DFib-fibre-as-slice}
    For each strict \dc{} $\A$, the apex of the following pullback in $\VDbl$ is isomorphic to $\Span(\Cat)\dfibsl\A$ (defined in \cref{LvDFib-as-slice}).
    % https://q.uiver.app/#q=WzAsNCxbMCwxLCJcXGJibiAxIl0sWzEsMSwiXFxTcGFuKFxcQ2F0KSJdLFsxLDAsIlxcU3BhbihcXERGaWIpIl0sWzAsMCwiXFxTcGFuKFxcREZpYilfXFxBIl0sWzIsMSwiXFxTcGFuKFxcY29kKSJdLFswLDEsIlxcQSIsMl0sWzMsMCwiXFx1bml0IiwyXSxbMywyXSxbNiw0LCJcXHBiIiwxLHsic2hvcnRlbiI6eyJzb3VyY2UiOjIwLCJ0YXJnZXQiOjIwfSwic3R5bGUiOnsiYm9keSI6eyJuYW1lIjoibm9uZSJ9LCJoZWFkIjp7Im5hbWUiOiJub25lIn19fV1d
    \[\begin{tikzcd}
        {\Span(\DFib)_\A} & {\Span(\DFib)} \\
        {\bbn 1} & {\Span(\Cat)}
        \arrow[from=1-1, to=1-2]
        \arrow[""{name=0, anchor=center, inner sep=0}, "\unit"', from=1-1, to=2-1]
        \arrow[""{name=1, anchor=center, inner sep=0}, "{\Span(\cod)}", from=1-2, to=2-2]
        \arrow["\A"', from=2-1, to=2-2]
        \arrow["\pb"{description}, draw=none, from=0, to=1]
    \end{tikzcd}\]
\end{lemma}

\begin{proof}
    Consider the following diagram. Since $\Cat^\to$ is a power by the arrow category, hence a comma object, and $\Span$ preserves 2-limits, the right-hand square is also a comma diagram. By the pasting law for comma objects (\cref{pasting-law}), the outer rectangle is a comma object, which is precisely the universal property of the slice $\Span(\Cat)/\A$.
    % https://q.uiver.app/#q=WzAsNixbMCwwLCJcXFNwYW4oXFxDYXReXFx0bylfXFxBIl0sWzAsMSwiXFxiYm4gMSJdLFsxLDEsIlxcU3BhbihcXENhdCkiXSxbMSwwLCJcXFNwYW4oXFxDYXReXFx0bykiXSxbMiwwLCJcXFNwYW4oXFxDYXQpIl0sWzIsMSwiXFxTcGFuKFxcQ2F0KSJdLFsxLDIsIlxcQSIsMl0sWzMsMiwiXFxTcGFuKFxcY29kKSIsMV0sWzAsM10sWzAsMSwiXFx1bml0IiwyXSxbMyw0LCJcXFNwYW4oXFxkb20pIl0sWzQsNSwiIiwwLHsibGV2ZWwiOjIsInN0eWxlIjp7ImhlYWQiOnsibmFtZSI6Im5vbmUifX19XSxbMiw1LCIiLDAseyJsZXZlbCI6Miwic3R5bGUiOnsiaGVhZCI6eyJuYW1lIjoibm9uZSJ9fX1dLFs0LDIsIiIsMCx7InNob3J0ZW4iOnsic291cmNlIjo0MCwidGFyZ2V0Ijo0MH0sImxldmVsIjoyfV0sWzksNywiXFxwYiIsMSx7InNob3J0ZW4iOnsic291cmNlIjoyMCwidGFyZ2V0IjoyMH0sInN0eWxlIjp7ImJvZHkiOnsibmFtZSI6Im5vbmUifSwiaGVhZCI6eyJuYW1lIjoibm9uZSJ9fX1dXQ==
    \[\begin{tikzcd}[column sep=huge]
        {\Span(\Cat^\to)_\A} & {\Span(\Cat^\to)} & {\Span(\Cat)} \\
        {\bbn 1} & {\Span(\Cat)} & {\Span(\Cat)}
        \arrow[from=1-1, to=1-2]
        \arrow[""{name=0, anchor=center, inner sep=0}, "\unit"', from=1-1, to=2-1]
        \arrow["{\Span(\dom)}", from=1-2, to=1-3]
        \arrow[""{name=1, anchor=center, inner sep=0}, "{\Span(\cod)}"{description}, from=1-2, to=2-2]
        \arrow[between={0.4}{0.6}, Rightarrow, from=1-3, to=2-2]
        \arrow[equals, from=1-3, to=2-3]
        \arrow["\A"', from=2-1, to=2-2]
        \arrow[equals, from=2-2, to=2-3]
        \arrow["\pb"{description}, draw=none, from=0, to=1]
    \end{tikzcd}\]
    That this restricts to discrete fibrations, as stated, follows by pasting the following pullback on top of the pullback above.
    % https://q.uiver.app/#q=WzAsNCxbMCwwLCJcXFNwYW4oXFxDYXQpXFxkZmlic2xcXEEiXSxbMCwxLCJcXFNwYW4oXFxDYXQpL1xcQSJdLFsxLDAsIlxcU3BhbihcXERGaWIpIl0sWzEsMSwiXFxTcGFuKFxcQ2F0XlxcdG8pIl0sWzAsMl0sWzAsMSwiIiwyLHsic3R5bGUiOnsidGFpbCI6eyJuYW1lIjoiaG9vayIsInNpZGUiOiJib3R0b20ifX19XSxbMSwzXSxbMiwzLCIiLDAseyJzdHlsZSI6eyJ0YWlsIjp7Im5hbWUiOiJob29rIiwic2lkZSI6ImJvdHRvbSJ9fX1dLFs1LDcsIlxccGIiLDEseyJzaG9ydGVuIjp7InNvdXJjZSI6MjAsInRhcmdldCI6MjB9LCJzdHlsZSI6eyJib2R5Ijp7Im5hbWUiOiJub25lIn0sImhlYWQiOnsibmFtZSI6Im5vbmUifX19XV0=
    \[\begin{tikzcd}
        {\Span(\Cat)\dfibsl\A} & {\Span(\DFib)} \\
        {\Span(\Cat)/\A} & {\Span(\Cat^\to)}
        \arrow[from=1-1, to=1-2]
        \arrow[""{name=0, anchor=center, inner sep=0}, hook', from=1-1, to=2-1]
        \arrow[""{name=1, anchor=center, inner sep=0}, hook', from=1-2, to=2-2]
        \arrow[from=2-1, to=2-2]
        \arrow["\pb"{description}, draw=none, from=0, to=1]
    \end{tikzcd}\qedshift\]
\end{proof}

Discrete fibrations of strict and \pdcs{} have appeared before in the literature in several guises\footnotemark{}. The following relates our \vdc{} $\LvDFib(\A)$ to each of the notions appearing in the literature.
\footnotetext{Perhaps the earliest definition of a (not necessarily discrete) fibration of \dcs{} is \cite[Definition~1.7]{dawson2006paths}.}%

\begin{theorem}
    \label{DFib-is-Mnd}
    Let $\A$ be a \pdc{}.
    \begin{enumerate}
        \item For $\A$ strict, the \vdc{} $\DFibDbl(\A)$ of \cite[Definition~3.24]{lambert2021discrete} is equivalent to $\Mod(\LvDFib(\A))$.
        \item For $\A$ strictly normal, the category $\b{DDblFib}(\A)$ of \cite[Definition~3.43 \& (3.20)]{cruttwell2022double} is equivalent to the underlying category of $\Mod(\LvDFib(\A))$.
        \item For $\A$ strict, the 2-category $\tx D\cl{F}\tx{ib}(\A)$ of \cite[Notation~5.2]{frohlich2024yoneda} is equivalent to the underlying 2-category of $\Mod(\LvDFib(\A))$.
    \end{enumerate}
\end{theorem}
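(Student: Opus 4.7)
The plan is to reduce all three statements to a single unifying identification:
\[
\Mod(\LvDFib(\A)) \equiv \Dist(\DFib)_\A,
\]
where the right-hand side is the pullback \vdc{} appearing in \cref{DFib-fibre-as-slice}.

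For (1), since $\A$ is strict, \cref{LvDFib-as-slice} identifies $\LvDFib(\A)$ with the full sub-\vdc{} of $\Span(\Cat)/\A$ spanned by those span morphisms whose components are discrete fibrations. Applying $\Mod$ and invoking \cref{Mod-preserves-ffness} (so that $\Mod$ sends the \ff{} inclusion to another \ff{} inclusion), together with \cref{Mod-slice} and \cref{Mod-Span}, yields
\[
\Mod(\LvDFib(\A)) \ffto \Mod(\Span(\Cat)/\A) \iso \Mod(\Span(\Cat))/\A \iso \Dist(\Cat)/\A.
\]
Tracing the identifications, the image of $\Mod(\LvDFib(\A))$ is the full sub-\vdc{} of $\Dist(\Cat)/\A$ spanned by those objects and loose morphisms whose underlying spans of categories are discrete fibrations, which by \cref{DFib-fibre-as-slice} is precisely $\Dist(\DFib)_\A$. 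Statement (1) then reduces to showing $\DFibDbl(\A) \equiv \Dist(\DFib)_\A$, which follows by unwinding \cite[Definition~3.24]{lambert2021discrete} (see also the footnote to \cref{DFib-fibre-as-slice}).

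Statements (2) and (3) proceed by the same two-step strategy: first identify $\Mod(\LvDFib(\A))$ with $\Dist(\DFib)_\A$, then compare with the cited definition. For (3), with $\A$ strict, the first step is literally the argument of (1), and one need only match the underlying 2-categories. For (2), the hypothesis is weaker ($\A$ strictly normal, rather than strict), so \cref{LvDFib-as-slice} is not directly applicable; however, since (2) claims only an equivalence of underlying 1-categories, it suffices to exhibit an equivalence between \citeauthor{cruttwell2022double}'s $\b{DDblFib}(\A)$ and the underlying category of $\Mod(\LvDFib(\A))$. A loose monad in $\LvDFib(\A)$ unpacks as a strictly normal pseudo double category equipped with a levelwise discrete fibration to $\A$ whose composition and units respect the associators of $\A$, matching \cite[Definition~3.43]{cruttwell2022double} after diagrammatic translation.

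The main obstacle will be the definitional bookkeeping in (2). Because $\A$ is only strictly normal, the composition in $\LvDFib(\A)$ is defined via a genuine lift along an amnestic isofibration (as associators in $\A$ are not identities), and one must verify that the resulting monad data matches the coherence conditions in \cite{cruttwell2022double}. The conceptual portion of the argument---the reduction to $\Dist(\DFib)_\A$ via the $\Mod$-slice calculus---is essentially formal once the preparatory lemmas are in hand.
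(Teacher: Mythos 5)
Your proposal is correct and follows essentially the same route as the paper: parts (1) and (3) via the identifications $\LvDFib(\A) \iso$ (sub-\vdc{} of $\Span(\Cat)/\A$), $\Mod(\Span(\Cat)/\A) \iso \Dist(\Cat)/\A$, and the pullback description of $\Dist(\DFib)_\A$, and part (2) by directly unpacking a loose monad in $\LvDFib(\A)$ as a levelwise discrete fibration of double categories. The only cosmetic difference is that you front-load the identification $\Mod(\LvDFib(\A)) \equiv \Dist(\DFib)_\A$ and make the \ffness{} bookkeeping explicit, whereas the paper runs the same chain of lemmas starting from Lambert's definition.
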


\begin{proof}
    \begin{enumerate}
        \item The \vdc{} of discrete fibrations over a \sdc{} $\A$ is defined by \citeauthor{lambert2021discrete} to be the following pullback in $\VDbl$.\footnote{The reader should be warned that \textcite[Example~3.23]{lambert2021discrete} uses the notation $\Dist(\DFib)/\A$ for what we would call $\Dist(\DFib)_\A$ (though it is not a slice construction).}
        % https://q.uiver.app/#q=WzAsNCxbMCwxLCJcXGJibiAxIl0sWzEsMSwiXFxEaXN0KFxcQ2F0KSJdLFsxLDAsIlxcRGlzdChcXERGaWIpIl0sWzAsMCwiXFxERmliRGJsKFxcQSkiXSxbMiwxLCJcXERpc3QoXFxjb2QpIl0sWzAsMSwiXFxBIiwyXSxbMywwLCJcXHVuaXQiLDJdLFszLDJdLFs2LDQsIlxccGIiLDEseyJzaG9ydGVuIjp7InNvdXJjZSI6MjAsInRhcmdldCI6MjB9LCJzdHlsZSI6eyJib2R5Ijp7Im5hbWUiOiJub25lIn0sImhlYWQiOnsibmFtZSI6Im5vbmUifX19XV0=
        \[\begin{tikzcd}
            {\DFibDbl(\A)} & {\Dist(\DFib)} \\
            {\bbn 1} & {\Dist(\Cat)}
            \arrow[from=1-1, to=1-2]
            \arrow[""{name=0, anchor=center, inner sep=0}, "\unit"', from=1-1, to=2-1]
            \arrow[""{name=1, anchor=center, inner sep=0}, "{\Dist(\cod)}", from=1-2, to=2-2]
            \arrow["\A"', from=2-1, to=2-2]
            \arrow["\pb"{description}, draw=none, from=0, to=1]
        \end{tikzcd}\]
        Since $\Mod$ preserves 2-limits (\cref{Mod-is-right-adjoint}), this pullback is isomorphic by \cref{DFib-fibre-as-slice} to the \vdc{} $\Dist(\Cat)\dfibsl\A \defeq \Mod(\Span(\Cat)\dfibsl\A)$. Using \cref{Mod-slice}, we have that $\Mod(\Span(\Cat)/\A) \iso \Mod(\Span(\Cat))/\A \iso \Dist(\Cat)/\A$, from which it follows that $\Dist(\Cat)\dfibsl\A$ is precisely the full sub-\vdc{} of $\Dist(\Cat)/\A$ whose objects' and loose morphisms' component functors into $\A_0$ and $\A_1$ are discrete fibrations. Finally, by \cref{LvDFib-as-slice}, we have that $\Span(\Cat)\dfibsl\A \iso \LvDFib(\A)$, from which the equivalence follows by applying $\Mod$.
        \item Explicitly, an object of $\b{DDblFib}(\A)$ is a strict functor of \pdcs{} ${E \colon \dc E \to \A}$ (\ie{} a functor preserving the chosen opcartesian cells) such that $E_0 \colon \dc E_0 \to \A_0$ and $E_1 \colon \dc E_1 \to \A_1$ are discrete fibrations; and a morphism is a commutative triangle of strict functors of \pdcs{}. By inspection of \cref{LvAmnIso}, a loose endomorphism in $\LvDFib(\A)$ comprises a pair of discrete fibrations $E_0 \colon \E_0 \to \A_0$ and $E_1 \colon \E_1 \to \A_1$ and a morphism of spans as follows.
        % https://q.uiver.app/#q=WzAsNixbMiwxLCJcXEFfMCJdLFsxLDEsIlxcQV8xIl0sWzAsMSwiXFxBXzAiXSxbMiwwLCJcXEVfMCJdLFswLDAsIlxcRV8wIl0sWzEsMCwiXFxFXzEiXSxbMywwLCJFXzAiXSxbNCwyLCJFXzAiLDJdLFs1LDMsIkVfcyJdLFs1LDQsIkVfdCIsMl0sWzEsMCwicyIsMl0sWzEsMiwidCJdLFs1LDEsIkVfMSIsMV1d
        \[\begin{tikzcd}
            {\E_0} & {\E_1} & {\E_0} \\
            {\A_0} & {\A_1} & {\A_0}
            \arrow["{E_0}"', from=1-1, to=2-1]
            \arrow["{E_t}"', from=1-2, to=1-1]
            \arrow["{E_s}", from=1-2, to=1-3]
            \arrow["{E_1}"{description}, from=1-2, to=2-2]
            \arrow["{E_0}", from=1-3, to=2-3]
            \arrow["t", from=2-2, to=2-1]
            \arrow["s"', from=2-2, to=2-3]
        \end{tikzcd}\]
        By unwrapping the definitions, we see that equipping this loose endomorphism with the structure of a loose monad amounts to specifying composition and identity structure on the graph of categories $\E_1 \rightrightarrows \E_0$ that is strictly preserved by $E_0$ and $E_1$. This is almost the structure of a normal \pdc{}: it remains to specify associativity isomorphisms in $\E_1$, which are uniquely determined by lifting the associativity isomorphisms in $\A$ along $E_1 \colon \E_1 \to \A_1$. Consequently, $\E_1 \rightrightarrows \E_0$ is equipped with the structure of a normal \pdc{} $\dc E$; note that, in contrast to loose monads in $\Span(\Cat)$ (which are \emph{strict} \dcs), the reason a loose monad in $\LvDFib(\A)$ need not be strict is that the composition of cells in $\LvDFib(\A)$ involves the coherence isomorphisms in $\A$. Since the structural isomorphisms in $\dc E$ are given by lifting those in $\A$, the functors $E_0$ and $E_1$ form a strict functor of \pdcs, whose components are discrete fibrations by definition. A similar analysis shows that a loose monad morphism in $\LvDFib(\A)$ is precisely a strict functor, the data of the tight morphism determining the action on $\E_0$ and the data of the cell determining the action on~$\E_1$.
        \item The 2-category of discrete fibrations over a \sdc{} $\A$ is defined by \citeauthor{frohlich2024yoneda} as the underlying 2-category of the full sub-\vdc{} of $\Dist(\Cat)/\A$ spanned by the strict functors of \dcs{} that satisfy a certain pullback condition~\cite[Definition~5.1]{frohlich2024yoneda}. By \cite[Proposition~2.17]{lambert2021discrete}, this condition is equivalent to asking for the components of the span morphism underlying a functor of \dcs{} to be discrete fibrations. Consequently, the equivalence follows as in~(1).
        \qedhere
    \end{enumerate}
\end{proof}

A \vdc{} of discrete fibrations over a \emph{pseudo} \dc{} has not previously been defined in the literature. \Cref{DFib-is-Mnd} justifies us in defining $\DFibDbl(\A)$ to be the \vdc{} $\Mod(\LvDFib(\A))$. Having set up our definitions, we immediately obtain a correspondence between presheaves on, and discrete fibrations over, a \pdc{} $\A$.

\begin{corollary}
    \label{elements-correspondence}
    For each \pdc{} $\A$, there are equivalences of normal \vdcs{}, 2-natural in $\A$:
    \[\Lax(\A\opt, \Span) \iso \LaxN(\A\opt, \Dist) \equiv \DFibDbl(\A)\]
\end{corollary}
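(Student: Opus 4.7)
The plan is to derive the equivalence by chaining together three results already established in the paper: the identification of lax functors as monads in an exponential \vdc{} (\Cref{Lax-is-Mod}), the equivalence between the presheaf exponential and the \vdc{} of levelwise discrete fibrations (\Cref{Span-A-op-equiv-DFib-A}), and the definition of $\DFibDbl(\A) \defeq \Mod(\LvDFib(\A))$ justified by \Cref{DFib-is-Mnd}.

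Concretely, I would first invoke \Cref{Lax-is-Mod} to obtain a normal isomorphism $\Lax(\A\opt, \Span) \iso \Mod(\Span^{\A\opt})$. Next, I would transport the equivalence $\Span^{\A\opt} \equiv \LvDFib(\A)$ of \Cref{Span-A-op-equiv-DFib-A} across $\Mod$; since $\Mod$ is a right 2-adjoint by \Cref{Mod-is-right-adjoint}, it is in particular a 2-functor and hence preserves equivalences of \vdcs, producing an equivalence $\Mod(\Span^{\A\opt}) \equiv \Mod(\LvDFib(\A)) = \DFibDbl(\A)$. Composing these two steps delivers the desired equivalence $\Lax(\A\opt, \Span) \equiv \DFibDbl(\A)$.

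No substantive obstacle remains: all the conceptual work has been absorbed into the three ingredients above. In particular, the classical correspondence between presheaves and discrete fibrations for ordinary categories enters only through \Cref{Span-A-op-equiv-DFib-A}, where it is used to transfer structure between $\Span^{\A\opt}$ and $\LvDFib(\A)$, while the passage from presheaves to lax functors is handled uniformly by $\Mod$. This separation of concerns is precisely the anticipated benefit of the perspective developed in the paper, contrasted with the direct calculations required in \cite{lambert2021discrete} and \cite{cruttwell2022double}; the generalisation from strict to pseudo double categories $\A$ is no more difficult than the strict case, because \Cref{Span-A-op-equiv-DFib-A} is already available at that level of generality.
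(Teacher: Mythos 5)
Your proposal is correct and follows exactly the paper's own argument: apply \cref{Lax-is-Mod} to identify $\Lax(\A\opt,\Span)$ with $\Mod(\Span^{\A\opt})$, transport the equivalence of \cref{Span-A-op-equiv-DFib-A} across the 2-functor $\Mod$, and conclude by the definition of $\DFibDbl(\A)$ as $\Mod(\LvDFib(\A))$. The only addition is your explicit remark that $\Mod$ preserves equivalences because it is a 2-functor, which the paper leaves implicit.
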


\begin{proof}
    The isomorphism follows from \cref{Mod-is-internal-right-adjoint}. For the equivalence, from \cref{Span-A-op-equiv-DFib-A}, we have $\Span^{\A\opt} \equiv \LvDFib(\A)$, whereupon applying $\Mod$ gives \[\Lax(\A\opt, \Span) \iso \Mod(\Span^{\A\opt}) \equiv \Mod(\LvDFib(\A)) \iso \DFibDbl(\A)\] by \cref{Lax-is-Mod} and by definition.
\end{proof}

\begin{example}
    Let $\b A$ be a category and denote by $\cl A$ the corresponding locally discrete 2-category. Then \cref{elements-correspondence} recovers the classical correspondence between the following~\cite[Theorem~3.16]{pare2011yoneda}.
    \begin{enumerate}
        \item Lax functors $\cl A\co \to \b{\cl Span}$ into the bicategory of spans.
        \item Normal lax functors $\cl A\co \to \b{\cl Dist}$ into the bicategory of distributors.
        \item Functors into $\b A$.
    \end{enumerate}
    \Cref{lax-functors-are-graded-categories} further extends this correspondence to the following.
    \begin{enumerate}[resume]
        \item Categories enriched in the local cocompletion of $\cl A$.
    \end{enumerate}
    The local cocompletion of a locally discrete 2-category admits an alternative description: it is the slice of $\Set$ over the category $\b A$, in the sense of \cite{fujii2024oplax}, and as such we recover the special case of \cite[Theorem~4.5]{fujii2024oplax} in which the base of enrichment is $\Set$ (\cf~\cite[Example~4.6]{fujii2024oplax}).
\end{example}

When $\A$ is strict, \cref{elements-correspondence} recovers \cite[Theorem~4.14]{lambert2021discrete}; while the underlying equivalence of 2-categories recovers \cite[Theorem~6.13]{frohlich2024yoneda}. When $\A$ is a strictly normal \pdc, the underlying equivalence of categories recovers \cite[Theorem~3.45]{cruttwell2022double}. When $\A$ is a \pdc{}, the \ffness{} of the functor $\Lax(\A\op, \Span) \to \DFibDbl(\A)$ recovers \cite[Theorems~3.8 \& 3.11 \& Proposition~3.9]{pare2011yoneda}. Finally, when $\A$ is a bicategory, \cref{elements-correspondence} recovers an unpublished correspondence due to Cockett, Niefield and Wood~\cite{niefield2012double}, making use of the following observation.

\begin{lemma}
    \label{local-discrete-fibration}
    Let $\cl A$ be a bicategory, viewed as a tightly discrete \dc.
    A strict functor $E \colon \dc E \to \cl A$ of \pdcs{} is a discrete fibration if and only if the following conditions hold.
    \begin{enumerate}
        \item $\dc E$ is tightly discrete (\ie{} is a bicategory $\cl E$).
        \item $E$ is a local discrete fibration, \ie{} each functor $E_{X, Y} \colon \cl E(X, Y) \to \cl A(EX, EY)$ is a discrete fibration.
    \end{enumerate}
\end{lemma}

\begin{proof}
    For $E$ to be a discrete fibration amounts to the functors $E_0$ and $E_1$ being discrete fibrations (\cref{DFib-is-Mnd}). Since $\A_0$ is discrete, $E_0$ is a discrete fibration if and only if $\dc E_0$ is discrete, which is condition (1). Then, assuming (1), the property that $E_1$ is a discrete fibration is equivalent to condition (2).
\end{proof}

\begin{remark}
    We see from \cref{local-discrete-fibration} that a discrete fibration of strict \dcs{} is a generalisation of the notion of discrete 2-fibration~\cite[Definition~2.4]{lambert2024discrete}, which furthermore requires that, viewing $E$ as a 2-functor, the underlying functor of categories is a fibration. This refutes \cite[Remark~2.6 \& \S5.2]{lambert2024discrete}, where it is erroneously stated that `a discrete 2-fibration is not a kind of discrete double fibration'. That discrete fibrations of \dcs{} generalise discrete 2-fibrations is perhaps more easily seen from the indexed perspective: a discrete fibration of double categories over a 2-category corresponds to a contravariant normal lax functor into $\b{\cl Dist}$, whereas a discrete 2-fibration corresponds to a contravariant pseudo functor into $\Cat$~\cite[Theorem~3.7]{lambert2024discrete}. We note in passing that \citeauthor{lambert2024discrete}'s elements correspondence for split discrete 2-fibrations may consequently be recovered by restricting the left-hand side of the equivalence
    \[\LaxN(\A\opt, \Dist) \equiv \DFibDbl(\A) \tag{\cref{elements-correspondence}}\]
    to the strict functors $\A\opt \to \dc{Sq}(\Cat)$ into the strict \dc{} of natural transformations in the 2-category $\Cat$.
\end{remark}

\begin{remark}
    We note that the correspondence proven by \citeauthor{cruttwell2022double} permits the base $\A$ to vary. Our view is that it is the correspondence with the fixed base that is primary; we explain briefly how one might go about obtaining the correspondence for a varying base from the correspondence for a fixed base. \Cref{elements-correspondence} states that the 2-functors $\Lax({-}\op, \Span) \colon \Dbl_l \to \VDbln$ and $\DFibDbl({-}) \colon \Dbl_l \to \VDbln$ are 2-naturally equivalent. Our expectation is that such functors should admit an elements construction (necessarily of a three-dimensional nature). Thereafter, applying the elements construction to the equivalence $\Lax({-}\op, \Span) \equiv \DFibDbl({-})$ would produce the desired correspondence for a varying~base.
\end{remark}

We conclude this analysis with some simple but useful consequences of the correspondence between presheaves and discrete fibrations.

\begin{proposition}
    \label{Yoneda-embedding-is-slice-embedding}
    The embedding $\A({-}_2, {-}_1) \colon \A \to \Span^{\A\opt}$ corresponds under \cref{Span-A-op-equiv-DFib-A} to the functor $\A \to \LvDFib(\A)$ sending a loose morphism $p \colon A \lto B$ to the following span morphism.
    % https://q.uiver.app/#q=WzAsNixbMCwwLCJcXEFfMC9BIl0sWzAsMSwiXFxBXzAiXSxbMSwwLCJcXEFfMS9wIl0sWzEsMSwiXFxBXzEiXSxbMiwxLCJcXEFfMCJdLFsyLDAsIlxcQV8wL0IiXSxbMCwxLCJcXHBpX0EiLDJdLFszLDEsInMiXSxbMyw0LCJ0IiwyXSxbMiwzLCJcXHBpX3AiLDFdLFs1LDQsIlxccGlfQiJdLFsyLDAsInMvcCIsMl0sWzIsNSwidC9wIl1d
    \[\begin{tikzcd}
        {\A_0/A} & {\A_1/p} & {\A_0/B} \\
        {\A_0} & {\A_1} & {\A_0}
        \arrow["{\pi_A}"', from=1-1, to=2-1]
        \arrow["{s/p}"', from=1-2, to=1-1]
        \arrow["{t/p}", from=1-2, to=1-3]
        \arrow["{\pi_p}"{description}, from=1-2, to=2-2]
        \arrow["{\pi_B}", from=1-3, to=2-3]
        \arrow["s", from=2-2, to=2-1]
        \arrow["t"', from=2-2, to=2-3]
    \end{tikzcd}\]
    Consequently, the embedding $Y_\A \colon \A \to \Lax(\A\op, \Span)$ corresponds under \cref{elements-correspondence} to the functor $\A \to \DFibDbl(\A)$ that sends each object $A \in \A$ to the projection functor from the slice \dc{} $\pi_A \colon \A/A \to \A$.
\end{proposition}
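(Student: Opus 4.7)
The plan is to unpack the equivalence $\Span^{\A\opt} \equiv \LvDFib(\A)$ of \cref{Span-A-op-equiv-DFib-A} at the two levels of presheaves (on $\A_0$ and on $\A_1$) that make up the data of an object of $\Span^{\A\opt}$. Recall that this equivalence is realised by the levelwise category-of-elements construction, which sends a presheaf to its discrete fibration of elements and intertwines precomposition with pullback.

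First I would appeal to the classical observation that the category of elements of a representable presheaf $\A_0({-}, A) \colon \A_0\op \to \Set$ is canonically isomorphic to the slice $\A_0/A$, with the canonical projection corresponding to $\pi_A$. This identifies the action of $\A({-}_2, {-}_1)$ on objects. For a loose morphism $p \colon A \lto B$, an entirely analogous argument identifies the category of elements of $\A_1({-}, p) \colon \A_1\op \to \Set$ with the slice $\A_1/p$, projecting via $\pi_p$. It remains only to check that the source and target natural transformations $s_{{-},{-}}$ and $t_{{-},{-}}$ from \cref{hom-functor}, which on a cell $\chi \in \A_1(q, p)$ return $s\chi \in \A_0(sq, A)$ and $t\chi \in \A_0(tq, B)$, correspond under the category-of-elements construction to postcomposition by $s$ and $t$, namely to $s/p$ and $t/p$. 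This is immediate by unwinding the definitions.

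For the consequence, recall from the proof of \cref{presheaf-embedding} that $Y_\A$ is the composite $\Mod(\A({-}_2, {-}_1)) \c \eta_\A$. Hence $Y_\A(A)$ is the image of the identity loose monad $A(1, 1)$ on $A$ in $\A$ under $\Mod$ applied to $\A({-}_2, {-}_1)$. Since $\Mod$ is a 2-functor, the equivalence $\Span^{\A\opt} \equiv \LvDFib(\A)$ passes to an equivalence $\Mod(\Span^{\A\opt}) \equiv \DFibDbl(\A)$, and the first part of the proposition already identifies the underlying object and loose morphism of the image as $\pi_A \colon \A_0/A \to \A_0$ and $\pi_{A(1,1)} \colon \A_1/A(1,1) \to \A_1$ respectively. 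It then remains to recognise the resulting loose monad in $\LvDFib(\A)$ as the slice \pdc{} $\A/A$ of \cite[\S2.5]{grandis1999limits}, together with its projection to $\A$.

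The main obstacle will be matching the unit and multiplication cells of the loose monad -- which are obtained by applying $\A({-}_2, {-}_1)$ to the unit and multiplication of $A(1, 1)$ and then transporting across the category-of-elements correspondence -- with the unitor and compositor of the slice \pdc{} $\A/A$. However, since both sides are completely determined by the unitor and compositor of $\A$ itself via postcomposition, this reduces to a coherence-free diagram chase using the universal properties of the slices.
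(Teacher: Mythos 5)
Your proposal is correct and follows essentially the same route as the paper, which simply observes that the equivalence of \cref{Span-A-op-equiv-DFib-A} is levelwise and invokes the classical identification of the category of elements of a representable presheaf with a slice, together with the decomposition of $Y_\A$ through $\Mod$ from \cref{presheaf-embedding}. Your write-up just spells out the unwrapping (the source/target legs as $s/p$, $t/p$, and the matching of the monad structure with the unitor and compositor of $\A/A$) that the paper leaves implicit.
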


\begin{proof}
    The equivalence of \cref{Span-A-op-equiv-DFib-A} is levelwise, and so the results follow from the corresponding result for categories by unwrapping the definition of the embeddings in \cref{presheaf-embedding}.
\end{proof}

A discrete fibration $P \colon \b E \to \b A$ of categories corresponds to a representable presheaf on $\b A$ if and only if its domain admits a terminal object: in this case, $P$ is isomorphic to a projection functor $\pi_A \colon \b A/A \to \b A$, the identity morphism on the object $A$ being terminal in $\b A/A$. Applying this principle componentwise, we obtain an analogous representability condition for discrete fibrations of \pdcs.

\begin{corollary}
    \label{representability-of-discrete-fibration}
    A discrete fibration $\dc E \to \A$ of \pdcs{} corresponds under the equivalence of \cref{elements-correspondence} to a representable presheaf if and only if $\dc E_0$ admits a terminal object whose identity loose morphism is terminal in $\dc E_1$.
\end{corollary}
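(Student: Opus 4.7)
My plan is to reduce the claim to the 1-dimensional fact that a discrete fibration $\b F \to \b B$ of categories is isomorphic to the slice projection $\b B/B \to \b B$ if and only if $\b F$ admits a terminal object (necessarily lifting $1_B$). By \cref{Yoneda-embedding-is-slice-embedding}, the presheaf associated to $P \colon \dc E \to \A$ is representable precisely when $P$ is isomorphic, as an object of $\DFibDbl(\A) = \Mod(\LvDFib(\A))$, to the slice projection $\pi_A \colon \A/A \to \A$ for some $A \in \A$; equivalently, when $P$ is isomorphic to the loose identity monad $Y_\A(A)$ on $\pi_A$.

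For the forward direction, suppose $P \iso \pi_A$ in $\DFibDbl(\A)$. Restricting the underlying span isomorphism componentwise yields $P_0 \iso (\A_0/A \to \A_0)$ and $P_1 \iso (\A_1/A(1, 1) \to \A_1)$, so $\dc E_0$ has a terminal object $e$ with image $A$ and $\dc E_1$ has a terminal object with image $A(1, 1)$. Span compatibility forces the latter to be an endo-loose-morphism of $e$; since $P$ is a functor of \pdcs{} and $P_1$ is a discrete fibration, it must coincide with the identity loose morphism $e(1, 1)$.

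For the converse, let $e$ be a terminal object of $\dc E_0$ with $e(1, 1)$ terminal in $\dc E_1$, and set $A \defeq P_0(e)$; note that $P_1(e(1, 1)) = A(1, 1)$ by functoriality of $P$. The 1-dimensional result yields isomorphisms $P_0 \iso \pi_A$ (sending $e \mapsto 1_A$) and $P_1 \iso \pi_{A(1, 1)}$ (sending $e(1, 1) \mapsto 1_{A(1, 1)}$), compatible with the source and target projections of both spans since $e(1, 1)$ is an endomorphism of $e$ lifting $A(1, 1)$. This gives an isomorphism of $P$ and $\pi_A$ as objects of $\LvDFib(\A)$, which lifts to an isomorphism of loose monads by the lax-idempotence of $\Mod$ (\cref{Mod-is-lax-idempotent}), exhibiting $P \iso Y_\A(A)$ in $\DFibDbl(\A)$.

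The main subtlety lies in the final step of the converse: confirming that the componentwise categorical isomorphism respects the loose monad structure. Rather than matching unit and multiplication cells by hand, lax-idempotence ensures that any isomorphism into a loose identity monad is uniquely determined by its underlying carrier component; since the carrier isomorphism $P_0 \iso \pi_A$ is already established, the lift to $\DFibDbl(\A)$ is automatic.
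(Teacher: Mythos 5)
Your overall strategy --- reducing, via \cref{Yoneda-embedding-is-slice-embedding}, to the componentwise application of the $1$-categorical characterisation of representable discrete fibrations --- is exactly the paper's, and both the forward direction and the construction of the componentwise isomorphisms in the converse are sound. The genuine problem is your final step, where you invoke lax-idempotence of $\Mod$ (\cref{Mod-is-lax-idempotent}) to conclude that the isomorphism in $\LvDFib(\A)$ automatically lifts to an isomorphism of loose monads. That appeal is misapplied for two reasons. First, the coreflective adjunction ${\eta_\X}' \adj \varepsilon_{\X'}$ of \cref{Mod-is-lax-idempotent} exists only for \emph{normal} $\X$, and $\LvDFib(\A) \equiv \Span^{\A\opt}$ is not normal in general (\cf{} \cref{representability-of-Lax}: loose identities rarely exist in exponentials). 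Second, and relatedly, $Y_\A(A) = \pi_A$ is \emph{not} an identity loose monad in the sense of the unit $\eta$: its underlying loose morphism is the discrete fibration $\A_1/A(1,1) \to \A_1$, which is not a loose identity of $\LvDFib(\A)$. So the claim that ``any isomorphism into a loose identity monad is determined by its carrier component'' does not apply here, and the compatibility of your componentwise isomorphism with the unit and multiplication cells is not automatic from \ffness{} of $\eta$.

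The compatibility does hold, but it needs a direct argument rather than an appeal to \cref{Mod-is-lax-idempotent}. In each case the two composites to be compared (the unit condition $\I_P \d f = 1_{\crr f} \d \I_{\pi_A}$ and the multiplication condition $\circ_P \d f = (f, f) \d \circ_{\pi_A}$) are functors into the discrete fibration $\A_1/A(1,1) \to \A_1$ lying over the same base functor; they agree at the terminal object of their domain (using the monad unit laws), and every object of the domain admits a morphism to that terminal object, so uniqueness of lifts with prescribed codomain in a discrete fibration propagates the agreement to all objects. The paper's own proof elides this point entirely, so the omission itself is venial; but the justification you supply in its place is the one part of your argument that is actually incorrect, and should be replaced by something along these lines.
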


\begin{proof}
    Following \cref{Yoneda-embedding-is-slice-embedding}, the discrete fibration of \pdcs{} corresponding to the representable presheaf on an object $A \in \A$ is presented by the following span morphism. Trivially, $A$ is terminal in $\A_0/A$ and $A(1, 1)$ is terminal in $\A_1/A(1, 1)$.
    \begin{equation}
    \label{representable-discrete-fibration}
    % https://q.uiver.app/#q=WzAsNixbMCwwLCJcXEFfMC9BIl0sWzAsMSwiXFxBXzAiXSxbMSwwLCJcXEFfMS9BKDEsIDEpIl0sWzEsMSwiXFxBXzEiXSxbMiwxLCJcXEFfMCJdLFsyLDAsIlxcQV8wL0EiXSxbMCwxLCJcXHBpX0EiLDJdLFszLDEsInMiXSxbMyw0LCJ0IiwyXSxbMiwzLCJcXHBpX3tBKDEsIDEpfSIsMV0sWzUsNCwiXFxwaV9BIl0sWzIsMCwicy9BKDEsIDEpIiwyXSxbMiw1LCJ0L0EoMSwgMSkiXV0=
    \begin{tikzcd}[column sep=large]
        {\A_0/A} & {\A_1/A(1, 1)} & {\A_0/A} \\
        {\A_0} & {\A_1} & {\A_0}
        \arrow["{\pi_A}"', from=1-1, to=2-1]
        \arrow["{s/A(1, 1)}"', from=1-2, to=1-1]
        \arrow["{t/A(1, 1)}", from=1-2, to=1-3]
        \arrow["{\pi_{A(1, 1)}}"{description}, from=1-2, to=2-2]
        \arrow["{\pi_A}", from=1-3, to=2-3]
        \arrow["s", from=2-2, to=2-1]
        \arrow["t"', from=2-2, to=2-3]
    \end{tikzcd}
    \end{equation}

    Conversely, suppose $P \colon \dc E \to \A$ is a discrete fibration of \pdcs{} for which $\dc E_0$ admits a terminal object whose identity loose morphism is terminal in $\dc E_1$. Then the span morphism presenting $P$, whose components are discrete fibrations, is necessarily isomorphic to one of the form \eqref{representable-discrete-fibration}, by applying the characterisation of discrete fibrations of categories corresponding to representable presheaves componentwise.
\end{proof}

For instance, in combination with \cref{Yoneda-lemma}, \cref{Yoneda-embedding-is-slice-embedding} extends \cite[Theorem~7.4]{frohlich2024yoneda} from strict \dcs{} to \pdcs, while \cref{representability-of-discrete-fibration} extends \cite[Theorems~7.7 \& 7.9]{frohlich2024yoneda} likewise.

\begin{remark}
    In \cite[\S8]{fujii2025familial}, \citeauthor{fujii2025familial} study an elements construction for functors $\X \to \Span$ in which $\X$ is not assumed to be representable, and observe that this recovers \citeauthor{pare2011yoneda}'s elements construction when $\X$ is representable~\cite[Remark~8.8]{fujii2025familial}. It seems reasonable to expect that
    \begin{enumerate}
        \item this extends to an equivalence between the category of functors $\X \to \Span$ and the category of discrete opfibrations over $\X$ in the sense of \cite[Definition~5.1]{fujii2025familial};
        \item for representable $\X$, this recovers the underlying equivalence of categories between $\Lax(\X, \Span)_0$ and $\DFibDbl(\X\opt)_0$ established in \cref{elements-correspondence}.
    \end{enumerate}
    However, it seems unclear how one might recover the full equivalence of \emph{\vdcs} established in \cref{elements-correspondence} from a more general correspondence for arbitrary \vdcs{} $\X$, since the definition of $\Lax(\X, \Span)$ relies fundamentally on exponentiability.
\end{remark}

\subsection{Tight distributors and two-sided discrete fibrations}

For simplicity, we focused above on the elements correspondence between tight presheaves and discrete fibrations of \pdcs. However, the correspondence extends in exactly the same way to tight distributors and two-sided discrete fibrations of \pdcs{}. For completeness, we make explicit this more general correspondence. For an exposition of the correspondence relating distributors between categories and two-sided discrete fibrations, see \cite[Theorem~2.3.2]{loregian2020categorical}. We shall need a small lemma regarding stability of two-sided fibrations under pullback.

\begin{lemma}[{\cite[Proposition~7.4.5]{riehl2022elements}}]
    \label{pullback-stability-of-two-sided-fibrations}
    Let $D \colon \D \to \b A \times \b B$ be a two-sided discrete fibration and let $F \colon \b A' \to \b A$ and $G \colon \b B' \to \b B$ be functors. The pullback of $D$ along $F \times G \colon \b A' \times \b B' \to \b A \times \b B$ is a two-sided discrete fibration.
\end{lemma}

\begin{proof}
    $D$ is the projection from the category of elements for a distributor, say $P \colon \b B\op \times \b A \to \Set$. A short calculation shows that pulling back along $F \times G$ produces the projection from the category of elements for the distributor $P(F, G) \colon (\b B')\op \times \b A' \to \Set$.
\end{proof}

\begin{definition}
    Let $\A$ and $\B$ be \pdcs. We define a \vdc{} $\LvDFib(\A, \B)$ (for \emph{levelwise two-sided discrete fibrations}) as follows.
    \begin{itemize}
        \item The underlying category is the full subcategory of the slice category $\Cat/(\A_0 \times \B_0)$ spanned by the two-sided discrete fibrations from $\A_1$ to $\B_1$.
        \item A loose morphism from $D' \colon \D' \to \A_0 \times \B_0$ to $D \colon \D \to \A_0 \times \B_0$ is given by a two-sided discrete fibration $E \colon \E \to \A_1 \times \B_1$ from $\A_1$ to $\B_1$ and a span rendering the following diagram commutative.
        % https://q.uiver.app/#q=WzAsNixbMiwxLCJcXEFfMCBcXHRpbWVzIFxcQl8wIl0sWzEsMSwiXFxBXzEgXFx0aW1lcyBcXEJfMSJdLFswLDEsIlxcQV8wIFxcdGltZXMgXFxCXzAiXSxbMiwwLCJcXEQnIl0sWzAsMCwiXFxEIl0sWzEsMCwiXFxFIl0sWzMsMCwiRCciXSxbNCwyLCJEIiwyXSxbNSwzLCJFX3MiXSxbNSw0LCJFX3QiLDJdLFsxLDAsInMgXFx0aW1lcyBzIiwyXSxbMSwyLCJ0IFxcdGltZXMgdCJdLFs1LDEsIkUiLDFdXQ==
        \[\begin{tikzcd}
            \D & \E & {\D'} \\
            {\A_0 \times \B_0} & {\A_1 \times \B_1} & {\A_0 \times \B_0}
            \arrow["D"', from=1-1, to=2-1]
            \arrow["{E_t}"', from=1-2, to=1-1]
            \arrow["{E_s}", from=1-2, to=1-3]
            \arrow["E"{description}, from=1-2, to=2-2]
            \arrow["{D'}", from=1-3, to=2-3]
            \arrow["{t \times t}", from=2-2, to=2-1]
            \arrow["{s \times s}"', from=2-2, to=2-3]
        \end{tikzcd}\]
        \item A cell $\zeta$ as on the left below (where each loose morphism $E_i$ on the left is given explicitly by the data of functors $E_i$, $(E_i)_s$ and $(E_i)_t$, as on the right below)
        \[
        % https://q.uiver.app/#q=WzAsNSxbMiwwLCJEX24iXSxbMiwxLCJEJyJdLFsxLDAsIlxcY2RvdHMiXSxbMCwxLCJEIl0sWzAsMCwiRF8wIl0sWzAsMSwiSCciXSxbMCwyLCJFX24iLDIseyJzdHlsZSI6eyJib2R5Ijp7Im5hbWUiOiJiYXJyZWQifX19XSxbMSwzLCJFIiwwLHsic3R5bGUiOnsiYm9keSI6eyJuYW1lIjoiYmFycmVkIn19fV0sWzIsNCwiRV8xIiwyLHsic3R5bGUiOnsiYm9keSI6eyJuYW1lIjoiYmFycmVkIn19fV0sWzQsMywiSCIsMl0sWzUsOSwiXFx6ZXRhIiwxLHsic2hvcnRlbiI6eyJzb3VyY2UiOjIwLCJ0YXJnZXQiOjIwfSwic3R5bGUiOnsiYm9keSI6eyJuYW1lIjoibm9uZSJ9LCJoZWFkIjp7Im5hbWUiOiJub25lIn19fV1d
        \begin{tikzcd}
            {D_0} & \cdots & {D_n} \\
            D && {D'}
            \arrow[""{name=0, anchor=center, inner sep=0}, "H"', from=1-1, to=2-1]
            \arrow["{E_1}"'{inner sep=.8ex}, "\shortmid"{marking}, from=1-2, to=1-1]
            \arrow["{E_n}"'{inner sep=.8ex}, "\shortmid"{marking}, from=1-3, to=1-2]
            \arrow[""{name=1, anchor=center, inner sep=0}, "{H'}", from=1-3, to=2-3]
            \arrow["E"{inner sep=.8ex}, "\shortmid"{marking}, from=2-3, to=2-1]
            \arrow["\zeta"{description}, draw=none, from=1, to=0]
        \end{tikzcd}
        \hspace{3em}
        % https://q.uiver.app/#q=WzAsNixbMCwwLCJcXERfe2kgLSAxfSJdLFswLDEsIlxcQV8wIFxcdGltZXMgXFxCXzAiXSxbMSwwLCJcXEVfaSJdLFsxLDEsIlxcQV8xIFxcdGltZXMgXFxCXzEiXSxbMiwwLCJcXERfaSJdLFsyLDEsIlxcQV8wIFxcdGltZXMgXFxCXzAiXSxbMCwxLCJEX3tpIC0gMX0iLDJdLFsyLDMsIkVfaSIsMV0sWzMsMSwidCBcXHRpbWVzIHQiXSxbMyw1LCJzIFxcdGltZXMgcyIsMl0sWzIsMCwiKEVfaSlfdCIsMl0sWzIsNCwiKEVfaSlfcyJdLFs0LDUsIkRfaSJdXQ==
        \begin{tikzcd}
            {\D_{i - 1}} & {\E_i} & {\D_i} \\
            {\A_0 \times \B_0} & {\A_1 \times \B_1} & {\A_0 \times \B_0}
            \arrow["{D_{i - 1}}"', from=1-1, to=2-1]
            \arrow["{(E_i)_t}"', from=1-2, to=1-1]
            \arrow["{(E_i)_s}", from=1-2, to=1-3]
            \arrow["{E_i}"{description}, from=1-2, to=2-2]
            \arrow["{D_i}", from=1-3, to=2-3]
            \arrow["{t \times t}", from=2-2, to=2-1]
            \arrow["{s \times s}"', from=2-2, to=2-3]
        \end{tikzcd}
        \]
        is given by a functor $\zeta$ making the following two diagrams commute.
        \[
        % https://q.uiver.app/#q=WzAsNCxbMCwwLCJcXEVfezEsIFxcbGRvdHMsIG59Il0sWzAsMSwiXFxBX24gXFx0aW1lcyBcXEJfbiJdLFsxLDEsIlxcQV8xIFxcdGltZXMgXFxCXzEiXSxbMSwwLCJcXEUiXSxbMCwxLCJFX3sxLCBcXGxkb3RzLCBufSIsMl0sWzEsMiwiXFxvZG90X24gXFx0aW1lcyBcXG9kb3RfbiIsMl0sWzAsMywiXFx6ZXRhIl0sWzMsMiwiRSJdXQ==
        \begin{tikzcd}[column sep=large]
            {\E_{1, \ldots, n}} & \E \\
            {\A_n \times \B_n} & {\A_1 \times \B_1}
            \arrow["\zeta", from=1-1, to=1-2]
            \arrow["{E_{1, \ldots, n}}"', from=1-1, to=2-1]
            \arrow["E", from=1-2, to=2-2]
            \arrow["{\odot_n \times \odot_n}"', from=2-1, to=2-2]
        \end{tikzcd}
        \hspace{2em}
        % https://q.uiver.app/#q=WzAsOCxbMiwwLCJcXEVfezEsIFxcbGRvdHMsIG59Il0sWzEsMCwiXFxFXzEiXSxbMCwxLCJcXEQiXSxbMiwxLCJcXEUiXSxbMCwwLCJcXERfMCJdLFszLDAsIlxcRV9uIl0sWzQsMCwiXFxEX24iXSxbNCwxLCJcXEQnIl0sWzAsMSwiXFxwaV8xIiwyXSxbMCwzLCJcXHpldGEiLDFdLFszLDIsIkVfdCJdLFsxLDQsIihFXzEpX3QiLDJdLFs0LDIsIkgiLDJdLFswLDUsIlxccGlfbiJdLFs1LDYsIihFX24pX3MiXSxbNiw3LCJIJyJdLFszLDcsIkVfcyIsMl1d
        \begin{tikzcd}[column sep=large]
            {\D_0} & {\E_1} & {\E_{1, \ldots, n}} & {\E_n} & {\D_n} \\
            \D && \E && {\D'}
            \arrow["H"', from=1-1, to=2-1]
            \arrow["{(E_1)_t}"', from=1-2, to=1-1]
            \arrow["{\pi_1}"', from=1-3, to=1-2]
            \arrow["{\pi_n}", from=1-3, to=1-4]
            \arrow["\zeta"{description}, from=1-3, to=2-3]
            \arrow["{(E_n)_s}", from=1-4, to=1-5]
            \arrow["{H'}", from=1-5, to=2-5]
            \arrow["{E_t}", from=2-3, to=2-1]
            \arrow["{E_s}"', from=2-3, to=2-5]
        \end{tikzcd}
        \]
        Above, we have denoted by $\E_{1, \ldots, n}$ the limit (\ie{} wide pullback) of the following chain of spans, and by $E_{1, \ldots, n} \colon \E_{1, \ldots, n} \to \A_n \times \B_n$ the canonical functor induced by the functors $E_1, \ldots, E_n$.
        % https://q.uiver.app/#q=WzAsNyxbMSwwLCJcXEVfMSJdLFswLDEsIlxcRF8wIl0sWzIsMSwiXFxEXzEiXSxbMywwLCJcXEVfMiJdLFs0LDEsIlxcRF8yIl0sWzUsMCwiXFxjZG90cyJdLFs2LDEsIlxcRF9uIl0sWzAsMSwiKEVfMSlfdCIsMl0sWzAsMiwiKEVfMSlfcyIsMV0sWzMsMiwiKEVfMilfdCIsMV0sWzMsNCwiKEVfMilfcyIsMV0sWzUsNCwiKEVfMylfdCIsMV0sWzUsNiwiKEVfbilfcyJdXQ==
        \[\begin{tikzcd}
            & {\E_1} && {\E_2} && \cdots \\
            {\D_0} && {\D_1} && {\D_2} && {\D_n}
            \arrow["{(E_1)_t}"', from=1-2, to=2-1]
            \arrow["{(E_1)_s}"{description}, from=1-2, to=2-3]
            \arrow["{(E_2)_t}"{description}, from=1-4, to=2-3]
            \arrow["{(E_2)_s}"{description}, from=1-4, to=2-5]
            \arrow["{(E_3)_t}"{description}, from=1-6, to=2-5]
            \arrow["{(E_n)_s}", from=1-6, to=2-7]
        \end{tikzcd}\]
        \item The identity cell on $E \colon \E \to \A_1 \times \B_1$ is given by the identity functor on $\E$.
        \item Composition of cells is given, as in \cref{LvAmnIso}, by lifting the structural isomorphisms in $\A$ and $\B$.
        % https://q.uiver.app/#q=WzAsNSxbMSwwLCJcXEVfezEsIFxcbGRvdHMsIG59Il0sWzIsMSwiXFxBXzEgXFx0aW1lcyBcXEJfMSJdLFsyLDAsIlxcRSJdLFswLDAsIlxcRV97MSwgXFxsZG90cywgXFxzdW1fezEgXFxsZXEgaSBcXGxlcSBufSBtX2l9Il0sWzAsMSwiXFxBX3tcXHN1bV97MSBcXGxlcSBpIFxcbGVxIG59IG1faX0gXFx0aW1lcyBcXEJfe1xcc3VtX3sxIFxcbGVxIGkgXFxsZXEgbn0gbV9pfSJdLFswLDIsIlxcemV0YSJdLFsyLDEsIkUiXSxbMywwLCJcXHpldGFfMSwgXFxsZG90cywgXFx6ZXRhX24iXSxbMyw0LCJFX3sxLCBcXGxkb3RzLCBcXHN1bV97MSBcXGxlcSBpIFxcbGVxIG59IG1faX0iLDJdLFs0LDEsIlxcb2RvdF97XFxzdW1fezEgXFxsZXEgaSBcXGxlcSBufSBtX2l9IFxcdGltZXMgXFxvZG90X3tcXHN1bV97MSBcXGxlcSBpIFxcbGVxIG59IG1faX0iLDJdLFszLDIsIiIsMSx7ImN1cnZlIjo1LCJzdHlsZSI6eyJib2R5Ijp7Im5hbWUiOiJkYXNoZWQifX19XSxbMCwxMCwiXFxpc28iLDEseyJzaG9ydGVuIjp7InNvdXJjZSI6MjAsInRhcmdldCI6MjB9LCJzdHlsZSI6eyJib2R5Ijp7Im5hbWUiOiJub25lIn0sImhlYWQiOnsibmFtZSI6Im5vbmUifX19XV0=
        \[\begin{tikzcd}[column sep=huge]
            {\E_{1, \ldots, \sum_{1 \leq i \leq n} m_i}} & {\E_{1, \ldots, n}} & \E \\
            {\A_{\sum_{1 \leq i \leq n} m_i} \times \B_{\sum_{1 \leq i \leq n} m_i}} && {\A_1 \times \B_1}
            \arrow["{\zeta_1, \ldots, \zeta_n}", from=1-1, to=1-2]
            \arrow[""{name=0, anchor=center, inner sep=0}, curve={height=30pt}, dashed, from=1-1, to=1-3]
            \arrow["{E_{1, \ldots, \sum_{1 \leq i \leq n} m_i}}"', from=1-1, to=2-1]
            \arrow["\zeta", from=1-2, to=1-3]
            \arrow["E", from=1-3, to=2-3]
            \arrow["{\odot_{\sum_{1 \leq i \leq n} m_i} \times \odot_{\sum_{1 \leq i \leq n} m_i}}"', from=2-1, to=2-3]
            \arrow["\iso"{description}, draw=none, from=1-2, to=0]
        \end{tikzcd}\]
    \end{itemize}
    That composition of cells is indeed associative and unital follows from uniqueness of liftings for the two-sided discrete fibrations. $\LvDFib$ extends to a 2-functor $\Dbl_l\op \times \Dbl_l\coop \to \VDbl$ via pullback of categories (\cref{pullback-stability-of-two-sided-fibrations}).
\end{definition}

To our knowledge, two-sided discrete fibrations of \pdcs{} have not previously been studied. The one-sided analogue motivates the following definition (\cf~\cref{DFib-is-Mnd}).

\begin{definition}
    \label{double-two-sided-discrete-fibration}
    Let $\A$ and $\B$ be \pdcs. The \vdc{} of \emph{two-sided discrete fibrations} from $\A$ to $\B$ is $\DFibDbl(\A, \B) \defeq \Mod(\LvDFib(\A, \B))$.
\end{definition}

This definition is justified by an elements correspondence extending that for tight presheaves and discrete fibrations of \pdcs.

\begin{theorem}
    \label{tight-distributors-and-two-sided-discrete-fibrations}
    For each pair of \pdcs{} $\A$ and $\B$, there is an equivalence of \vdcs{}, 2-natural in $\A$ and $\B$:
    \[\Span^{\B\opt \times \A} \equiv \LvDFib(\A, \B)\]
    and consequently equivalences of normal \vdcs{}, 2-natural in $\A$ and $\B$:
    \[\Lax(\B\opt \times \A, \Span) \iso \LaxN(\B\opt \times \A, \Dist) \equiv \DFibDbl(\A, \B)\]
\end{theorem}

\begin{proof}
    The proofs of two statements follow exactly as in \cref{Span-A-op-equiv-DFib-A,elements-correspondence}, using the correspondence relating distributors between categories and two-sided discrete fibrations.
\end{proof}

For concreteness, we spell out the definition of the \pdc{} obtained from a lax functor $\B\opt \times \A \to \Span$ via \cref{tight-distributors-and-two-sided-discrete-fibrations}, generalising \cite[Theorem~3.8]{pare2011yoneda}.

\begin{definition}
    \label{double-category-of-elements}
    Let $P \colon \B\opt \times \A \to \Span$ be a lax functor of \pdcs. The \emph{\pdc{} of elements} $\El(P)$ associated to $P$ is defined as follows.
    \begin{enumerate}
        \item An object is a triple $(B \in \B, A \in \A, f \in P(B, A))$. We depict $f$ as a `tight heteromorphism' $f \colon B \squigto A$.
        \item A tight morphism from $(B, A, f)$ to $(B', A', f')$ is a pair $(b \colon B \to B', a \colon A \to A')$ such that the following square commutes (\ie{} the induced elements of $P(B, A')$ are equal).
        % https://q.uiver.app/#q=WzAsNCxbMSwwLCJBIl0sWzEsMSwiQSciXSxbMCwwLCJCIl0sWzAsMSwiQiciXSxbMCwxLCJhIl0sWzIsMCwiZiIsMCx7InN0eWxlIjp7ImJvZHkiOnsibmFtZSI6InNxdWlnZ2x5In19fV0sWzIsMywiYiIsMl0sWzMsMSwiZiciLDIseyJzdHlsZSI6eyJib2R5Ijp7Im5hbWUiOiJzcXVpZ2dseSJ9fX1dXQ==
        \[\begin{tikzcd}
            B & A \\
            {B'} & {A'}
            \arrow["f", squiggly, from=1-1, to=1-2]
            \arrow["b"', from=1-1, to=2-1]
            \arrow["a", from=1-2, to=2-2]
            \arrow["{f'}"', squiggly, from=2-1, to=2-2]
        \end{tikzcd}\]
        \item A loose morphism from $(B', A', f')$ to $(B, A, f)$ is a triple $(q \colon B' \lto B, p \colon A' \lto A, \phi \in P(q, p)_{f', f})$. We depict $\phi$ as a `hetero-cell'.
        % https://q.uiver.app/#q=WzAsNCxbMCwwLCJCIl0sWzEsMCwiQiciXSxbMCwxLCJBIl0sWzEsMSwiQSciXSxbMSwwLCJxIiwyLHsic3R5bGUiOnsiYm9keSI6eyJuYW1lIjoiYmFycmVkIn19fV0sWzMsMiwicCIsMCx7InN0eWxlIjp7ImJvZHkiOnsibmFtZSI6ImJhcnJlZCJ9fX1dLFswLDIsImYiLDIseyJzdHlsZSI6eyJib2R5Ijp7Im5hbWUiOiJzcXVpZ2dseSJ9fX1dLFsxLDMsImYnIiwwLHsic3R5bGUiOnsiYm9keSI6eyJuYW1lIjoic3F1aWdnbHkifX19XSxbNiw3LCJcXHBoaSIsMSx7InNob3J0ZW4iOnsic291cmNlIjoyMCwidGFyZ2V0IjoyMH0sInN0eWxlIjp7ImJvZHkiOnsibmFtZSI6Im5vbmUifSwiaGVhZCI6eyJuYW1lIjoibm9uZSJ9fX1dXQ==
        \[\begin{tikzcd}
            B & {B'} \\
            A & {A'}
            \arrow[""{name=0, anchor=center, inner sep=0}, "f"', squiggly, from=1-1, to=2-1]
            \arrow["q"'{inner sep=.8ex}, "\shortmid"{marking}, from=1-2, to=1-1]
            \arrow[""{name=1, anchor=center, inner sep=0}, "{f'}", squiggly, from=1-2, to=2-2]
            \arrow["p"{inner sep=.8ex}, "\shortmid"{marking}, from=2-2, to=2-1]
            \arrow["\phi"{description}, draw=none, from=0, to=1]
        \end{tikzcd}\]
        \item A cell with the following frame,
        % https://q.uiver.app/#q=WzAsNSxbMCwwLCIoQl8wLCBBXzAsIGZfMCkiXSxbMSwwLCJcXGNkb3RzIl0sWzIsMCwiKEJfbiwgQV9uLCBmX24pIl0sWzAsMSwiKEIsIEEsIGYpIl0sWzIsMSwiKEInLCBBJywgZicpIl0sWzEsMCwiKHFfMSwgcF8xLCBcXHBoaV8xKSIsMix7InN0eWxlIjp7ImJvZHkiOnsibmFtZSI6ImJhcnJlZCJ9fX1dLFsyLDEsIihxX24sIHBfbiwgXFxwaGlfbikiLDIseyJzdHlsZSI6eyJib2R5Ijp7Im5hbWUiOiJiYXJyZWQifX19XSxbMCwzLCIoYiwgYSkiLDJdLFsyLDQsIihiJywgYScpIl0sWzQsMywiKHEsIHAsIFxccGhpKSIsMCx7InN0eWxlIjp7ImJvZHkiOnsibmFtZSI6ImJhcnJlZCJ9fX1dXQ==
        \[\begin{tikzcd}[column sep=huge]
            {(B_0, A_0, f_0)} & \cdots & {(B_n, A_n, f_n)} \\
            {(B, A, f)} && {(B', A', f')}
            \arrow["{(b, a)}"', from=1-1, to=2-1]
            \arrow["{(q_1, p_1, \phi_1)}"'{inner sep=.8ex}, "\shortmid"{marking}, from=1-2, to=1-1]
            \arrow["{(q_n, p_n, \phi_n)}"'{inner sep=.8ex}, "\shortmid"{marking}, from=1-3, to=1-2]
            \arrow["{(b', a')}", from=1-3, to=2-3]
            \arrow["{(q, p, \phi)}"{inner sep=.8ex}, "\shortmid"{marking}, from=2-3, to=2-1]
        \end{tikzcd}\]
        comprises cells $\alpha$ in $\A$ and $\beta$ in $\B$,
        \[
        % https://q.uiver.app/#q=WzAsNSxbMCwwLCJCXzAiXSxbMSwwLCJcXGNkb3RzIl0sWzIsMCwiQl9uIl0sWzAsMSwiQiJdLFsyLDEsIkInIl0sWzEsMCwicV8xIiwyLHsic3R5bGUiOnsiYm9keSI6eyJuYW1lIjoiYmFycmVkIn19fV0sWzIsMSwicV9uIiwyLHsic3R5bGUiOnsiYm9keSI6eyJuYW1lIjoiYmFycmVkIn19fV0sWzAsMywiYiIsMl0sWzIsNCwiYiciXSxbNCwzLCJxIiwwLHsic3R5bGUiOnsiYm9keSI6eyJuYW1lIjoiYmFycmVkIn19fV0sWzcsOCwiXFxiZXRhIiwxLHsic2hvcnRlbiI6eyJzb3VyY2UiOjIwLCJ0YXJnZXQiOjIwfSwic3R5bGUiOnsiYm9keSI6eyJuYW1lIjoibm9uZSJ9LCJoZWFkIjp7Im5hbWUiOiJub25lIn19fV1d
        \begin{tikzcd}[column sep=large]
            {B_0} & \cdots & {B_n} \\
            B && {B'}
            \arrow[""{name=0, anchor=center, inner sep=0}, "b"', from=1-1, to=2-1]
            \arrow["{q_1}"'{inner sep=.8ex}, "\shortmid"{marking}, from=1-2, to=1-1]
            \arrow["{q_n}"'{inner sep=.8ex}, "\shortmid"{marking}, from=1-3, to=1-2]
            \arrow[""{name=1, anchor=center, inner sep=0}, "{b'}", from=1-3, to=2-3]
            \arrow["q"{inner sep=.8ex}, "\shortmid"{marking}, from=2-3, to=2-1]
            \arrow["\beta"{description}, draw=none, from=0, to=1]
        \end{tikzcd}
        \hspace{3em}
        % https://q.uiver.app/#q=WzAsNSxbMCwwLCJBXzAiXSxbMSwwLCJcXGNkb3RzIl0sWzIsMCwiQV9uIl0sWzAsMSwiQSJdLFsyLDEsIkEnIl0sWzEsMCwicF8xIiwyLHsic3R5bGUiOnsiYm9keSI6eyJuYW1lIjoiYmFycmVkIn19fV0sWzIsMSwicF9uIiwyLHsic3R5bGUiOnsiYm9keSI6eyJuYW1lIjoiYmFycmVkIn19fV0sWzAsMywiYSIsMl0sWzIsNCwiYSciXSxbNCwzLCJwIiwwLHsic3R5bGUiOnsiYm9keSI6eyJuYW1lIjoiYmFycmVkIn19fV0sWzcsOCwiXFxhbHBoYSIsMSx7InNob3J0ZW4iOnsic291cmNlIjoyMCwidGFyZ2V0IjoyMH0sInN0eWxlIjp7ImJvZHkiOnsibmFtZSI6Im5vbmUifSwiaGVhZCI6eyJuYW1lIjoibm9uZSJ9fX1dXQ==
        \begin{tikzcd}[column sep=large]
            {A_0} & \cdots & {A_n} \\
            A && {A'}
            \arrow[""{name=0, anchor=center, inner sep=0}, "a"', from=1-1, to=2-1]
            \arrow["{p_1}"'{inner sep=.8ex}, "\shortmid"{marking}, from=1-2, to=1-1]
            \arrow["{p_n}"'{inner sep=.8ex}, "\shortmid"{marking}, from=1-3, to=1-2]
            \arrow[""{name=1, anchor=center, inner sep=0}, "{a'}", from=1-3, to=2-3]
            \arrow["p"{inner sep=.8ex}, "\shortmid"{marking}, from=2-3, to=2-1]
            \arrow["\alpha"{description}, draw=none, from=0, to=1]
        \end{tikzcd}
        \]
        such that the following equation holds (\ie{} the induced elements of $P(q_1 \odot \cdots \odot q_n, p)$ are equal).
        \[
        % https://q.uiver.app/#q=WzAsNyxbMCwwLCJCXzAiXSxbMSwwLCJcXGNkb3RzIl0sWzIsMCwiQl9uIl0sWzAsMSwiQiJdLFsyLDEsIkInIl0sWzAsMiwiQSJdLFsyLDIsIkEnIl0sWzEsMCwicV8xIiwyLHsic3R5bGUiOnsiYm9keSI6eyJuYW1lIjoiYmFycmVkIn19fV0sWzIsMSwicV9uIiwyLHsic3R5bGUiOnsiYm9keSI6eyJuYW1lIjoiYmFycmVkIn19fV0sWzAsMywiYiIsMl0sWzIsNCwiYiciXSxbNCwzLCJxIiwxXSxbMyw1LCJmIiwyLHsic3R5bGUiOnsiYm9keSI6eyJuYW1lIjoic3F1aWdnbHkifX19XSxbNCw2LCJmJyIsMCx7InN0eWxlIjp7ImJvZHkiOnsibmFtZSI6InNxdWlnZ2x5In19fV0sWzYsNSwicCIsMCx7InN0eWxlIjp7ImJvZHkiOnsibmFtZSI6ImJhcnJlZCJ9fX1dLFs5LDEwLCJcXGJldGEiLDEseyJzaG9ydGVuIjp7InNvdXJjZSI6MjAsInRhcmdldCI6MjB9LCJzdHlsZSI6eyJib2R5Ijp7Im5hbWUiOiJub25lIn0sImhlYWQiOnsibmFtZSI6Im5vbmUifX19XSxbMTIsMTMsIlxccGhpIiwxLHsic2hvcnRlbiI6eyJzb3VyY2UiOjIwLCJ0YXJnZXQiOjIwfSwic3R5bGUiOnsiYm9keSI6eyJuYW1lIjoibm9uZSJ9LCJoZWFkIjp7Im5hbWUiOiJub25lIn19fV1d
        \begin{tikzcd}
            {B_0} & \cdots & {B_n} \\
            B && {B'} \\
            A && {A'}
            \arrow[""{name=0, anchor=center, inner sep=0}, "b"', from=1-1, to=2-1]
            \arrow["{q_1}"'{inner sep=.8ex}, "\shortmid"{marking}, from=1-2, to=1-1]
            \arrow["{q_n}"'{inner sep=.8ex}, "\shortmid"{marking}, from=1-3, to=1-2]
            \arrow[""{name=1, anchor=center, inner sep=0}, "{b'}", from=1-3, to=2-3]
            \arrow[""{name=2, anchor=center, inner sep=0}, "f"', squiggly, from=2-1, to=3-1]
            \arrow["q"{description}, from=2-3, to=2-1]
            \arrow[""{name=3, anchor=center, inner sep=0}, "{f'}", squiggly, from=2-3, to=3-3]
            \arrow["p"{inner sep=.8ex}, "\shortmid"{marking}, from=3-3, to=3-1]
            \arrow["\beta"{description}, draw=none, from=0, to=1]
            \arrow["\phi"{description}, draw=none, from=2, to=3]
        \end{tikzcd}
        \quad = \quad
        % https://q.uiver.app/#q=WzAsOCxbMCwxLCJBXzAiXSxbMSwxLCJcXGNkb3RzIl0sWzIsMSwiQV9uIl0sWzAsMiwiQSJdLFsyLDIsIkEnIl0sWzAsMCwiQl8wIl0sWzIsMCwiQl9uIl0sWzEsMCwiXFxjZG90cyJdLFsxLDAsInBfMSIsMV0sWzIsMSwicF9uIiwxXSxbMCwzLCJhIiwyXSxbMiw0LCJhJyJdLFs0LDMsInAiLDAseyJzdHlsZSI6eyJib2R5Ijp7Im5hbWUiOiJiYXJyZWQifX19XSxbNyw1LCJxXzEiLDIseyJzdHlsZSI6eyJib2R5Ijp7Im5hbWUiOiJiYXJyZWQifX19XSxbNiw3LCJxX24iLDIseyJzdHlsZSI6eyJib2R5Ijp7Im5hbWUiOiJiYXJyZWQifX19XSxbNSwwLCJmXzAiLDIseyJzdHlsZSI6eyJib2R5Ijp7Im5hbWUiOiJzcXVpZ2dseSJ9fX1dLFs2LDIsImZfbiIsMCx7InN0eWxlIjp7ImJvZHkiOnsibmFtZSI6InNxdWlnZ2x5In19fV0sWzcsMSwiXFxjZG90cyIsMSx7InN0eWxlIjp7ImJvZHkiOnsibmFtZSI6Im5vbmUifSwiaGVhZCI6eyJuYW1lIjoibm9uZSJ9fX1dLFsxMCwxMSwiXFxhbHBoYSIsMSx7InNob3J0ZW4iOnsic291cmNlIjoyMCwidGFyZ2V0IjoyMH0sInN0eWxlIjp7ImJvZHkiOnsibmFtZSI6Im5vbmUifSwiaGVhZCI6eyJuYW1lIjoibm9uZSJ9fX1dLFsxNSwxNywiXFxwaGlfMSIsMSx7InNob3J0ZW4iOnsic291cmNlIjoyMCwidGFyZ2V0IjoyMH0sInN0eWxlIjp7ImJvZHkiOnsibmFtZSI6Im5vbmUifSwiaGVhZCI6eyJuYW1lIjoibm9uZSJ9fX1dLFsxNywxNiwiXFxwaGlfbiIsMSx7InNob3J0ZW4iOnsic291cmNlIjoyMCwidGFyZ2V0IjoyMH0sInN0eWxlIjp7ImJvZHkiOnsibmFtZSI6Im5vbmUifSwiaGVhZCI6eyJuYW1lIjoibm9uZSJ9fX1dXQ==
        \begin{tikzcd}
            {B_0} & \cdots & {B_n} \\
            {A_0} & \cdots & {A_n} \\
            A && {A'}
            \arrow[""{name=0, anchor=center, inner sep=0}, "{f_0}"', squiggly, from=1-1, to=2-1]
            \arrow["{q_1}"'{inner sep=.8ex}, "\shortmid"{marking}, from=1-2, to=1-1]
            \arrow[""{name=1, anchor=center, inner sep=0}, "\cdots"{description}, draw=none, from=1-2, to=2-2]
            \arrow["{q_n}"'{inner sep=.8ex}, "\shortmid"{marking}, from=1-3, to=1-2]
            \arrow[""{name=2, anchor=center, inner sep=0}, "{f_n}", squiggly, from=1-3, to=2-3]
            \arrow[""{name=3, anchor=center, inner sep=0}, "a"', from=2-1, to=3-1]
            \arrow["{p_1}"{description}, from=2-2, to=2-1]
            \arrow["{p_n}"{description}, from=2-3, to=2-2]
            \arrow[""{name=4, anchor=center, inner sep=0}, "{a'}", from=2-3, to=3-3]
            \arrow["p"{inner sep=.8ex}, "\shortmid"{marking}, from=3-3, to=3-1]
            \arrow["{\phi_1}"{description}, draw=none, from=0, to=1]
            \arrow["{\phi_n}"{description}, draw=none, from=1, to=2]
            \arrow["\alpha"{description}, draw=none, from=3, to=4]
        \end{tikzcd}
        \]
    \end{enumerate}
    Composition of tight morphisms and of cells is given by that in $\A$ and $\B$. Loose composites are given by those in $\A$, $\B$, and $\Span$. The \dc{} of elements is equipped with a span of strict functors of \dcs{} $\A \xfrom{\pi_1} \El(P) \xto{\pi_2} \B$ forming a two-sided discrete fibration in the sense of \cref{double-two-sided-discrete-fibration}.
\end{definition}

\begin{example}
    Every pseudo functor $\cl B\co \times \cl A \to \Cat$ induces a pseudo functor $\cl B\co \times \cl A \to \b{\cl Dist}$ by postcomposing the embedding $\Cat \to \b{\cl Dist}$ of functors into representable distributors, and hence induces a normal lax functor $\cl B\co \times \cl A \to \Span$ by \cref{Mod-is-right-adjoint}. In this case, the associated \pdc{} of elements (\cref{double-category-of-elements}) is tightly discrete (\ie{} a bicategory) and coincides with \citeauthor{street1980fibrations}'s two-sided Grothendieck construction for bicategories~\cite[(1.10)]{street1980fibrations}.
\end{example}

Note that the forgetful functor $\u\ph \colon \VDbl \to \Cat$, being a right adjoint, preserves discrete fibrations and so the underlying category of $\El(P)$ is the category of elements of the underlying distributor $\u P \colon \u\B\op \times \u\A \to \Set$.

We conclude by observing that there is a natural assignment sending lax functors between \pdcs{} to tight distributors, analogous to the embedding $\Cat \ffto \b{\cl Dist}$ of functors into representable distributors.

\begin{proposition}
    For each pair of \pdcs{} $\A$ and $\B$, there are \ff{} functors
    \begin{align*}
        \B^\A & \ffto \Span^{\B\opt \times \A} &
        \Lax(\A, \B) & \ffto \Lax(\B\opt \times \A, \Span)
    \end{align*}
    of \vdcs{} and normal \vdcs{} respectively.
\end{proposition}

\begin{proof}
    The two embeddings are respectively given by the following (using \cref{presheaf-embedding}).
    \[\B^\A \xffto{\B({-}_2, {-}_1)^\A} (\Span^{\B\opt})^\A \iso \Span^{\B\opt \times \A}\]
    \[\Lax(\A, \B) \xffto{\Lax(\A, Y_\B)} \Lax(\A, \Lax(\B\opt, \Span)) \iso \Lax(\B\opt \times \A, \Span) \qedhere\]
\end{proof}

Consequently, by taking tight opposites, we also obtain an analogue of the embedding ${\Cat\coop \ffto \b{\cl Dist}}$ of functors into corepresentable distributors.
\[(\B\opt)^{(\A\opt)} \ffto \Span^{\B \times \A\opt} \iso \Span^{\A\opt \times \B}\]
\[\Lax(\A\opt, \B\opt) \ffto \Lax(\B \times \A\opt, \Span) \iso \Lax(\A\opt \times \B, \Span)\]

(Given that, for ordinary categories $\b A$ and $\b B$, we have $(\b B\op)^{(\b A\op)} \iso (\b B^{\b A})\op$, we might be tempted to suggest that $(\B^\A)\opt \iso (\B\opt)^{(\A\opt)}$ and $\Lax(\A, \B)\opt \iso \Lax(\A\opt, \B\opt)$, but it is not clear to what extent the tight opposite makes sense for non-representable \vdcs.)

\printbibliography

\end{document}